\begin{document}
 
\title{ A hierarchic multi-level energy method  \\
for the control of bi-diagonal and mixed 
n-coupled \\
cascade systems of PDE's by 
a reduced number of controls }
\thanks{Accepted for publication: May 2013.}
\thanks{AMS Subject Classifications:  34G10, 35B35, 35B37, 35L90, 93D15, 93D20} 
\date{}
\maketitle     
 
\vspace{ -1\baselineskip}

{\small
\begin{center}
 {\sc Fatiha Alabau-Boussouira} \\
Universit\'e de Lorraine, IECL UMR 7502 and Inria Equipe-projet Corida
\\
 57045 Metz Cedex 1, France \\[10pt]
 (Submitted by: Jean-Michel Coron)  
\end{center}
}

\numberwithin{equation}{section}
\allowdisplaybreaks

 \smallskip

 \begin{quote}
\footnotesize
{\bf Abstract.}  
This work is concerned with the exact controllability/observability  of
abstract cascade hyperbolic systems by a reduced number of controls/observations. We prove that the observation of the last component of the vector state allows to recover the initial energies of all of its components in suitable functional spaces under a necessary and sufficient condition on the coupling operators for cascade bi-diagonal systems. The approach is based on a multi-level energy method which
involves $n$-levels of weakened energies. 
 We establish this result for the case of bounded as well as unbounded dual control operators and under the hypotheses of partial coercivity of the  $n-1$ coupling operators on the sub-diagonal of the system.  
 We further extend our observability result to mixed bi-diagonal and non bi-diagonal $n+p$-coupled cascade systems by $p+1$ observations. Applying the HUM method, we derive the corresponding exact controllability results for
$n$-coupled bi-diagonal cascade and $n+p$-coupled mixed cascade systems. Using the transmutation method for the wave operator, we prove that the corresponding heat 
(resp. Schr\"odinger)  multi-dimensional cascade systems are null-controllable for control regions and coupling regions which are disjoint from each other and for any positive time for 
$n \le 5$ for dimensions larger than $2$, and for any $n \ge 2$ in the one-dimensional case. The controls can be localized on a subdomain or on the boundary and in the one-dimensional case the coupling coefficients can be supported in any non-empty subset of the domain. 
\end{quote}

\newtheorem{Theorem}{Theorem}[section]
\newtheorem{Definition}[Theorem]{Definition}
\newtheorem{Proposition}[Theorem]{Proposition}
\newtheorem{Lemma}[Theorem]{Lemma}
\newtheorem{Corollary}[Theorem]{Corollary}
\newtheorem{Hypothesis}[Theorem]{Hypothesis}
\newtheorem{Remark}[Theorem]{Remark}
\newtheorem{Example}[Theorem]{Example}
\newtheorem{Examples}[Theorem]{Examples}
\newtheorem{Problem}[Theorem]{Problem}

\def\aa{\alpha}
\def\t{\tau}
\def\R{\mathbb R}
\def\N{\mathbb N }
\def\O{\mathbb \Omega}
\def\eps{\varepsilon }

\newcounter{prop}

\section{Introduction}
The control of reaction-diffusion systems, of coupled hyperbolic systems, or of more complex systems involved in medical, biological, chemical or mechanical applications,  has become a more and more challenging issue for more than a decade. From the point of view of control theory, it is important for practical applications and for cost reasons to control these systems by a reduced number of controls, that is by a number of controls which is strictly smaller than the number of unknowns/equations involved in the system. 
Indirect controllability questions for coupled systems appear  for instance naturally as soon as  one wants to build insensitizing controls for scalar equations or simultaneous control for certain classes of coupled systems in parallel.  We shall describe more precisely some examples for these two applications.

\smallskip

\noindent {\bf Insensitizing controls.}
 J. -L. Lions has introduced this definition in~\cite{lions89} for heat type equations, to describe
controls which are robust to small perturbations on the initial data with respect to a given measurement of the solutions.  Let us describe formally 
this notion for the scalar wave equation in a bounded open set $\Omega \subset \mathbb{R}^d$ with a smooth boundary $\Gamma$ (see \cite{Dager06, alabau2cascade} for the mathematical justification)
$$
\begin{cases}
y_{tt} - \Delta y =b v \mbox{ in } (0,T) \times \Omega \,,\\
y=0 \mbox{ in } (0,T) \times \Gamma  ,\\
(y, y_t)(0,.)=(y^0 +\tau_0 z^0, y^1 + \tau_1 z^1) \mbox{ in } \Omega\,, 
\end{cases}
$$
 where $(y^0,y^1)$ are given known initial data, whereas $(z^0,z^1)$ are unknown perturbations of the initial data and of norm $1$
in appropriate functional spaces, $\tau_0$ and $\tau_1$ are real (small) numbers measuring the
amplitude of the perturbations $(z^0, z^1)$, $v$ is the control and $b$ is the control coefficient, which may vanish in some region of $\Omega$. We associate
to the solution $y$ the following measurement
$$
\phi(y;\tau_0, \tau_1)=\frac{1}{2} \int_0^T\int_{\Omega} c(x) y^2 \,dx\, dt \,,
$$
 where $c \ge 0$ is the observation coefficient, which may also vanish in some sub-regions of $\Omega$ and
may have a disjoint support from that of the control coefficient $b$.
One says that the control $v$ insensitizes $\phi$ if the following property holds
$$
\frac{\partial \phi}{\partial \tau_0}(y;0,0)=\frac{\partial \phi}{\partial \tau_1}(y;0,0)=0 \,,
$$
 for all $(z^0,z^1)$ of norm $1$ in the appropriate spaces. We have formally
$$
\frac{\partial \phi}{\partial \tau_0}(y;0,0)=\int_0^T\int_{\Omega} c(x) y_2 w \,dx\, dt \,, \ 
\frac{\partial \phi}{\partial \tau_1}(y;0,0)=\int_0^T\int_{\Omega} c(x) y_2 z \,dx\, dt \,,
$$
 where $y_2$, $w$ respectively solve
$$
\begin{cases}
y_{2,tt} - \Delta y_2 =b v \mbox{ in } (0,T) \times \Omega \,,\\
y_2=0 \mbox{ in } (0,T) \times \Gamma \,,\\
(y_2, y_{2,t})(0,.)=(y^0, y^1) \mbox{ in } \Omega\,,
\end{cases}
\begin{cases}
w_{tt} -\Delta w=0\mbox{ in } (0,T) \times \Omega \,,\\
w=0 \mbox{ in } (0,T) \times \Gamma \,,\\
(w, w_t)(0,.)=(z^0, 0) \mbox{ in } \Omega\,,
\end{cases}
$$
and $z$ solves
$$
\begin{cases}
z_{tt} -\Delta z=0\mbox{ in } (0,T) \times \Omega \,,\\
z=0 \mbox{ in } (0,T) \times \Gamma \,,
\end{cases}
(z, z_t)(0,.)=(0, z^1) \mbox{ in } \Omega\,.
$$
 We introduce the auxiliary equation
$$
\begin{cases}
y_{1,tt} - \Delta y_1 +c(x) y_2 =0 \mbox{ in } (0,T) \times \Omega \,,\\
y_1=0 \mbox{ in } (0,T) \times \Gamma \,, \\
(y_1, y_{1,t})(0,.)=(y_1^0, y_1^1) \mbox{ in } \Omega\,.
\end{cases}
$$
 Then, multiplying the above equation by $w$, integrating
over $(0,T)\times \Omega$ and using the equation in $w$, we have formally
$$
\frac{\partial \phi}{\partial \tau_0}(y;0,0)=\int_0^T\int_{\Omega} cy_2 w=\int_0^T\int_{\Omega}
(-y_{1,tt} +\Delta y_1)w= \Big[\int_{\Omega} -y_{1,t}w +y_1 w_t
\Big]_0^T \,.
$$
 In a similar way, we have
$$
\frac{\partial \phi}{\partial \tau_1}(y;0,0)=\int_0^T\int_{\Omega} cy_2 z=\int_0^T\int_{\Omega}
(-y_{1,tt} +\Delta y_1)z= \Big[\int_{\Omega}- y_{1,t}z +y_1 z_t
\Big]_0^T \,.
$$
 The insensitizing property will hold as soon as the control $v$ is such that the solution of the $2$ order cascade system
\begin{equation}\label{modex}
\begin{cases}
y_{1,tt} - \Delta y_1 +c(x) y_2 =0 \mbox{ in } (0,T) \times \Omega \,,\\
y_{2,tt} - \Delta y_2 =b v \mbox{ in } (0,T) \times \Omega \,,\\
y_1=y_2=0 \mbox{ in } (0,T) \times \Gamma \,,\\
(y_1, y_{1,t})(0,.)=(y_1^0, y_1^1)  ,\\
(y_2, y_{2,t})(0,.)=(y^0, y^1) \mbox{ in } \Omega\,,
\end{cases}
\end{equation}
 satisfies the following property, 
for any $(y^0,y^1)$ given in appropriate energy space
$ 
(y_1,y_{1,t})(0,.)=(y_1,y_{1,t})(T,.)=0$  in $ \Omega  .
$ 
 A similar formal analysis can be performed 
for the building of insensitizing controls on a subset $\Gamma_1$ of the
boundary $\Gamma$. 
Hence, the existence of insensitizing controls for the scalar wave equation is directly linked to an exact controllability result for
a cascade system of order $2$ by a single control. 
This problem can be reformulated as 
$$
Y^{\prime\prime} + \mathcal{M}_2Y= \mathcal{B}_2v\,, \  \ (Y,Y^{\prime})(0)=(y_1^0,y^0,y_1^1,y^1) \,,
$$
 where $Y=(y_1,y_2)^t$, $\mathcal{B}_2v=(0,bv)^t$ and where the matrix operator $\mathcal{M}_2$ involved in \eqref{modex} has the following upper triangular form
$ 
\mathcal{M}_2=
\left(\begin{matrix} 
A &  c I\\
0 & A  
\end{matrix}
\right),
$ 
 where $I$ stands for the identity operator in $L^2(\Omega)$ and $A=-\Delta$ stands for the homogeneous Dirichlet Laplacian. We can remark that for cascade systems arising from insensitizing questions, the coupling operators are partially coercive (due to the above properties of
the coefficient $c$).

Note also that for other examples the coupling terms may be of higher order. This is the case for the mechanical model of Timoshenko beams for instance, 
which couples two wave equations by first order terms. Also one may consider in applications two controls (see e.g. ~\cite{kimren, taylor}) or a single control (see e.g. \cite{soufyane, ABRR, nodeaala}).

\smallskip

\noindent 
{\bf Simultaneous control for coupled systems.}
Another application is the simultaneous control of systems of hyperbolic equations coupled in parallel (see e.g.~\cite{lions, tucsnakweiss}). 
We denote by $\mathcal{L}$ a uniformly elliptic operator on $\Omega \subset \mathbb{R}^n$ with smooth coefficients, subjected to homogeneous Dirichlet boundary conditions. We set $p=(p_1, p_2, p_3)^t$, and use the notation $p_{tt}=(p_{1,tt}, \ldots, p_{3,tt})^t$ ,
$\mathcal{L}p=(\mathcal{L}p_1, \ldots, \mathcal{L}p_3)^t$.  Let $\alpha$ and $\beta$ be given functions on the set $\Omega$.
We consider the following control problem for $t \in (0,T)$ and $x \in \Omega$
\begin{equation}\label{simulta}
\begin{cases}
p_{1,tt} - \mathcal{L}p_1 - (3 \alpha + \beta)p_1 +(\alpha+\beta)p_2 + (2 \alpha+\beta)p_3= v_1\,, \\
p_{2,tt} - \mathcal{L}p_2 - (3 \alpha -\beta)p_1 +(-\alpha+\beta)p_2 + (-2 \alpha +\beta)p_3= v_2\,, \\
p_{3,tt} - \mathcal{L}p_3 - 6 \alpha p_1 +2\alpha p_2 + 4\alpha p_3= v_3\,, 
\end{cases}
\end{equation}
 where the initial conditions for $p$ are known and where
$v_1, v_2, v_3 \in L^2((0,T) \times \Omega)$ are the controls. We shall consider the simultaneous control problem, that is we look for controls
$(v_1,v_2,v_3)=(\eta_1 h, \eta_2 h, h)$ which depend on a single scalar control $h \in L^2((0,T) \times \Omega)$, where $\eta_1, \eta_2$
are fixed real coefficients.
Hence, for each given initial data we look for a scalar control $h$ which could simultaneously drive back to equilibrium at time $T>0$ each component of the system, i.e. which is such that $p_i(T)=p_{i,t}(T)=0$, $i=1,2,3$ for a sufficiently large time $T$. Indeed, making an appropriate change of unknowns, we can transform this simultaneous control problem into a control problem for a bi-diagonal cascade problem by a single control. More precisely, set
\begin{align*}
y_1 & =\frac{1}{4}(-p_1 +p_2 -2p_3)  ,  \\
y_2 & =\frac{1}{4}(3p_1 -p_2 -2p_3) , \\
 y_3 & =(-p_1 +p_2 +p_3) ,
\end{align*}
 and $\eta_1=2\,,\eta_2=4$.
 Then $y=(y_1,y_2,y_3)^t$ is the solution of the following bi-diagonal cascade system
$$
\begin{cases}
y_{1,tt} - \mathcal{L}y_1+ 6 \alpha(x) y_2=0 \,, t \in (0,T) \,, x \in \Omega \,,\\
y_{2,tt} - \mathcal{L}y_2+ \frac{\beta(x)}{2} y_3= 0\,, t \in (0,T) \,, x \in \Omega \,,\\
y_{3,tt} - \mathcal{L}y_3=3h \,, t \in (0,T) \,, x \in \Omega \,.
\end{cases}
$$
 Therefore, if we can solve the exact controllability of the above bi-diagonal cascade system, we can find a simultaneous control $h$
which drives back the solution of \eqref{simulta} at equilibrium at a sufficiently large time $T$. The above example can be generalized to simultaneous control systems of order $n$.

Thus, a natural question raising from these applications and from the case of $2$-coupled cascade systems given in \eqref{modex} is to extend the controllability results
for $2$-coupled cascade systems to cascade systems coupling at least $3$ equations
or more. Moreover we are motivated by giving sharp conditions on the geometry of the control and coupling regions, and in particular by cases for which the control and coupling regions do not meet and for boundary controls as well. This is an open problem for the corresponding parabolic systems.
We shall restrict our analysis to two subclasses of cascade coupled systems: the bi-diagonal and mixed cascade systems. We generalize the two-level energy method into a {\em hierarchic multi-level energy method} to study the indirect controllability of such systems. This {\em constructive} method uses the property that one can derive from the original system set in the natural energy space a {\em hierarchy of related systems} similar to the original one, but set in weakened energy spaces. The solutions of these hierarchic systems are linked to each other and  this {\em rich structure} allows us to get positive controllability results.
These subclasses can be seen as a toy model to understand and capture essential properties which guarantee controllability by a reduced number of controls. If one cannot understand controllability questions by a reduced number of controls for such subclasses then there is no hope for more general structures.  
We  give applications to explicit coupled cascade systems of PDE's, namely the wave, heat and Schr\"odinger ones.  Our abstract results apply as well to coupled cascade wave systems with variable (smooth) coefficients, to
cascade systems of Petrowsky equations \ldots and also under other sufficient geometric conditions derived by other tools such as frequency and spectral methods, even though we do not present such applications in this paper for reasons due to the length of the paper.

Let us describe the two subclasses of cascade systems we consider in this paper, namely
the bi-diagonal $n$-coupled cascade system 
$$
Y^{\prime\prime} + \mathcal{M}_nY =\mathcal{B}_n{\bf v}\,,
$$
 with $Y=(y_1, y_2, \ldots y_n)^t$, $\mathcal{B}_n{\bf v}=(0, \ldots, B_n v)^t$
and for which the matrix operator $\mathcal{M}_n$ have the following form 
\begin{equation}\label{Mn}
\mathcal{M}_n=
\begin{pmatrix}
A &  c_{21}I &0 & \ldots\\
0 & A & c_{32} I & 0 & \ldots \\
\vdots \\
0 & 0 & \ldots & A & c_{n n-1}I \\
0 & 0& \ldots & 0 & A
\end{pmatrix} \,,
\end{equation}
 and the mixed bi-diagonal and non bi-diagonal $n+p$-coupled cascade system 
$$
Y^{\prime\prime} + \mathcal{M}_{n+p}Y =\mathcal{B}_{n+p}{\bf v} \,,
$$
 with $Y=(y_1, y_2, \ldots y_{n+p})^t$, $\mathcal{B}_{n+p}{\bf v}=(0, \ldots, B_{n} v_{n}, \ldots, B_{n+p}v_{n+p})$,
where ${\bf v}=(v_n, \ldots, v_{n+p})$ are the $p+1$ controls, and
where the matrix operator is given by
\begin{align} \label{Mn+p}
&
\mathcal{M}_{n+p}=
\notag \\
&  
\begin{pmatrix}
A &  c_{21}I &0 & \ldots & 0 & 0 & 0 & 0 & 0\\
0 & A & c_{32}I & 0 & \ldots  &0 & 0 & 0 &0 \\
\vdots \\
0 & 0 & \ldots & A & c_{n-1 n-2}I & 0 & \ldots & 0 & 0\\
0 & \ldots & 0 & 0& A & c_{n n-1}I & c_{n+1 n-1} I & \ldots & c_{n+p \,n-1} I\\
\vdots\\
0 & 0 \ldots & 0 & \ldots & 0 & 0& 0& A & c_{n+p\, n+p-1} I\\
0& 0 & 0 & \ldots & 0 & 0 & 0 & \ldots & A
\end{pmatrix}
\end{align}
 where $n \ge 3$ and $p \ge 0$.  One can note that the above matrix operator is bi-diagonal up to the $n-1$ line and is no longer bi-diagonal starting from the $n$ line.

We first study these two classes in an abstract form, that is for general unbounded self-adjoint and coercive operators $A$ and
bounded and partially coercive coupling operators $C_{i i-1}$ in a Hilbert space $H$.
We give  a necessary and sufficient {\em observability-type condition}, on the coupling operators $C_{i i-1}$, so that the bi-diagonal $n$-coupled cascade systems are controllable by a single
control acting on the last equation, either by bounded or unbounded control operators. We then prove that mixed bi-diagonal and non bi-diagonal $n+p$-coupled cascade
systems are controllable by $p+1$ controls acting on the last $p+1$ equations, some of the controls being bounded
and the other unbounded. The necessary and sufficient abstract condition allows us to get results under sharp geometric conditions, issued from micro-local analysis, or spectral and frequency approaches (or also multipliers methods) for wave coupled systems
under additional compatibility assumptions on some of the coupling coefficients if the number of equations is larger or equal to $4$. 

We give applications of these
abstract results to hyperbolic, parabolic and Schr\"odinger cascade systems of order larger than $3$. These results solve partially
some conjectures on coupled parabolic cascade systems allowing to exhibit coupled systems with variable coupling coefficients for which
the control regions do not meet the coupling regions (for less than $5$ equations). 

The question of positive controllability results for parabolic systems for arbitrary non empty open coupling and control regions is still widely open.

The assumption of partially coercive coupling operators can also be discussed. On one side, this assumption is naturally satisfied for several applications as for instance for cascade systems arising from insensitizing control or for examples arising from applications such as Timoshenko beams for instance. On the other side, from a mathematical point of view, one may want to relax this assumption. If for instance, the coupling operators
$C_{i i-1}$ are given by $C_{i i-1}u=c_{i i-1}u$ for $u \in L^2(\Omega)$ in the case of systems of cascade wave equations, the necessary conditions $(A3)_n$ with $\Pi_i=C_{i i-1}$ established in Theorem~\ref{nNEC},  only require that the $supp\{c_{i i-1}\}$ satisfy the Geometric Control Condition. It is an interesting open question to obtain positive or negative control results for $2$-coupled cascade wave systems  for a sign varying coupling coefficient $c$ where $supp \{c\}$ satisfies the Geometric Control Condition. All the positive results~\cite{sicon03, alaleaucont, alaleau11, alabau2cascade, Dager06, DLRL, RdT11, tebou2011} for coupled systems assume that the coupling coefficient has a constant sign over the whole domain. To our knowledge, there are no results for coupled systems with sign varying coupling coefficients.Thus, this is an open problem for $2$-coupled cascade systems, and further for $n$-coupled systems for which the dynamics is being more complex.  

\smallskip

\noindent
{\bf On the literature on control of coupled systems by a reduced number of controls.}
Let us give some picture of the literature on the subject of coupled hyperbolic as well as parabolic systems. The question of controllability of symmetric weakly $2$-coupled hyperbolic systems by a single control is adressed by the author in~\cite{alacras01, sicon03}. These systems have the form
$$
Y^{\prime\prime} + \mathcal{S}_2Y= \mathcal{B}_2v\,, 
\ \ \ (Y,Y^{\prime})(0)=(y_1^0,y^0,y_1^1,y^1) \,,
$$
 where $Y=(y_1,y_2)^t$, $\mathcal{B}_2v=(Bv,0)^t$ and where the matrix operator $\mathcal{S}_2$ has the following symmetric form
$ 
\mathcal{S}_2=
\left(
\begin{matrix}
A &  C\\
C^{\ast} & A
\end{matrix}
\right) ,
$ 
 where $C$ stands for a bounded operator in $L^2(\Omega)$ and $A=-\Delta$ stands for the homogeneous Dirichlet Laplacian.
We prove in~\cite{alacras01, sicon03}, positive indirect controllability/observability results by means of a {\em two-level energy method}.  We introduce this method in a general abstract setting under a coercivity assumption  of $C$ (case of globally distributed couplings) and unbounded control operators (case of boundary control). 

The author and L\'eautaud in~\cite{alaleaucont, alaleau11} extend and simplify these results to the case of symmetric weakly $2$-coupled hyperbolic systems with partially coercive couplings (case of locally supported couplings) and for localized as well as boundary control. Thanks to the transmutation method
~\cite{seidman, miller, phung, EZ}, we also give applications to symmetric $2$-coupled systems of parabolic and diffusive equations. These results are valid for geometric situations for which the control and coupling regions do not meet and in the case of smooth coupling coefficients. 

Using D\"ager's~\cite{Dager06} approach relying on the periodicity  of the semigroup for the single free wave equation, Rosier and de Teresa~\cite{RdT11} prove positive results for $2$-coupled system of cascade hyperbolic equations with partially coercive operators under this periodicity assumption, hence for one dimensional $2$-coupled cascade wave systems. They give applications to $2$-coupled systems of cascade one-dimensional heat equations and to $2$-coupled systems of cascade Schr\"odinger equations in a $n$-dimensional interval with empty intersection between the control and coupling regions.  The coupling coefficient is assumed to be a nonnegative function, and does not need to be a smooth function as in the two-level energy method.

On the other hand, Dehman, Le Rousseau and L\'eautaud~\cite{DLRL}  (see also \cite{these-leautaud}) consider a $2$-coupled cascade wave system 
in a $\mathcal{C}^{\infty}$ compact connected riemannian manifold without boundary. They further assume that the coupling coefficient is nonnegative, which is equivalent to assuming that the coupling operator is partially coercive. Under these assumptions, they prove that the system can be controlled by a locally distributed control and further give
a {\em characterization of the minimal control time} using micro-local analysis and the idea to work in weakened energy spaces for the unobserved component (see~\cite{alacras01, sicon03}). 

We give in~\cite{alabau2cascade}, under the assumption of partially coercive operators, a necessary and sufficient condition for the controllability/observability of  $2$-coupled cascade hyperbolic systems by a single, either locally distributed control or a boundary control, in particular for geometric situations for which the coupling and control regions do not meet. We answer to the question of existence of insensitizing controls for the scalar wave equation, generalizing the one-dimensional results and approximate insensitivity results in~\cite{Dager06} to the multi-dimensional case and to exact insensitivity.

The generalization of the above controllability result for $2$-cascade systems to $n$-coupled cascade systems with arbitrary $n\ge 2$ in the present paper is nontrivial and relies on a tricky induction argument on $n$ given in Theorem~\ref{inductionobsNSHn}. One has to establish by induction on $n$ intermediate estimates stated in the property $\mathcal{P}_n$. These successive estimates explain in some way how
the information given by the observation of the last component can be transferred to the unobserved equations through the successive coupling coefficients.

There is a prolific literature on the control of parabolic coupled systems. The survey paper~\cite{AKBGBT11} and the references therein give an overview of the recent results on the null-controllability for coupled parabolic systems.  It is devoted to observability results for the adjoint system based on Carleman estimates and generalizations of the Kalman rank condition in infinite dimension. It also stresses fundamental differences between scalar and vectorial parabolic equations, in particular for boundary control. 

As for the case of the insensitive controls for the scalar wave equation, the question of existence of insensitizing controls for the scalar heat equation is equivalent to a controllability result for an associated forward-backward cascade system. We refer to~\cite{lions89, bodart-fabre95, DeT00, BGBPG04CPDE, BGBPG04SICON, DeTZ, cannarsaLT09} for results on insensitizing control for the heat equation. These studies treat the cases $\omega \cap O\neq \emptyset$, that is the cases for which the control region meets the coupling region, which corresponds to the region on which one wants to insensitive the measurement. We also refer to~\cite{guerrero}  for an insensitizing analysis in the case of fluids models, that is the case of the existence of insensitizing controls for the Stokes equations and~\cite{gueyecras, gueyeNS} for the existence of insensitizing controls for the Navier-Stokes equations.
For  null controllability results on coupled parabolic systems by a single control force for: either constant coupling operators and locally distributed control, or localized coupling operators and locally distributed control regions with a non-empty intersection between control and coupling regions, we refer the reader to~\cite{DeT00, AKBD06, AKBDGB09, FCGBDeT10, GBdT10, leautaud, CGR10, olive, mauffrey}. These results are based on Carleman estimates for the observability of the adjoint system. In the case $\omega \cap O= \emptyset$, Kavian and de Teresa~\cite{DeTK10} proved a unique continuation result for a $2$-coupled cascade systems of parabolic equations. De Teresa and Zuazua~\cite{DeTZ} give further results concerning the determination of the initial data for which insensitizing controls of the heat equation can be built. 

Coron, Guerrero and Rosier~\cite{CGR10} prove local null controllability results for nonlinearly coupled $2$-systems of parabolic equations with a {\em nonlinear coupling} term arising in control of  chemical reaction-diffusion models. The results are based on the Coron's return method~\cite{coronbk07}.  
One should note that for most of these results, a condition is that the coupling region meets the control/observation region.  It is therefore a challenging issue to determine whether it is possible to control/observe the full vector-state solution of coupled parabolic systems by a reduced number of controls/observations, in the cases for which the control/observation and coupling regions do not meet. This is also one of  the question which motivates this paper. 

This paper is organized as follows.  In Section 2, we give the results and proofs for observability (resp. controllability) by a single observation (resp. control) for $n$-coupled abstract bi-diagonal cascade systems. Section 3 is devoted to the control of mixed bi-diagonal and non bi-diagonal $n+p$-coupled abstract cascade systems by $p+1$ controls. 
Section 4 gives applications of our abstract results to $n+p$-coupled cascade wave, parabolic and Schr\"odinger systems.We give the proofs of the applicative results of Section 2 in Section 5. We discuss our results and indicate open problems in Section 6. We provide the proofs of the results of section 3 in an appendix.

\section{Controllability and observability of 
bi-diagonal $n$-coupled cascade hyperbolic systems by a single control/observation}

\subsection{Observability of bi-diagonal $n$-coupled cascade hyperbolic systems by a single observation}

In~\cite{alabau2cascade}, we have considered $2$-coupled cascade system, that is coupled systems of order $2$,
which are lower  triangular systems . We  consider in this paper, $n$-coupled cascade systems, that is lower triangular hyperbolic systems of order $n\in \mathbb{N}$
with $n \ge 2$.
\begin{equation}\label{NSHFn}
\begin{cases}
u_1^{\prime\prime} + A u_1 = 0 \,,\\
u_2^{\prime\prime} + A u_2+ C_{21}u_1  = 0 \,,\\
u_3^{\prime\prime} + A u_3+ C_{31}u_1 + C_{32}u_2 = 0 \,,\\
\vdots \\
u_n^{\prime\prime} + A u_n+ C_{n1}u_1 + C_{n2}u_2 + \ldots
C_{nn-1}u_{n-1}=0 \,,\\
(u_i,u_i^{\prime})(0)=(u_i^0,u_i^1) \mbox{ for }
i=1, \ldots n\,,
\end{cases}
\end{equation}
 where $A$ satisfies 
\begin{equation*}\label{A1}
(A1)\ 
\begin{cases}
 A : D(A) \subset H \mapsto H \,, A^{\ast}=A\,,\\
\exists \; \omega>0\,,  |A u| \ge \omega |u|  \quad \forall \ u \in D(A) \,,\ \
A \mbox{  has a compact resolvent}\,,
\end{cases}
\end{equation*}
 and where
the operators $C_{ij}$ are bounded in $H$ for $i \in \{2,\ldots, n\}\,, \ 
j \in \{1, \ldots,  i-1\}$. We recover $2$-coupled cascade systems considered in~\cite{alabau2cascade}, when $n=2$.
We set $H_k=D(A^{k/2})$ for $k \in \N$, with the convention
$H_0=H$. The set $H_k$ is equipped with the norm $|\cdot|_k$
defined by $|A^{k/2} \cdot|$ and the associated scalar product. It
is a Hilbert space. We denote by $H_{-k}$ the dual space of
$H_k$ with the pivot space $H$. We equip $H_{-k}$ with the norm
$|\cdot|_{-k}=|A^{-k/2} \cdot|$. 

We define the energy space associated
to \eqref{NSHFn} by $\mathcal{H}_n=H_1^n\times
H_0^n$.
The system \eqref{NSHFn} can then be reformulated as the first order abstract system
\begin{equation}\label{ANSHn} 
U^{\prime} = \mathcal{A}_n U \,,\quad  
U(0)=U^0=(u_1^0,u_2^0 ,\ldots u_n^0,u_1^1,u_2^1,\ldots, u_n^1) \,, 
\end{equation}
 where $U=(u_1,u_2,\ldots, u_n, v_1,v_2, \ldots, v_n)$ and
$\mathcal{A}_n$ is the unbounded operator
in $\mathcal{H}_n$ with domain
$D(\mathcal{A}_n)=H_2^n\times H_1^n$ defined by
\begin{align}\label{An}
\mathcal{A}_nU=(v_1,v_2,\ldots, v_n, & -Au_1,-Au_2 - C_{21}u_1, 
\notag 
\\
\ldots, & -Au_n -C_{n1}u_1 \ldots - C_{nn-1}u_{n-1}) \,.
\end{align}
 The well-posedness of the abstract system
\eqref{ANSHn} for initial data $U_0 \in
\mathcal{H}_n$ using semigroup theory is easy to establish and the usual regularity results for smoother initial data also hold. Moreover, for any $k \in \mathbb{Z}^{\ast}$, if $C_{ij}, C_{ij}^{\ast} \in \mathcal{L}(H_{k-1})$ for all $i \in \{2, \ldots, n\}$ and all $j \in \{1, \ldots, i-1\}$, the problem \eqref{ANSHn} is also well-posed in
$H_k^n \times H_{k-1}^n$. For a solution
$U=(u_1,\ldots, u_n,v_1,\ldots, v_n)$ of \eqref{ANSHn}, we have
$v_i=u_i^{\prime}$ for $i=1,\ldots,n$. 

For this, it is convenient to introduce some
further notations. For a solution
$U=(u_1, \ldots,u_n,u_1^{\prime},\ldots, u_n^{\prime})$ of \eqref{ANSHn}, we set
$ \label{Uin}
U_i=(u_i,u_i^{\prime}) $ for $ i=1,\ldots,n\,.
$
 Moreover, for $U_i \in H_k \times H_{k-1}$, we define the energy of level $k$ as
\begin{equation*}\label{enkU}
e_k(U_i)(t)=\tfrac{1}{2} \Big(
|A^{k/2}u_i|^2 + |A^{(k-1)/2}u_i^{\prime}|^2\Big) \,, \ 
k \in \mathbb{Z} \,, i=1,2\,.
\end{equation*}
Similarly to the case of $2$-coupled cascade systems, 
we can easily deduce the following results (their proofs are left to the reader).

\begin{Proposition}\label{invertAn}
Assume that $A$ satisfies $(A1)$ and define $\mathcal{A}_n$
as in \eqref{An}. Then $\mathcal{A}_n$ is invertible from
$D(\mathcal{A}_n)$ on $\mathcal{H}_n$. Moreover,
for any solution $U=(u_1,\ldots,u_n,v_1,\ldots,v_n)$ of \eqref{ANSHn}, the equation
$ \label{AWn}
\mathcal{A}_nW=U
$
 admits a unique solution $W=(w_1, \ldots, w_n, z_1,\ldots,z_n)$ given by
\begin{equation*}\label{Wn}
\begin{cases}
w_1=-A^{-1}u_1^{\prime} \,,\\
w_2=-A^{-1}u_2^{\prime} + A^{-1}C_{21}A^{-1}u_1^{\prime} \,,\\
\vdots \\
w_n= - A^{-1}u_n^{\prime}- A^{-1}C_{n1}w_1 - \ldots -  A^{-1}C_{nn-1}w_{n-1} \,, \\
z_1=w_1^{\prime}=u_1 \,, \ldots \,, \  z_n=w_n^{\prime}=u_n \,.
\end{cases}
\end{equation*}
 Also, $W$ is then the solution of \eqref{ANSHn}, associated
to the initial data $W(0)=\mathcal{A}_n^{-1}U^0$.
\end{Proposition}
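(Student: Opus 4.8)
The plan is to exploit the lower-triangular block structure of $\mathcal{A}_n$ and to invert it by forward substitution. Writing $U=(u,v)$ and the unknown $W=(w,z)$ with $u=(u_1,\dots,u_n)$, $v=(v_1,\dots,v_n)$, $w=(w_1,\dots,w_n)$, $z=(z_1,\dots,z_n)$, the equation $\mathcal{A}_nW=U$ decouples into $z=u$ together with $\mathcal{M}_nw=-v$, where $\mathcal{M}_n$ is the lower-triangular operator matrix carrying $A$ on the diagonal and the coupling operators $C_{ij}$ ($j<i$) below it. By $(A1)$ the operator $A$ is an isomorphism from $H_2=D(A)$ onto $H_0=H$, so the diagonal blocks are invertible and I can solve the triangular system $\mathcal{M}_nw=-v$ recursively: set $w_1=-A^{-1}v_1$ and, for $k\ge 2$, $w_k=-A^{-1}v_k-\sum_{j<k}A^{-1}C_{kj}w_j$. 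This construction produces one and only one $W$, which gives both surjectivity and injectivity, hence bijectivity of $\mathcal{A}_n$ from $D(\mathcal{A}_n)$ onto $\mathcal{H}_n$.

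First I would verify that the $W$ just built actually lies in $D(\mathcal{A}_n)=H_2^n\times H_1^n$, arguing by induction on the index $k$. One has $z_i=u_i\in H_1$ directly from $U\in\mathcal{H}_n$. For the $w_k$, since $v_k\in H_0$ one gets $A^{-1}v_k\in H_2$; assuming $w_j\in H_2\subset H_0$ for $j<k$, boundedness of $C_{kj}$ on $H$ yields $C_{kj}w_j\in H_0$ and so $A^{-1}C_{kj}w_j\in H_2$, whence $w_k\in H_2$. Boundedness of $\mathcal{A}_n^{-1}$ then follows from the explicit formula, each step being a finite composition of the bounded maps $A^{-1}$ and $C_{kj}$.

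For the second assertion I would specialize to the case in which $U$ is a solution of \eqref{ANSHn}, so that $v_i=u_i'$; inserting this into the recursion gives at once the stated closed formulas for $w_1,\dots,w_n$. To see that $W$ is then itself a solution of \eqref{ANSHn} issued from $W(0)=\mathcal{A}_n^{-1}U^0$, the cleanest route is to note that the bounded inverse $\mathcal{A}_n^{-1}$ commutes with the semigroup $e^{t\mathcal{A}_n}$ generated by $\mathcal{A}_n$ (the resolvent commutes with its semigroup), so from $U(t)=e^{t\mathcal{A}_n}U^0$ one obtains $W(t)=\mathcal{A}_n^{-1}U(t)=e^{t\mathcal{A}_n}\mathcal{A}_n^{-1}U^0$, which is exactly the orbit launched from $\mathcal{A}_n^{-1}U^0$. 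Alternatively, and more explicitly, I would check the velocity relations $z_i=w_i'=u_i$ by differentiating the formulas for the $w_i$ and using the second-order equations $u_i''=-Au_i-\sum_{j<i}C_{ij}u_j$ read off from \eqref{NSHFn}; the coupling contributions then telescope and cancel, confirming that $W$ has the form $(w_1,\dots,w_n,w_1',\dots,w_n')$ of a genuine solution.

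I expect no serious obstacle: the result is essentially a structural consequence of triangularity. The only points requiring care are the regularity bookkeeping in the induction, namely tracking membership in $H_2$ versus $H_0$ along each step, and, if one avoids the semigroup argument, the telescoping cancellation of the coupling terms when verifying $w_i'=u_i$. This last step is the one where the precise shape of the recursion and of the equations \eqref{NSHFn} must be used in tandem, and it is where I would concentrate the written detail.
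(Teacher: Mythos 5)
Your proof is correct, and it is essentially the argument the paper intends: the paper explicitly leaves this proof to the reader as a routine extension of the $2$-coupled case, and the expected verification is exactly your forward substitution down the triangular structure (using that $(A1)$ makes $A:H_2\to H_0$ an isomorphism), the induction showing $w_k\in H_2$, and the identification of $W(t)=\mathcal{A}_n^{-1}U(t)$ as the orbit issued from $\mathcal{A}_n^{-1}U^0$ via commutation of $\mathcal{A}_n^{-1}$ with the semigroup (or, equivalently, your telescoping check that $w_i'=u_i$). The only cosmetic point is that your inline use of $\mathcal{M}_n$ for the lower-triangular matrix clashes with the paper's notation \eqref{Mn}, which denotes the upper-triangular (dual) operator, but since you define your symbol explicitly this causes no mathematical issue.
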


\begin{Corollary}\label{coroinducn}
We assume the hypotheses of Proposition~$\ref{invertAn}.$ Let $l \in \mathbb{N}^{\ast}$ be given.
Then the equation 
$ \label{eqWkn}
\mathcal{A}_n^lW^l=U
$
 admits a unique solution $W^l=\mathcal{A}_n^{-l}U \in D(\mathcal{A}_n^l)$. This solution can be defined by induction as follows
$ \label{inducWn}
W^0=U \,, W^{i+1}=\mathcal{A}_n^{-1}W^i \,, i \in \{0, \ldots, l-1\}\,.
$
 We set
\begin{equation}\label{Wi}
W_i^l=(w_i^l,(w_i^{l})^{\prime})\,, \quad  i=1, \ldots, n \,, \, l \in \mathbb{N} \,.
\end{equation}
 Then we have moreover 
\begin{equation}\label{wil}
(w_i^l)^{\prime}=w_i^{l-1} \,, \quad  \forall \ i=1, \ldots, n \,, \forall \ l  \in \mathbb{N}^{\ast} \,.
\end{equation}
\end{Corollary}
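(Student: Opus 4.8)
The plan is to reduce everything to the invertibility of $\mathcal{A}_n$ and to the explicit \emph{solution-to-solution} structure recorded in Proposition~\ref{invertAn}. I first settle existence, uniqueness and the recursive definition. Since $\mathcal{A}_n$ is a bijection from $D(\mathcal{A}_n)$ onto $\mathcal{H}_n$, its inverse $\mathcal{A}_n^{-1}$ is well defined on $\mathcal{H}_n$, and hence $\mathcal{A}_n^l$ is a bijection from $D(\mathcal{A}_n^l)$ onto $\mathcal{H}_n$ with inverse $\mathcal{A}_n^{-l}=(\mathcal{A}_n^{-1})^l$. Thus $\mathcal{A}_n^lW^l=U$ has the unique solution $W^l=\mathcal{A}_n^{-l}U\in D(\mathcal{A}_n^l)$, and the factorization $\mathcal{A}_n^{-l}=\mathcal{A}_n^{-1}\mathcal{A}_n^{-(l-1)}$ gives at once $W^0=U$ together with $W^{i+1}=\mathcal{A}_n^{-1}W^i$ for $i\in\{0,\ldots,l-1\}$.

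The core is the differentiation identity \eqref{wil}, which I would prove by induction on $l$, the engine being the last line of Proposition~\ref{invertAn}. The fact to isolate is that solving $\mathcal{A}_nW=\tilde U$ for a \emph{solution} $\tilde U=(\tilde u_1,\ldots,\tilde u_n,\tilde u_1',\ldots,\tilde u_n')$ of \eqref{ANSHn} produces a $W$ which is again a solution of \eqref{ANSHn} and whose second block of components equals the first block $(\tilde u_i)_i$ of the right-hand side. Iterating $\mathcal{A}_n^{-1}$ from the solution $W^0=U$ therefore stays within the class of solutions of \eqref{ANSHn}, so each $W^l=(w_1^l,\ldots,w_n^l,z_1^l,\ldots,z_n^l)$ is a solution; in particular its second block is the time derivative of its first, i.e. $z_i^l=(w_i^l)'$, in accordance with the pairing \eqref{Wi}.

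The induction is then short. For $l=1$, $W^0=U$ has first block $w_i^0=u_i$, so Proposition~\ref{invertAn} applied to $W^1=\mathcal{A}_n^{-1}W^0$ gives $(w_i^1)'=z_i^1=u_i=w_i^0$. For the inductive step, $W^l$ is a solution with first block $(w_i^l)_i$, so applying Proposition~\ref{invertAn} to $W^{l+1}=\mathcal{A}_n^{-1}W^l$ identifies the second block of $W^{l+1}$ with the first block of $W^l$, namely $(w_i^{l+1})'=z_i^{l+1}=w_i^l$. This closes the induction and yields \eqref{wil} for all $l\in\mathbb{N}^{\ast}$ and $i=1,\ldots,n$.

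The only delicate bookkeeping, and the step I expect to be the main obstacle, is keeping straight the two roles of $\mathcal{A}_n^{-1}$: it is both the algebraic operator whose componentwise action is read off from Proposition~\ref{invertAn}, and the map that must preserve the class of solutions of \eqref{ANSHn} so that the identification $z_i^l=(w_i^l)'$ is legitimate at every level. Once these two properties are granted, relation \eqref{wil} follows at each level without any new estimate.
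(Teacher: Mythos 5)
Your proof is correct and follows exactly the route the paper intends: the paper leaves this corollary's proof to the reader as an immediate iteration of Proposition~\ref{invertAn}, which is precisely what you do (invertibility of $\mathcal{A}_n$ gives existence, uniqueness and the recursion, and the solution-to-solution statement $z_i=w_i'=u_i$ of Proposition~\ref{invertAn}, applied inductively along $W^{i+1}=\mathcal{A}_n^{-1}W^i$, gives \eqref{wil}). Your closing remark on the two roles of $\mathcal{A}_n^{-1}$ --- componentwise algebraic inverse and map preserving the class of solutions of \eqref{ANSHn} --- is exactly the point that makes the identification $z_i^l=(w_i^l)'$ legitimate, so nothing is missing.
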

We will use the above notations and properties in all the sequel.

\subsubsection{Main results for the observability of $n$-coupled bi-diagonal cascade systems}
Let $n \ge 2$ be a fixed integer. We will generalize our results on  $2$-coupled cascade systems to $n$-coupled cascade systems of bi-diagonal form, that is under the assumption that
\begin{equation} \label{bidiagonal}
(HC)_n \ \ \ \ \ \ \ \  \  \  \  \  \  \  \  \  \  \  \ 
C_{i j}\equiv 0 \mbox{ for } i \in \{2,\ldots n\}\,, j \in \{1, \ldots, i-2\} \,. 
\ \ \ \ \  \  \  \  \  \  \  \  \  \  \ 
\end{equation}
 Therefore, we discuss cascade systems of the form
\begin{equation}\label{NSHn}
\begin{cases}
u_1^{\prime\prime} + A u_1 = 0 \,,\\
u_i^{\prime\prime} + A u_i+ C_{i i-1}u_{i-1}  = 0 \,, 2 \le i \le n \,,\\
(u_i,u_i^{\prime})(0)=(u_i^0,u_i^1) \mbox{ for }
i=1, \ldots n\,,
\end{cases}
\end{equation}
 For bi-diagonal $n$-coupled cascade system, we shall assume that the coupling operators $C_{i i-1}$ for $i=2, \ldots,n$ satisfy
\begin{equation*}\label{hypCi}
(A2)_n
\begin{cases} 
\mbox{ For all } i \in \{2, \ldots, n\} \mbox{ we have }\\
C_{i i-1}^{\ast} \in \mathcal{L}(H_{k}) \mbox{ for } k \in \{0,1,\ldots n-i+1\}\,, \\
||C_{i i-1}||=\beta_{i}\,,\,
|C_{i i-1}w|^2 \le \beta_i \langle C_{i i-1}w\,,w\rangle \quad \forall \ w \in H \,,\\
\exists \alpha_{i}>0 \mbox{ such that }
\alpha_{i}\, |\Pi_{i} w|^2 \le \langle C_{i i-1}w,w\rangle \quad
\forall \ w \in H \,.
\end{cases}
\end{equation*}
 where the operators $\Pi_i$, $i \in \{2, \ldots,n\}$, satisfy the assumptions
\begin{equation*}\label{Pii}
(A3)_n
\begin{cases}
\mbox{ For all } i \in \{2, \ldots, n\}, \ \
\Pi_{i} \in \mathcal{L}(H)\,,\\
\exists \ T_{0,i}>0, \forall \ T>T_{0,i}, \exists \ \gamma_i(T)>0, \\
\mbox{ such that all the solutions } w \mbox{ of }
w'' + A w = 0       
\mbox{ satisfy }  \\
\int_0^T |\Pi_{i}w^{\prime}|^2 dt \geq \gamma_i(T)e_1(W)(0) \,.
\end{cases}
\end{equation*}
For a given $i \in \{2, \ldots, n\}$, we denote by $G_i$  given Hilbert spaces with norm $||\ ||_{G_i}$ and
scalar product $\langle \,, \rangle_{G_i}$. The spaces
$G_{i}$, $i=2,\ldots, n$ will be identified to their dual spaces in all the sequel. Let
$\mathcal{\mathbf{B^{\ast}_{n}}}$ for $n \ge 2$ be an arbitrary observability operator satisfying the following assumptions:
\begin{equation*}\label{admissibilityi}
(A4)_n
\begin{cases}
\mathcal{\mathbf{B^{\ast}_{n}}} \in \mathcal{L}(H_2\times H;G_{n}),\\
\forall \ T > 0 \ \exists \ D_{n}=D_{n}(T) >0 ,\mbox{ such that all the solutions }
w \mbox{ of }\\
w'' + A w = f  \in L^2([0,T];H)     
\mbox{ satisfy }  \\
\int_0^T \| \mathcal{\mathbf{B^{\ast}_{n}}}(w,w^{\prime}) \|_{G_{n}}^2dt \leq D_{n}(T)  \Big( e_1(W)(0) + e_1(W)(T) + \\
\int_0^T e_1(W)(t) dt +
\int_0^T |f|^2 dt \Big),
\end{cases}
\end{equation*}
 where $W=(w,w^{\prime})$. 
\begin{equation*}\label{observabilityBi}
(A5)_n
\begin{cases}
\exists \ T_{0,n}>0,  \forall \ T>T_{0,n}, \exists \ R_n(T)>0 \\
\mbox{ such that all the solutions } w \mbox{ of }
w'' + A w = 0       
\mbox{ satisfy }  \\
\int_0^T \| \mathcal{\mathbf{B^{\ast}_{n}}}(w,w^{\prime}) \|_{G_{n}}^2 dt \geq R_n(T)e_1(W)(0)\,,
\end{cases}
\end{equation*}
 \begin{Theorem}\label{admissi} $($Admissibility property$)$
 Assume the hypotheses $(A1)$, $(A2)_n$ and $(A4)_{n}$, then for all $T>0$ there exists
 $C(T)>0$ such that for all initial data $U^0 \in \mathcal{H}_n$,
the solution of \eqref{NSHn} satisfies the following direct inequality
 
 \begin{equation}\label{admissineqi}
 \displaystyle{\int_0^T ||\mathcal{\mathbf{B^{\ast}_{n}}}U_n||_{G_{n}}^2 \,dt \le C(T) \Big(
 \sum_{i=1}^{n} e_{1-n+i}(U_{i})(0) \Big) \,,} 
 \end{equation}
  and the following estimates
 \begin{align}
 \label{admiss*}
&
 \displaystyle{\int_0^T |u_{n-1}|^2 \,dt \le C(T) \Big(
 \sum_{i=1}^{n-1} e_{1-n+i}(U_{i})(0) \Big) \,,} 
\\
&
\label{admissx*}
 \displaystyle{\int_0^T |u_{n}|^2 \,dt \le C(T) \Big(
 \sum_{i=1}^{n} e_{1-n+i}(U_{i})(0) \Big) \,.} 
  \end{align}
  \end{Theorem}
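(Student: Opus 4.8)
The plan is to propagate weakened energies down the cascade by induction on the component index, and then to read off the three claimed inequalities. The guiding principle is the \emph{level bookkeeping}: if one measures the $i$-th component $u_i$ at the energy level $1-n+i$, then each successive equation of \eqref{NSHn} loses exactly one level of regularity through its coupling term, in the precise sense that $(1-n+i)-1=1-n+(i-1)$. Thus the forcing $-C_{i\,i-1}u_{i-1}$ entering the $u_i$-equation, measured in the norm $|\cdot|_{1-n+i-1}$ dictated by the level-$(1-n+i)$ energy estimate, is read exactly as $|C_{i\,i-1}u_{i-1}|_{1-n+(i-1)}$, i.e. at the natural level of $u_{i-1}$. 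This is the mechanism by which the hierarchy of weakened spaces $H_{1-n+i}\times H_{-n+i}$ matches the triangular coupling.

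First I would record the elementary energy identity for the inhomogeneous equation $w''+Aw=f$: differentiating and using $A^{\ast}=A$ gives $\tfrac{d}{dt}e_k(W)=\langle A^{(k-1)/2}w',A^{(k-1)/2}f\rangle$, whence $\sqrt{e_k(W)(t)}\le \sqrt{e_k(W)(0)}+\tfrac{1}{\sqrt2}\int_0^t|f|_{k-1}\,ds$. Then I would prove by induction on $i\in\{1,\dots,n\}$ the propagation estimate that, for all $t\in[0,T]$, $e_{1-n+i}(U_i)(t)\le C(T)\sum_{j=1}^{i}e_{1-n+j}(U_j)(0)$. The base case $i=1$ is energy conservation at level $2-n$ for the free equation $u_1''+Au_1=0$. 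For the inductive step I apply the energy identity to $u_i$ at level $1-n+i$ with $f=-C_{i\,i-1}u_{i-1}$; by the bookkeeping above I need $C_{i\,i-1}\in\mathcal{L}(H_{1-n+i-1})$, which is precisely what $(A2)_n$ provides, since $C_{i\,i-1}^{\ast}\in\mathcal{L}(H_k)$ for $k\in\{0,\dots,n-i+1\}$ dualizes to $C_{i\,i-1}\in\mathcal{L}(H_{-k})$ for those $k$, and $1-n+(i-1)=i-n\in\{-(n-i+1),\dots,0\}$. Bounding the forcing by the inductive hypothesis for $u_{i-1}$ and integrating in time yields the level-$(1-n+i)$ estimate after squaring.

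With the propagation estimate in hand, the two $L^2$-in-time bounds follow at once. For \eqref{admiss*} I note that level $1-n+(n-1)=0$, so $|u_{n-1}(t)|^2\le 2e_0(U_{n-1})(t)$, and the propagation estimate at $i=n-1$ followed by integration in $t$ gives the claim. For \eqref{admissx*} I use the coercivity $(A1)$ in the form $|u_n|^2\le \omega^{-1}|A^{1/2}u_n|^2\le 2\omega^{-1}e_1(U_n)$ together with the propagation estimate at $i=n$. Finally, for the direct inequality \eqref{admissineqi} I apply the admissibility hypothesis $(A4)_n$ to $w=u_n$ with $f=-C_{n\,n-1}u_{n-1}$: the three energy terms $e_1(U_n)(0)$, $e_1(U_n)(T)$, $\int_0^T e_1(U_n)\,dt$ are all controlled by the propagation estimate at $i=n$, while $\int_0^T|f|^2\,dt\le \beta_n^2\int_0^T|u_{n-1}|^2\,dt$ is controlled by the already proven \eqref{admiss*}; summing the four contributions gives \eqref{admissineqi}.

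I expect the only genuinely delicate point to be the level bookkeeping, namely verifying that the regularity loss of each cascade equation matches exactly the boundedness range $k\le n-i+1$ assumed for $C_{i\,i-1}^{\ast}$ in $(A2)_n$, so that every coupling term can be read at the natural level of the component it multiplies. Once this matching is checked, the induction and the three estimates are routine; the remaining care is simply to justify the level-$k$ energy identities in the weakened spaces by a density argument, which is legitimate since the data lie in $\mathcal{H}_n=H_1^n\times H_0^n$ and the weaker energies are dominated by the natural one. Note that the nonnegativity and partial coercivity parts of $(A2)_n$, as well as the observability hypotheses $(A3)_n$ and $(A5)_n$, play no role here, consistent with the fact that only a direct (admissibility) inequality is asserted.
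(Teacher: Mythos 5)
Your proof is correct, and it takes a genuinely different route from the paper's. You run the Gronwall-type energy estimates directly at the weakened levels of the original variables: induction on the component index gives the propagation bound $e_{1-n+i}(U_i)(t)\le C(T)\sum_{j=1}^{i}e_{1-n+j}(U_j)(0)$, whose only structural input is the dual mapping property $C_{i\,i-1}\in\mathcal{L}(H_{i-n})$, correctly extracted from $(A2)_n$ by duality (the paper performs the same dualization, e.g.\ in the proof of Lemma~\ref{obsdirn}); the three estimates then drop out, the direct inequality exactly as in the paper (apply $(A4)_n$ to the $u_n$-equation and absorb the source term via \eqref{admiss*}). The paper, by contrast, never integrates an energy identity at a negative level for $U$ itself: it reduces \eqref{admissineqi} and \eqref{admissx*} to \eqref{admiss*} just as you do, but proves \eqref{admiss*} through the anti-derivative hierarchy $W^l=\mathcal{A}_n^{-l}U$ of Corollary~\ref{coroinducn}, via a double recursion --- the algebraic relations $(R_i^k)$, $(S_i^k)$ across levels and the estimates $(Q_m)$ across components, closed by conservation of $e_{2-n}(U_1)$ for the anti-derivatives of the free component --- so that all its time-integrated estimates are performed at the natural level $e_1$ of the transformed variables, weakened norms entering only algebraically. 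Your route is shorter, treats all $n\ge2$ uniformly (the paper quotes the case $n=2$ from \cite{alabau2cascade}), and makes the level bookkeeping --- each coupling term loses exactly one level, matched by the range $k\le n-i+1$ in $(A2)_n$ --- completely explicit; what the paper's heavier route buys is that the $\mathcal{A}_n^{-l}$ machinery it sets up here (the relations \eqref{wil}, Lemma~\ref{tech1n}, Lemma~\ref{obsdirn}) is reused in the observability and controllability arguments, whereas your propagation lemma serves admissibility alone. Your closing density remark is indeed the standard and sufficient justification of the weakened-level energy identities, and you are right that only $(A1)$, the norm and regularity parts of $(A2)_n$, and $(A4)_n$ intervene.
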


  \begin{Remark}
\rm
  We give in Remark~\ref{natspace} the optimal space for well-posedness of \eqref{NSHn} in view of the well-posedness of the corresponding control problem using the duality method HUM (see Lemma~\ref{obsdirn} and Lemma~\ref{obsdirnunbounded}). 
    \end{Remark} 
\begin{Theorem}[Sufficient conditions]\label{obsNSHn}
We assume $(A1)$. Let $n \ge 2$ be an integer.  We assume  
that for all $i=2, \ldots,n$,
the operators $C_{i i-1}$ satisfy the assumption $(A2)_n$ where the operators
$\Pi_i$ satisfy $(A3)_n$. Moreover let $\mathcal{\mathbf{B^{\ast}_n}}$ 
 be any given operator satisfying $(A4)_{n}-(A5)_{n}$. 
Then there exists $T_{n}^{\ast}>0$ such that for all $i=1, \ldots, n$, there exist constants 
$d_{i,n}(T)>0$ such that for all solution $W$ of \eqref{NSHn} and all $T>T_n^{\ast}$ we have
\begin{equation}\label{eqobsk}
e_{1+i-n}(W_i)(0) \le d_{i,n}(T) \int_0^T \| \mathcal{\mathbf{B^{\ast}_n}}(W_n) \|_{G_{n}}^2 dt \,,
\forall \ i=1, \ldots, n \,,
\end{equation}
 with
$ 
d_{i,n}(T) \le \tfrac{K}{T^3}$ for $i \in \{1, \ldots, n-1\}\,, $
$ d_{n,n}(T) \le \tfrac{K}{T} \,,
$ 
where $K$ is bounded with respect to $T$.
\end{Theorem}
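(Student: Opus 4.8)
The plan is to argue by induction on $n \ge 2$, following the multi-level energy method. The base case $n = 2$ is exactly the observability result for $2$-coupled cascade systems established in~\cite{alabau2cascade}, so I assume Theorem~\ref{obsNSHn} holds for the $(n-1)$-coupled system and deduce it for the $n$-coupled one. The crucial structural observation is that, since \eqref{NSHn} is bi-diagonal and lower triangular, the first $n-1$ components $(u_1, \ldots, u_{n-1})$ solve \emph{by themselves} an autonomous $(n-1)$-coupled bi-diagonal cascade system of the same form, in which $u_n$ never appears. Its coupling operators $C_{i\,i-1}$, $i = 2, \ldots, n-1$, inherit $(A2)_{n-1}$ and $(A3)_{n-1}$ from $(A2)_n$, $(A3)_n$ (the regularity window in $(A2)_n$ is strictly larger than the one required for the subsystem). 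I want to apply the induction hypothesis to this subsystem with $u_{n-1}$ playing the role of the observed component, the natural observation operator being $\Pi_n$ acting on the velocity: with $\mathbf{B}^{\ast}_{n-1}(w,w') = \Pi_n w'$ and $G_{n-1} = H$, hypothesis $(A3)_n$ for $\Pi_n$ \emph{is} the observability inequality $(A5)_{n-1}$, while $(A4)_{n-1}$ follows from $\Pi_n \in \mathcal{L}(H)$ together with the standard energy estimate for $w'' + Aw = f$.

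The induction hypothesis, however, controls the subsystem through $\int_0^T |\Pi_n u_{n-1}'|^2\,dt$, whereas the coupling only gives access, via $(A2)_n$, to $\int_0^T |\Pi_n u_{n-1}|^2\,dt \le \alpha_n^{-1}\int_0^T \langle C_{n\,n-1}u_{n-1}, u_{n-1}\rangle\,dt$. This derivative mismatch is resolved by the hierarchy of Corollary~\ref{coroinducn}: the primitive $W^1 = \mathcal{A}_n^{-1}U$ again solves \eqref{ANSHn}, its first $n-1$ components solve the same autonomous subsystem, and $(w_i^1)' = u_i$ by \eqref{wil}. Hence $\Pi_n u_{n-1} = \Pi_n (w_{n-1}^1)' = \mathbf{B}^{\ast}_{n-1}W_{n-1}^1$, so applying the induction hypothesis to $W^1$ bounds $e_{2+i-n}(W_i^1)(0)$ for $i = 1, \ldots, n-1$ by $\int_0^T |\Pi_n u_{n-1}|^2\,dt$. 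Since passing to the primitive lowers the energy level by one ($e_k(W^1_i)$ dominates $e_{k-1}(U_i)$ up to coupling corrections, as one checks directly for the free component from $w_1^1 = -A^{-1}u_1'$), the quantity $e_{2+i-n}(W_i^1)(0)$ controls precisely $e_{1+i-n}(U_i)(0)$, which is the target level for the upper components.

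Everything therefore reduces to the key \emph{transfer estimate}: bounding $\int_0^T \langle C_{n\,n-1}u_{n-1}, u_{n-1}\rangle\,dt$ by the observation $\int_0^T \|\mathbf{B}^{\ast}_n U_n\|_{G_n}^2\,dt$. First I would recover $e_1(U_n)(0)$ by splitting $u_n$ into the free-wave evolution of its own Cauchy data, to which $(A5)_n$ applies, plus a remainder driven by the source $-C_{n\,n-1}u_{n-1}$, which is absorbed by the admissibility estimate $(A4)_n$ of Theorem~\ref{admissi}. Then I would test the last equation $u_n'' + Au_n = -C_{n\,n-1}u_{n-1}$ against $u_{n-1}$ and integrate by parts in time, using $Au_{n-1} = -u_{n-1}'' - C_{n-1\,n-2}u_{n-2}$ from the $(n-1)$-th equation to rewrite the elliptic term. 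This is the content of the property $\mathcal{P}_n$: the integration by parts produces boundary terms at $t = 0$ and $t = T$, controlled by the already-recovered $e_1(U_n)(0)$ and by the weakened energies of $u_{n-1}$, together with a cross term feeding $u_{n-2}$, so that the estimate cascades up the chain and must be closed simultaneously at all levels by the induction.

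The main obstacle is to carry out this integration-by-parts bookkeeping while tracking the \emph{sharp} dependence on $T$. A single energy identity does not suffice: the cross terms must be handled at several levels of the hierarchy $W^l = \mathcal{A}_n^{-l}U$ at once. The point of the power counting is that transferring information across one coupling gains two powers of $T$, so that composing the induction constant $d_{n-1,n-1}(T) \le K/T$ for the last component of the subsystem with the transfer factor reproduces exactly $d_{i,n}(T) \le K/T^3$ for $i < n$, while the directly observed component keeps $d_{n,n}(T) \le K/T$. Making these gains sharp, and verifying that the coupling-correction terms introduced by the non-exact level shift in the primitives are genuinely of lower order and can be absorbed, is the delicate part of the argument and the reason the induction must be stated in the reinforced form $\mathcal{P}_n$ rather than as the bare conclusion of Theorem~\ref{obsNSHn}.
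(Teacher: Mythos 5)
Your proposal follows essentially the same route as the paper's proof: induction on $n$ through the reinforced property $\mathcal{P}_n$, application of the induction hypothesis to the primitive $\mathcal{A}_n^{-1}U$ with the velocity observation operator $\Pi_n w'$ supplied by the partial coercivity of $C_{n\,n-1}$, the integration-by-parts transfer estimate for $\int_0^T\langle C_{n\,n-1}u_{n-1},u_{n-1}\rangle\,dt$, and the simultaneous closing of all levels with the sharp power counting in $T$. The one ingredient you leave implicit — that the observability inequality for the nonhomogeneous equation $p''+Ap=f$ must hold with constants \emph{uniform} in $T$ (Lemma~\ref{AL}, obtained via time-translation invariance rather than the bare splitting into free wave plus remainder, whose constants a priori depend on $T$ in an unknown way) — is precisely what makes the power counting you describe close, so be sure to invoke it in that form.
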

We also prove that the above conditions are optimal in the following theorem.
\begin{Theorem}[Necessary conditions]\label{nNEC}
We assume $(A1)$, $(A2)_n$ with $\Pi_i=C_{i i-1}$ for all $i=2, \ldots, n$. Let $\mathcal{\mathbf{B^{\ast}_n}}$ 
 be any given operator satisfying $(A4)_{n}$. Assume that either $\mathcal{\mathbf{B^{\ast}_n}}$
 does not satisfy $(A5)_n$ or that the operators 
 $$
 (C_{i i-1})_{i \in\{2, \ldots, n\}}=(\Pi_i)_{i \in\{2, \ldots, n\}}
 $$
  do not satisfy $(A3)_n$.
Then there does not exist $T_n^{\ast}>0$ such that for all $T>T_n^{\ast}$, 
the following property holds
\begin{equation*}\label{OBSPn}
(OBS)_n \begin{cases}
\exists \ C>0 \mbox{ such that } \forall \ U^0 \in \mathcal{H} \mbox{ the solution of \eqref{NSHn} } \mbox{ satisfies} \quad \\
\displaystyle{C \sum_{i=1}^n e_{1+i-n}(U_i)(0)  \le \int_0^T||\mathcal{\mathbf{B^{\ast}_n}}U_n||_{G_n}^2\,dt \,.}
\end{cases}
\end{equation*}
\end{Theorem}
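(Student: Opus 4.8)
The plan is to argue by contraposition: assuming that $(OBS)_n$ holds for arbitrarily large $T$ with a uniform constant $C>0$, I would show that $\mathcal{\mathbf{B^{\ast}_n}}$ must satisfy $(A5)_n$ and that the family $(C_{i i-1})$ must satisfy $(A3)_n$; the stated theorem is then exactly the contrapositive. The two conditions are tested on two different families of ``isolating'' solutions of \eqref{NSHn}, chosen so that only one coupling mechanism is active at a time.

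For $(A5)_n$ the construction is immediate. Given any solution $w$ of $w''+Aw=0$, embed it as the solution of \eqref{NSHn} with $u_1=\cdots=u_{n-1}=0$ and $u_n=w$; this is admissible because every coupling term $C_{i i-1}u_{i-1}$ then vanishes, and in particular the last equation reads $u_n''+Au_n+C_{n n-1}\cdot 0=0$. At $t=0$ all but the last term in the left-hand side of $(OBS)_n$ vanish, so $(OBS)_n$ collapses to $C\,e_1(W)(0)\le\int_0^T\|\mathcal{\mathbf{B^{\ast}_n}}(w,w')\|_{G_n}^2\,dt$, which is precisely $(A5)_n$ with $R_n(T)=C$. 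Hence if $(A5)_n$ fails for arbitrarily large $T$, so must $(OBS)_n$.

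For $(A3)_n$ I would isolate one coupling at a time. Fix $i_0\in\{2,\ldots,n\}$, let $w$ solve $w''+Aw=0$, and take the solution of \eqref{NSHn} with $u_1=\cdots=u_{i_0-2}=0$, with $u_{i_0-1}=w$, and with $u_{i_0},\ldots,u_n$ generated by zero initial data. Then at $t=0$ only the $(i_0-1)$-th term survives in $(OBS)_n$, giving $C\,e_{i_0-n}(U_{i_0-1})(0)\le\int_0^T\|\mathcal{\mathbf{B^{\ast}_n}}U_n\|_{G_n}^2\,dt$, while the whole downstream block $(u_{i_0},\ldots,u_n)$ is generated solely by the forcing $-C_{i_0 i_0-1}w$ fed into the $i_0$-th equation. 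The core step is to bound the observation from above by this single forcing, by propagating $-C_{i_0 i_0-1}w$ down the lower cascade through energy estimates and closing with the admissibility inequality $(A4)_n$ (a refinement of \eqref{admissineqi}--\eqref{admissx*} that keeps track of the forcing rather than of the initial energies). Combining this upper bound with the $(OBS)_n$ lower bound and letting $w$ range over all free solutions yields an observability estimate for $C_{i_0 i_0-1}$; after the free-flow substitution $w\mapsto w'$ and the identity $e_{i_0-n}\big((w',-Aw)\big)=e_{i_0-n+1}(W)$ this is exactly $(A3)_n$ for the last coupling $i_0=n$, and for the remaining couplings the energy level still has to be corrected, as explained next.

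The main obstacle is precisely this upper bound on the observation, and above all the choice of the space in which the forcing is measured. The naive estimate $\int_0^T\|\mathcal{\mathbf{B^{\ast}_n}}U_n\|_{G_n}^2\lesssim\int_0^T|C_{i_0 i_0-1}w|^2\,dt$ is too lossy: it discards the smoothing gained at each of the $n-i_0$ coupling steps separating $u_{i_0-1}$ from the observed component $u_n$, and therefore only produces a strictly weaker, lower-level observability inequality for $C_{i_0 i_0-1}$ (sharp only when $i_0=n$). To recover the clean level-$1$ form of $(A3)_n$ for the upstream couplings one must propagate the forcing through the downstream equations within the hierarchy of weakened energy spaces, measuring $u_j$ at exactly the level $1+j-n$ dictated by $(OBS)_n$, so that the surplus powers of $A$ cancel; this is where the boundedness of the $C_{i i-1}$ on the spaces $H_k$ assumed in $(A2)_n$, together with the inverse and hierarchy structure of Proposition~\ref{invertAn} and Corollary~\ref{coroinducn}, enters decisively. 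The delicate point, which I expect to absorb most of the work, is reconciling the resulting $A$-weighted coupling quantity with the unweighted level-$1$ quantity of $(A3)_n$: this rests on the commutation and regularity properties of the coupling operators and is the abstract counterpart of the fact that, for wave equations with multiplicative couplings, the Geometric Control Condition is insensitive to the energy level.
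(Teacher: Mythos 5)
Your treatment of the $(A5)_n$ alternative is correct and coincides with the paper's: the isolating solution $u_1=\cdots=u_{n-1}=0$, $u_n=w$ collapses $(OBS)_n$ to exactly the free-wave observability inequality. The problem lies entirely in the $(A3)_n$ alternative, and it is the very point you flag as ``delicate'' that constitutes a genuine gap. For $i_0<n$, your construction (forcing the downstream block $u_{i_0},\ldots,u_n$ from zero data by $-C_{i_0 i_0-1}w$) yields, after the one admissible antiderivative substitution $w=v'$, an inequality of the form $\int_0^T|C_{i_0 i_0-1}v'|^2\,dt\ \ge\ c\,e_{i_0-n+1}(V)(0)$, whose right-hand side sits $n-i_0$ levels below the level-$1$ energy demanded by $(A3)_n$; iterating the substitution does not help, since it shifts both sides together. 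Your proposed repair --- propagating the forcing in the weighted spaces $H_{1+j-n}$ --- produces instead a lower bound for $\int_0^T|A^{(i_0-n)/2}C_{i_0 i_0-1}w|^2\,dt$, i.e.\ an observability statement for the \emph{conjugated} operator $A^{(i_0-n)/2}C_{i_0 i_0-1}A^{(n-i_0)/2}$, not for $C_{i_0 i_0-1}$. Passing from one to the other requires $C_{i_0 i_0-1}$ to commute (at least approximately) with fractional powers of $A$, and no such hypothesis is available: $(A2)_n$ only asserts boundedness of $C_{i i-1}^{\ast}$ on the scale $H_k$, and in the motivating application $C_{i i-1}$ is multiplication by $c_{i i-1}$, which does not commute with the Dirichlet Laplacian. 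The appeal to ``GCC being insensitive to the energy level'' is circular here: in this direction of the theorem one is trying to \emph{derive} the abstract observability property, so no geometric input may be invoked.

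The paper closes this gap by a different architecture: induction on $n$, with the induction hypothesis quantified over \emph{all} observation operators satisfying $(A4)_n$--$(A5)_n$ (this extra generality is what makes the induction run). At order $n+1$, failure of $(A3)_{n+1}$ splits into two cases. Either the last coupling $C_{n+1\,n}$ fails free-wave observability --- this is exactly your $i_0=n$ case, where the level mismatch is a single unit and is exactly compensated by one substitution $w=-A^{-1}u_n'$. Or else $(C_{i i-1})_{2\le i\le n}$ fail $(A3)_n$; then, taking $u_{n+1}$ with zero initial data, $(OBS)_{n+1}$ plus admissibility give $C_2\sum_{i=1}^n e_{i-n}(U_i)(0)\le\int_0^T|C_{n+1\,n}u_n|^2\,dt$, and after the level adjustment $W=\mathcal{A}_n^{-1}U$ (Lemma~\ref{tech1n}) this says that the operator $\widehat{\mathcal{\mathbf{B^{\ast}_n}}}W_n:=C_{n+1\,n}w_n'$ is an admissible observation operator for which $(OBS)_n$ holds --- contradicting the induction hypothesis. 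In other words, the paper never propagates an upstream coupling across more than one equation at a time, so the level loss that defeats your direct argument never accumulates. To salvage your approach you would essentially have to reorganize it into this induction.
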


\begin{Remark}
\rm
We recall that the minimal control time $T_m$ 
under which an observability inequality holds,
 is the minimal time for which the inequality
 holds for all $T >T_{m}$ and does not hold for $T< T_{m}$.
\end{Remark}

We deduce from Theorem~\ref{nNEC} and 
Theorem~\ref{obsNSHn} the following Corollary.

\begin{Corollary}\label{nCNS}
Assume $(A1)$, $(A4)_n$ and $(A2)_n$ with $\Pi_i=C_{i i-1}$ 
for all $i \in \{2,\ldots,n\}$. Then
$(OBS)_n$ holds if and only if $(A3)_n$ and $(A5)_n$ hold. Moreover,
 $T_n^{\ast}$ has to be greater than 
$\max(\max_{2 \le i \le n}(T_{i,c}), T_{0,n})$
 where $T_{i,c}$  for $i=2, \ldots, n$ 
denote for each observability operator $\Pi_i=C_{i i-1}$ the minimal control times for 
which $(A3)_n$ holds, and  $T_{0,n}$ the
 minimal control time for which $(A5)_n$ holds.
\end{Corollary}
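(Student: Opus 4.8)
The plan is to read the equivalence off directly from the two preceding theorems, since the hypotheses $(A1)$, $(A2)_n$ with $\Pi_i=C_{i i-1}$, and $(A4)_n$ assumed here are exactly the standing hypotheses of both Theorem~\ref{obsNSHn} and Theorem~\ref{nNEC}. For the implication that $(A3)_n$ and $(A5)_n$ imply $(OBS)_n$, I would apply Theorem~\ref{obsNSHn}: under $(A3)_n$ and $(A5)_n$ it furnishes a time $T_n^{\ast}>0$ and constants $d_{i,n}(T)>0$ for which the weakened estimates \eqref{eqobsk} hold for every $i=1,\ldots,n$ and every $T>T_n^{\ast}$. Summing these $n$ inequalities over $i$ yields
$$\sum_{i=1}^n e_{1+i-n}(U_i)(0) \le \Big(\sum_{i=1}^n d_{i,n}(T)\Big)\int_0^T\|\mathbf{B}^{\ast}_n U_n\|_{G_n}^2\,dt,$$
so that $(OBS)_n$ holds with $C=\big(\sum_{i=1}^n d_{i,n}(T)\big)^{-1}>0$.

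For the converse, that $(OBS)_n$ implies $(A3)_n$ and $(A5)_n$, I would argue by contraposition using Theorem~\ref{nNEC}. That theorem asserts that whenever $\mathbf{B}^{\ast}_n$ fails $(A5)_n$ \emph{or} the family $(C_{i i-1})_i=(\Pi_i)_i$ fails $(A3)_n$, no threshold $T_n^{\ast}$ can render $(OBS)_n$ valid for all $T>T_n^{\ast}$. Consequently the mere validity of $(OBS)_n$, for some threshold and all larger $T$, forces both $(A3)_n$ and $(A5)_n$ to hold. Combined with the first paragraph this gives the equivalence $(OBS)_n \iff (A3)_n \text{ and } (A5)_n$.

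It remains to produce the lower bound on the minimal observability time, and here I would extract the quantitative, time-by-time content of the necessity argument rather than only its asymptotic form. By the definitions of the minimal control times, $(A3)_n$ delivers the free-wave inequality for the operator $\Pi_i=C_{i i-1}$ at a given $T$ precisely when $T>T_{i,c}$, hence $(A3)_n$ holds at $T$ iff $T>\max_{2\le i\le n}T_{i,c}$, while $(A5)_n$ holds at $T$ iff $T>T_{0,n}$. Since, at any fixed $T$ at which one of these single-operator inequalities fails, the construction underlying Theorem~\ref{nNEC} produces a sequence of data violating $(OBS)_n$ at that very time $T$, the inequality $(OBS)_n$ cannot hold at any $T\le\max(\max_{2\le i\le n}T_{i,c},\,T_{0,n})$; therefore $T_n^{\ast}\ge\max(\max_{2\le i\le n}T_{i,c},\,T_{0,n})$. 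The one point requiring care is exactly this localization: one must check that the counterexamples in the proof of Theorem~\ref{nNEC} violate observability at the prescribed time $T$ and not merely in a limit, so that they bound $T_n^{\ast}$ from below rather than only excluding the existence of a universal threshold. This is the main, and essentially only, obstacle, the remaining steps being a summation and a direct appeal to the two theorems.
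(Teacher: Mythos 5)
Your proposal is correct and takes essentially the same route as the paper, whose entire proof is the single line that the corollary "is a direct consequence of Theorem~\ref{nNEC} and Theorem~\ref{obsNSHn}": sufficiency by summing the estimates \eqref{eqobsk} of Theorem~\ref{obsNSHn}, necessity by contraposition of Theorem~\ref{nNEC}. The one point you flag — that the restriction/admissibility/shift arguments inside the proof of Theorem~\ref{nNEC} are local in $T$, so that $(OBS)_n$ at a fixed time $T$ forces the free-wave inequalities for $\mathcal{\mathbf{B^{\ast}_n}}$ and for each $C_{i\,i-1}$ at that same $T$, yielding $T_n^{\ast}\ge\max(\max_{2\le i\le n}T_{i,c},\,T_{0,n})$ — is precisely what the paper leaves implicit, and it does check out.
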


\subsubsection{Proofs of the main results for the observability of $n$-coupled cascade systems}

We recall here some of the results of~\cite{alabau2cascade} for the sake of clarity.
The proof of Theorem~\ref{obsNSHn} follows the idea of the two-level energy method~\cite{sicon03}, adapted to $2$-coupled cascade systems in~\cite{alabau2cascade}.
 For systems of order $2$, it consists in using two level of energies, the natural one for the observed component of the state
and the weakened energy of the unobserved component of the state. Here we shall show that we have to use
$n$ levels of weakened (except for the observed one) energies for each of the
$n$ components of the vector-state for coupled cascade systems of order $n$. We shall use, as for the case $n=2$,
a crucial property  for the abstract system (and proved for the applicative examples), that is direct  and observability inequalities for a single equation with a source term, with constants which are uniform with respect to the length $T$ of the time interval $[0,T]$. This property was proved in~\cite{sicon03} thanks to the multiplier method for the usual PDE's (wave, Petrowsky,\ldots) in~\cite{sicon03} and extended  in \cite{alaleaucont} under an abstract form which allows the use of the optimal geometric conditions of Bardos Lebeau and Rauch~\cite{blr92} . We shall use this result in the sequel so we recall that it reads as follows.

\begin{Lemma}[\cite{alaleaucont}, [Lemma 3.3]\label{AL}
We assume hypotheses $(A1)$, $(A4)_n$ and $(A5)_n$. Then, there exist $\eta_0>0$ and $\alpha_0>0$ such that for all $T>T_0$, $f \in L^2([0,T];H)$,  all solutions $P=(p,p^{\prime})$ of  the nonhomogeneous equation
\begin{equation} \label{eqNH} 
p^{\prime\prime} +Ap=f \,,\ \ \
(p,p^{\prime})(0)=(p^0,p^1) \,, 
\end{equation}
  the following uniform observability estimate holds 
\begin{equation}\label{obsabs} 
\eta_0\int_0^T ||\mathcal{\mathbf{B}}^*_nP||_{G_n}^2 \,dt \ge 
\int_0^T e_1(P)(t)\,dt -
\alpha_0\int_0^T|f|^2 \,dt \,.
\end{equation}
\end{Lemma}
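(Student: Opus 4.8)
The plan is to reduce the nonhomogeneous estimate \eqref{obsabs} to the homogeneous observability hypothesis $(A5)_n$ by a perturbation argument, and then to obtain constants that are \emph{uniform in $T$} by tiling $[0,T]$ with subintervals of bounded length. First I would fix an interval length $\ell$ and prove the estimate on $[0,\ell]$. Split the solution $P=(p,p')$ of \eqref{eqNH} as $p=w+r$, where $w$ solves $w''+Aw=0$ with $(w,w')(0)=(p^0,p^1)$ and $r$ solves $r''+Ar=f$ with zero initial data. Since energy is conserved for $w$ (using $(A1)$), hypothesis $(A5)_n$ gives $\int_0^\ell\|\mathcal{\mathbf{B}}^*_n W\|_{G_n}^2\,dt\ge R_n(\ell)\,e_1(W)(0)=R_n(\ell)\,e_1(P)(0)$. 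For the remainder, the elementary energy identity $\tfrac{d}{dt}e_1(R)=\langle r',f\rangle$ together with the vanishing of the data of $r$ yields $e_1(R)(t)\le\tfrac{\ell}{2}\int_0^\ell|f|^2\,dt$ on $[0,\ell]$; feeding this into the admissibility hypothesis $(A4)_n$ bounds $\int_0^\ell\|\mathcal{\mathbf{B}}^*_n R\|_{G_n}^2\,dt$ by $c_1(\ell)\int_0^\ell|f|^2\,dt$. Combining these through the triangle inequality $\|\mathcal{\mathbf{B}}^*_n W\|_{G_n}\le\|\mathcal{\mathbf{B}}^*_n P\|_{G_n}+\|\mathcal{\mathbf{B}}^*_n R\|_{G_n}$ produces $e_1(P)(0)\le\tfrac{2}{R_n(\ell)}\int_0^\ell\|\mathcal{\mathbf{B}}^*_n P\|_{G_n}^2\,dt+\tfrac{2c_1(\ell)}{R_n(\ell)}\int_0^\ell|f|^2\,dt$.

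Next I would replace the pointwise quantity $e_1(P)(0)$ by the time integral appearing in \eqref{obsabs}. The same energy identity applied to $p$ gives $e_1(P)(t)\le 2e_1(P)(0)+\ell\int_0^\ell|f|^2\,dt$, hence $\int_0^\ell e_1(P)\,dt\le 2\ell\,e_1(P)(0)+\ell^2\int_0^\ell|f|^2\,dt$. Inserting the previous bound on $e_1(P)(0)$ yields the local inequality
$$
\eta_0(\ell)\int_0^\ell\|\mathcal{\mathbf{B}}^*_n P\|_{G_n}^2\,dt\ge\int_0^\ell e_1(P)(t)\,dt-\alpha_0(\ell)\int_0^\ell|f|^2\,dt,
$$
with $\eta_0(\ell)=\tfrac{4\ell}{R_n(\ell)}$ and $\alpha_0(\ell)=\tfrac{4\ell c_1(\ell)}{R_n(\ell)}+\ell^2$. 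By time-translation invariance of the autonomous equation \eqref{eqNH}, the identical estimate holds on any interval $[a,a+\ell]$ with the same constants.

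The delicate point, and the reason a naive one-shot argument on $[0,T]$ fails, is uniformity in $T$: applied directly on $[0,T]$ the term $\int_0^T e_1(R)$ driven by the source grows like $T^2$, so $\alpha_0$ would blow up. The remedy is to keep $\ell$ bounded and sum. Given $T>T_0$ with $T_0>T_{0,n}$, set $N=\lfloor T/T_0\rfloor$ and $\ell=T/N$, so that $\ell\in[T_0,2T_0]$ and $[0,T]$ is tiled exactly by the $N$ intervals $[j\ell,(j+1)\ell]$. Over the compact range $\ell\in[T_0,2T_0]$ the constants are controlled, $\eta_0:=\sup_\ell\eta_0(\ell)<\infty$ and $\alpha_0:=\sup_\ell\alpha_0(\ell)<\infty$, provided $R_n$ stays bounded below and $D_n$ bounded above there (a mild monotonicity/continuity property of these constants, which I would record). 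Applying the local estimate on each tile with these uniform $\eta_0,\alpha_0$ and summing over $j$ gives exactly \eqref{obsabs}, since both $\int\|\mathcal{\mathbf{B}}^*_n P\|_{G_n}^2$ and $\int|f|^2$ add over the partition while the $e_1(P)$ contributions reconstitute $\int_0^T e_1(P)$; the range $T_0<T<2T_0$ is covered by the single-interval estimate. The main obstacle is thus not any individual inequality but the bookkeeping that keeps $\eta_0,\alpha_0$ independent of $T$, and localizing the source-term effect to intervals of bounded length is the crucial idea.
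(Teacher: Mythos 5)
Your proof is correct and follows essentially the same route as the proof this paper relies on: the paper does not reprove Lemma~\ref{AL} but recalls it from \cite{alaleaucont}, and the mechanism it attributes to that proof --- a local estimate obtained by splitting $p$ into a homogeneous part handled by $(A5)_n$ and a zero-data part handled by $(A4)_n$, then $T$-independent constants via time-translation invariance and tiling of $[0,T]$ by intervals of bounded length --- is exactly your argument. The uniformity over $\ell\in[T_0,2T_0]$ that you flag as needing to be recorded does hold for elementary reasons: one may take $R_n(\ell)\ge R_n(T_0)$ since the observation integral only increases with the interval, and applying $(A4)_n$ on $[0,2T_0]$ after extending $f$ by zero (the energy being conserved past $\ell$) gives an admissibility constant depending only on $T_0$.
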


\begin{Remark}
\rm
The form of the above inequality for the solutions
 of the abstract second order equation with a source 
term \eqref{eqNH}, is crucial: it is a form which 
respects the invariance by translation in time of 
the equation. Our first results in~\cite{sicon03} 
are based on the following form of the required 
observability inequality:
there exist $\eta_i>0$ for $i=0,\ldots, 3$  
such that for all $T>T_0$, $f \in L^2([0,T];H)$, 
 all solutions $P=(p,p^{\prime})$ of  the 
nonhomogeneous equation \eqref{eqNH} satisfy
\begin{align*}
\eta_0\int_0^T ||\mathcal{\mathbf{B}}^*_nP||_{G_n}^2 \,dt & \ge 
(1-\eta_1 \delta)\int_0^T e_1(P)(t)\,dt 
- \eta_2 (e_1(P)(0)+ e_1(P)(T)) \\
&
-
\eta_3\delta^{-1} \int_0^T|f|^2 \,dt \,,
 \forall \ \delta \in (0, \eta_1^{-1}) .
\end{align*}
 This observability inequality, which is uniform with respect to the time $T$ for sufficiently large $T$, is proved on various examples of PDE's (wave, Petrowsky, \ldots) using the multiplier method.  One can further remark that due to the term
$(e_1(P)(0)+ e_1(P)(T))$ this inequality is not preserved by  time-translations.
However, it naturally arises under this form when using the multiplier method. The drawback is that the resulting geometric conditions on the observation region are then not optimal. On the other hand, the multi-level energy method requires observability constants which are uniform with respect to $T$ for sufficiently large time $T$. A way to handle both constraints is to require an observability inequality in a canonical form which respects the invariance by time-translations. The uniformity of the constants with respect to $T$ is then proved using this invariance property.
This leads  to Lemma 3.3 in \cite{alaleaucont} (recalled in the above Lemma~\ref{AL} under minor changes).
\end{Remark}

\noindent
\textbf{Proof  of Theorem~\ref{admissi}}.
 We have already proved the admissibility property, that is \eqref{admissineqi} for $n=2$ in Lemma 2.5
 in~\cite{alabau2cascade}.  On the other hand,
 the estimate \eqref{admiss*} is trivial for $n=2$. Moreover, the usual energy estimates for the equation in $u_2$ yield
 $$
 \int_0^Te_1(U_2)(t) \,dt \le C(T) \Big( e_1(U_2)(0) + \int_0^T |u_1|^2\,dt\Big)\,.
 $$
  Using \eqref{admiss*} in this inequality, we easily obtain \eqref{admissx*}.  
  Hence, we can assume that $n \ge 3$. We will denote by $C$ generic constants depending in particular on $T$ but not on the initial data.
 Thanks to assumption $(A4)_n$ applied to the last equation of  \eqref{NSHn}, for all
 $T>0$ there exists $C(T)>0$ such that
 \begin{align*} \label{inter1n}
\int_0^T || \mathcal{\mathbf{B^{\ast}_{n}}}U_n||_{G_{n}}^2 \,dt 
&
\le C(T)\Big(e_1(U_n)(0) + e_1(U_n)(T) \\
&
+
\int_0^T e_1(U_n)(t)\,dt + \int_0^T |C_{n n-1}u_{n-1}|^2\,dt
\Big)\,.
 \end{align*}
  Thanks to the usual energy estimates, we obtain 
  \begin{equation}\label{inter2n}
 \int_0^T || \mathcal{\mathbf{B^{\ast}_{n}}}U_n||_{G_{n}}^2 \,dt \le C\Big(e_1(U_n)(0) + 
\int_0^T |u_{n-1}|^2\,dt\Big) \,.
 \end{equation}
  We have
 \begin{equation}\label{inter*n}
 \int_0^T |u_{n-1}|^2\,dt \le 2  \int_0^T e_0(U_{n-1})\,dt \,.
 \end{equation}
  We shall first prove that \eqref{admiss*} holds.
For any $k \in \{1, \ldots, n-2\}$ and $p \in \{1, \ldots, n\}$, we define $W^k$ and $W^k_p=
(w_p^k, (w_p^k)^{\prime})$ as in \eqref{Wi}. We also set $W^0_p=(w_p^0, (w_p^0)^{\prime})=(u_p,
u_p^{\prime})$. We claim that the following inequality
holds for any $k \in \{1, \ldots, n-2\}$ and any $i \in \{0, \ldots, k-1\}$.
\begin{equation*}\label{claimi}
(R_i^k)  \quad
e_{1-i}(W^{k-i}_{n-k}) \le C \Big(e_{-i}(W^{k-i-1}_{n-k}) + |A^{-(i+1)/2} w^{k-i}_{n-k-1}|^2 \Big)\,.
\end{equation*}
  We have by definition of the energy $e_{1-i}$
\begin{align}
\label{inter4n}
2e_{1-i}(W^{k-i}_{n-k})
&
=|A^{(1-i)/2}w^{k-i}_{n-k}|^2 + |A^{-i/2}(w_{n-k}^{k-i})^{\prime}|^2
\\ 
&
\notag
=
|A^{-i/2}w_{n-k}^{k-i-1}|^2 + |A^{(1-i)/2}w^{k-i}_{n-k}|^2\,.
\end{align}
On the other hand by definition of $W^k$, we have
$$
(w_{n-k}^{k-i-1})^{\prime} + Aw_{n-k}^{k-i} + C_{n-k \, n-k-1} w_{n-k-1}^{k-i} =0\,.
$$
 Using this relation in \eqref{inter4n}, together with the property in $(A_2)_n$
that \\
$  
C_{n-k\, n-k-1}^{\ast} \in \mathcal{L}(H_{i+1}) \,,
$ 
 we deduce  that
$$
e_{1-i}(W^{k-i}_{n-k}) \le C \Big(e_{-i}(W^{k-i-1}_{n-k}) + |A^{-(i+1)/2}w_{n-k-1}^{k-i}|^2\Big)\,,
$$
 so that  $(R_{i}^k)$ holds and thus our claim is proved for all $k \in\{1, \ldots, n-2\}$ and for all  $i \in \{0, \ldots, k-1\}$. 
Using now $(R_1^k)$
to estimate $e_0(W^{k-1}_{n-k})$ in the right hand side of $(R_0^k)$, proceeding recursively up
to $(R_{k-1}^k)$ and using that $W^0_{n-k}=U_{n-k}$,
we deduce that
\begin{equation}\label{inter5n}
e_1(W^k_{n-k}) \le C \Big(e_{1-k}(U_{n-k}) +  \sum_{p=1}^k |A^{-p/2}w_{n-k-1}^{k+1-p}|^2 \Big) \,.
\end{equation}
 We now claim that for all $m \in \{1, \ldots, n-2\}$, we have the following inequality
\begin{align*} \label{Qm}
(Q_{m-1}) \quad \int_0^T |u_{n-1}|^2  dt 
& \le C\Big(
\sum_{i=1}^{m-1} e_{1-i}(U_{n-i})(0) + \sum_{p=1}^{m-1} |A^{-p/2} w_{n-m}^{m-p}(0)|^2   \\
&
+
\int_0^T |w_{n-m}^{m-1}|^2 \,dt
\Big) \,.
\end{align*}
 We shall prove $(Q_m)$ by induction on $m$. Let us first prove $(Q_1)$. We have
\begin{equation}\label{Q1}
\int_0^T |u_{n-1}|^2 \,dt = \int_0^T |(w_{n-1}^1)^{\prime}|^2 \,dt \le C\int_0^T e_1(W^1_{n-1}) \,dt \,.
\end{equation}
 On the other hand, we have
$ 
(w^1_{n-1})^{\prime\prime} + A w_{n-1}^1 + C_{n-1 \, n-2} w^1_{n-2}=0 \,,
$ 
 so that classical energy estimates lead to
$$
\int_0^T e_1(W^1_{n-1}) \,dt \le C \Big(e_1(W^1_{n-1})(0) + \int_0^T |w^1_{n-2}|^2 \,dt\Big) \,.
$$
 Using this last estimate in \eqref{Q1}, we  obtain
$$
 \int_0^T |u_{n-1}|^2 \,dt \le C\Big( e_1(W^1_{n-1})(0) + \int_0^T |w^1_{n-2}|^2 \,dt
\Big) \,.
$$
 Using now the inequality \eqref{inter5n} with $k=1$ in the above inequality, we obtain
$(Q_1)$. Assume now that $(Q_{m-1})$ holds. We shall prove $(Q_{m})$. We have
$$
\int_0^T |w_{n-m}^{m-1}|^2 \,dt = \int_0^T |(w_{n-m}^m)^{\prime}|^2 \,dt \le C\int_0^T e_1(W^{m}_{n-m}) \,dt \,.
$$
 As above for the case $m=1$, we deduce that
$$
\int_0^T |w_{n-m}^{m-1}|^2 \,dt \le C \Big(e_1(W^{m}_{n-m})(0) + \int_0^T |w_{n-m-1}^{m}|^2\,dt\Big) \,.
$$
 Using this last estimate in $(Q_{m-1})$, we obtain
\begin{align*}
\quad \int_0^T |u_{n-1}|^2 \,dt  & \le C\Big(
\sum_{i=1}^{m-1} e_{1-i}(U_{n-i})(0) + \sum_{p=1}^{m-1} |A^{-p/2} w_{n-m}^{m-p}(0)|^2 
\\
&
+ e_1(W^{m}_{n-m})(0) + \int_0^T |w_{n-m-1}^{m}|^2\,dt
\Big) \,.
\end{align*}
 We then use \eqref{inter5n} with $k=m$ in the above inequality. This gives
\begin{eqnarray} \label{Qm1} 
\int_0^T |u_{n-1}|^2 \,dt \le C\Big(
\sum_{i=1}^{m} e_{1-i}(U_{n-i})(0)  +
\int_0^T |w_{n-m-1}^{m}|^2\,dt  \nonumber \\
+  \sum_{p=1}^m |A^{-p/2}w_{n-m-1}^{m+1-p}(0)|^2
 + \sum_{p=1}^{m-1} |A^{-p/2} w_{n-m}^{m-p}(0)|^2
\Big) \,.
\end{eqnarray}
 It remains to estimate the last term of this inequality to conclude that $(Q_m)$ holds. 
Let $p \in \{1, \ldots, m-1\}$. We first remark that by the definition of $W_{n-m}^{m-p+1}$, we have
\begin{equation}\label{Qm2}
 |A^{-p/2} w_{n-m}^{m-p}(0)|^2 \le C e_{1-p}(W_{n-m}^{m-p+1})(0) \,.
\end{equation}
We prove in a similar
way to the proof of $(R_i^k)$, that for all $k \in\{1, \ldots, n-2\}$ and for all  $i \in \{0, \ldots, k-1\}$, we have
\begin{equation*}\label{claimii}
(S_i^k) \quad
e_{1-i}(W^{k-i+1}_{n-k}) \le C \Big(e_{-i}(W^{k-i}_{n-k}) + |A^{-(i+1)/2} w^{k-i+1}_{n-k-1}|^2 \Big)\,,
\end{equation*}
  replacing $W_{n-k}^{k-i}$ by $W_{n-k}^{k-i+1}$ in the proof of $(R_i^k)$.

Using $(S_{p+1}^m)$
to estimate $e_{-p}(W^{m-p}_{n-m})$ in the right hand side of $(S_p^m)$, proceeding recursively up
to $(S_m^m)$ and using that $W^0_{n-m}=U_{n-m}$,
we deduce that
\begin{equation*}\label{inter6n}
e_{1-p}(W^{m-p+1}_{n-m}) \le C \Big(e_{-m}(U_{n-m}) +  \sum_{j=p-1}^{m-1} |A^{-(j+2)/2}w_{n-m-1}^{m-j}|^2 \Big) \,.
\end{equation*}
 From this last inequality together with \eqref{Qm2}, and the
inequality $ e_{-m}(U_{n-m}) $ $\le C e_{1-m}(U_{n-m}) , $
 where $C$ depends on $||A^{-1/2}||$, we deduce that
\begin{equation*}\label{inter7n}
\sum_{p=1}^{m-1}  |A^{-p/2} w_{n-m}^{m-p}(0)|^2 \le C \Big(e_{1-m}(U_{n-m}) +  \sum_{p=1}^{m} |A^{-(p+1)/2}w_{n-m-1}^{m-p+1}|^2 \Big) \,.
\end{equation*}
 Using this inequality in \eqref{Qm1}, we deduce that $(Q_{m})$ holds. 

We can now conclude the proof of \eqref{admiss*} as follows. We choose $m=n-2$. 
Thus from the above result, $(Q_{n-2})$ holds,  which means
\begin{align}
 \label{inter8n}
(Q_{n-2}) \qquad 
\int_0^T |u_{n-1}|^2 \,dt & \le C\Big(
\sum_{i=1}^{n-2} e_{1-i}(U_{n-i})(0)  \\
&
\notag 
+ \sum_{p=1}^{n-2} |A^{-p/2} w_{1}^{n-1-p}(0)|^2 +
\int_0^T |w_{1}^{n-2}|^2 \,dt
\Big) \,.
\end{align}
 We will estimate the last two terms of this inequality. By definition of $W_1^{n-1-p}$, we have
$ 
(w_1^{n-2-p})^{\prime} + Aw_1^{n-1-p}=0  , $ $ (w_1^{n-2-p})^{\prime}=w_1^{n-3-p}  .
$ 
 Hence, we have
$ 
 |A^{-p/2} w_{1}^{n-1-p}(0)|^2=|A^{-1-p/2}w_1^{n-3-p}(0)|^2 \,. 
$ 
 Proceeding recursively, we deduce that for all integers $i$ such that $2i+1 \! \le n-p$, we have
$ 
 |A^{- \frac p 2} w_{1}^{n-1-p}(0)|^2 $ $ =  |A^{-p/2-i}w_{1}^{n-1-(2i+1)}(0)|^2\,. 
$ 
 If $n-p$ is even we choose $i=(n-p)/2 -1$, whereas if $n-p$ is odd, we choose
$i=(n-p-1)/2$. Thus, if $n-p$ is even we have
$ 
|A^{-p/2} w_{1}^{n-1-p}(0)|^2=|A^{-n/2}u_1^{\prime}|^2 \le C e_{2-n}(U_1)(0) \,,
$ 
 whereas if $n-p$ is odd, we have
$$ 
|A^{-p/2} w_{1}^{n-1-p}(0)|^2=|A^{(1-n)/2}u_1|^2 \le C e_{2-n}(U_1)(0) \,.
$$ 
 Thus, we have
\begin{equation}\label{inter9n}
\sum_{p=1}^{n-2} |A^{-p/2} w_{1}^{n-1-p}(0)|^2 \le C e_{2-n}(U_1)(0) \,.
\end{equation}
 We now estimate the last term of \eqref{inter8n} as follows. If $n$ is even we have
$|w_1^{n-2}|^2=|A^{(2-n)/2}u_1|^2$, whereas if $n$ is odd we have $|w_1^{n-2}|^2=|A^{(1-n)/2}u_1^{\prime}|^2$. Hence, for the two cases, we have
$ 
|w_1^{n-2}(t)|^2 \le C e_{2-n}(U_1)(t) \,, \, \forall \ t \ge 0\,.
$ 
 Thanks to the conservation of the energy $e_{2-n}(U_1)$, we have
$$
\int_0^T |w_1^{n-2}|^2\,dt \le C e_{2-n}(U_1)(0)\,,
$$
 where the constant $C$ depends on $T$. Using this last estimate together with \eqref{inter9n} in \eqref{inter8n}, we obtain
\eqref{admiss*}. Using this last inequality in \eqref{inter2n}, we obtain \eqref{admissineqi}.
Using classical energy estimates for the equation for $u_n$ in \eqref{NSHn}, we have
$$
\int_0^T|u_n|^2\,dt \le C(e_1(U_n)(0) + \int_0^T |u_{n-1}|^2\,dt) \,.
$$
 Using \eqref{admiss*} in this last estimate, we obtain \eqref{admissx*}.
This concludes the proof.

\smallskip

\noindent
\textbf{Proof of Theorem~\ref{obsNSHn}.}
The proof is proved by induction on $n$. However, \eqref{eqobsk} is not sufficient for a proof by induction, one needs to combine it with other properties to be proved by induction. This is the purpose of the next result. We shall then prove Theorem~\ref{obsNSHn}. Let $n \ge 2$ be given. We consider the following property at order $n$:
\begin{equation*}
(\mathcal{P}_n) 
\begin{cases}
\exists \ K_n>0\,, 
\exists\  T_n^{\ast} >0 \,, 
\mbox{ such that } \forall \ T >T_n^{\ast} , \ 
\exists \ r_{n,n}(T)>0
\mbox{ and } \\ 
 \forall \ i\in \{1,\ldots,n\}, \,  \exists \, d_{i,n}(T)>0  ,  k_{i,n}(T)>0  
\mbox{ such that for all solutions } \\
 W=(w_1, \ldots, w_n, w_1^{\prime}, \ldots, w^{\prime}_n) \mbox{ of }  
W^{\prime}=\mathcal{A}_nW\mbox{ one has }\\
e_{1+i-n}(W_i)(0) \le d_{i,n}(T) \int_0^T ||\mathcal{\mathbf{B^{\ast}_n}}(W_n) \|_{G_{n}}^2 dt \quad
\forall \ i \in \{1,\ldots, n\}  \,, \quad \label{H-Rn}
\\
e_{0}(W_{n-1})(T) \le d_{n-1,n}(T) \int_0^T ||\mathcal{\mathbf{B^{\ast}_n}}(W_n) \|_{G_{n}}^2 dt \,,\\
\int_0^T \langle C_{n n-1} w_{n-1}, w_{n-1}\rangle \,dt \le r_{n,n}(T)  \int_0^T ||\mathcal{\mathbf{B^{\ast}_n}}(W_n) \|_{G_{n}}^2 dt \,,
 \\
\int_0^Te_{1+i-n}(W_i)(t) \,dt \le k_{i,n}(T) \int_0^T ||\mathcal{\mathbf{B^{\ast}_n}}(W_n) \|_{G_{n}}^2 dt \ 
\forall \ i \in \{1,\ldots, n\} \,, \\
\mbox{ where } r_{n,n}(T) \le K_n/T^2 \,, d_{n,n}(T) \le K_n/T 
\mbox{ and } k_{n,n}(T) \le K_n , \\
d_{i,n}(T) \le K_n/T^3 
\mbox{ and } k_{i,n}(T) \le K_n/T^2 \mbox{ for $ i \in \{1, \ldots, n-1\} $ and}  , \\
\mbox{where } K_n \mbox{ is a generic constant which depends on } n \mbox{ but not on } T \,.
 \end{cases}
\end{equation*}
\begin{Theorem}\label{inductionobsNSHn}
Let $n \ge 2$ be an integer.  We assume  
that for all $i \in \{2, \ldots,n+1\}$,
the operators $C_{i i-1}$ satisfy the assumption $(A2)_{n+1}$ where the operators
$\Pi_i$ satisfy $(A3)_{n+1}$.  
We assume that for all $k \in \{2, \ldots, n\}$ the property $(\mathcal{P}_k)$ holds for any observation operator $\mathcal{\mathbf{B^{\ast}_k}}$, satisfying $(A4)_{k}-(A5)_{k}$. Then
the property $(\mathcal{P}_{n+1})$ also holds for any observation operator $\mathcal{\mathbf{B^{\ast}_{n+1}}}$  satisfying $(A4)_{n+1}-(A5)_{n+1}$.
\end{Theorem}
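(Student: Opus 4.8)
The plan is to run the induction along the cascade structure. The only observed equation is the last one, $w_{n+1}'' + Aw_{n+1} + C_{n+1\,n}w_n = 0$, and the aim is to transfer the information carried by $\int_0^T\|\mathcal{\mathbf{B^{\ast}_{n+1}}}W_{n+1}\|_{G_{n+1}}^2\,dt$ up the chain to $w_1,\dots,w_n$. The key structural remark is that $(w_1,\dots,w_n)$ by itself solves the $n$-coupled bi-diagonal cascade system \eqref{NSHn} (the first $n$ equations do not see $w_{n+1}$), so the induction hypothesis $(\mathcal{P}_n)$ is available for it once it is equipped with an admissible observation operator. That operator is dictated by the partial coercivity of $C_{n+1\,n}$ in $(A2)_{n+1}$: setting $\mathcal{\mathbf{B^{\ast}_n}}(w,w')=\Pi_{n+1}w'$, hypothesis $(A5)_n$ for this operator is literally $(A3)_{n+1}$ for $\Pi_{n+1}$, while $(A4)_n$ follows from $\Pi_{n+1}\in\mathcal{L}(H)$ together with the usual energy estimates, so $(\mathcal{P}_n)$ applies.

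First I would reconcile the mismatch between $w_n$ and $w_n'$: the coercivity bound $\alpha_{n+1}|\Pi_{n+1}w_n|^2\le\langle C_{n+1\,n}w_n,w_n\rangle$ involves $w_n$, whereas $\|\mathcal{\mathbf{B^{\ast}_n}}W_n\|^2=|\Pi_{n+1}w_n'|^2$ involves $w_n'$. I would resolve this by applying $(\mathcal{P}_n)$ not to $(w_1,\dots,w_n)$ but to the once-integrated subsystem $W^1=\mathcal{A}_{n+1}^{-1}W$ of Corollary~\ref{coroinducn}, for which $(w_i^1)'=w_i$, so that the observation integral becomes $\int_0^T|\Pi_{n+1}w_n|^2\,dt\le\alpha_{n+1}^{-1}\int_0^T\langle C_{n+1\,n}w_n,w_n\rangle\,dt$. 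As in the proof of Theorem~\ref{admissi}, the weakened energies $e_{1+i-n}(W_i^1)$ delivered by $(\mathcal{P}_n)$ control the target energies $e_{i-n}(W_i)=e_{1+i-(n+1)}(W_i)$, using $(w_i^1)'=w_i$ and the $(A2)_{n+1}$ boundedness of the $C_{i\,i-1}^{\ast}$ to absorb the coupling terms appearing in the non-free components.

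The main obstacle is the \emph{coupling estimate}, the third line of $(\mathcal{P}_{n+1})$, which must bound $\int_0^T\langle C_{n+1\,n}w_n,w_n\rangle\,dt$ by $\int_0^T\|\mathcal{\mathbf{B^{\ast}_{n+1}}}W_{n+1}\|_{G_{n+1}}^2\,dt$. Pairing the last equation with $w_n$ and integrating by parts twice (using $Aw_n=-w_n''-C_{n\,n-1}w_{n-1}$) yields the identity
\[
\int_0^T\langle C_{n+1\,n}w_n,w_n\rangle\,dt=\big[\langle w_{n+1},w_n'\rangle-\langle w_{n+1}',w_n\rangle\big]_0^T+\int_0^T\langle w_{n+1},C_{n\,n-1}w_{n-1}\rangle\,dt.
\]
Unlike the case $n=2$ of \cite{alabau2cascade}, where $w_1$ is free and the last integral is absent, for $n\ge 2$ the extra cross term $\int_0^T\langle w_{n+1},C_{n\,n-1}w_{n-1}\rangle\,dt$ appears, and controlling it is the heart of the argument. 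I would bound it by Cauchy--Schwarz together with $|C_{n\,n-1}w_{n-1}|^2\le\beta_n\langle C_{n\,n-1}w_{n-1},w_{n-1}\rangle$ from $(A2)_{n+1}$, and then invoke the coupling estimate and the time-integrated energy bounds of $(\mathcal{P}_n)$ for the subsystem to dominate $\int_0^T\langle C_{n\,n-1}w_{n-1},w_{n-1}\rangle\,dt$ and $\int_0^T e_1(W_{n+1})\,dt$. The boundary terms at $0$ and $T$ I would estimate by Young's inequality after writing them through $A^{\pm 1/2}$, so that only weakened norms of $w_n$ and the observable energy of $w_{n+1}$ appear, and I would absorb the self-referential term $\alpha_0\int_0^T|C_{n+1\,n}w_n|^2\,dt\le\alpha_0\beta_{n+1}\int_0^T\langle C_{n+1\,n}w_n,w_n\rangle\,dt$ into the left-hand side, which is legitimate for $T$ large because the relevant constant carries a negative power of $T$.

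Finally I would assemble $(\mathcal{P}_{n+1})$ and track the rates. Combining the coupling estimate with the subsystem observability of the previous step bounds $e_{1+i-(n+1)}(W_i)(0)$ for $i\le n$, and the top level $e_1(W_{n+1})(0)$ follows from Lemma~\ref{AL} applied to the last equation with source $f=-C_{n+1\,n}w_n$: integrating $\tfrac{d}{dt}e_1(W_{n+1})=\langle f,w_{n+1}'\rangle$ in $t$ gives $e_1(W_{n+1})(0)\le\tfrac1T\int_0^T e_1(W_{n+1})\,dt+\int_0^T|f|\,|w_{n+1}'|\,dt$, which produces the $1/T$ decay of $d_{n+1,n+1}$; the final-time estimate for $e_0(W_n)(T)$ and the time-integrated estimates are obtained in the same spirit. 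The remaining — and genuinely delicate — point is the bookkeeping of the powers of $T$: each $A^{-1}$-integration weakens the energy level, each use of Lemma~\ref{AL} or of the energy identity above gains a factor $T^{-1}$, and one must verify that the cascaded application of $(\mathcal{P}_n)$ (whose constants already decay like $T^{-3},T^{-2},T^{-1}$) yields precisely $r_{n+1,n+1}\le K/T^2$, $d_{n+1,n+1}\le K/T$, $k_{n+1,n+1}\le K$, and $d_{i,n+1}\le K/T^3$, $k_{i,n+1}\le K/T^2$ for $i\le n$, with $K$ independent of $T$.
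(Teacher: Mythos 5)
Your overall architecture is the same as the paper's: you apply $(\mathcal{P}_n)$ to the once-integrated first-$n$ subsystem with the observation operator $(w,w')\mapsto\Pi_{n+1}w'$, so that $(A5)_n$ for this operator is exactly $(A3)_{n+1}$ and the observation integral is dominated by $\alpha_{n+1}^{-1}\int_0^T\langle C_{n+1\,n}w_n,w_n\rangle\,dt$; you derive the coupling identity by integration by parts, absorb the source term of Lemma~\ref{AL} for large $T$, recover the top-level energy from the energy identity of the last equation, and track the powers of $T$. This is precisely the scheme of Propositions~\ref{PROP1}--\ref{PROP5}. However, your resolution of what you yourself call the heart of the argument — the cross term — contains a genuine gap.

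In your identity the cross term $\int_0^T\langle w_{n+1},C_{n\,n-1}w_{n-1}\rangle\,dt$ involves the \emph{original} variable $w_{n-1}$, and you propose to reduce it, via Cauchy--Schwarz and $|C_{n\,n-1}v|^2\le\beta_n\langle C_{n\,n-1}v,v\rangle$, to the quantity $\int_0^T\langle C_{n\,n-1}w_{n-1},w_{n-1}\rangle\,dt$, to be dominated by the coupling estimate of $(\mathcal{P}_n)$. But $(\mathcal{P}_n)$ was (necessarily) applied to the once-integrated subsystem $W^1$, so the coupling estimate it supplies controls only $\int_0^T\langle C_{n\,n-1}w^1_{n-1},w^1_{n-1}\rangle\,dt$, one energy level below what you need. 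The original-level quantity is not controlled by anything in the induction hypothesis: the time-integrated bounds of $(\mathcal{P}_n)$ give $\int_0^T e_0(W^1_{n-1})\,dt$, hence $\int_0^T|A^{-1/2}w_{n-1}|^2\,dt$ but not $\int_0^T|w_{n-1}|^2\,dt$, and since $C_{n\,n-1}$ is merely bounded on $H$ no interpolation upgrades one into the other. Nor can you instead apply $(\mathcal{P}_n)$ to the unintegrated subsystem with observation $\Pi_{n+1}w_n$: for free waves, position observation only yields the level-$0$ energy (set $z=-A^{-1}w'$ and apply $(A3)_{n+1}$ to $z$), so $(A5)_n$ fails for that operator. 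The missing step is one further integration by parts in time, which is exactly what the paper does in \eqref{EQ6}: writing $w_{n-1}=(w^1_{n-1})'$ and transferring the derivative onto $w_{n+1}$ turns the cross term into boundary terms plus $-\int_0^T\langle C_{n\,n-1}w^1_{n-1},w_{n+1}'\rangle\,dt$, which Young's inequality and the quadratic-form bound reduce to $\int_0^T\langle C_{n\,n-1}w^1_{n-1},w^1_{n-1}\rangle\,dt$ — the quantity controlled by \eqref{EQ3}, with the crucial factor $r_{n,n}(T)\le K_n/T^2$ that makes the absorption for large $T$ legitimate. (An alternative repair uses $C^{\ast}_{n\,n-1}\in\mathcal{L}(H_1)$ from $(A2)_{n+1}$ to shift half a power of $A$ onto $w_{n+1}$ and pair with $A^{-1/2}w_{n-1}$, which is controlled; but your stated route, staying in $H$, does not close.) With this step inserted, the rest of your outline goes through as in the paper.
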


\begin{Remark}\label{obs_cons_T}
\rm
A key point in the multi-level (as well as in the two-level) energy method is to derive observability inequalities for any sufficiently large time $T$ with observability constants which depend in a suitable way on $T$ as stated in $(\mathcal{P}_n)$.  A first step for this is to derive suitable observability estimates for a scalar nonhomogeneous wave equation. In particular, it is crucial to obtain observability constants which are uniform with respect to $T$ for any sufficiently large time $T$. 
This first crucial step has been proved in Lemma 3.3 of \cite{alaleaucont} (we recall this Lemma here as Lemma~\ref{AL} with some minor changes). This Lemma gives observability constants $\eta_0$ and $\alpha_0$ which do not depend on $T$ (for sufficiently large $T$) as a consequence of $(A5)_n$ and the admissibility property $(A4)_n$. This holds even though the observability constants in the assumption $(A5)_n$ depend on an unknown way on the (sufficiently large) time $T$. This result is based on the invariance of the solutions with respect to time translations. Using involved estimates, one can then prove inductively the observability estimates stated in $(\mathcal{P}_n)$
with observability constants which depend on sufficiently large time $T$ with the given explicit  $T$-dependence.
\end{Remark}
Theorem 2.7 in~\cite{alabau2cascade} proves $(\mathcal{P}_n)$ for $n=2$. Let us assume that $(\mathcal{P}_n)$ holds. We shall prove that $(\mathcal{P}_{n+1})$ holds by several intermediate propositions.
We shall keep the same notation for these propositions and their proofs.
\begin{Proposition}\label{PROP1}
Assume the hypotheses of Theorem~$\ref{inductionobsNSHn}$ and that $(\mathcal{P}_n)$ holds.
Then for every $T>T_n^{\ast}$, the following estimate holds
\begin{align}
 \label{EQ13}
&
\int_0^T \langle C_{n+1 n} u_{n}, u_n \rangle \,dt  \le
\frac{\beta_n r_{n,n}(T) \eta T}{2 \alpha_{n+1}} \int_0^T \langle C_{n+1 n} u_{n}, u_n \rangle \,dt  \\
&
\notag 
+
\frac{1}{\eta T} \int_0^T e_1(U_{n+1})(t)\,dt + \frac{c}{\eta} \Big(e_1(U_{n+1})(T)+
e_1(U_{n+1})(0)\Big) \\
&
\notag + 
c\eta\big (e_1(W_{n})(T)+ e_1(W_{n})(0)\big) + c\eta (e_0(W_{n-1})(T)+ e_0(W_{n-1})(0)\Big) \  \forall \ \eta>0 .
\end{align}
\end{Proposition}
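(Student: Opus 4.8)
The plan is to run the inductive step by exploiting two structural facts. First, the truncation $(u_1,\dots,u_n)$ of a solution of the $(n+1)$-cascade is itself a solution of the $n$-cascade \eqref{NSHn}, so the induction hypothesis $(\mathcal{P}_n)$ is at our disposal. Second, the partial coercivity in $(A2)_{n+1}$, namely $\alpha_{n+1}|\Pi_{n+1}w|^2\le\langle C_{n+1,n}w,w\rangle$, lets us feed the very quantity $\int_0^T\langle C_{n+1,n}u_n,u_n\rangle\,dt$ we must estimate back into the observation term of the $n$-subsystem. Concretely I would apply $(\mathcal{P}_n)$ to $(u_1,\dots,u_n)$ with the $n$-level observation operator built from $\Pi_{n+1}$: then $(A3)_{n+1}$ furnishes exactly $(A5)_n$ for this operator and $\Pi_{n+1}\in\mathcal{L}(H)$ furnishes $(A4)_n$, so every inequality in $(\mathcal{P}_n)$ becomes available with its stated $T$-rates.

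The engine of the estimate is a Green/Wronskian identity. Pairing the last equation $u_{n+1}''+Au_{n+1}+C_{n+1,n}u_n=0$ with $u_n$, integrating over $(0,T)$, using $A^{\ast}=A$ and substituting $Au_n=-u_n''-C_{n,n-1}u_{n-1}$ from the $n$-th equation, the two interior terms $\int_0^T\langle u_{n+1}',u_n'\rangle$ cancel and one is left with
\[
\int_0^T\langle C_{n+1,n}u_n,u_n\rangle\,dt=\big[\langle u_{n+1},u_n'\rangle-\langle u_{n+1}',u_n\rangle\big]_0^T+\int_0^T\langle u_{n+1},C_{n,n-1}u_{n-1}\rangle\,dt.
\]
This expresses the level-$n$ coupling energy through boundary (Wronskian) terms in $u_{n+1},u_n$ and a single residual cross-term carrying the next coupling $C_{n,n-1}$ and the deeper component $u_{n-1}$.

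It then remains to bound the two pieces with the advertised, $T$-explicit constants. The boundary Wronskian terms are handled by Cauchy--Schwarz, Young's inequality with parameter $\eta$, and the coercivity of $A$ to pass from $H$-norms to energies; this produces the $\tfrac{c}{\eta}(e_1(U_{n+1})(0)+e_1(U_{n+1})(T))$ and $c\eta(e_1(W_n)(0)+e_1(W_n)(T))$ contributions. The residual cross-term is reorganized by an integration by parts in time: its boundary contributions, estimated through the boundedness of $C_{n,n-1}$ and Young with parameter $\eta$, yield the weakened $c\eta(e_0(W_{n-1})(0)+e_0(W_{n-1})(T))$ term (and a further $e_1(U_{n+1})$ piece), while the remaining interior integral is controlled using $|C_{n,n-1}u_{n-1}|^2\le\beta_n\langle C_{n,n-1}u_{n-1},u_{n-1}\rangle$ from $(A2)_{n+1}$ followed by Young's inequality with the time-calibrated parameter $1/(\eta T)$. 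The $|u_{n+1}|^2$ part becomes $\tfrac{1}{\eta T}\int_0^T e_1(U_{n+1})$, while the factor $\tfrac{\beta_n\eta T}{2}\int_0^T\langle C_{n,n-1}u_{n-1},u_{n-1}\rangle$ is controlled by the coupling estimate of $(\mathcal{P}_n)$ (constant $r_{n,n}(T)$) and then by the coercivity constant $\alpha_{n+1}$, producing exactly the self-referential term $\tfrac{\beta_n r_{n,n}(T)\eta T}{2\alpha_{n+1}}\int_0^T\langle C_{n+1,n}u_n,u_n\rangle$.

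The main obstacle is precisely this residual cross-term, which has no analogue in the two-coupled case of \cite{alabau2cascade}: it must be routed through the induction hypothesis so as to reappear as a multiple of the target coupling energy small enough (for large $T$) to be absorbed into the left-hand side, and this forces one to keep explicit track of the $T$-dependence at every step. The delicate bookkeeping is the matching of energy levels between the cross-term and $(\mathcal{P}_n)$ --- choosing whether to observe the position or the velocity of $u_n$, equivalently whether to apply $(\mathcal{P}_n)$ to $(u_1,\dots,u_n)$ or to its once-integrated (primitive) cascade --- together with the calibrated interplay of the two Young parameters $\eta$ and $1/(\eta T)$. It is this calibration that produces the factor $\eta T$ in the self-referential term and that will ultimately let the rates $r_{n,n}(T)\le K/T^2$ and $d_{n,n}(T)\le K/T$ of $(\mathcal{P}_{n+1})$ close the induction.
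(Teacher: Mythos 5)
Your overall architecture coincides with the paper's proof: the same Green/Wronskian identity obtained by pairing the $(n+1)$-st equation with $u_n$, using $A^{\ast}=A$ and substituting $Au_n=-u_n''-C_{n\,n-1}u_{n-1}$; the same integration by parts in time on the residual cross-term via $u_{n-1}=w_{n-1}'$; the same Young calibration with the two parameters $\eta$ and $\eta T$; and the same absorption mechanism through the coupling estimate of $(\mathcal{P}_n)$ (constant $r_{n,n}(T)$) divided by $\alpha_{n+1}$. There is, however, one genuine gap, and it sits precisely at the point you flag as ``delicate bookkeeping'' and then leave unresolved: to which system, and with which observation operator, $(\mathcal{P}_n)$ is applied. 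Your opening paragraph commits to applying $(\mathcal{P}_n)$ to $(u_1,\dots,u_n)$ with an operator built from $\Pi_{n+1}$ so that ``$(A3)_{n+1}$ furnishes exactly $(A5)_n$''; since $(A3)_{n+1}$ concerns $\int_0^T|\Pi_{n+1}w'|^2\,dt$, this forces the velocity observation $\Pi_{n+1}u_n'$. But then the coercivity feedback of $(A2)_{n+1}$ yields a bound by $\int_0^T\langle C_{n+1\,n}u_n',u_n'\rangle\,dt$, not by the target $\int_0^T\langle C_{n+1\,n}u_n,u_n\rangle\,dt$, and the self-referential term in \eqref{EQ13} is never formed. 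If instead you take the position observation $\Pi_{n+1}u_n$, the feedback is right but $(A5)_n$ fails: for free waves, $(A3)_{n+1}$ applied to the primitive only gives $\int_0^T|\Pi_{n+1}w|^2\,dt\geq\gamma\,e_0(W)(0)$, which cannot control $e_1(W)(0)$ on high-frequency data.

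The resolution used in the paper---and which your own computation silently presupposes---is to apply $(\mathcal{P}_n)$ not to $(u_1,\dots,u_n)$ but to its primitive $W=\mathcal{A}_n^{-1}U^n$, which solves the same $n$-cascade, with the velocity observation $\Pi_{n+1}w_n'=\Pi_{n+1}u_n$: this operator satisfies $(A4)_n$--$(A5)_n$ because it is a velocity observation, and coercivity converts it into exactly the target coupling integral. This choice is in fact forced by the statement being proved: \eqref{EQ13} involves the primitive energies $e_1(W_n)$ and $e_0(W_{n-1})$, and the absorbed term requires the estimate $\int_0^T\langle C_{n\,n-1}w_{n-1},w_{n-1}\rangle\,dt\leq\frac{r_{n,n}(T)}{\alpha_{n+1}}\int_0^T\langle C_{n+1\,n}u_n,u_n\rangle\,dt$, i.e.\ $(\mathcal{P}_n)$ stated for $W$, not for $U^n$ (your intermediate bounds written with $u_{n-1}$ in place of $w_{n-1}$ show the same slippage). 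Note also that converting the Wronskian boundary terms, which naturally carry $e_0(U_n)$, into $c\eta\big(e_1(W_n)+e_0(W_{n-1})\big)$ uses the structure relations $u_n=w_n'$ and $u_n'=-Aw_n-C_{n\,n-1}w_{n-1}$, not merely coercivity of $A$. With these corrections your plan becomes the paper's proof.
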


\begin{proof}[\bf Proof.]
Let $U^{n+1}=(u_1,\ldots, u_{n+1}, u_1^{\prime}, \ldots, u_{n+1}^{\prime}) \in D(\mathcal{A}_{n+1})$ be a solution of
$ 
(U^{n+1})^{\prime}=\mathcal{A}_{n+1}U^{n+1} \,.
$ 
  We note that  $U^n=(u_1,\ldots, u_{n}, u_1^{\prime}, \ldots, u_{n}^{\prime}) \in D(\mathcal{A}_{n})$
is solution of
$ 
(U^{n})^{\prime}=\mathcal{A}_{n}U^{n} \,.
$ 
We set $W^{n+1}=\mathcal{A}_{n+1}^{-1}U^{n+1}$. Then $W^{n+1}=(w_1,\ldots, w_{n+1}, w_1^{\prime}, \ldots, w_{n+1}^{\prime})
 \in D(\mathcal{A}_{n+1})$ and
 $W^n=(w_1,\ldots, w_{n}, w_1^{\prime}, \ldots, w_{n}^{\prime})$ $ \in D(\mathcal{A}_{n})$ and
 $W^{n+1}$, $W^n=W$ are respective solutions of
 $ 
(W^{n+1})^{\prime}=\mathcal{A}_{n+1}W^{n+1}\,,
$ 
 and
$ 
(W^{n})^{\prime}=\mathcal{A}_{n}W^{n} \,.
$ 
  By definition of $W=W^n$, we have
the relations
$ \label{uiprime}
w_i^{\prime}=u_i \,, i=1, \ldots, n+1 \,.
$
We set $W_n=(w_n, w_n^{\prime})$. We define an observation operator
$\mathcal{\mathbf{B^{\ast}_n}}$
by
$ \label{Bnstar}
\mathcal{\mathbf{B^{\ast}_n}}W_n=\Pi_{n+1}w_n^{\prime}=\Pi_{n+1}u_n \,.
$
 Thus, we have
\begin{equation}\label{EQ1}
\int_0^T |\Pi_{n+1}u_n|^2 \,dt \le \frac{1}{\alpha_{n+1}} \int_0^T \langle C_{n+1 n} u_n,u_n \rangle \,dt \,.
\end{equation} 
The operator $\mathcal{\mathbf{B^{\ast}_n}}$ satisfies $(A4)_n$ and $(A5)_n$.
We apply the property $(\mathcal{P}_n)$ to $W$, solution of $W^{\prime}=\mathcal{A}_{n}W$
with this choice of observation operator $\mathcal{\mathbf{B^{\ast}_n}}$. This, together
with \eqref{EQ1} imply
\begin{equation}\label{EQ2}
e_{1+i-n}(W_i)(0) \le \frac{d_{i,n}(T)}{\alpha_{n+1}} \int_0^T\langle C_{n+1 n} u_n,u_n \rangle \,dt \,,
\quad i \in \{1, \ldots, n\}\,,
\end{equation}
\begin{equation}\label{EQT}
e_0(W_{n-1})(T) \le \frac{d_{n-1,n}(T)}{\alpha_{n+1}} \int_0^T\langle C_{n+1 n} u_n,u_n \rangle \,dt \,,
\end{equation}
\begin{equation}\label{EQ3}
\int_0^T\langle C_{n n -1} w_{n-1},w_{n-1} \rangle \,dt
 \le \frac{r_{n,n}(T)}{\alpha_{n+1}} \int_0^T\langle C_{n+1 n} u_n,u_n \rangle \,dt 
 \,,
\end{equation}
\begin{equation}\label{EQ4}
\int_0^Te_{1+i-n}(W_i)(t)\,dt \le \frac{k_{i,n}(T)}{\alpha_{n+1}}
 \int_0^T\langle C_{n+1 n} u_n,u_n \rangle \,dt  ,\  i \in \{1, \ldots, n\}\,.
\end{equation}
 Since $\mathcal{\mathbf{B^{\ast}_{n+1}}}$
satisfies $(A4)_{i}-(A5)_{i}$ for $i=n+1$, we can apply the uniform estimate \eqref{obsabs} to
$u_{n+1}$ solution of
$$
u_{n+1}^{\prime\prime} + A u_{n+1} + C_{n+1 n} u_{n}=0\,.
$$
 This, together with $(A2)_{n+1}$ give
\begin{equation}\label{EQ5}
\eta_0 \int_0^T ||\mathcal{\mathbf{B^{\ast}_{n+1}}}U_{n+1}||_{G_{n+1}}^2 \ge
\int_0^T e_1(U_{n+1}(t)\,dt - \alpha_0 \beta_{n+1} \int_0^T \langle C_{n+1 n} u_{n}, u_n \rangle \,dt \,.
\end{equation}
 On the other hand, thanks to \eqref{NSHn} for $i=n-1$ and $i=n$, and using the relation $u_{n-1}=w_{n-1}^{\prime}$ with appropriate integration by parts with respect to time we obtain
\begin{align}
 \label{EQ6}
& 
\int_0^T \langle C_{n+1 n} u_{n}, u_n \rangle \,dt=  
\Big[\langle A^{-1/2}u_n^{\prime},  A^{1/2}u_{n+1}
\rangle - \langle u_n, u_{n+1}^{\prime} \rangle  \\
&
\notag
\qquad\qquad + \langle C_{n n-1}w_{n-1}, u_{n+1} \rangle
\Big]_0^T  
 - \int_0^T \langle C_{n n-1}w_{n-1}, u_{n+1}^{\prime}  \rangle \,. 
\end{align}
 We estimate the right hand side of \eqref{EQ6} using $(A2)_n$. This gives
for all $\eta>0$
\begin{align*}
&
\int_0^T \langle C_{n+1 n} u_{n}, u_n \rangle \,dt \\
&
 \le 
\frac{\beta_n \eta T}{2} \int_0^T \langle C_{n n-1} w_{n-1}, w_{n-1} \rangle \,dt +
\frac{1}{\eta T} \int_0^T e_1(U_{n+1})(t)\,dt \\
&
+ 
\frac{\eta}{2}\Big(|w_{n-1}(T)|^2 \! + |w_{n-1}(0)|^2\Big) + \frac{\beta_n^2}{2 \eta}||A^{- \frac 12 }||
\Big(|A^{ \frac 12 }u_{n+1}(T)|^2 + |A^{ \frac 12 }u_{n+1}(0)|^2\Big) \\
&
+\eta\Big((e_0(U_{n})(T)+ e_0(U_{n})(0)\Big) + \frac{1}{\eta} \Big(e_1(U_{n+1})(T)+
e_1(U_{n+1})(0)\Big) \,.
\end{align*}
 Using \eqref{EQ3} in the above inequality, we obtain
\begin{align}\label{EQ9}
& 
\int_0^T \langle C_{n+1 n} u_{n}, u_n \rangle \,dt  \le
\frac{\beta_n r_{n,n}(T) \eta T}{2 \alpha_{n+1}} \int_0^T \langle C_{n+1 n} u_{n}, u_n \rangle \,dt 
\notag
\\
&
+
\frac{1}{\eta T} \int_0^T e_1(U_{n+1})(t)\,dt + \frac{c}{\eta} \Big(e_1(U_{n+1})(T)+
e_1(U_{n+1})(0)\Big)  \\
&
\notag 
\eta\Big(e_0(U_{n})(T)+ e_0(U_{n})(0)\Big) + \eta (e_0(W_{n-1})(T)+ e_0(W_{n-1})(0)\Big) \,,
\end{align}
 where $c>0$ is a generic constant which is independent of $T$.
Since $W^n=\mathcal{A}_n^{-1}U^n$, we have
$u_n=w_n^{\prime}$ and $u_n^{\prime}=-Aw_n -C_{n n-1}w_{n-1}$. Therefore,
we have for any $t \ge 0$
\begin{equation}\label{EQ11}
e_0(U_n)(t) \le c\Big(e_0(W_{n-1})(t) + e_1(W_n)(t)\Big) \,.
\end{equation}
 Using this inequality for $t=0$ and $t=T$ in \eqref{EQ9}, we obtain
\eqref{EQ13}.
\end{proof}
 We shall prove several intermediate results to estimate the right hand side of
\eqref{EQ9}.

\begin{Proposition}\label{PROP2}
Assume the hypotheses of Theorem~$\ref{inductionobsNSHn}$ and that $(\mathcal{P}_n)$ holds.
Then for every $T>T_n^{\ast}$, the following estimates hold for all $\eta>0$
\begin{align} \label{EQ14}
&
e_1(W_n)(T)+e_1(W_n)(0)  \\
&
\notag 
\le 
\frac{1}{\alpha_{n+1}}\Big(2 d_{n,n}(T) + \frac{\beta_n T r_{n,n}(T)}
{2} + \frac{ k_{n,n}(T)}{T}\Big) \int_0^T \langle C_{n+1 n} u_{n}, u_n \rangle \,dt \,,
\end{align}
\begin{align} \label{EQ15}
e_0(W_{n-1})(T) +e_0(W_{n-1})(0)   \le
\frac{1}{\alpha_{n+1}}\Big(2 d_{n-1,n}(T)\Big) \int_0^T
 \langle C_{n+1 n} u_{n}, u_n \rangle \,dt \,,
\end{align}
\begin{align} \label{EQ15b}
e_1(U_{n+1})(T)+e_1(U_{n+1})(0) 
&  \le 2 e_1(U_{n+1})(0) +  
\frac{\beta_{n+1} \eta^2}
{2T} \int_0^T \langle C_{n+1 n} u_{n}, u_n \rangle \,dt  
\notag
\\ 
&
+ \frac{T}{\eta^2}\int_0^Te_1(U_{n+1})(t)\,dt \,.
\end{align}
\end{Proposition}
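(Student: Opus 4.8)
The plan is to exploit the common mechanism that both $w_n$, the last component of $W=W^n=\mathcal{A}_n^{-1}U^n$, and $u_{n+1}$ solve a nonhomogeneous second-order equation whose source is a coupling term, and to combine this with the consequences \eqref{EQ2}--\eqref{EQ4} of $(\mathcal{P}_n)$ already produced in the proof of Proposition~\ref{PROP1}. Since $W^n$ is itself a solution of $W^{\prime}=\mathcal{A}_nW$, its last component obeys $w_n^{\prime\prime}+Aw_n+C_{n\,n-1}w_{n-1}=0$, while by definition $u_{n+1}^{\prime\prime}+Au_{n+1}+C_{n+1\,n}u_n=0$. For any $p$ solving $p^{\prime\prime}+Ap=f$ the exact energy balance $e_1(P)(T)=e_1(P)(0)-\int_0^T\langle p^{\prime},f\rangle\,dt$ holds, obtained by differentiating $e_1(P)$; I apply it with $p=w_n$ and with $p=u_{n+1}$.

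Estimate \eqref{EQ15} is immediate and requires no energy balance: \eqref{EQ2} with $i=n-1$ bounds $e_0(W_{n-1})(0)$ by $\frac{d_{n-1,n}(T)}{\alpha_{n+1}}\int_0^T\langle C_{n+1\,n}u_n,u_n\rangle\,dt$, and \eqref{EQT} bounds $e_0(W_{n-1})(T)$ by the same quantity; adding the two yields the factor $2d_{n-1,n}(T)$.

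For \eqref{EQ14} I would use the energy balance for $w_n$ to write $e_1(W_n)(T)+e_1(W_n)(0)=2e_1(W_n)(0)-\int_0^T\langle w_n^{\prime},C_{n\,n-1}w_{n-1}\rangle\,dt$. The first term is controlled by \eqref{EQ2} with $i=n$, giving the $2d_{n,n}(T)$ contribution. For the integral I apply Young's inequality in the form $|\langle w_n^{\prime},C_{n\,n-1}w_{n-1}\rangle|\le \frac{T}{2}|C_{n\,n-1}w_{n-1}|^2+\frac{1}{2T}|w_n^{\prime}|^2$, then invoke the positivity bound $|C_{n\,n-1}w_{n-1}|^2\le\beta_n\langle C_{n\,n-1}w_{n-1},w_{n-1}\rangle$ from $(A2)_n$ on the first term and $|w_n^{\prime}|^2\le 2e_1(W_n)$ on the second. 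Inserting \eqref{EQ3} into the resulting $\int_0^T\langle C_{n\,n-1}w_{n-1},w_{n-1}\rangle\,dt$ and \eqref{EQ4} with $i=n$ into the resulting $\int_0^Te_1(W_n)(t)\,dt$ produces exactly the coefficients $\frac{\beta_n T r_{n,n}(T)}{2}$ and $\frac{k_{n,n}(T)}{T}$.

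Estimate \eqref{EQ15b} is the same argument applied to $u_{n+1}$, except that $\int_0^Te_1(U_{n+1})\,dt$ is kept on the right without appealing to $(\mathcal{P}_n)$: the energy balance gives $e_1(U_{n+1})(T)+e_1(U_{n+1})(0)=2e_1(U_{n+1})(0)-\int_0^T\langle u_{n+1}^{\prime},C_{n+1\,n}u_n\rangle\,dt$, and Young's inequality with weight $\eta^2/T$, namely $|\langle u_{n+1}^{\prime},C_{n+1\,n}u_n\rangle|\le\frac{\eta^2}{2T}|C_{n+1\,n}u_n|^2+\frac{T}{2\eta^2}|u_{n+1}^{\prime}|^2$, together with $|C_{n+1\,n}u_n|^2\le\beta_{n+1}\langle C_{n+1\,n}u_n,u_n\rangle$ from $(A2)_{n+1}$ and $|u_{n+1}^{\prime}|^2\le 2e_1(U_{n+1})$, delivers the two terms of \eqref{EQ15b} directly. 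There is no genuine difficulty here beyond the bookkeeping: the only care needed is to pick the Young weights ($T$ for \eqref{EQ14}, $\eta^2/T$ for \eqref{EQ15b}) so that the powers of $T$ line up with the stated constants, and to route each leftover integral to the correct consequence of $(\mathcal{P}_n)$, sending $\langle C_{n\,n-1}w_{n-1},w_{n-1}\rangle$ to \eqref{EQ3} and $e_1(W_n)$ to \eqref{EQ4}.
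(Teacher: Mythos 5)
Your proposal is correct and follows essentially the same route as the paper: the energy balance $e_1^{\prime}(W_n)=-\langle C_{n\,n-1}w_{n-1},w_n^{\prime}\rangle$ (and its analogue for $u_{n+1}$), Young's inequality with weights $T$ and $\eta^2/T$ respectively, the coercivity-type bound from $(A2)_n$, and the routing of the leftover terms to \eqref{EQ2}, \eqref{EQT}, \eqref{EQ3} and \eqref{EQ4} are exactly the paper's argument, merely written out in more detail than the paper's terse ``similar'' remarks. The only blemish is a sign-convention slip in your abstract statement of the balance law (for $p^{\prime\prime}+Ap=f$ one has $e_1(P)(T)=e_1(P)(0)+\int_0^T\langle p^{\prime},f\rangle\,dt$), which is immaterial since your concrete identities for $w_n$ and $u_{n+1}$ are correct and the subsequent estimates only use absolute values.
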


\begin{proof}[\bf Proof.]
Since we have
$ 
w_n^{\prime\prime}+Aw_n + C_{n n-1} w_{n-1}=0 \,,
$ 
 we deduce that 
$ 
e_1^{\prime}(W_n)(t)$ $=-\langle C_{n n-1} w_{n-1}, w_n^{\prime}\rangle \,,
$ 
 so that
\begin{align*}
&
e_1(W_n)(T) + e_1(W_n)(0)=2 e_1(W_n)(0)
 - \int_0^T \langle C_{n n-1} w_{n-1}, w_n^{\prime}\rangle\\
&
 \le  
2 e_1(W_n)(0) +\frac{T\beta_n}{2} \int_0^T \langle C_{n n-1} w_{n-1}, w_{n-1} \rangle +
\frac{1}{T}\int_0^T e_1(W_n) (t)\,dt \,.
\end{align*}
 We use respectively \eqref{EQ2}, \eqref{EQ3} and \eqref{EQ4} with $i=n$, this gives \eqref{EQ14}.

The proof of \eqref{EQ15b} is similar using the equation for $u_{n+1}$.

We use \eqref{EQ2} with $i=n-1$ and \eqref{EQT}. This gives \eqref{EQ15}.
\end{proof}

\begin{Proposition}\label{PROP3}
Assume the hypotheses of Theorem~$\ref{inductionobsNSHn}$ and that $(\mathcal{P}_n)$ holds.
Then for every $T>T_n^{\ast}$, the following estimate holds
\begin{equation}\label{EQ21}
\int_0^T \langle C_{n+1 n} u_{n}, u_n \rangle \,dt  \le
\frac{K_n}{T^2} \int_0^T e_1(U_{n+1})(t)\,dt + \frac{J_n}{T}e_1(U_{n+1})(0) \,,
\end{equation}
 where $K_n\,, J_n$ are generic constants which depend on $n$ but not on $T$.
\end{Proposition}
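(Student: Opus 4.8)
The plan is to feed the three estimates of Proposition~\ref{PROP2} into the right-hand side of \eqref{EQ13} and then absorb the quantity $I:=\int_0^T \langle C_{n+1\,n} u_n, u_n\rangle\,dt$ on the left, exploiting the explicit $T$-dependence of the constants recorded in $(\mathcal{P}_n)$. Concretely, in \eqref{EQ13} I would replace the factor $e_1(W_n)(T)+e_1(W_n)(0)$ using \eqref{EQ14}, the factor $e_0(W_{n-1})(T)+e_0(W_{n-1})(0)$ using \eqref{EQ15}, and the factor $e_1(U_{n+1})(T)+e_1(U_{n+1})(0)$ using \eqref{EQ15b}, taking the same free parameter $\eta$ in \eqref{EQ15b} as in \eqref{EQ13} (both hold for all $\eta>0$). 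Each substitution produces either a further multiple of $I$ or a multiple of the two admissible quantities $\int_0^T e_1(U_{n+1})\,dt$ and $e_1(U_{n+1})(0)$ appearing on the right of \eqref{EQ21}.

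I would then read off the scaling of every resulting coefficient from $(\mathcal{P}_n)$, namely $r_{n,n}(T)\le K_n/T^2$, $d_{n,n}(T)\le K_n/T$, $k_{n,n}(T)\le K_n$ and $d_{n-1,n}(T)\le K_n/T^3$. The coefficient multiplying $I$ on the right then collects four contributions: the original $\frac{\beta_n r_{n,n}(T)\eta T}{2\alpha_{n+1}}$ from \eqref{EQ13}, the $c\eta$-weighted bound from \eqref{EQ14}, the $c\eta$-weighted bound from \eqref{EQ15}, and the term $\frac{c\beta_{n+1}\eta}{2T}$ coming from $\frac{c}{\eta}$ times \eqref{EQ15b}. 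Using the scalings above, each of these is at most a constant multiple of $\eta/T$ (the \eqref{EQ15}-contribution being even smaller, of order $\eta/T^3$). At the same time the admissible terms take the form $\frac{1}{\eta T}\int_0^T e_1(U_{n+1})\,dt$ and $\frac{cT}{\eta^3}\int_0^T e_1(U_{n+1})\,dt$ for the time integral, together with $\frac{2c}{\eta}e_1(U_{n+1})(0)$ for the initial energy.

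The decisive step is to choose $\eta=\lambda T$ with $\lambda>0$ a fixed constant independent of $T$. With this choice the coefficient of $I$ reduces to a constant multiple of $\lambda$ plus a remainder of order $T^{-2}$, so fixing $\lambda$ small enough and then taking $T>T_n^{\ast}$ large enough renders it at most $\frac{1}{2}$, after which $\frac{1}{2}I$ is absorbed on the left. The same substitution turns the admissible terms into $\frac{1}{\lambda T^2}\int_0^T e_1(U_{n+1})\,dt$, $\frac{c}{\lambda^3 T^2}\int_0^T e_1(U_{n+1})\,dt$ and $\frac{2c}{\lambda T}e_1(U_{n+1})(0)$, which are exactly of the orders $T^{-2}$ and $T^{-1}$ required in \eqref{EQ21}. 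Collecting the constants yields \eqref{EQ21} with $K_n$ and $J_n$ depending only on $n$.

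The main obstacle is the balancing hidden in the choice of $\eta$: the scaling $\eta\sim T$ is forced by the demand that both $\frac{1}{\eta T}$ and $\frac{cT}{\eta^3}$ decay like $T^{-2}$, yet this very scaling makes the absorption coefficient of $I$ a constant rather than a decaying quantity, so one cannot rely on largeness of $T$ alone. The resolution is to split the two roles of the parameter, letting the fixed smallness of $\lambda$ secure the absorption while $T$ supplies the decay, and then to check that this single choice $\eta=\lambda T$ is simultaneously compatible with all four $I$-contributions and all three admissible contributions. It is this compatibility, rather than any individual estimate, that is the crux of the argument.
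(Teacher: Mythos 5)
Your proposal is correct and follows essentially the same route as the paper's proof: substitute \eqref{EQ14}, \eqref{EQ15} and \eqref{EQ15b} into \eqref{EQ13} with one common parameter $\eta$, use the $(\mathcal{P}_n)$ scalings to see that the total coefficient of $\int_0^T\langle C_{n+1\,n}u_n,u_n\rangle\,dt$ is of order $\eta/T$, and take $\eta\sim T$ (the paper's choice $\eta=1/(2\theta_n(T))$ with $\theta_n(T)\le K_n/T$ is exactly this scaling) so that the absorption works while the remaining terms acquire the stated $T^{-2}$ and $T^{-1}$ factors. One cosmetic point: you do not need to take $T$ ``large enough'' to control the $\eta/T^{3}$ contribution --- since $T>T_n^{\ast}$ you may bound $1/T^{2}$ by $1/(T_n^{\ast})^{2}$ and absorb it into the choice of $\lambda$, so the estimate holds for every $T>T_n^{\ast}$ exactly as claimed.
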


\begin{proof}[\bf Proof.]
Using \eqref{EQ14}, \eqref{EQ15} and \eqref{EQ15b} in \eqref{EQ13} we obtain
\begin{align}
\label{EQ17}
&
\Big(1-\eta\theta_n(T)\Big)\int_0^T \langle C_{n+1 n} u_{n}, u_n \rangle \,dt  \\
&
\notag
\le
\Big(\frac{1}{\eta T} + \frac{c T}{\eta^3}\Big)\int_0^T e_1(U_{n+1})(t)\,dt +  
\frac{c}{\eta}e_1(U_{n+1})(0)
\,,
\end{align}
 where $c$ is a generic positive constant which does not
depend on $T$ nor $\eta$ and where the coefficient $\theta_n(T)$ is given by
\begin{align*}
\label{EQ18}
\theta_n(T) & =\frac{\beta_n r_{n,n}(T)T}{2 \alpha_{n+1}} + \frac{c \beta_{n+1}}{2T}+
\frac{c}{\alpha_{n+1}}
\Big(2 d_{n.n}(T) +\frac{T \beta_n r_{n.n}(T)}{2} \\
&
+ \frac{k_{n,n}(T)}{T}\Big)+ 
\frac{c}{\alpha_{n+1}}
\Big(2 d_{n-1.n}(T) \Big)\,.
\end{align*}
 We choose $\eta$ such that
$ \label{EQ20}
\eta=\frac{1}{2 \theta_n(T)}=\eta_n(T) \,. 
$
 Thanks to this choice, the definition of $\theta_n(T)$ and our induction hypothesis
$(\mathcal{P}_n)$, we deduce that
\begin{equation}\label{EQ19}
\frac{1}{\eta_n(T)} \le \frac{K_n}{T} \,.
\end{equation}
  Therefore, using \eqref{EQ19} in \eqref{EQ17}, we
deduce \eqref{EQ21}.
\end{proof}

\begin{Lemma}\label{PROP4}
Assume the hypotheses of Theorem~$\ref{inductionobsNSHn}$
 and that $(\mathcal{P}_n)$ holds. Then for every $T>T_n^{\ast}$,
\begin{equation}\label{EQ27}
\int_0^T e_1(U_{n+1})(t) \ge M_n\,T e_1(U_{n+1})(0) \,,
\end{equation}
where $M_n$ does not depend on $T$ and is defined by
\begin{equation}\label{Mna}
M_n=\frac{\sqrt{a_n^2+a_n+b_n}}{(2a_n+1)\big(
a_n+\sqrt{a_n^2+a_n+b_n}\big)+a_n+2b_n} \,,
\end{equation}
 with
\begin{equation}\label{an}
a_n=\frac{\beta_{n+1} J_n}{2} \,,
\end{equation}
 and
\begin{equation}\label{bn}
b_n=\frac{\beta_{n+1} K_n}{2} \,.
\end{equation}
\end{Lemma}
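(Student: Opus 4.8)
The plan is to track the variation of the top-level energy $E(t):=e_1(U_{n+1})(t)$ along the flow and to show that it cannot lose too much over $[0,T]$, so that its time-average stays comparable to its initial value $E_0:=e_1(U_{n+1})(0)$; the quantitative size of the admissible loss is precisely what Proposition~\ref{PROP3} controls. I abbreviate $I:=\int_0^T E(t)\,dt$, $g(t):=\langle C_{n+1\,n}u_n,u_n\rangle$ and $G:=\int_0^T g\,dt$, and I may assume $E_0>0$ since the inequality is trivial otherwise. Differentiating the energy and using the equation $u_{n+1}''+Au_{n+1}+C_{n+1\,n}u_n=0$ gives $E'(t)=-\langle C_{n+1\,n}u_n,u_{n+1}'\rangle$. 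Combining Cauchy--Schwarz in $H$, the bound $|C_{n+1\,n}u_n|^2\le\beta_{n+1}\langle C_{n+1\,n}u_n,u_n\rangle$ furnished by $(A2)_{n+1}$, and $|u_{n+1}'|^2\le 2E$, this yields the pointwise estimate $|E'(t)|\le\sqrt{2\beta_{n+1}}\,\sqrt{g(t)}\,\sqrt{E(t)}$.

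First I would integrate this bound from $0$ to $t$ and apply Cauchy--Schwarz in time, using $\int_0^t\sqrt{g}\sqrt{E}\le\sqrt{G}\sqrt{I}$, to obtain the \emph{uniform} lower bound $E(t)\ge E_0-\sqrt{2\beta_{n+1}}\,\sqrt{G}\,\sqrt{I}$ for every $t\in[0,T]$. Integrating this constant lower bound over $[0,T]$ produces the self-referential inequality $I\ge T E_0-T\sqrt{2\beta_{n+1}}\,\sqrt{G}\,\sqrt{I}$. The role of Proposition~\ref{PROP3} is exactly to close this loop: inserting \eqref{EQ21}, namely $G\le\frac{K_n}{T^2}I+\frac{J_n}{T}E_0$, and recalling $a_n=\beta_{n+1}J_n/2$ and $b_n=\beta_{n+1}K_n/2$, one finds $\sqrt{2\beta_{n+1}\,(K_n\xi^2+J_n\xi)}=2\sqrt{\xi(a_n+b_n\xi)}$ after setting the dimensionless ratio $\xi:=I/(TE_0)\ge 0$. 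The whole inequality then collapses to the single scalar relation $\xi\ge 1-2\sqrt{\xi(a_n+b_n\xi)}$, and the target estimate \eqref{EQ27} is equivalent to $\xi\ge M_n$.

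It remains to solve this scalar inequality. If $\xi\ge 1$ there is nothing to prove, since $M_n=\frac{1}{(2a_n+1)+2\sqrt{a_n^2+a_n+b_n}}\le 1$; otherwise $1-\xi>0$ and squaring $2\sqrt{\xi(a_n+b_n\xi)}\ge 1-\xi$ gives the quadratic inequality $(4b_n-1)\xi^2+2(2a_n+1)\xi-1\ge 0$. I would locate its roots using the identity $\bigl(2\sqrt{a_n^2+a_n+b_n}\bigr)^2-(2a_n+1)^2=4b_n-1$, which factors the leading coefficient $4b_n-1$ as a difference of squares and shows that the relevant root is $\frac{1}{(2a_n+1)+2\sqrt{a_n^2+a_n+b_n}}$; a direct computation (clearing the denominator $(2a_n+1)(a_n+D)+a_n+2b_n=D(2D+2a_n+1)$ with $D=\sqrt{a_n^2+a_n+b_n}$) identifies this root with the stated closed form \eqref{Mna}.

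The analytic input above---energy differentiation, the two Cauchy--Schwarz steps, and the substitution of \eqref{EQ21}---is routine. The \textbf{main obstacle} is the final scalar analysis: one must treat the three sign regimes $4b_n\gtrless 1$ (upward, downward, and degenerate parabola) separately to guarantee that $(4b_n-1)\xi^2+2(2a_n+1)\xi-1\ge 0$ together with $\xi\ge 0$ \emph{forces} $\xi\ge M_n$ rather than placing $\xi$ in some trivial range, and one must carry out the elementary but delicate rationalization confirming that the root coincides with \eqref{Mna}. All the genuine work is concentrated in this algebra.
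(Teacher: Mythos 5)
Your proof is correct, and it closes the argument by a different mechanism than the paper. Both proofs start identically (differentiate $e_1(U_{n+1})$, control the cross term $\langle C_{n+1\,n}u_n,u_{n+1}'\rangle$ via $(A2)_{n+1}$, and feed in \eqref{EQ21}), but they diverge at the key estimate: the paper integrates the energy identity twice and applies Young's inequality with a free parameter $\nu$, obtaining the \emph{linear} inequality $(1+\nu)\int_0^T e_1(U_{n+1})\,dt \ge T e_1(U_{n+1})(0)-\tfrac{\beta_{n+1}T^2}{2\nu}\int_0^T\langle C_{n+1\,n}u_n,u_n\rangle\,dt$, then substitutes \eqref{EQ21} and makes the one clever choice $\nu=a_n+\sqrt{a_n^2+a_n+b_n}$, which solves the resulting linear relation for $\int_0^T e_1(U_{n+1})\,dt$ and produces $M_n$ with no case analysis at all. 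You instead keep the sharp Cauchy--Schwarz form of the cross term, which yields the self-referential inequality $I \ge TE_0 - T\sqrt{2\beta_{n+1}}\sqrt{G}\sqrt{I}$ and hence the nonlinear scalar relation $\xi\ge 1-2\sqrt{\xi(a_n+b_n\xi)}$, whose resolution requires exactly the three-regime root analysis ($4b_n\gtrless 1$) you identify as the main obstacle; your analysis of those cases (upward parabola with $q(0)=-1<0$ forcing $\xi\ge\xi_+$, downward parabola forcing $\xi$ into the interval between the two positive roots, and the degenerate linear case) is sound, and your algebraic identities $(2D)^2-(2a_n+1)^2=4b_n-1$ and $(2a_n+1)(a_n+D)+a_n+2b_n=D(2D+2a_n+1)$ correctly identify the root with \eqref{Mna}. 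The two routes necessarily land on the same constant, since Cauchy--Schwarz is the envelope over $\nu$ of the Young inequalities and the paper's $\nu$ is precisely the optimizer; what the paper's parametrized version buys is linearity in $I$ and the complete avoidance of sign regimes, while your version makes visible why $M_n$ is the sharp constant attainable by this energy argument.
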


\begin{proof}[\bf Proof.]
The proof is similar to that of Lemma~2.14 in~\cite{alabau2cascade} in the case $n=2$. For the sake of clarity, we indicate briefly the arguments. We have
$$
e_1^{\prime}(U_{n+1})(t)=-\langle C_{n+1 n} u_{n}, u_{n+1}^{\prime}\rangle \,.
$$
 Integrating twice this equation and using assumption $(A2)_n$, we have for all $\nu>0$
$$
(1+\nu)\int_0^T e_1(U_{n+1})(t) \ge Te_1(U_{n+1})(0)
- \frac{\beta_{n+1} T^2}{2\nu} \int_0^T\langle C_{n+1 n}u_n,u_n\rangle \,.
$$
 We choose $\nu= a_n + \sqrt{a_n^2+a_n+b_n}$. Using \eqref{EQ21} in this last estimate
and the definition of $a_n$, $b_n$ and $\nu$, we obtain \eqref{EQ27} where
$M_n$
is defined by \eqref{Mna}.
\end{proof}

\begin{Proposition}\label{PROP5}
Assume the hypotheses of Theorem~$\ref{inductionobsNSHn}$ and that $(\mathcal{P}_n)$ holds. 
Then $(\mathcal{P}_{n+1})$ holds.

\end{Proposition}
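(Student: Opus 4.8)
The plan is to assemble the estimates of Propositions~\ref{PROP1}--\ref{PROP3} and Lemma~\ref{PROP4} with the induction hypothesis $(\mathcal{P}_n)$; the genuine analytic work is already spent there, so what remains is to observe the top-level energy, cascade the observability down to the lower components, and track the $T$-dependence of every constant against the rates demanded in $(\mathcal{P}_{n+1})$. I would write $U=U^{n+1}$ for the generic solution of $(U^{n+1})'=\mathcal{A}_{n+1}U^{n+1}$ (the ``$W$'' of the statement) and $W=W^n=\mathcal{A}_n^{-1}U^n$ for the auxiliary solution of Proposition~\ref{PROP1}, so that $u_i=w_i'$ and $u_i'=-Aw_i-C_{i\,i-1}w_{i-1}$.

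First I would observe $\int_0^T e_1(U_{n+1})(t)\,dt$. Inserting \eqref{EQ27} into \eqref{EQ21} eliminates $e_1(U_{n+1})(0)$ and gives $\int_0^T\langle C_{n+1\,n}u_n,u_n\rangle\,dt\le \tfrac{\tilde K_n}{T^2}\int_0^T e_1(U_{n+1})(t)\,dt$ with $\tilde K_n=K_n+J_n/M_n$ independent of $T$. Feeding this into the uniform inequality \eqref{EQ5} (which is \eqref{obsabs} for $u_{n+1}''+Au_{n+1}+C_{n+1\,n}u_n=0$) and taking $T_{n+1}^\ast$ so large that $\alpha_0\beta_{n+1}\tilde K_n/T^2\le\tfrac12$ absorbs the coupling term on the left, I obtain $\int_0^T e_1(U_{n+1})(t)\,dt\le 2\eta_0\int_0^T\|\mathcal{\mathbf{B^{\ast}_{n+1}}}U_{n+1}\|_{G_{n+1}}^2\,dt$, i.e. the fourth line of $(\mathcal{P}_{n+1})$ at $i=n+1$ with $k_{n+1,n+1}=2\eta_0$ constant in $T$. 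Combining with \eqref{EQ27} yields $e_1(U_{n+1})(0)\le\tfrac{2\eta_0}{M_nT}\int_0^T\|\mathcal{\mathbf{B^{\ast}_{n+1}}}U_{n+1}\|_{G_{n+1}}^2\,dt$ (first line, $i=n+1$, $d_{n+1,n+1}\sim1/T$), and back-substituting gives the third line with $r_{n+1,n+1}=2\eta_0\tilde K_n/T^2\sim1/T^2$.

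Next I would propagate to the components $i\le n$. The key new ingredient is a pointwise energy comparison generalizing \eqref{EQ11}: from $u_i=w_i'$, $u_i'=-Aw_i-C_{i\,i-1}w_{i-1}$, the coercivity of $A$, and the boundedness of $C_{i\,i-1}^\ast$ on $H_{n+1-i}$ granted by $(A2)_n$ (the level $n+1-i=n-i+1$ being exactly the top one allowed), one gets for all $t\ge0$ and all $i\in\{1,\dots,n\}$
\[
e_{1+i-(n+1)}(U_i)(t)=e_{i-n}(U_i)(t)\le c\big(e_{1+i-n}(W_i)(t)+e_{1+(i-1)-n}(W_{i-1})(t)\big),
\]
with the $W_{i-1}$-term absent when $i=1$. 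At $t=0$, invoking \eqref{EQ2} at indices $i$ and $i-1$ bounds the right side by $\tfrac{c(d_{i,n}+d_{i-1,n})}{\alpha_{n+1}}\int_0^T\langle C_{n+1\,n}u_n,u_n\rangle\,dt$, and the third line just proved (factor $r_{n+1,n+1}\sim1/T^2$) converts this into the observation norm, giving the first line of $(\mathcal{P}_{n+1})$ for $i\le n$; integrating in time and using \eqref{EQ4} in place of \eqref{EQ2} gives the fourth line for $i\le n$. The second line, $e_0(U_n)(T)\le d_{n,n+1}\int_0^T\|\mathcal{\mathbf{B^{\ast}_{n+1}}}U_{n+1}\|_{G_{n+1}}^2\,dt$, follows from the comparison at $t=T$, $i=n$, together with \eqref{EQT} for $e_0(W_{n-1})(T)$ and \eqref{EQ14} for $e_1(W_n)(T)$, again closed by the third line.

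The step I expect to be the main obstacle is the bookkeeping of decay rates, since this is where the exact powers needed downstream are won or lost. The recurring mechanism is that the coupling factor $r_{n+1,n+1}\sim1/T^2$ multiplies the induction constants: with $d_{i,n}\le K_n/T^3$, $k_{i,n}\le K_n/T^2$ for $i\le n-1$ and $d_{n,n}\le K_n/T$, $k_{n,n}\le K_n$, $r_{n,n}\le K_n/T^2$, I find $d_{i,n+1}=O(1/T^5)$, $k_{i,n+1}=O(1/T^4)$ for $i\le n-1$, while the borderline index $i=n$ yields $d_{n,n+1}=O(1/T^3)$ and $k_{n,n+1}=O(1/T^2)$; likewise $e_0(U_n)(T)$ is $O(1/T^3)$ because the bracket in \eqref{EQ14} is $O(1/T)$. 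All are dominated by the required $K_{n+1}/T^3$ and $K_{n+1}/T^2$ for large $T$. Taking $K_{n+1}$ a large enough multiple of the product of the $n$-level constants with $\eta_0,\alpha_0,\alpha_{n+1},\beta_{n+1},M_n^{-1},\|A^{-1/2}\|$, and $T_{n+1}^\ast\ge\max\!\big(T_n^\ast,(2\alpha_0\beta_{n+1}\tilde K_n)^{1/2}\big)$ enlarged so every $O(1/T^q)$ coefficient sits below its target, establishes $(\mathcal{P}_{n+1})$ and closes the induction.
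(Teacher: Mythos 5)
Your proposal is correct and follows essentially the same route as the paper's proof of Proposition~\ref{PROP5}: you absorb the coupling term via \eqref{EQ5}, \eqref{EQ21}, \eqref{EQ27} to obtain the bounds on $U_{n+1}$ (lines one, three and four of $(\mathcal{P}_{n+1})$ at the top index), then propagate to the components $i\le n$ through the pointwise comparison \eqref{ein} (the generalization of \eqref{EQ11}) combined with \eqref{EQ2}, \eqref{EQ4}, \eqref{EQT} and \eqref{EQ14}, with the same tracking of the $T$-dependence of the constants. The only deviations are cosmetic: you eliminate $e_1(U_{n+1})(0)$ before the absorption step (the paper keeps it and gets $k_{n+1,n+1}(T)=\eta_0T^2/(T^2-T_n^2)$ directly), and you record the sharper rates $O(T^{-5})$, $O(T^{-4})$ for $i\le n-1$, which the paper simply majorizes by $T^{-3}$, $T^{-2}$.
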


\begin{proof}[\bf Proof.]
We choose $T_{n+1}^{\ast}>\max(T_n^{\ast}, T_n)$ where $T_n^2$ is given by
\begin{equation*}\label{EQTn}
T_n^2= \alpha_0\beta_{n+1}\Big(K_n+ \frac{J_n}{M_n}\Big) \,.
\end{equation*}
 Let $T>T_{n+1}^{\ast}$. In the sequel of this proof, we will denote by $H_{l,n+1}$ for $l=1, \ldots, 6$  generic constants which
do not depend on $T$.

Thanks to \eqref{EQ5} and \eqref{EQ21} and \eqref{EQ27} we have
\begin{equation}\label{EQ28}
\int_0^Te_1(U_{n+1})(t) \le  k_{n+1,n+1}(T)\int_0^T ||\mathcal{\mathbf{B^{\ast}_{n+1}}}(U_{n+1}) \|_{G_{n+1}}^2 dt \,,
\end{equation}
 where 
\begin{equation*}\label{kn+1}
k_{n+1,n+1}(T)= \frac{\eta_0 T^2}{T^2 -T_n^2} \le \frac{\eta_0 T_{n+1}^{\ast \,2}}{T_{n+1}^{\ast \,2} -T_n^2}=H_{1,n+1} \,.
\end{equation*}
 Using now \eqref{EQ28} in \eqref{EQ27}, we obtain
\begin{equation}\label{EQ29}
e_1(U_{n+1})(0) \le  d_{n+1,n+1}(T)\int_0^T ||\mathcal{\mathbf{B^{\ast}_{n+1}}}(U_{n+1}) \|_{G_{n+1}}^2 dt \,,
\end{equation}
 where 
\begin{equation*}\label{dn+1}
d_{n+1,n+1}(T)= \frac{\eta_0 T}{M_n(T^2 -T_n^2)} \le \frac{\eta_0 T_{n+1}^{\ast \,2}}{M_n(T_{n+1}^{\ast \,2} -T_n^2)}\frac{1}{T} =\frac{H_{2,n+1}}{T}\,.
\end{equation*}
 Using similarly \eqref{EQ28} and \eqref{EQ29} in \eqref{EQ21}, we have
\begin{equation}\label{EQ30}
\int_0^T \langle C_{n+1 n} u_{n}, u_n \rangle \,dt  \le  r_{n+1,n+1}(T) \int_0^T ||\mathcal{\mathbf{B^{\ast}_{n+1}}}(U_{n+1}) \|_{G_{n+1}}^2 dt \,,
\end{equation}
 where 
\begin{align*}  \label{rn+1}
r_{n+1,n+1}(T) &
= \frac{\eta_0}{(T^2 -T_n^2)}\Big(K_n+ \frac{J_n}{M_n}\Big)  \\
&
 \le  
\frac{\eta_0 T_{n+1}^{\ast \,2}}{M_n(T_{n+1}^{\ast \,2} -T_n^2)}\Big(K_n+ \frac{J_n}{M_n}\Big) \frac{1}{T^2}=
\frac{H_{3,n+1} }{T^2}\,.
\end{align*}
 Let $i$ be any integer in $\{2,\ldots, n\}$. We estimate $e_{i-n}(U_i)$ as follows. Thanks to the definition of $W_i$,  we have
\begin{align*}
2 e_{i-n}(U_i)  & =|A^{i-n}w_i^{\prime}|^2 + |A^{(i-n-1)/2}u_i^{\prime}|^2 \\
&
\le 
C \Big(e_{1+i-n}(W_i) + |A^{(i-n-1)/2}C_{i\,i-1}w_{i-1}|^2\Big) \,.
\end{align*}
 On the other hand, thanks to $(A2)_n$ we have
$$
|A^{(i-n-1)/2}C_{i\,i-1}w_{i-1}|^2 \le C |A^{(i-n)/2}w_{i-1}|^2 \,.
$$
 Hence, we have
\begin{equation}\label{ein}
e_{i-n}(U_i)\le
C \Big(e_{1+i-n}(W_i) + e_{i-n}(W_{i-1})\Big) \,.
\end{equation}
 Using \eqref{EQ30} in \eqref{EQ2} for $i$ and $i-1$, and inserting the resulting estimate
in \eqref{ein}, we obtain
\begin{equation*}\label{EQn+1}
e_{i-n}(U_i)(0) \le d_{i,n+1}(T) \int_0^T ||\mathcal{\mathbf{B^{\ast}_{n+1}}}(U_{n+1}) \|_{G_{n+1}}^2 dt \,,
\end{equation*}
 where 
$$ \label{din+1}
d_{i,n+1}(T) \! = \! C\Big( \frac{d_{i-1,n}(T) \! + d_{i,n}(T)}{\alpha_{n+1}}\Big) r_{n+1,n+1}(T)  
 \! \le  \! \frac{2C K_n H_{3,n+1}}{\alpha_{n+1}}\frac{1}{T^3}  \! \le  \! \frac{H_{4,n+1}}{T^3} .
$$
 Using similarly \eqref{EQ30} in \eqref{EQ4} for $i$ and $i-1$, and inserting the resulting estimate
in \eqref{ein}, we obtain
\begin{equation*}\label{EQint}
\int_0^T e_{i-n}(U_i)(t)\,dt  \le k_{i,n+1}(T) \int_0^T ||\mathcal{\mathbf{B^{\ast}_{n+1}}}(U_{n+1}) \|_{G_{n+1}}^2 dt \,,
\end{equation*}
 where 
$$
 \label{kin+1}
k_{i,n+1}(T)=C\Big( \frac{k_{i-1,n}(T)+ k_{i,n}(T)}{\alpha_{n+1}}\Big) r_{n+1,n+1}(T)  
 \le \frac{2 C K_n H_{3,n+1}}{\alpha_{n+1} T^2} \le \frac{H_{5,n+1}}{T^2}
.
$$
 We now remark that
$ 
e_{1-n}(U_1)=e_{2-n}(W_1) \,.
$ 
 Thus, we easily deduce using \eqref{EQ30} in respectively \eqref{EQ2} and \eqref{EQ4}
with $i=1$ that
\begin{equation*}\label{EQe1}
e_{1-n}(U_1)(0) \le d_{1,n+1}(T) \int_0^T ||\mathcal{\mathbf{B^{\ast}_{n+1}}}(U_{n+1}) \|_{G_{n+1}}^2 dt \,,
\end{equation*}
 where 
\begin{equation*}\label{d1n+1}
d_{1,n+1}(T)=\frac{C\,d_{1,n}(T)}{\alpha_{n+1}} r_{n+1,n+1}(T) \le
\frac{H_{4,n+1}}{T^3}\,,
\end{equation*}
 and
\begin{equation*}\label{EQ1int}
\int_0^T e_{1-n}(U_1)(t)\,dt  \le k_{1,n+1}(T) \int_0^T ||\mathcal{\mathbf{B^{\ast}_{n+1}}}(U_{n+1}) \|_{G_{n+1}}^2 dt \,,
\end{equation*}
 where 
\begin{equation*}\label{ke1n+1}
k_{1,n+1}(T)=\frac{C\,k_{1,n}(T)}{\alpha_{n+1}}r_{n+1,n+1}(T) 
\le \frac{H_{5,n+1}}{T^2}\,.
\end{equation*}
 We finally estimate $e_0(U_n)(T)$ as follows. Using \eqref{EQ30} in \eqref{EQT} and \eqref{EQ14} and using the resulting estimates in \eqref{EQ11}, we deduce
that
\begin{equation*}\label{EQUn}
e_0(U_n)(T) \le d_{n,n+1}(T) \int_0^T ||\mathcal{\mathbf{B^{\ast}_{n+1}}}(U_{n+1}) \|_{G_{n+1}}^2 dt \,,
\end{equation*}
 where 
\begin{align*} \label{dnn+1}
d_{n,n+1}(T) & =   
C\Big(d_{n-1,n}(T) + 2d_{n,n}(T) + \frac{\beta_n T r_{n,n}(T)}
{2}  \\
&
+ \frac{ k_{n,n}(T)}{T}\Big)\frac{r_{n+1,n+1}(T)}{\alpha_{n+1}} \le
\frac{H_{6,n+1}}{T^3}\,.
\end{align*}
 Thus we prove that $(\mathcal{P}_{n+1})$ holds with $\displaystyle{K_{n+1}=
\max_{1\le l \le 6}(H_{l,n+1})}$. 
\end{proof}

This concludes the proof of Theorem~\ref{inductionobsNSHn}.
We can now prove Theorem~\ref{obsNSHn}.

\smallskip

\noindent
\textbf{Proof of Theorem~\ref{obsNSHn}}. 
This is an easy consequence of $(\mathcal{P}_{n})$.
We shall need the following technical result for the proof of Theorem~\ref{nNEC}.
 \begin{Lemma}\label{tech1n}
 Assume the hypotheses $(A1)$ and $(A2)_n$.  Then there exist $ C_1>0\,, C_2>0$ such that
 for all $W_0 \in \mathcal{H}_n$,  the following properties hold for 
 $W$ the solution of \eqref{NSHn} with initial data $W_0$ and for $Z=\mathcal{A}_n^{-1}W$  
 \begin{equation}\label{equivxN}
 C_1 \displaystyle{\sum_{i=1}^n e_{i-n}(W_i)} \le \displaystyle{\sum_{i=1}^n e_{1+i-n}(Z_i)} \le
 C_2 \displaystyle{\sum_{i=1}^n e_{i-n}(W_i)}\,.
 \end{equation}
 \end{Lemma}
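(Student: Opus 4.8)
The plan is to exploit the explicit form of $Z=\mathcal{A}_n^{-1}W$ provided by Proposition~\ref{invertAn}, specialized to the bi-diagonal structure of \eqref{NSHn}. Writing $W=(w_1,\dots,w_n,w_1',\dots,w_n')$ for the solution of \eqref{NSHn} and $Z=(z_1,\dots,z_n,z_1',\dots,z_n')$, the identity $\mathcal{A}_nZ=W$ yields the two pointwise relations $z_i'=w_i$ and $Az_i+C_{i\,i-1}z_{i-1}=-w_i'$ for $i=1,\dots,n$, with the convention $z_0=0$. Using $z_i'=w_i$, the two energies to be compared,
\begin{align*}
2e_{i-n}(W_i)&=|A^{(i-n)/2}w_i|^2+|A^{(i-n-1)/2}w_i'|^2,\\
2e_{1+i-n}(Z_i)&=|A^{(1+i-n)/2}z_i|^2+|A^{(i-n)/2}w_i|^2,
\end{align*}
share the common term $|A^{(i-n)/2}w_i|^2$. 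Hence the whole statement reduces to comparing, termwise and after summation, the two remaining families $|A^{(i-n-1)/2}w_i'|^2$ and $|A^{(1+i-n)/2}z_i|^2$.

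First I would relate these two families through the second relation above. Applying $A^{(i-n-1)/2}$ to $w_i'=-Az_i-C_{i\,i-1}z_{i-1}$ gives $A^{(i-n-1)/2}w_i'=-A^{(1+i-n)/2}z_i-A^{(i-n-1)/2}C_{i\,i-1}z_{i-1}$, so by the triangle inequality each of $|A^{(i-n-1)/2}w_i'|$ and $|A^{(1+i-n)/2}z_i|$ is bounded by the other plus the cross term $|A^{(i-n-1)/2}C_{i\,i-1}z_{i-1}|$. The key structural point is that this cross term sits at the Sobolev level $i-n-1=-(n+1-i)$: since $(A2)_n$ grants $C_{i\,i-1}^{\ast}\in\mathcal{L}(H_k)$ for $k\in\{0,\dots,n-i+1\}$, duality with pivot space $H$ gives $C_{i\,i-1}\in\mathcal{L}(H_{i-n-1})$, precisely at the endpoint $k=n+1-i$. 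Thus $|A^{(i-n-1)/2}C_{i\,i-1}z_{i-1}|\le C|A^{(i-n-1)/2}z_{i-1}|$, and the coercivity $|A^{-1/2}|\le\omega^{-1/2}$ furnished by $(A1)$ upgrades this to $C|A^{(i-n)/2}z_{i-1}|$, i.e. a constant times the first term of $e_{1+(i-1)-n}(Z_{i-1})$.

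Setting $a_i=|A^{(1+i-n)/2}z_i|^2$ and $c_i=|A^{(i-n-1)/2}w_i'|^2$, the previous step yields $c_i\le C(a_i+a_{i-1})$ and $a_i\le C(c_i+a_{i-1})$, with $a_1=c_1$ (no coupling at level one, which is the exact identity $e_{2-n}(Z_1)=e_{1-n}(W_1)$ already recorded in the proof of Proposition~\ref{PROP5}). These are finite triangular recursions: iterating $a_i\le C(c_i+a_{i-1})$ bounds $a_i$ by a constant times $\sum_{j\le i}c_j$, and symmetrically $c_i$ by a constant times $\sum_{j\le i}a_j$; summing over $i$ then gives $\sum_i a_i\asymp\sum_i c_i$, with constants depending only on $\omega$, the $\beta_i$ and $n$. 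Reinstating the shared terms $|A^{(i-n)/2}w_i|^2$ produces the two inequalities \eqref{equivxN}. I expect the only real obstacle to be bookkeeping: keeping track of the half-integer powers of $A$ and checking that the negative index at which each coupling operator must act lands exactly in the window allowed by $(A2)_n$; everything else is the triangle inequality, Cauchy--Schwarz, and the coercivity of $A$.
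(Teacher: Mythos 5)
Your proposal is correct and follows essentially the same route as the paper's proof: both extract the relations $z_i'=w_i$ and $w_i'+Az_i+C_{i\,i-1}z_{i-1}=0$ from $\mathcal{A}_nZ=W$, both use $(A2)_n$ via duality to bound the coupling term in $H_{i-n-1}$ (plus the coercivity of $A$), and both resolve the resulting triangular recursion over $i$ by finite induction and summation, with the base identity $e_{2-n}(Z_1)=e_{1-n}(W_1)$. Your only deviation is cosmetic: you strip out the shared term $|A^{(i-n)/2}w_i|^2$ and compare the remaining pieces $a_i$, $c_i$, whereas the paper runs the same recursion directly on the full energies $e_{1+i-n}(Z_i)$ and $e_{i-n}(W_i)$.
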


\begin{proof}[\bf Proof.]
 By definition of $Z$ we have $z_i^{\prime}=w_i$ for $i=1, \ldots, n$, and
 $w_i^{\prime} + Az_i + C_{i i-1}z_{i-1}=0$.  Let $ i \in \{1, \ldots,n\}$ be given arbitrarily. 
 Thanks to  assumption $(A2)_n$, we have
 $ 
 e_{1+i-n}(Z_i) \le C\big(e_{i-n}(W_i) + e_{i-n}(Z_{i-1}) 
 \big) .
 $ 
  Moreover, we have $e_{2-n}(Z_1)=e_{1-n}(W_1)$. Thanks to these two properties, we
 easily prove by induction on $j$ that
 $$
 e_{1+i-n}(Z_i) \le C \displaystyle{\sum_{j=1}^i e_{j-n}(W_j)} \,,
 $$
  which in turn implies that the second inequality in \eqref{equivxN} holds. In a similar way, we have
$ 
e_{i-n}(W_i) \le C \big(e_{1+i-n}(Z_i) + e_{i-n}(Z_{i-1})
\big) \,,
$ 
  which in turn implies that the first inequality in \eqref{equivxN} holds.
 \end{proof}

\noindent
\textbf{Proof of Theorem~\ref{nNEC}}.
Let us first assume that $\mathcal{\mathbf{B^{\ast}_n}}$ does not satisfy $(A5)_n$. We argue by contradiction
and assume that there exists $T_n^{\ast}$ such that $(OBS)_n$ holds for all
$T>T_n^{\ast}$. We choose initial data such that $u_i^0=u_i^1=0$ for all $i=1, \ldots, n-1$ whereas $(u_n^0,u_n^1)$ is arbitrary
in $H_1 \times H_0$. Then by uniqueness $u_i=u_i^{\prime} \equiv 0$ for all  $i=1, \ldots, n-1$. Thus, $u_n$
is the solution of
\begin{equation}\label{U2On} 
u_n^{\prime\prime} + A u_n=0 \,,\ \ \
(u_n,u_n^{\prime})(0)=(u_n^0,u_n^1) \,, 
\end{equation}
 whereas $(OBS)_n$ reduces to: there exists $T_n^{\ast}>0$ such that for all $T>T_n^{\ast}$ we have
$$
\begin{cases}
\exists \ C>0 \mbox{ such that } 
 \forall \  (u_n^0,u_n^1)\in  H_1 \times H_0 \mbox{ the solution of \eqref{U2On} } \mbox{ satisfies}   \\
C (e_1(U_n)(0)) \le \int_0^T||\mathcal{\mathbf{B}}^*_nU_n||_{G_{n}}^2\,dt \,.
\end{cases}
$$
 Hence $\mathcal{\mathbf{B}}^{\ast}_n$ satisfies $(A5)_n$ which contradicts our hypothesis.
We shall prove the second assertion of Theorem~\ref{nNEC} by induction on $n$. Our claim is as follows: for any integer $n \ge 2$, if the operators $(C_{i i-1})_{i \in\{2, \ldots, n\}}=(\Pi_i)_{i \in\{2, \ldots, n\}}$ do not satisfy $(A3)_n$ then there does not exist $T_n^{\ast}>0$ such that $(OBS)_n$ holds for all $T>T_n^{\ast}$. This holds true for $n=2$ thanks to Theorem~2.8 in~\cite{alabau2cascade}. Assume that it holds up to the order $n$. We assume that the operators $(C_{i i-1})_{2 \le i \le n+1}=(\Pi_i)_{2 \le i \le n+1}$ do not satisfy $(A3)_{n+1}$.
We argue by contradiction and assume that there exists $T_{n+1}^{\ast}>0$ such that for all $T>T_{n+1}^{\ast}$ all the solutions of 
\begin{equation*}\label{NSHn+1X}
\begin{cases}
u_1^{\prime\prime} + A u_1 = 0 \,,\\
u_i^{\prime\prime} + A u_i+ C_{i i-1}u_{i-1}  = 0 \,, 2 \le i \le n+1\,,\\
(u_i,u_i^{\prime})(0)=(u_i^0,u_i^1) \mbox{ for }
i=1, \ldots n+1\,,
\end{cases}
\end{equation*}
 satisfy $(OBS)_{n+1}$. Two cases are possible: either $C_{n+1 n}=\Pi_{n+1}$ does
not satisfy 
\begin{equation}\label{OUPS1}
\begin{cases}
\exists \ T_{0}>0,\mbox{ such that all the solutions } w \mbox{ of }
w'' + A w = 0       
\mbox{ satisfy}  \\
\int_0^T |C_{n+1 n}w^{\prime}|^2 dt \geq \gamma(T)e_1(W)(0)\,, \forall \ T>T_{0} \,,
\end{cases}
\end{equation}
 or the operators $(C_{i i-1})_{2 \le i \le n}=(\Pi_i)_{2 \le i \le n}$ do not satisfy $(A3)_{n}$. Assume the first alternative. We choose initial data such that $u_i^0=u_i^1=0$ for all
$i \neq n$ and $i \in \{1,\ldots,n+1\}$. Then $(OBS)_{n+1}$ together with the property $(A4)_{n+1}$ with
$e_1(U_{n+1})(0)=0$ imply that for all $T>T_{n+1}^{\ast}$ there exists $C_2>0$ such that
$$
C_2 e_0(U_n)(0) \le \int_0^T |C_{n+1n} u_n|^2 \,\,dt \,,
$$
 for all $u_n$ such that $u_n^{\prime\prime} +A u_n=0$.  We set $w=-A^{-1}u_n^{\prime}$. Then we have $w^{\prime}=u_n$ and
$w_n^{\prime\prime}+ Aw_n=0$. We set $W=(w,w^{\prime})$. Then, thanks to the above inequality and to classical density arguments, we have
for all $T>T_{n+1}^{\ast}$ and for all the solutions of 
$w^{\prime\prime} + Aw=0$ with $W(0) \in H_1 \times H_0$
$$
\int_0^T |C_{n+1n}w^{\prime}|^2 \ge C_2 e_1(W)(0)  \,,
$$
 so that $C_{n+1 n}$ satisfies \eqref{OUPS1}, which contradicts our hypothesis.
 We now consider the second alternative, assuming now that $C_{n+1 n}$ satisfies
the above observability property. We choose initial data of the form $u_{n+1}^0=
u_{n+1}^1=0$. Then $(OBS)_{n+1}$ together with the admissibility assumption $(A4)_{n+1}$ and
$e_1(U_{n+1})(0)=0$ imply that for all $T>T_{n+1}^{\ast}$ there exists $C_2>0$ such that
$$
\displaystyle{C_2 \sum_{i=1}^n e_{i-n}(U_i)(0) \le \int_0^T |C_{n+1n} u_n|^2 \,\,dt \,,}
$$
 where $(u_1, \ldots, u_n)$ is any solution of \eqref{NSHn}. Thanks to Lemma~\ref{tech1n} and
defining $W=\mathcal{A}_n^{-1}U_n$ we deduce that for all $T>T_{n+1}^{\ast}$ there exists $C>0$ such that
\begin{equation}\label{OUPS2}
\displaystyle{C \sum_{i=1}^n e_{1+i-n}(W_i)(0) \le \int_0^T |C_{n+1n} w_n^{\prime}|^2 \,\,dt \,,}
\end{equation}
 for all $W$ solution of \eqref{NSHn}. But $\widehat{\mathcal{\mathbf{B^{\ast}_n}}}$
defined by $\widehat{\mathcal{\mathbf{B^{\ast}_n}}}W_n=C_{n+1n}w_n^{\prime}$ satisfies
$(A4)_n$, and thanks to \eqref{OUPS2},  $(OBS)_n$ holds. But 
by our assumption the operators $(C_{i i-1})_{2 \le i \le n}=(\Pi_i)_{2 \le i \le n}$ do not satisfy $(A3)_{n}$. Thanks to our induction hypothesis,
$(OBS)_n$ cannot hold. We have again a contradiction. This concludes the proof.

The proof of   Corollary~\ref{nCNS}  is a direct consequence of Theorem~\ref{nNEC} and Theorem~\ref{obsNSHn}.

To handle the control problem, we shall
need to prove the admissibility and observability properties under a slightly different form
(mainly for the case $B_n \in \mathcal{L}(G_n,H)$). We have the following results.
 
  \begin{Remark}
\rm
 The operator $\mathcal{A}_n$ defined in \eqref{An} generates a $\mathcal{C}^0$-semigroup on $(H_{1-n})^n\times (H_{-n})^n$.  Hence due to the property of reversibility of time,
the Cauchy problem  $U^{\prime}=\mathcal{A}_n U$, $U(T)=U^T \in (H_{1-n})^n\times (H_{-n})^n$  is well-posed, that is
has a unique solution in $\mathcal{C}^0([0,T];(H_{1-n})^n\times (H_{-n})^n)$.  We set 
\begin{equation*}\label{x-n}
\displaystyle{X_{-(n-1)}=(\Pi_{i=1}^n H_{i-n}) \times  (\Pi_{i=1}^n H_{i-n-1})}\,.
\end{equation*}
 Since $X_{-(n-1)}
 \subset (H_{1-n})^n\times (H_{-n})^n$, we can also solve the Cauchy problem
 with $U^T \in X_{-(n-1)}$. In a similar way, we set
\begin{equation*}\label{xn}
X_{(n-1)}=\displaystyle{(\Pi_{i=1}^{n} H_{i-n+1}) \times (\Pi_{i=1}^{n} H_{i-n})}\,.
\end{equation*}
 Then, since $X_{(n-1)}
 \subset (H_{1-n})^n\times (H_{-n})^n$, we can also solve the Cauchy problem
 with $U^T \in X_{(n-1)}$.
 \end{Remark}

\begin{Lemma}\label{obsdirn}
Assume $(A1)$ and $(A2)_n-(A5)_n$. Let $T>0$ be given.
For $W^T=(w_1^T, \ldots, w_n^T,   q_1^T, 
\ldots, q_n^T) \in X_{-(n-1)}$, we denote by $W=(w_1, \ldots, w_n,$ 
$ w_1^{\prime}, \ldots, w_n^{\prime})$ the unique solution
in $\mathcal{C}^0([0,T]; (H_{1-n})^n\times (H_{-n})^n)$ of 

\begin{equation}\label{WnT}
\begin{cases}
w_1^{\prime\prime} + Aw_1=0 \,, \\
w_i^{\prime\prime} + A w_i + C_{i i-1} w_{i-1}=0 \,,  2 \le i \le n\,, \\
W_{|t=T}=W^T \,.
\end{cases}
\end{equation}
Then $W$ satisfies the following properties

\begin{itemize}
\item[(i)]
 $W \in \mathcal{C}^0([0,T]; X_{-(n-1)})$,

\item[(ii)] There exists $C_1=C_1(T)>0$, such that

\begin{equation}\label{directweak1n}
C_1 \int_0^T || \mathcal{\mathbf{B}}_n^{\ast}Z_n||_{G_n}^2 \,dt \le 
\displaystyle{\sum_{i=1}^n e_{i-n}(W_i)(0)}\,,
\end{equation}
 where $Z=\mathcal{A}_n^{-1}W$.

\item[(iii)]
 For all $T>T_n^{\ast}$, where $T_n^{\ast}$ is given in Theorem~\ref{obsNSHn}, there exists $C_2=C_2(T)>0$ such that

\begin{equation}\label{directweak2n}
\displaystyle{\sum_{i=1}^n e_{i-n}(W_i)(0)} \le C_2  \int_0^T || \mathcal{\mathbf{B}}_n^{\ast}Z_n||_{G_n}^2 \,dt \,,
\end{equation}

\item[(iv)]
 Assume furthermore that $\mathcal{\mathbf{B}}_n^*(w,w^{\prime})=B_n^*w^{\prime}$. Then
properties $(ii)-(iii)$ become

\begin{equation}\label{directweak1bisn}
C_1 \int_0^T ||B_n^{\ast}w_n||_{G_n}^2 \,dt \le\, \displaystyle{\sum_{i=1}^n e_{i-n}(W_i)(0)}
\end{equation}
 and
for all $T>T_n^{\ast}$
\begin{equation}\label{directweak2bisn}
\displaystyle{\sum_{i=1}^n e_{i-n}(W_i)(0)} \le C_2  \int_0^T || B_n^{\ast}w_n||_{G_n}^2 \,dt \,,
\end{equation}
 with the same constants $C_1$ and $C_2$ as in $(ii)-(iii)$.
\end{itemize}
\end{Lemma}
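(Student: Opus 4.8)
The plan is to exploit the fact, already encoded in Proposition~\ref{invertAn} and Corollary~\ref{coroinducn}, that the auxiliary state $Z=\mathcal{A}_n^{-1}W$ is again a solution of the abstract cascade system \eqref{ANSHn}, but living exactly one regularity level higher than $W$. This reduces all four assertions to the admissibility and observability estimates already proved in the natural graded space, namely Theorem~\ref{admissi} and Theorem~\ref{obsNSHn}, combined with the norm equivalence of Lemma~\ref{tech1n}. Concretely, since $W^\prime=\mathcal{A}_nW$ one has $Z^\prime=\mathcal{A}_n^{-1}W^\prime=\mathcal{A}_n^{-1}\mathcal{A}_nW=W=\mathcal{A}_nZ$, so $Z$ solves the same system; moreover the relations $z_i^\prime=w_i$ of Proposition~\ref{invertAn} give $Z_n=(z_n,z_n^\prime)$ with $z_n^\prime=w_n$, so that in the special case $\mathcal{\mathbf{B}}_n^\ast(w,w^\prime)=B_n^\ast w^\prime$ of item $(iv)$ one has $\mathcal{\mathbf{B}}_n^\ast Z_n=B_n^\ast w_n$. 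Thus $(iv)$ is merely the specialization of $(ii)$--$(iii)$ to this observation operator, and it suffices to establish $(i)$, $(ii)$ and $(iii)$.

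For $(i)$, I would first use the group property noted in the Remark preceding the statement to replace the terminal Cauchy problem \eqref{WnT} by an equivalent forward problem with data $W(0)$, so that $W$ is a genuine solution on $[0,T]$ and all energies at $t=0$ are well defined. That the graded space $X_{-(n-1)}$ is preserved by the flow is then proved by induction along the cascade: $w_1$ solves the free equation $w_1^{\prime\prime}+Aw_1=0$, so $(w_1,w_1^\prime)$ stays in $H_{1-n}\times H_{-n}$ by conservation of $e_{1-n}(W_1)$; and assuming $(w_{i-1},w_{i-1}^\prime)\in H_{i-1-n}\times H_{i-2-n}$, the source term $C_{i\,i-1}w_{i-1}$ of the $i$-th equation lies in $H_{i-1-n}$, because $(A2)_n$ gives $C_{i\,i-1}^\ast\in\mathcal{L}(H_k)$ for $k\le n-i+1$ and hence, by duality with $k=n-i+1$, $C_{i\,i-1}\in\mathcal{L}(H_{i-1-n})$. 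This is exactly the source regularity needed to keep $(w_i,w_i^\prime)\in H_{i-n}\times H_{i-1-n}$, yielding $W\in\mathcal{C}^0([0,T];X_{-(n-1)})$ and, by continuity of $\mathcal{A}_n^{-1}$, $Z\in\mathcal{C}^0([0,T];X_{(n-1)})$.

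For $(ii)$ and $(iii)$, the point is that both target estimates are graded-energy statements about $Z$. Applying the admissibility inequality \eqref{admissineqi} of Theorem~\ref{admissi} to $Z$ gives $\int_0^T\|\mathcal{\mathbf{B}}_n^\ast Z_n\|_{G_n}^2\,dt\le C(T)\sum_{i=1}^n e_{1+i-n}(Z_i)(0)$, and the right-hand inequality in \eqref{equivxN} of Lemma~\ref{tech1n} bounds $\sum_{i=1}^n e_{1+i-n}(Z_i)(0)$ by $C_2\sum_{i=1}^n e_{i-n}(W_i)(0)$; chaining these yields $(ii)$. Symmetrically, for $T>T_n^\ast$ the observability estimate \eqref{eqobsk} of Theorem~\ref{obsNSHn}, summed over $i=1,\dots,n$, gives $\sum_{i=1}^n e_{1+i-n}(Z_i)(0)\le\big(\sum_{i} d_{i,n}(T)\big)\int_0^T\|\mathcal{\mathbf{B}}_n^\ast Z_n\|_{G_n}^2\,dt$, while the left-hand inequality in \eqref{equivxN} bounds $\sum_{i=1}^n e_{i-n}(W_i)(0)$ from above by a constant times $\sum_{i=1}^n e_{1+i-n}(Z_i)(0)$; this produces $(iii)$.

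The step I expect to require the most care is the legitimacy of invoking Theorems~\ref{admissi} and~\ref{obsNSHn} for the solution $Z$: these were stated for data in the energy space $\mathcal{H}_n$, whereas $Z(0)=\mathcal{A}_n^{-1}W(0)$ lies in the weaker graded space $X_{(n-1)}$, its lower components being genuinely less regular than $H_1\times H_0$. The reconciliation is that both inequalities are purely graded-energy estimates whose right-hand sides are precisely the squared norm of $X_{(n-1)}$, so that $X_{(n-1)}$ is the completion of smooth data in that norm and a density argument extends the estimates to all of $X_{(n-1)}$; keeping track of this completion, together with the regularity bookkeeping of $(i)$, is the only genuinely delicate part of the argument.
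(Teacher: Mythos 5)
Your proposal is correct and follows essentially the same route as the paper: item $(i)$ by induction along the cascade using the duality consequence $C_{i\,i-1}\in\mathcal{L}(H_{i-1-n})$ of $(A2)_n$, items $(ii)$--$(iii)$ by applying the admissibility estimate \eqref{admissineqi} and the observability estimate \eqref{eqobsk} to $Z=\mathcal{A}_n^{-1}W$ and transferring them to $W$ via the norm equivalence \eqref{equivxN} of Lemma~\ref{tech1n}, and item $(iv)$ from the relation $z_n^{\prime}=w_n$. The density/extension issue you flag as the delicate point is exactly the one the paper disposes of in Remark~\ref{natspace}, by the same density argument you sketch.
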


\begin{proof}[\bf Proof.]
Since $W \! \in \!\mathcal{C}^0([0,T];(H_{1-n})^n\times (H_{-n})^n$, $w_1 \in
\mathcal{C}([0,T]; H_{1-n})$. 
Thanks to assumption $(A2)_n$, $C_{21}^{\ast} 
 \in \mathcal{L}(H_{n-1})$, thus we have 
$C_{21} \in \mathcal{L}(H_{1-n})$, thus
$w_2$ is a solution of
$$
\begin{cases}
w_2^{\prime\prime} +Aw_2=-C_{21}w_1 \in \mathcal{C}([0,T]; H_{1-n})\,, \\
(w_{2})_{|t=T}=w_2^T \in H_{2-n} \,,\ \ \
(w_{2}^{\prime})_{|t=T}=q_2^T \in H_{1-n} \,,
\end{cases}
$$
 so that $(w_2,w_2^{\prime}) \in \mathcal{C}([0,T]; H_{2-n} \times H_{1-n})$ by uniqueness.
By induction and thanks to  $(A2)_n$, we prove in a similar way that $(w_i,w_i^{\prime}) \in \mathcal{C}([0,T]; H_{i-n} \times H_{i-n-1})$. This yields $W \in \mathcal{C}^0([0,T]; X_{-(n-1)})$. We set $Z=\mathcal{A}^{-1}W$.

Thanks to \eqref{equivxN} and \eqref{admissineqi}, we easily deduce \eqref{directweak1n}. This proves $(ii)$.

Thanks to \eqref{equivxN} and \eqref{eqobsk}, we easily prove  \eqref{directweak2n}. This proves $(iii)$.

The properties $(iv)$ follow easily from the hypothesis on 
$\mathcal{\mathbf{B}}_n^*(w,w^{\prime})$ and from the definition of $Z$ which implies that
$z_n^{\prime}=w_n$.
\end{proof}

\begin{Lemma}\label{obsdirnunbounded}
Assume $(A1)$ and $(A2)_n-(A5)_n$. Let $T>0$ be given.
For $W^T=(w_1^T, \ldots, w_n^T,  
q_1^T, \ldots, q_n^T) \in X_{(n-1)}$, we denote by
 $W=(w_1, \ldots, w_n,$ $ w_1^{\prime}, \ldots, w_n^{\prime})$ the unique solution
in $\mathcal{C}^0([0,T]; (H_{1-n})^n\times (H_{-n})^n)$ of \eqref{WnT}. Then we have
$W \in \mathcal{C}^0([0,T]; X_{(n-1)})$.
\end{Lemma}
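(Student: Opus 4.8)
The plan is to upgrade the regularity of the (already unique) solution $W$ one component at a time, exactly as in part $(i)$ of Lemma~\ref{obsdirn}, but starting one level higher because now $W^T \in X_{(n-1)}$ rather than $X_{-(n-1)}$. Concretely, I would prove by induction on $i \in \{1, \ldots, n\}$ that
$$
(w_i, w_i') \in \mathcal{C}^0([0,T]; H_{i-n+1} \times H_{i-n}),
$$
which is exactly the assertion $W \in \mathcal{C}^0([0,T]; X_{(n-1)})$ once every $i$ is covered. Throughout I would use the standard fact that, under $(A1)$, the operator $A$ generates a strongly continuous group on each product space $H_k \times H_{k-1}$, so that the abstract equation $w'' + Aw = f$ with terminal data in $H_k \times H_{k-1}$ and source $f \in \mathcal{C}^0([0,T]; H_{k-1})$ has, by time reversibility and Duhamel's formula, a unique solution with $(w, w') \in \mathcal{C}^0([0,T]; H_k \times H_{k-1})$.

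For the base case $i = 1$, since $W^T \in X_{(n-1)}$ we have $(w_1^T, q_1^T) \in H_{2-n} \times H_{1-n}$, and $w_1$ solves the free equation $w_1'' + Aw_1 = 0$ in \eqref{WnT}; hence $(w_1, w_1') \in \mathcal{C}^0([0,T]; H_{2-n} \times H_{1-n})$, which is the claim for $i = 1$. For the inductive step I assume $(w_{i-1}, w_{i-1}') \in \mathcal{C}^0([0,T]; H_{i-n} \times H_{i-n-1})$. The component $w_i$ solves $w_i'' + A w_i = -C_{i\,i-1} w_{i-1}$ with terminal data $(w_i^T, q_i^T) \in H_{i-n+1} \times H_{i-n}$, so to invoke the regularity statement above I must verify that the source lies in $\mathcal{C}^0([0,T]; H_{i-n})$. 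This is precisely where $(A2)_n$ is used: it provides $C_{i\,i-1}^* \in \mathcal{L}(H_k)$ for every $k \in \{0, 1, \ldots, n-i+1\}$; transposing with respect to the pivot space $H$ (recall $H_{-k}$ is the dual of $H_k$) yields $C_{i\,i-1} \in \mathcal{L}(H_{-k})$ for the same range, that is $C_{i\,i-1} \in \mathcal{L}(H_j)$ for all $j \in \{-(n-i+1), \ldots, 0\}$. Since $0 \le n-i \le n-i+1$, the index $j = i-n = -(n-i)$ falls inside this range, so $C_{i\,i-1} \in \mathcal{L}(H_{i-n})$. Combined with the induction hypothesis $w_{i-1} \in \mathcal{C}^0([0,T]; H_{i-n})$, this gives $C_{i\,i-1} w_{i-1} \in \mathcal{C}^0([0,T]; H_{i-n})$, whence $(w_i, w_i') \in \mathcal{C}^0([0,T]; H_{i-n+1} \times H_{i-n})$, closing the induction.

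The argument is essentially routine regularity bootstrapping and carries no serious analytic obstacle; the only point demanding care is the index bookkeeping of the inductive step, namely confirming that the dualized boundedness furnished by $(A2)_n$ covers exactly the level $H_{i-n}$ required at each stage (it does, with one unit to spare, since $n-i \le n-i+1$). Finally, uniqueness of the solution in the larger space $\mathcal{C}^0([0,T]; (H_{1-n})^n \times (H_{-n})^n)$, together with the inclusion $X_{(n-1)} \subset (H_{1-n})^n \times (H_{-n})^n$ noted in the preceding Remark, guarantees that the more regular solution constructed here coincides with the given $W$, completing the proof.
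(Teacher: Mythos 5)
Your proof is correct and is essentially the argument the paper intends: the paper states that the proof of Lemma~\ref{obsdirnunbounded} is "similar to that of Lemma~\ref{obsdirn} and is left to the reader," and your component-by-component bootstrap (free equation for $w_1$, then Duhamel for $w_i''+Aw_i=-C_{i\,i-1}w_{i-1}$ with the dualized bound $C_{i\,i-1}\in\mathcal{L}(H_{i-n})$ obtained from $(A2)_n$, and identification with the given solution by uniqueness in the larger space) is exactly that argument shifted one regularity level up. Your index bookkeeping is also accurate, including the observation that here only $k=n-i$ is needed from $(A2)_n$, whereas the bounded case of Lemma~\ref{obsdirn} uses the endpoint $k=n-i+1$.
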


\noindent
{\bf Proof.}
The proof is similar to that of Lemma~\ref{obsdirn} and is left to the reader.

\begin{Remark}\label{natspace}
\rm
The admissibility property gives a hidden regularity result and holds true (by extension) for all $U^0 \in X_{-(n-1)}$ (case of bounded control operator $\mathcal{\mathbf{B^{\ast}_{n}}}$) and for all $U^0 \in X_{n-1}$ (case of unbounded control operator $\mathcal{\mathbf{B^{\ast}_{n}}}$). In a similar way the observability properties given in Theorem~\ref{obsNSHn} below also holds for all initial data in $X_{-(n-1)}$ (resp. in $X_{n-1}$). The extension is a consequence of usual density arguments, of the conservation of the energy for the first equation and continuous dependence with respect to source terms and initial data for a single wave equation with a source term.
Hence, the spaces $X_{-(n-1)}$ (case of bounded control operator) and $X_{n-1}$ (case of unbounded control operator) are in some way the "natural" spaces  to set the admissibility and observability properties given respectively in the Theorem~\ref{admissi} and Theorem~\ref{obsNSHn}. Note also that the assumptions on the coupling operators $C_{i i-1}$ for $i=2, \ldots,n$ are
also the "natural" assumptions for well-posedness of the dual homogeneous observability system in $X_{-(n-1)}$ and $X_{n-1}$. 
\end{Remark}

\subsection{Controllability of bi-diagonal $n$-coupled cascade hyperbolic \ \\
systems by a single control}

We apply the HUM method~\cite{lions} (see also~\cite{dolecruss, LLT}) to deduce from the indirect observability inequality obtained in the previous section an indirect exact controllability result for the dual problem. We refer to~\cite{alabau2cascade} for the definition of transposition solutions for the case of $2$-coupled cascade systems.

We consider the control problem
\begin{equation}\label{CTHn}
\begin{cases}
y_i^{\prime\prime} + A y_i +C_{i+1 i}^{\ast}y_{i+1}= 0 \,,  1 \le i \le n-1\,,\\
y_{n}^{\prime\prime} + A y_{n} = B_n v_n\,,\\
(y_i,y_i^{\prime})(0)=(y_i^0,y_i^1) \mbox{ for }
i=1, \ldots, n \,,
\end{cases}
\end{equation}
 where either $B_n \in  \mathcal{L}(G_n;H)$ (bounded control operator) or
 $B_n \in \mathcal{L}(G_n, H_2^{\prime})$ (unbounded control operator). 
 
  We set for all the sequel
 $Y_0=(y_1^0, \ldots, y_n^0, y_1^1, \ldots, y_n^1)$
and  denote by $Y=(y_1,\ldots, y_n,y_1^{\prime},\ldots, y_n^{\prime})$ the solution of \eqref{CTHn} with initial data $Y_0$.
 
\begin{Theorem}\label{control2n}
Assume the hypotheses $(A1)_n-(A5)_n$. We define $T_n^{\ast}>0$ as in Theorem~$\ref{obsNSHn}.$
\begin{itemize}
\item[(i)] Let $\mathcal{\mathbf{B}}_n^*(w_n,w_n^{\prime})=B_n^*w_n^{\prime}$ 
with $B_n \in \mathcal{L}(G_n,H)$. We set 
\begin{equation}\label{Xstar-n}
X_{-(n-1)}^{\ast}=\displaystyle{(\Pi_{i=1}^{n} H_{n-i+1}) \times (\Pi_{i=1}^{n} H_{n-i})}\,.
\end{equation}
 Then, for  all $T >T_n^{\ast}$
and all $Y_0 \in X_{-(n-1)}^{\ast}$, there exists a control function $v_n \in L^2((0,T);G_n)$ such that the solution $Y$ of \eqref{CTHn} with initial data $Y_0$ satisfies $Y(T)=0$.
\item[(ii)]
 Let $\mathcal{\mathbf{B}}_n^*(w_n,w_n^{\prime})=B_n^*w_n$ with $B_n \in\mathcal{L}(G_n, H_2^{\prime})$. We set 
\begin{equation}\label{Xstar-nbd}
X_{(n-1)}^{\ast}=\displaystyle{(\Pi_{i=1}^{n} H_{n-i}) \times (\Pi_{i=1}^{n} H_{n-i-1})}\,.
\end{equation}
 Then, for all $T >T_n^{\ast}$
and all $Y_0 \in X_{(n-1)}^{\ast}$, there exists a control function $v_n \in L^2((0,T);G_n)$ such that the solution $Y$ of \eqref{CTHn} with initial data $Y_0$ satisfies $Y(T)=0$.
\end{itemize}
\end{Theorem}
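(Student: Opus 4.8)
The plan is to apply the Hilbert Uniqueness Method \cite{lions}, using the adjoint homogeneous system \eqref{WnT} as the observation system and the sharp admissibility/observability estimates of Lemma~\ref{obsdirn} and Lemma~\ref{obsdirnunbounded} to invert the control-to-data map by a Lax--Milgram argument. The starting point is that \eqref{CTHn} is the formal adjoint of \eqref{WnT}: the coupling operators $C_{i+1\,i}^{\ast}$ in \eqref{CTHn} are precisely the transposes of the operators $C_{i\,i-1}$ of \eqref{WnT}, and $B_n$ acts on the last equation dually to the observation of $w_n$. The duality is consistent with the spaces in the statement: since $H_{n-i+1}=H_{-(i-n-1)}$ and $H_{n-i}=H_{-(i-n)}$, the space $X_{-(n-1)}^{\ast}$ (resp. $X_{(n-1)}^{\ast}$) is exactly the dual, with pivot $\mathcal{H}_n$, of the space $X_{-(n-1)}$ (resp. $X_{(n-1)}$) in which the adjoint data $W^T$ live.

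I would first treat case (i). For final data $W^T\in X_{-(n-1)}$ I solve \eqref{WnT} to obtain $W$, which by Lemma~\ref{obsdirn}(i) belongs to $\mathcal{C}^0([0,T];X_{-(n-1)})$, and I define the symmetric bilinear form
\begin{equation*}
a(W^T,\widehat W^T)=\int_0^T \langle B_n^{\ast}w_n, B_n^{\ast}\widehat w_n\rangle_{G_n}\,dt \,.
\end{equation*}
The admissibility bound \eqref{directweak1bisn} shows that $a$ is continuous on $X_{-(n-1)}$, while the observability bound \eqref{directweak2bisn}, valid for $T>T_n^{\ast}$, together with the energy equivalence \eqref{equivxN} and the continuous dependence of the weakened energies $e_{i-n}(W_i)$ on the data, shows that $a$ is coercive for the norm $\big(\sum_{i=1}^n e_{i-n}(W_i)(T)\big)^{1/2}$ of $X_{-(n-1)}$. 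By Lax--Milgram the operator $\Lambda\colon X_{-(n-1)}\to X_{-(n-1)}^{\ast}$ induced by $a$ is an isomorphism.

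The control is then produced by inverting $\Lambda$. Concretely, for $W^T\in X_{-(n-1)}$ I set $v_n=B_n^{\ast}w_n\in L^2((0,T);G_n)$ and solve \eqref{CTHn} in the transposition sense (as in \cite{alabau2cascade}) with $Y(T)=0$; the element $\Lambda W^T\in X_{-(n-1)}^{\ast}$ is the data of $Y$ at $t=0$, read through the pivot pairing. Multiplying the equations of \eqref{CTHn} by the components of the adjoint solution $\widehat W$ generated by a test datum $\widehat W^T$, integrating by parts in time and using that each $C_{i+1\,i}^{\ast}$ is the transpose of $C_{i\,i-1}$, all contributions at $t=T$ drop out because $Y(T)=0$, and the surviving identity is exactly $\langle \Lambda W^T,\widehat W^T\rangle = a(W^T,\widehat W^T)$. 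Hence $\Lambda$ coincides with the Lax--Milgram isomorphism, and given $Y_0\in X_{-(n-1)}^{\ast}$ the choice $W^T=\Lambda^{-1}Y_0$ yields a control $v_n$ whose state satisfies $Y(0)=Y_0$ and $Y(T)=0$. Case (ii) follows the same scheme with $X_{-(n-1)}$ replaced by $X_{(n-1)}$: the regularity $W\in\mathcal{C}^0([0,T];X_{(n-1)})$ of Lemma~\ref{obsdirnunbounded}, combined with the admissibility of Theorem~\ref{admissi} and the observability of Theorem~\ref{obsNSHn} for the unbounded operator $\mathcal{\mathbf{B}}_n^{\ast}(w_n,w_n^{\prime})=B_n^{\ast}w_n$, gives the continuity and coercivity of the analogous form on $X_{(n-1)}$, whose dual is $X_{(n-1)}^{\ast}$.

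The main obstacle I expect is not the abstract Lax--Milgram step but the rigorous handling of the duality in the weakened energy spaces: one must pin down the precise pivot pairing between $X_{-(n-1)}^{\ast}$ and $X_{-(n-1)}$ (and between $X_{(n-1)}^{\ast}$ and $X_{(n-1)}$), and justify the transposition formulation of \eqref{CTHn} together with the integration-by-parts identity when the data lie in negative-order spaces and, in case (ii), $B_n$ is unbounded. These points rest on the density arguments and continuous-dependence estimates already invoked in Remark~\ref{natspace}, and on careful bookkeeping of the shifted energy levels so that the single observation $\int_0^T\|B_n^{\ast}w_n\|_{G_n}^2\,dt$ controls exactly the $X_{-(n-1)}$-norm (resp. $X_{(n-1)}$-norm) of the adjoint data.
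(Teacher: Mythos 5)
Your strategy coincides with the paper's own proof: HUM via Lax--Milgram for the Gramian form $\int_0^T\langle B_n^{\ast}w_n,B_n^{\ast}\widetilde{w}_n\rangle_{G_n}\,dt$ on $X_{-(n-1)}$ (resp.\ $X_{(n-1)}$), with continuity/coercivity supplied by Lemma~\ref{obsdirn} (resp.\ by Theorem~\ref{admissi} and Theorem~\ref{obsNSHn}), the control $v_n=B_n^{\ast}w_n$, and the vanishing of $Y(T)$ read off from the transposition identity. Whether one inverts the HUM operator $\Lambda$ (your phrasing) or solves the variational equation $\Lambda_n(W^T,\cdot)=-\mathcal{L}_n(\cdot)$ with the linear form built from $Y_0$ (the paper's phrasing) is only a difference of presentation, and your bookkeeping of the dual spaces, including the position/velocity swap in the pivot pairing, is correct.

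There is, however, one step you dispose of too quickly, and it is precisely the step on which the paper spends most of its proof. Continuity and coercivity of your form with respect to the natural norm of $X_{-(n-1)}$, which is the norm of the \emph{final} data $\big(\sum_k e_{k-n}(W_k)(T)\big)^{1/2}$, and equally the boundedness of the map $Y_0\mapsto\Lambda^{-1}Y_0$ (in the paper's formulation, the continuity of $\mathcal{L}_n$, which involves $w_k(0)$ and $w_k^{\prime}(0)$), all require the quantitative comparison between the weakened energies at $t=0$ and at $t=T$, namely inequality \eqref{OKA1}:
\[
\sum_{k=1}^n e_{k-n}(W_k)(0) \le C \sum_{k=1}^n e_{k-n}(W_k)(T)\,,
\]
since the admissibility and observability bounds \eqref{directweak1bisn}--\eqref{directweak2bisn} are expressed at time $0$. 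This comparison is not contained in Lemma~\ref{obsdirn}(i), which gives only the qualitative statement $W\in\mathcal{C}^0([0,T];X_{-(n-1)})$, nor in Remark~\ref{natspace}, which concerns extension by density of the admissibility/observability inequalities; and it is not automatic, because the cascade system is non-conservative: only $e_{1-n}(W_1)$ is conserved, so the energies of the remaining components at time $0$ must be propagated from the data at time $T$ through the couplings. The paper proves \eqref{OKA1} by induction on $n$, passing to $Z=\mathcal{A}_n^{-1}W$, using the equivalence of Lemma~\ref{tech1n}, energy estimates for $z_n$, and, crucially, the admissibility bound \eqref{admiss*} applied to the time-reversed problem to control $\int_0^T|z_{n-1}|^2\,dt$; the resulting estimate \eqref{OKXT} (with $U$ in place of $Z$) is also exactly what yields the continuity of $\mathcal{J}_n$ in case (ii). Your proposal should either reproduce this induction or replace it by a direct component-by-component energy estimate for the backward system (using $C_{k\,k-1}\in\mathcal{L}(H_{k-n-1})$, which follows from $(A2)_n$ by duality); as written, the decisive estimate is assumed rather than proved.
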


\begin{proof}[\bf Proof.]
We first consider the case $(i)$.  Let $Y_0=(y_1^0,\ldots, y_n^0,y_1^1,\ldots, y_n^1) \in 
X_{-(n-1)}^{\ast}$. We consider the bilinear form $\Lambda_n$ on $X_{-(n-1)}$ defined by
\begin{equation*}\label{Lambdan}
\Lambda_n(W^T,\widetilde{W}^T)= \int_0^T \langle B_n^{\ast}w_n, B_n^{\ast}\widetilde{w_n}\rangle_{G_n}\,dt \,,
\forall \ W^T, \widetilde{W}^T \in X_{-(n-1)} \,,
\end{equation*}
 and the linear form on $X_{-(n-1)}$ defined for all $W^T  \in X_{-(n-1)}$
by
\begin{equation*}\label{Ln}
\mathcal{L}_n(W^T)=\displaystyle{
\sum_{k=1}^n \langle y_k^1, w_k(0)\rangle_{H_{n-k},H_{k-n}} -
\sum_{k=1}^n \langle y_k^0, w_k^{\prime}(0)\rangle_{H_{n-k+1},H_{k-n-1}} \,,
}
\end{equation*}
 where $W=(w_1,\ldots, w_n,w_1^{\prime},\ldots, w_n^{\prime})$ and
$\widetilde{W}=(\widetilde{w_1},\ldots, \widetilde{w_n},\widetilde{w_1}^{\prime},\ldots,\widetilde{w_n}^{\prime})$ are respectively solutions of \eqref{WnT} and
\begin{equation*}\label{tildeWn}
\begin{cases}
\widetilde{w_1}^{\prime\prime} + A\widetilde{w_1}=0 \,, \\
\widetilde{w_2}^{\prime\prime} + A \widetilde{w_2} + C_{21} \widetilde{w_1}=0 \,, \\
\vdots \,,\\
\widetilde{w_n}^{\prime\prime} + A \widetilde{w_n} + C_{n n-1} \widetilde{w_{n-1}}=0 \,, 
\end{cases}
\widetilde{W}_{|t=T}=\widetilde{W}^T \,.
\end{equation*}
 Thanks respectively to \eqref{directweak1bisn} and to 
\eqref{directweak2bisn}, $\Lambda_n$ is continuous and coercive on $X_{-(n-1)}$ for $T > T_n^{\ast}$. On the other hand, we prove that $\mathcal{L}_n$ is continuous on $X_{-(n-1)}$
as follows. We shall prove by induction on $n \ge 2$ that
\begin{equation}\label{OKA1}
\sum_{k=1}^n e_{k-n}(W_k)(0) \le C \sum_{k=1}^n e_{k-n}(W_k)(T) \,.
\end{equation}
 We already prove this property for $n=2$ in the proof of $(i)$ of Theorem 2.22 in~\cite{alabau2cascade}. However we shall recall briefly how to proceed. Set $n=2$ and $Z=\mathcal{A}_2^{-1}W$.
From the usual energy estimates for the time reverse problem for $Z=\mathcal{A}_2^{-1}W$, and the conservation of $e_0(Z_1)$ through time we have
\begin{equation*}\label{ZZTOP}
e_0(Z_1)(0) + e_1(Z_2)(0) \le C( e_0(Z_1)(T) + e_1(Z_2)(T)) \,.
\end{equation*}
 We now proceed as in  Lemma~2.17-(iv) in~\cite{alabau2cascade}. We have
$Z=(z_1,z_2,z_1^{\prime} , z_2^{\prime})$ where $z_1=-A^{-1}w_1^{\prime}$, $z_2=
 -A^{-1}w_2^{\prime} +A^{-1}C_{21}A^{-1}w_1^{\prime}$, $z_i^{\prime}=w_i$ for $i=1,2$. Therefore
 $e_0(Z_1)=e_{-1}(W_1)$. On the other hand, we have
  $$
 e_0(W_2)=\tfrac{1}{2}\big(|w_2|^2+ |A^{-1/2}w_2^{\prime}|^2\big) \le
 C\big(e_0(Z_1)+ e_1(Z_2)\big)\,.
$$
 And we also have
$ 
e_1(Z_2)=\tfrac{1}{2}\big(|A^{1/2}z_2|^2+ |z_2^{\prime}|^2\big) \le
 C\big(e_{-1}(W_1)+ e_0(W_2)\big)\,.
$ 
  Therefore, we have $e_0(W_2) \le C \big( e_0(Z_1) + e_1(Z_2) \big)$,
$e_1(Z_2) \le C \big(e_{-1}(W_1)+e_0(W_2)\big)$. Hence we have
 $C_1 \big(e_0(Z_1) +e_1(Z_2)\big) \le e_{-1}(W_1) + e_0(W_2) \le
 C_2 \big( e_0(Z_1) + e_1(Z_2)\big)$.
 This leads to
$$
e_{-1}(W_1)(0) + e_0(W_2)(0) \le C (e_{-1}(W_1)(T) + e_0(W_2)(T)) \,,
$$
 This proves \eqref{OKA1} for $n=2$.
Assume that it holds up to order $n-1$. 
Set $Z=\mathcal{A}_n^{-1}W$.  Thanks to Lemma~\ref{tech1n} and to \eqref{OKA1} for $n-1$, we have
\begin{align}\label{OKAX}
& 
C_1 \sum_{k=1}^n e_{k-n}(W_k)(0) \le \sum_{k=1}^n e_{k-n+1}(Z_k)(0) \\
&
\notag \le 
C (\sum_{k=1}^{n-1} e_{k-n+1}(Z_k)(T) +  e_1(Z_n)(0)) \,.
\end{align}
 The usual energy estimates for $z_n$ which solves $z_n^{\prime\prime} + Az_n
+ C_{n n-1} z_{n-1}=0$ yields
$$
e_1(Z_n)(0) \le C(e_1(Z_n)(T) + \int_0^T |z_{n-1}|^2 \,dt) \,.
$$
 Thanks to \eqref{admiss*} with $Z$ replacing $U$ and $0$ replaced by $T$ (reverse problem) we have
$$
\int_0^T |z_{n-1}|^2 \,dt \le C \sum_{k=1}^{n-1} e_{k-n+1}(Z_k) (T) \,,
$$
 so that
\begin{equation}\label{OKXT}
\sum_{k=1}^n e_{k-n+1}(Z_k)(0) \le C \sum_{k=1}^{n} e_{k-n+1}(Z_k) (T) \,.
\end{equation}
Using this inequality in \eqref{OKAX} and once again
Lemma~\ref{tech1n}, we obtain
$$
\sum_{k=1}^n e_{k-n}(W_k)(0) \le C\sum_{k=1}^{n} e_{k-n+1}(Z_k)(T) \le
C \sum_{k=1}^n e_{k-n}(W_k)(T)\,,
$$
 so that our induction property \eqref{OKA1} is proved. This proves that $\mathcal{L}_n$ is continuous on $X_{-(n-1)}$. Hence, 
thanks to Lax-Milgram Lemma, there exists a unique $W^T \in X_{-(n-1)}$ such that
$  \label{HUMn}
\Lambda_n(W^T,\widetilde{W}^T)=-\mathcal{L}_n(\widetilde{W}^T) \,, 
$
$\forall \widetilde{W}^T \in X_{-(n-1)} \,.
$
 We set $v_n=B_n^{\ast}w_n$. Then $v_n \in L^2((0,T);G_n)$ and we have by definition of
the solution of \eqref{CTHn} by transposition
\begin{align*} 
\int_0^T \langle v_n, B_n^{\ast}\widetilde{w_n}\rangle_{G_n} \,dt  
&
= - 
\mathcal{L}_n(\widetilde{W}^T) + 
\displaystyle{
\sum_{k=1}^n \langle y_k^{\prime}(T), \widetilde{w_k}(T)\rangle_{H_{n-k},H_{k-n}} } \\
&
- 
\displaystyle{\sum_{k=1}^n \langle y_k(T), \widetilde{w_k}^{\prime}(T)\rangle_{H_{n-k+1},H_{k-n-1}}
} , \ \ \forall \ \widetilde{W}^T \in X_{-(n-1)}\,.
\end{align*} 
 On the other hand, we have
$$
\int_0^T \langle v_n, B_n^{\ast}\widetilde{w_n}\rangle_{G_n} \,dt=\Lambda_n(W^T,\widetilde{W}^T)=-\mathcal{L}_n(\widetilde{W}^T) \  \forall \ 
\widetilde{W}^T \in X_{-(n-1)}\,,
$$
 so that, we deduce from these two relations that $$Y(T)=(y_1,\ldots, y_n,y_1^{\prime},\ldots, y_n^{\prime})(T)=0\,.$$
Assume now that $(ii)$ holds. Let $Y_0 \in X_{(n-1)}^{\ast}$.  We consider on
$X_{(n-1)}$ the bilinear form defined by

\begin{equation*}\label{Lambdaunbn}
\widetilde{\Lambda}_n(U^T,\widetilde{U}^T)= \int_0^T \langle B_n^{\ast}u_n, B^{\ast}\widetilde{u_n}\rangle_{G_n}\,dt \,,
\forall \ U^T, \widetilde{U}^T \in X_{(n-1)} \,,
\end{equation*}
 and the linear form on $X_{(n-1)}$ defined for all $U^T  \in X_{(n-1)}$, by
\begin{equation*}\label{Lunbn}
\mathcal{J}_n(U^T)= 
\displaystyle{
\sum_{k=1}^n \langle y_k^1, u_k(0)\rangle_{H_{n-k-1},H_{k+1-n}} 
-\sum_{k=1}^n \langle y_k^0, u_k^{\prime}(0)\rangle_{H_{n-k},H_{k-n}}
}\,.
\end{equation*}
Thanks respectively to \eqref{admissineqi} and to 
\eqref{eqobsk} (applied with $U$), $\widetilde{\Lambda}_n$ is continuous and coercive on $X_{(n-1)}$ for $T > T_n^{\ast}$. The proof of continuity of $\mathcal{J}_n$ on $X_{(n-1)}$
follows
from \eqref{OKXT} with $U$ replacing Z. Hence, 
thanks to Lax-Milgram Lemma, there exists a unique $U^T \in X_{(n-1)}$ such that

\begin{equation*}\label{HUMnn}
\widetilde{\Lambda}_n(U^T,\widetilde{U}^T)=-\mathcal{J}_n(\widetilde{U}^T) \,, \quad \forall \ \widetilde{U}^T \in X_{(n-1)} \,.
\end{equation*}
 We set $v_n=B_n^{\ast} u_n$. We deduce as for the case $(i)$ that $Y(T)=0$.
\end{proof}

\section{Control and observation of mixed bi-diagonal and non bi-diagonal $(n+p)$-coupled cascade hyperbolic systems by $p+1$ controls/observations}

The proofs of most of the results in this section are given in the appendix at the end of the paper.

\subsection{Observability  for mixed $(n+p)$-coupled cascade hyperbolic systems by $p+1$
observations}
Let $n \ge 2$ and $p \ge 1$ be  fixed integers. We will generalize our previous results on bi-diagonal $(n+p)$-coupled cascade systems to  mixed bi-diagonal and non-bidiagonal $(n+p)$-coupled cascade systems. More precisely, we shall discuss cascade systems of the form
\begin{equation}\label{NSHnmixed}
\begin{cases}
u_1^{\prime\prime} + A u_1 = 0 \,,\\
u_i^{\prime\prime} + A u_i+ C_{i i-1}u_{i-1}  = 0 \,, 2 \le i \le n \,, \\
\displaystyle{u_{i}^{\prime\prime} + A u_{i}+ \sum_{k=n-1}^{i-1} C_{i k}\,u_k =0 \,,  n+1 \le i \le n+p \,,}\\
(u_i,u_i^{\prime})(0)=(u_i^0,u_i^1) \mbox{ for }
i=1, \ldots n+p\,,
\end{cases}
\end{equation}

\begin{Remark} \rm
Hence the  cascade systems we discuss now, are bi-diagonal in their $n$ first equations,
and then non bi-diagonal for the next equations ranging from $n+1$ to $n+p$. Furthermore, these
last $p$ equations have a peculiar structure: the equation for $u_{n+1}$ have $2$ non
vanishing coupling terms, the next one $3$, up to for the last equation $p+1$ non-vanishing
coupling terms. This form is required for the extension of our previous results.
\end{Remark}
\subsubsection{Main results for observability of $n+p$-coupled cascade systems by $p+1$ observations}
\begin{Theorem}\label{mixedsystem}
Let $n \ge 2$ be an integer.  We assume 
that for all $i=2, \ldots,n$,
the operators $C_{i i-1}$ satisfy the assumption $(A2)_n$ where the operators
$\Pi_i$ satisfy $(A3)_n$. We assume that the operators $C_{i\, j}$ 
and $C_{i\, j}^{\ast}$ are in $\mathcal{L}(H_k)$ for 
$i \in \{n+1, \ldots, n+p\}$ and $j \in \{n-1, \ldots, i-1\}$ and $k=0,1$.
Moreover, let $\mathcal{\mathbf{B^{\ast}_{n+j}}}$ for $j=0$ to $j=p$ be
any given operators satisfying $(A4)_{n+j}-(A5)_{n+j}$ for all $j$ in $\{0,\ldots,p\}$. 
 Then for all $T>0$ there exists
 $C(T)>0$ such that all 
the solutions of \eqref{NSHnmixed} satisfy the following direct inequality
 \begin{equation}\label{admissineqimixed}
 \displaystyle{\sum_{j=0}^{p}
 \int_0^T ||\mathcal{\mathbf{B^{\ast}_{n+j}}}U_{n+j}||_{G_{n+j}}^2 \,dt \le C(T) \Big(
 \sum_{i=1}^{n} e_{1+i-n}(U_{i})(0) + \sum_{j=1}^p e_1(U_{n+j})(0) \Big) \,.} 
 \end{equation}
 Moreover, there exists $T_{n+p}^{\ast}>0$ such that for all $i=1, \ldots, n$, there exist constants 
$d_{i,n}(T)>0$ such that all $T>T_{n+p}^{\ast}$, all the solutions $U$ of \eqref{NSHnmixed}  satisfy the following observability inequalities
\begin{equation}\label{eqobskmixed}
e_{1+i-n}(U_i)(0) \le d_{i,n}(T) \int_0^T \| \mathcal{\mathbf{B^{\ast}_n}}(U_n) \|_{G_{n}}^2 dt \,,
\forall \ i=1, \ldots, n \,,
\end{equation}
 and
\begin{equation}\label{eqobskmixed2}
 \displaystyle{e_{1}(U_{n+k})(0) \le \rho_{n,k}(T) \sum_{l=0}^k\int_0^T \| \mathcal{\mathbf{B^{\ast}_{n+l}}}(U_{n+l}) \|_{G_{n+l}}^2 dt \,,}
\forall \ k=1, \ldots, p \,,
\end{equation}
 where $d_{i,n}(T)>0$ are the constants given in Theorem~$\ref{obsNSHn}$ and
where   $\rho_{n,k}(T)$ $> 0$ are explicit constants which depend on $n$, $k$ and $T$.
\end{Theorem}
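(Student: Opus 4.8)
The plan is to exploit the triangular (cascade) structure of \eqref{NSHnmixed}. The first $n$ equations form a closed bi-diagonal $n$-coupled system of the form \eqref{NSHn}, completely decoupled from the components $u_{n+1},\ldots,u_{n+p}$, while each of the remaining equations is a single wave equation whose source, by the structure of \eqref{Mn+p}, reaches back into the first block only through $u_{n-1}$ and $u_n$, together with the previously treated components $u_{n+1},\ldots,u_{i-1}$.

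\textbf{First block.} Since $(u_1,\ldots,u_n)$ solves exactly \eqref{NSHn} with observation operator $\mathcal{\mathbf{B^{\ast}_n}}$, Theorem~\ref{obsNSHn} applies verbatim and gives \eqref{eqobskmixed} with the same constants $d_{i,n}(T)$. In addition, the constructive property $(\mathcal{P}_n)$ supplies the time-averaged bounds $\int_0^T e_{1+i-n}(U_i)(t)\,dt\le k_{i,n}(T)\int_0^T\|\mathcal{\mathbf{B^{\ast}_n}}U_n\|_{G_n}^2\,dt$ for $i=1,\ldots,n$. Using the coercivity of $A$, which yields $|u|^2\le \tfrac{2}{\omega}e_1(U)$, for $i=n$, and $e_0(U_{n-1})\ge\tfrac12|u_{n-1}|^2$ for $i=n-1$, this controls $\int_0^T|u_{n-1}|^2\,dt$ and $\int_0^T|u_n|^2\,dt$ by $\int_0^T\|\mathcal{\mathbf{B^{\ast}_n}}U_n\|_{G_n}^2\,dt$; these are the only first-block components entering the sources of the last $p$ equations.

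\textbf{Observability of the last $p$ components.} I would establish \eqref{eqobskmixed2} by induction on $k$, carrying as a strengthened hypothesis both the stated bound on $e_1(U_{n+k})(0)$ and the time-averaged bound $\int_0^T e_1(U_{n+k})(t)\,dt\le C\sum_{l=0}^k\int_0^T\|\mathcal{\mathbf{B^{\ast}_{n+l}}}U_{n+l}\|_{G_{n+l}}^2\,dt$. For $u_{n+k}''+Au_{n+k}=f_{n+k}$ with $f_{n+k}=-\sum_{l=n-1}^{n+k-1}C_{n+k\,l}u_l$, the hypotheses $C_{n+k\,l}\in\mathcal{L}(H)$ give $\int_0^T|f_{n+k}|^2\,dt\le C\int_0^T\big(|u_{n-1}|^2+|u_n|^2+\sum_{l=1}^{k-1}|u_{n+l}|^2\big)\,dt$, which by the first block and the induction hypothesis (again via $|u_{n+l}|^2\le\tfrac{2}{\omega}e_1(U_{n+l})$) is bounded by $C\sum_{l=0}^{k-1}\int_0^T\|\mathcal{\mathbf{B^{\ast}_{n+l}}}U_{n+l}\|_{G_{n+l}}^2\,dt$. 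Applying the uniform nonhomogeneous observability estimate \eqref{obsabs} of Lemma~\ref{AL} to $u_{n+k}$ with this source yields the time-averaged bound. Finally, integrating the energy identity $e_1'(U_{n+k})(t)=\langle f_{n+k},u_{n+k}'\rangle$ twice in time, exactly as in the proof of Lemma~\ref{PROP4}, gives for a suitable choice of the free parameter $T\,e_1(U_{n+k})(0)\le C\int_0^T e_1(U_{n+k})(t)\,dt+CT^2\int_0^T|f_{n+k}|^2\,dt$, whence \eqref{eqobskmixed2} with an explicit $\rho_{n,k}(T)$.

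\textbf{Admissibility and the main obstacle.} For $j=0$, inequality \eqref{admissineqi} of Theorem~\ref{admissi} applied to the first block gives the bound by $\sum_{i=1}^n e_{1+i-n}(U_i)(0)$. For $j\ge1$, I would apply $(A4)_{n+j}$ to the scalar equation for $u_{n+j}$ with source $f_{n+j}$; it then remains to bound $e_1(U_{n+j})(T)$, $\int_0^T e_1(U_{n+j})(t)\,dt$ and $\int_0^T|f_{n+j}|^2\,dt$ by the right-hand side of \eqref{admissineqimixed}. The delicate point, and the main obstacle, is the mismatch of functional levels between the two blocks: the first block only controls weakened energies $e_{1+i-n}$, yet the sources of the last equations must be measured in $L^2(0,T;H)$. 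What closes the argument is precisely that these sources meet the first block only through $u_{n-1}$ and $u_n$, and $\int_0^T|u_{n-1}|^2\,dt$, $\int_0^T|u_n|^2\,dt$ are exactly the quantities bounded by $\sum_{i=1}^n e_{1+i-n}(U_i)(0)$ via the admissibility estimates \eqref{admiss*}--\eqref{admissx*}. A classical forward-in-time energy estimate for each $u_{n+l}$, run inductively on $l$, then bounds $\sup_{[0,T]}e_1(U_{n+l})(t)+\int_0^T e_1(U_{n+l})(t)\,dt$ by $\sum_{i=1}^n e_{1+i-n}(U_i)(0)+\sum_{m=1}^l e_1(U_{n+m})(0)$. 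Summing the resulting bounds over $j=0,\ldots,p$ gives \eqref{admissineqimixed}.
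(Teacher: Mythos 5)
Your proposal is correct and follows essentially the same route as the paper's own proof: you treat the first $n$ equations as a closed bi-diagonal cascade system (Theorem~\ref{obsNSHn} and its admissibility counterpart give \eqref{eqobskmixed} and the $L^2$-in-time control of $u_{n-1}$, $u_n$ via \eqref{admiss*}--\eqref{admissx*}), and you handle the last $p$ components by induction on their index, combining the uniform nonhomogeneous observability estimate \eqref{obsabs} of Lemma~\ref{AL} with the double time-integration energy argument, exactly as in the appendix. The only deviations (carrying $\int_0^T e_1(U_{n+k})\,dt$ in the induction instead of $\int_0^T |u_{n+k}|^2\,dt$, and invoking the time-averaged bounds of $(\mathcal{P}_n)$ directly rather than deriving \eqref{8x} from \eqref{eqobskmixed} and the admissibility estimates) are immaterial variants of the same argument.
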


 \begin{Remark}
\rm
 The operator $\mathcal{A}_{n+p}$  generates a $\mathcal{C}^0$-semigroup on $((H_{1-n})^n\times (H_{-1})^p) \times ( (H_{-n})^n \times (H_{-2})^p)$.  Hence due to the property of reversibility of time,
the Cauchy problem  $U^{\prime}=\mathcal{A}_{n+p} U$, $U(T)=U^T \in ((H_{1-n})^n\times (H_{-1})^p) \times ( (H_{-n})^n \times (H_{-2})^p)$  is well-posed, that is
has a unique solution in $\mathcal{C}^0([0,T]; ((H_{1-n})^n\times (H_{-1})^p) \times ( (H_{-n})^n \times (H_{-2})^p))$.  We set 
\begin{equation*}\label{M-np}
\displaystyle{M_{-(n+p-1)}=((\Pi_{i=1}^n H_{i-n})\times H^p) \times ( (\Pi_{i=1}^n H_{i-n-1})}
\times (H_{-1})^p)\,.
\end{equation*}
 Since $M_{-(n+p-1)}
 \subset ((H_{1-n})^n\times (H_{-1})^p) \times ( (H_{-n})^n \times (H_{-2})^p)$, we can also solve the Cauchy problem
 with $U^T \in M_{-(n+p-1)}$.  In a similar way, we set
\begin{equation*}\label{MNP}
M_{(n+p-1)}=\displaystyle{((\Pi_{i=1}^{n} H_{i-n+1}) \times H_1^p) \times ((\Pi_{i=1}^{n} H_{i-n})}
\times H^p)\,.
\end{equation*}
 Then, since $M_{(n+p-1)}
 \subset ((H_{1-n})^n\times (H_{-1})^p) \times ( (H_{-n})^n \times (H_{-2})^p)$, we can also solve the Cauchy problem
 with $U^T \in M_{(n+p-1)}$.
\end{Remark}

  \begin{Remark}
\rm
We set 
\begin{equation*}\label{Nnp}
\displaystyle{N_{(n+p-1)}=((\Pi_{i=1}^n H_{1+i-n})\times H_1^q \times H^{p-q}) \times ( (\Pi_{i=1}^n H_{i-n})}
\times H^q\times (H_{-1})^{p-q})\,.
\end{equation*}
 Since $N_{(n+p-1)}
 \subset ((H_{1-n})^n\times (H_{-1})^p) \times ( (H_{-n})^n \times (H_{-2})^p)$, we can also solve the Cauchy problem
 with $U^T \in N_{(n+p-1)}$.
 \end{Remark}

\subsection{Control of mixed bi-diagonal and non bi-diagonal cascade hyperbolic systems by $p+1$ controls}

We apply the HUM method~\cite{lions} to deduce from our above results,  exact controllability results for the dual $n+p$-coupled cascade systems by either $p+1$ bounded control operators,
$p+1$ unbounded control operators and mixed $p+1$ bounded/unbounded control operators.

We consider the control problem
\begin{equation}\label{CTHn+pq}
\begin{cases}
y_i^{\prime\prime} + A y_i +C_{i+1 i}^{\ast}y_{i+1}= 0  \,, 1\le i \le n-2\,,\\
\displaystyle{y_{n-1}^{\prime\prime} + A y_{n-1} + \sum_{k=n}^{n+p}C_{k i}^{\ast} y_k=0} \,,\\
\displaystyle{y_i^{\prime\prime} + A y_i +\sum_{k=i+1}^{n+p}C_{k i}^{\ast}y_{k}= B_i v_i  \,, n\le i \le n+p-1}\,,\\
y_{n+p}^{\prime\prime} + A y_{n+p}=B_{n+p} v_{n+p}\,\\
(y_i,y_i^{\prime})(0)=(y_i^0,y_i^1) \mbox{ for }
i=1, \ldots, n+p \,,
\end{cases}
\end{equation}
 where we use the convention that the first equation has to disappear  if $n=2$ and where either 
\begin{itemize}
\item for all $k\in \{0, \ldots, p\}$, the operators $B_{n+k} \in  \mathcal{L}(G_{n+k};H)$ (bounded control operators) 

\item or for all $k\in \{0, \ldots, p\}$, the operators 
 $B_{n+k} \in \mathcal{L}(G_{n+k}, H_2^{\prime})$ (unbounded control operators) 
 
 \item or for all $k\in \{0, \ldots, q\}$, the operators 
 $B_{n+k} \in \mathcal{L}(G_{n+k}, H_2^{\prime})$ (unbounded control operators) and for
 all  $k\in \{q+1, \ldots, p\}$, the operators $B_{n+k} \in  \mathcal{L}(G_{n+k};H)$ (bounded control operators). 
 \end{itemize}
 We set for all the sequel $Y_0=(y_1^0, \ldots, y_{n+p}^0, y_1^1, \ldots, y_{n+p}^1)$. 
 
 \subsubsection{The case of either all bounded or all unbounded control operators}
 We shall first consider the case of either all bounded (resp. unbounded) control operators.

\begin{Theorem}\label{control2n+pq}
We assume the hypotheses of Theorem~$\ref{mixedsystem}$ and define $T_{n+p}^{\ast}>0$ as
 in Theorem~$\ref{mixedsystem}.$  We have
\begin{itemize}

\item[(i)] Let $\mathcal{\mathbf{B}}_{n+k}^{\ast}(w_{n+k},w_{n+k}^{\prime})=B_{n+k}^*w_{n+k}^{\prime}$ 
with $B_{n+k} \in \mathcal{L}(G_{n+k},H)$ for all $k \in \{0, \ldots, p\}$. We set 
\begin{equation}\label{Mstar-npq}
M_{-(n+p-1)}^{\ast}=\displaystyle{((\Pi_{i=1}^{n} H_{n-i+1})\times H_1^p) \times ((\Pi_{i=1}^{n} H_{n-i})
\times H^p)}\,.
\end{equation}
 Then, for  all $T >T_{n+p}^{\ast}$ ,
and all $Y_0 \in M_{-(n+p-1)}^{\ast}$, there exist control functions $v_{n+k} \in L^2((0,T);G_{n+k})$
for $k=0, \ldots, p$ such that the solution $Y=(y_1,\ldots, y_{n+p},y_1^{\prime},\ldots, y_{n+p}^{\prime})$ of \eqref{CTHn+pq} with initial data $Y_0$ satisfies $Y(T)=0$.
\item[(ii)] Let $\mathcal{\mathbf{B}}_{n+k}^{\ast}(w_{n+k},w_{n+k}^{\prime})=B_{n+k}^{\ast}w_{n+k}$ with $B_{n+k} \in\mathcal{L}(G_{n+k}, H_2^{\prime})$ for all $k \in \{0, \ldots, p\}$. We set 
\begin{equation*}\label{Xstar-n+p}
M_{(n+p-1)}^{\ast}=\displaystyle{((\Pi_{i=1}^{n} H_{n-i}) \times H^p) \times ((\Pi_{i=1}^{n} H_{n-i-1})
\times (H_{-1})^p)}\,.
\end{equation*}
 Then, for all $T >T_n^{\ast}$,
and all $Y_0 \in M_{(n+p-1)}^{\ast}$, there exist control functions $v_{n+k} \in L^2((0,T);G_{n+k})$ for $k=0, \ldots, p$ such that the solution $Y=(y_1,\ldots, y_{n+p},y_1^{\prime},\ldots, y_{n+p}^{\prime})$ of \eqref{CTHn+pq} with initial data $Y_0$ satisfies $Y(T)=0$.
\end{itemize}
\end{Theorem}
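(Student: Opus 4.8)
The plan is to run the HUM duality argument already used for the bi-diagonal case in Theorem~\ref{control2n}, replacing the single-observation estimates by the $(p+1)$-observation estimates of Theorem~\ref{mixedsystem}. First I would note that \eqref{CTHn+pq} is exactly the formal adjoint of the homogeneous mixed cascade system \eqref{NSHnmixed}: each coupling operator $C_{ik}$ reappears transposed as $C_{ki}^{\ast}$. Thus, by the transposition definition of solutions, producing controls that steer $Y$ to rest at time $T$ is equivalent to the observability of \eqref{NSHnmixed} through its last $p+1$ components, which is the content of \eqref{eqobskmixed}--\eqref{eqobskmixed2}.

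For case~(i) I would work on the space $M_{-(n+p-1)}$ of terminal data for the dual system. Given $W^T,\widetilde{W}^T\in M_{-(n+p-1)}$, let $W,\widetilde{W}$ solve \eqref{NSHnmixed} backward from these data and set
$$
\Lambda(W^T,\widetilde{W}^T)=\sum_{k=0}^{p}\int_0^T\langle B_{n+k}^{\ast}w_{n+k},\,B_{n+k}^{\ast}\widetilde{w}_{n+k}\rangle_{G_{n+k}}\,dt,
$$
together with the linear form $\mathcal{L}$ pairing $Y_0$ with $(W(0),W'(0))$ through the duality brackets of $M_{-(n+p-1)}^{\ast}$ and $M_{-(n+p-1)}$, exactly as $\mathcal{L}_n$ in Theorem~\ref{control2n}. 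Continuity of $\Lambda$ is the admissibility inequality \eqref{admissineqimixed}. For coercivity I would apply the observability inequalities to $Z=\mathcal{A}_{n+p}^{-1}W$ (which again solves \eqref{NSHnmixed} and satisfies $z_{n+k}^{\prime}=w_{n+k}$, so that $\mathcal{B}_{n+k}^{\ast}Z_{n+k}=B_{n+k}^{\ast}w_{n+k}$), then sum \eqref{eqobskmixed} over $i$ and \eqref{eqobskmixed2} over $k$; because the right-hand side of \eqref{eqobskmixed2} uses only the observations of index $l\le k$, the summation still bounds the full weakened energy of $Z$ at time $0$ by $\Lambda(W^T,W^T)$. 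Combined with the analogue of Lemma~\ref{tech1n} for the mixed system and the equivalence of the energies at times $0$ and $T$, this gives $\Lambda(W^T,W^T)\ge c\,\|W^T\|_{M_{-(n+p-1)}}^2$ for $T$ above the observability threshold of Theorem~\ref{mixedsystem}.

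The one genuinely new ingredient, and the step I expect to be the main obstacle, is the continuity of $\mathcal{L}$, that is the time-reversal comparison generalizing \eqref{OKA1},
$$
\sum_{i=1}^{n}e_{i-n}(W_i)(0)+\sum_{k=1}^{p}e_0(W_{n+k})(0)\le C\Big(\sum_{i=1}^{n}e_{i-n}(W_i)(T)+\sum_{k=1}^{p}e_0(W_{n+k})(T)\Big).
$$
I would prove this by induction on the number of equations: \eqref{OKA1} already handles the first $n$ (bi-diagonal) components, and for each extra equation $u_{n+k}''+Au_{n+k}+\sum_{j}C_{n+k\,j}u_j=0$ I would insert the classical energy estimate for a nonhomogeneous wave equation with source $\sum_{j}C_{n+k\,j}w_j$. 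The boundedness of the $C_{ik}$ and $C_{ik}^{\ast}$ on $H_0$ and $H_1$ assumed in Theorem~\ref{mixedsystem}, together with the already controlled lower-index components, closes the recursion; the strictly triangular coupling (each new equation sees only earlier components) is precisely what makes the induction run.

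With $\Lambda$ continuous and coercive and $\mathcal{L}$ continuous, the Lax--Milgram lemma yields a unique $W^T\in M_{-(n+p-1)}$ with $\Lambda(W^T,\widetilde{W}^T)=-\mathcal{L}(\widetilde{W}^T)$ for all $\widetilde{W}^T$. Setting $v_{n+k}=B_{n+k}^{\ast}w_{n+k}\in L^2((0,T);G_{n+k})$ and comparing this identity with the transposition formula for \eqref{CTHn+pq} forces every boundary term at time $T$ to vanish against all test data, hence $Y(T)=0$. Case~(ii) is treated identically on the space $M_{(n+p-1)}$, now with observation $B_{n+k}^{\ast}w_{n+k}$ (no time derivative) and the admissibility/observability estimates applied directly to $U$ rather than to $\mathcal{A}_{n+p}^{-1}U$; the continuity of the corresponding linear form $\mathcal{J}$ follows from the same reversal estimate.
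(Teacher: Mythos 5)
Your proposal is correct and follows essentially the same route as the paper: the paper's own proof is simply the remark that the argument of Theorem~\ref{control2n} carries over, using the appendix lemmas (the mixed-system analogues Lemma~\ref{tech1nmixed} and Lemma~\ref{obsdirnmixed} of Lemma~\ref{tech1n} and Lemma~\ref{obsdirn}). Your HUM/Lax--Milgram scheme with the summed bilinear form, the observability and admissibility estimates transferred to $Z=\mathcal{A}_{n+p}^{-1}W$ via the norm equivalence, the induction extending the time-reversal estimate \eqref{OKA1} equation by equation through the triangular coupling, and the transposition identity yielding $Y(T)=0$, is exactly that program carried out.
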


\subsubsection{The case of  mixed bounded and unbounded control operators}

\begin{Theorem}\label{control2n+pqmixed}
We assume the hypotheses of Theorem~$\ref{obsdirnmixed}$ and define $T_{n+p}^{\ast}>0$ as
 in Theorem~$\ref{mixedsystem}. $
We assume that $\mathcal{\mathbf{B}}_{n+k}^{\ast}(w_{n+k},w_{n+k}^{\prime})=B_{n+k}w_{n+k}$ 
with $B_{n+k} \in \mathcal{L}(G_{n+k},H_2^{\prime})$ for all $k \in \{0, \ldots, q\}$. We further assume
that $\mathcal{\mathbf{B}}_{n+k}^{\ast}(w_{n+k},w_{n+k}^{\prime})=B_{n+k}^{\ast}w_{n+k}^{\prime}$ with $B_{n+k} \in \mathcal{L}(G_{n+k}, H)$ for all $k \in \{q+1, \ldots, p\}$.
We set 
\begin{equation*}\label{Nstar-npq}
N_{(n+p-1)}^{\ast}=\displaystyle{((\Pi_{i=1}^{n} H_{n-i})\times H^q \times H_1^{p-q}) \times ((\Pi_{i=1}^{n} H_{n-i-1})
\times H_{-1}^q \times H^{p-q})}\,.
\end{equation*}
 Then, for  all $T >T_{n+p}^{\ast}$ ,
and all $Y_0 \in N_{(n+p-1)}^{\ast}$, there exist control functions $v_{n+k} \in L^2((0,T);G_{n+k})$
for $k=0, \ldots, p$ such that the solution $Y=(y_1,\ldots, y_{n+p},y_1^{\prime},\ldots, y_{n+p}^{\prime})$ of \eqref{CTHn+pq} with initial data $Y_0$ satisfies $Y(T)=0$.
\end{Theorem}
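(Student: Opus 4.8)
The plan is to follow the HUM duality scheme already used in the proof of Theorem~\ref{control2n}, now with the adjoint dynamics given by the mixed cascade system \eqref{NSHnmixed} run backward from a final datum $W^T$. Fix $Y_0\in N_{(n+p-1)}^{\ast}$. On the Hilbert space $N_{(n+p-1)}$ introduced in the Remark above I would introduce the bilinear form
\begin{equation*}
\Lambda_{n+p}(W^T,\widetilde W^T)=\sum_{k=0}^{p}\int_0^T\langle \mathcal{\mathbf{B^{\ast}_{n+k}}}W_{n+k},\mathcal{\mathbf{B^{\ast}_{n+k}}}\widetilde W_{n+k}\rangle_{G_{n+k}}\,dt,
\end{equation*}
where $W,\widetilde W$ solve \eqref{NSHnmixed} with final data $W^T,\widetilde W^T$, together with the linear form $\mathcal{J}_{n+p}$ obtained by pairing the components of $Y_0$ against $(\widetilde w_k(0),\widetilde w_k^{\prime}(0))$ in the duality brackets dictated by the mixed regularity of $N_{(n+p-1)}$ (the position components for the unbounded indices $k\le q$, the velocity components for the bounded indices $k>q$), exactly as in the definitions of $\mathcal{L}_n$ and $\mathcal{J}_n$.

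The continuity and coercivity of $\Lambda_{n+p}$ on $N_{(n+p-1)}$ is the part I would extract from the observability machinery already in place. I would invoke Theorem~\ref{obsdirnmixed}, whose direct and inverse inequalities are the weakened-space versions of the admissibility \eqref{admissineqimixed} and of the observability inequalities \eqref{eqobskmixed}--\eqref{eqobskmixed2} of Theorem~\ref{mixedsystem}. Concretely, one passes to the weakened unknown $Z=\mathcal{A}_{n+p}^{-1}W$, so that the observation of a velocity turns into the observation of a position as in Lemma~\ref{obsdirn}(iv); then summing \eqref{eqobskmixed} over $i=1,\dots,n$ and \eqref{eqobskmixed2} over $k=1,\dots,p$, and exploiting the triangular cumulative structure of \eqref{eqobskmixed2} together with \eqref{admissineqimixed} to handle the cross terms, bounds the full weakened energy of $W^T$ both above and below by $\Lambda_{n+p}(W^T,W^T)$. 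This gives coercivity for $T>T_{n+p}^{\ast}$ and continuity for every $T>0$.

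The genuinely technical step, and the one I expect to be the main obstacle, is the continuity of $\mathcal{J}_{n+p}$, which reduces to a forward--backward energy comparison of the form $\sum_k e_{\bullet}(W_k)(0)\le C\sum_k e_{\bullet}(W_k)(T)$ measured in the mixed-regularity norm of $N_{(n+p-1)}$. I would establish it by induction on the number of equations: the first $n$ equations are handled by reusing the bi-diagonal comparison \eqref{OKA1} and \eqref{OKXT} together with Lemma~\ref{tech1n}, while the last $p$ non bi-diagonal equations are treated one index at a time, the extra coupling terms $\sum_{j=n-1}^{n+k-1}C_{n+k\,j}w_j$ being controlled by the lower-level energies already estimated, after integrating the energy identity for $z_{n+k}=(\mathcal{A}_{n+p}^{-1}W)_{n+k}$ and absorbing the resulting interior integrals $\int_0^T|z_j|^2\,dt$ by the admissibility-type estimates of Theorem~\ref{admissi} and Theorem~\ref{mixedsystem}; the bookkeeping of which components are measured in position versus velocity regularity (the split at the index $q$) is precisely what makes $N_{(n+p-1)}$ the right space. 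Once both forms are continuous and $\Lambda_{n+p}$ is coercive, Lax--Milgram furnishes a unique $W^T\in N_{(n+p-1)}$ with $\Lambda_{n+p}(W^T,\cdot)=-\mathcal{J}_{n+p}(\cdot)$; setting $v_{n+k}=\mathcal{\mathbf{B^{\ast}_{n+k}}}W_{n+k}$ and comparing with the transposition definition of the solution of \eqref{CTHn+pq} yields $Y(T)=0$, as in the proof of Theorem~\ref{control2n}.
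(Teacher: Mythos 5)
Your proposal is correct and takes essentially the same route as the paper, which itself only records that the proof is ``similar to that of Theorem~\ref{control2n}'' once the mixed admissibility/observability equivalence of Theorem~\ref{obsdirnmixedobs} is in hand: HUM bilinear and linear forms on $N_{(n+p-1)}$, coercivity and continuity of the bilinear form from Theorem~\ref{obsdirnmixedobs}, continuity of the linear form via a forward--backward energy comparison in the spirit of \eqref{OKA1} and \eqref{OKXT}, then Lax--Milgram and transposition. The only point to watch is notational: for the bounded indices $k>q$ the integrand must be read as $B_{n+k}^{\ast}w_{n+k}$, the observation of the position of the adjoint state (equivalently $\mathcal{\mathbf{B}}_{n+k}^{\ast}Z_{n+k}$ with $Z=\mathcal{A}_{n+p}^{-1}W$), and not literally $B_{n+k}^{\ast}w_{n+k}^{\prime}$, which is undefined at the $H\times H_{-1}$ regularity of those components of $N_{(n+p-1)}$ --- precisely the correction you yourself make by invoking Lemma~\ref{obsdirn}(iv).
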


\begin{Remark}
\rm
We can note that the same method does not apply to the case for which the first $q+1$ control operators are bounded, whereas the next $p-q$ ones are unbounded. 
\end{Remark}

\section{Main applicative results}
The main results are proved under a general abstract form given in the successive next sections. However for more clarity we shall give in this section the main consequences on the most well-known examples of applications, namely: wave-type, heat-type and Schr\"odinger cascade coupled systems. Other examples, based for instance on mechanical systems such as plates or Euler-Bernouilli beams can be given, but are not detailed here for length reasons.
\subsection{Geometric preliminaries}
Let $\Omega$ be a bounded open set in $\mathbb{R}^d$ with a sufficiently smooth boundary $\Gamma$. The set $\Omega$ can also be a smooth connected compact Riemannian manifold with or without boundary as in \cite{alaleau11}.  Let $T$ be a given positive time. We recall
the following definition for the Geometric Control Condition of Bardos Lebeau Rauch~\cite{blr92}.
\begin{Definition}
We say that an open subset $\omega$ of $\Omega$ satisfies $(GCC)$ if there exists a time $T>0$ such that every generalized bicharacteristic traveling at speed $1$ in $\Omega$ meets $\omega$ at a time $t<T$. We say that a subset $\Gamma_1$ of the boundary $\Gamma$ satisfies $(GCC)$ if there exists a time $T>0$ such that every generalized bicharacteristic traveling at speed $1$ in $\Omega$ meets $\Gamma_1$ at a time $t<T$ in a non-diffractive point.
\end{Definition}

\begin{Remark}
\rm
In the one dimensional case, $\omega \subset \Omega$ (resp. $\Gamma_1 \subset \Gamma$) satisfies $(GCC)$ as soon as $\omega$ is
any non-empty open subset of $\Omega$ (resp. any non-empty open subset in $\Gamma$).
\end{Remark}

Our abstract results require that the coupling operators $C_{ii-1} \in \mathcal{L}(H)$ satisfy the property that $C_{ii-1}^{\ast} \in \mathcal{L}(H_k)$ for all $k=0, \ldots n-i+1$ for $i=2, \ldots, n$ where $H$ is a given Hilbert space (the pivot space)
and $H_k$ are the domains of some fractional powers of the unbounded coercive operator $A$. In applications, we will be interested by cases for which $H=L^2(\Omega)$, $A$ is the Dirichlet Laplacian and $C_{i i-1}u=c_{i i-1}u$ for all $u\in H$, where the coefficients $c_{i i-1}$ are smooth functions defined on $\Omega$. We shall see that $C_{i i-1}^{\ast} \in \mathcal{L}(H_k)$  if and only if $c_{i i-1}$ satisfies certain compatibility properties when $k \ge 3$, whereas no compatibility conditions will be required for $k \le 2$. 
On the other hand, we are also interested by the situation for which these coefficients are supported in a neighborhood of subsets $\overline{O_i}$ which satisfy $(GCC)$. This is a sufficient (and almost necessary condition) for our abstract result to hold (see assumption $(A5)_n$). 
In general, except in the one-dimensional case, this geometric condition  implies that $\overline{O_i}$ meets a non-empty open subset of the boundary $\Gamma$, so that $c_{i i-1}$ can vanish on $\Gamma$ outside a neighborhood in $\Gamma$ of $\partial O_i\cap \Gamma$ and $c_{i i-1}>0$ on $\partial O_i\cap \Gamma$. So we must describe how these geometric condition can be combined
with the compatibility conditions. Furthermore,  the observation/control region $\omega \subset \Omega$ (resp. $\Gamma_1 \subset \Gamma$) in the locally distributed (resp. boundary) case should also satisfy $(GCC)$. 
Hence if we want to describe possible geometric examples for which the coupling region does not meet the control regions, we should build coefficients which both satisfy the compatibility conditions, are supported in a neighborhood of $\overline{O_i}$ and  for which $\partial O_i\cap \Gamma$ is as small as possible.  

So let us consider the case of an operator $C$ defined as the multiplication operator by $c$ where $c$ is a sufficiently smooth nonnegative function
on $\Omega$, $A$ is the Dirichlet Laplacien, $H_k=D(A^{k/2})$. We shall describe below for $k \ge 3$ and $d \ge 1$, the compatibility conditions for  smooth coefficients $c$ and for certain geometries of $\Omega$.

 We recall that 
\begin{equation}\label{iterateDA}
D(A^{k/2})=\{ u \in H^k(\Omega) \,, u=\Delta u = \ldots \Delta^{[(k-1)/2]}u=0 \mbox{ on } \Gamma\}
\end{equation}
 for all $k \in \{0, 1,\ldots\}$ and where $[x]$ hands for the integer part of the real number $x$.

We shall need the following notation.
For a multi-integer $\alpha=(\alpha_1,
\ldots,\alpha_d)\ \\$$ \in \N^d$, we denote by $|\alpha|=\sum_{i=1}^d \alpha_i$
and $\partial^{\alpha} = \frac{\partial^{|\alpha|}}{\partial_1^{\alpha_1} \ldots \partial_d^{\alpha_d}}$. 
\begin{Definition}\label{1Dcompat}
Let $d=1$ and $\Omega=(L_1,L_2)$ with $-\infty<L_1<L_2<\infty$. We say that $c \in W^{k,\infty}(\Omega)$ satisfies the compatibility condition $(C_{1D})_{[(k-1)/2]}$, if it satisfies
\begin{equation}\label{comp-1D}
(C_{1D})_{[(k-1)/2]} \quad \quad
c^{(2p-1)}(L_i)=0 \quad i=1,2 \,, p=1, \ldots, [(k-1)/2] \,.
\end{equation}
\end{Definition}
\begin{Proposition}\label{1dcomp}
Assume that $\Omega=(L_1,L_2)$ with $-\infty<L_1<L_2<\infty$ and let $k \ge 3$ be a given integer. If $c \in W^{k,\infty}(\Omega)$ satisfies the compatibility condition $(C_{1D})_{[(k-1)/2]}$
then for all $u \in H_k$, $c\,u \in H_k$. Moreover this condition is also necessary.
\end{Proposition}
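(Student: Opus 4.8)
The plan is to read off from the description \eqref{iterateDA} of $H_k=D(A^{k/2})$ exactly what membership means in one dimension. Here $\Delta=d^2/dx^2$ and $\Gamma=\{L_1,L_2\}$, so, abbreviating $m=[(k-1)/2]$, we have $u\in H_k$ if and only if $u\in H^k(\Omega)$ and $u^{(2j)}(L_i)=0$ for $i=1,2$ and $j=0,\ldots,m$. In the same language, $(C_{1D})_m$ asks precisely that the odd derivatives $c^{(2q-1)}(L_i)=0$ for $q=1,\ldots,m$. Since $c\in W^{k,\infty}(\Omega)$ and $u\in H^k(\Omega)$ on a bounded interval, the Leibniz rule gives at once $cu\in H^k(\Omega)$; so the whole statement reduces to checking the boundary conditions $(cu)^{(2j)}(L_i)=0$ for $j=0,\ldots,m$. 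This is legitimate because $2m\le k-1$ and the embedding $H^k\hookrightarrow C^{k-1}$ in one dimension makes these pointwise derivatives well defined.

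For the sufficiency the key is the Leibniz expansion
$$(cu)^{(2j)}(L_i)=\sum_{l=0}^{2j}\binom{2j}{l}c^{(l)}(L_i)\,u^{(2j-l)}(L_i),$$
split according to the parity of $l$. When $l$ is even, $2j-l$ is even and at most $2j\le 2m$, so $u^{(2j-l)}(L_i)=0$ because $u\in H_k$; when $l$ is odd, $l$ is at most $2j-1\le 2m-1$, so $c^{(l)}(L_i)=0$ by $(C_{1D})_m$. Every term therefore vanishes, so $(cu)^{(2j)}(L_i)=0$ for all $j\le m$ and $cu\in H_k$. This half is a clean parity-bookkeeping argument with no real obstacle.

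For the necessity I would argue by contraposition: suppose $(C_{1D})_m$ fails, say $c^{(2p_0-1)}(L_1)\ne 0$ with $p_0\in\{1,\ldots,m\}$ chosen minimal with this property at $L_1$ (the case of $L_2$ being symmetric). The goal is to produce a single $u\in H_k$ with $cu\notin H_k$. I would take $u=(x-L_1)\,\phi(x)$, where $\phi$ is a smooth cutoff equal to $1$ near $L_1$ and supported away from $L_2$; then $u\in C_c^\infty(\Omega)\subset H_k$, all boundary conditions at $L_2$ hold trivially, $u^{(2j)}(L_1)=0$ for every $j$, while $u'(L_1)=1\ne 0$. Evaluating the same Leibniz expansion at $j=p_0$, the even-$l$ terms vanish as before, and among the odd-$l$ terms minimality of $p_0$ annihilates every contribution with $l<2p_0-1$, leaving only
$$(cu)^{(2p_0)}(L_1)=\binom{2p_0}{2p_0-1}c^{(2p_0-1)}(L_1)\,u'(L_1)=2p_0\,c^{(2p_0-1)}(L_1)\,u'(L_1)\ne 0.$$
Since $p_0\le m$, this violates the boundary condition required for membership in $H_k$, so $cu\notin H_k$, a contradiction.

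The only point needing a little care is the construction of the test function in the necessity part, namely exhibiting a genuine $H_k$ element with prescribed vanishing even derivatives and nonzero first derivative at $L_1$; this is elementary since the odd-order boundary data of an $H^k$ function are unconstrained, and the explicit choice $u=(x-L_1)\phi(x)$ settles it. I expect the parity bookkeeping in the Leibniz expansion, tracking which indices $l$ are killed by the conditions on $u$ and which by those on $c$, to be the part demanding the most attention, though it is purely combinatorial.
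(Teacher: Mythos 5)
Your proof is correct, and there is nothing in the paper to compare it against: the paper states Proposition~\ref{1dcomp} without proof (as it does its multi-dimensional analogue, Proposition~\ref{compatibilityC}), so your argument supplies exactly what the paper leaves to the reader, and it follows the natural route. You correctly read off from \eqref{iterateDA} that in one dimension $H_k=\{u\in H^k(\Omega):\ u^{(2j)}(L_i)=0,\ i=1,2,\ j=0,\dots,[(k-1)/2]\}$, you get $cu\in H^k(\Omega)$ from the Leibniz rule, and the parity bookkeeping in the sufficiency direction is airtight: for $l$ even, $2j-l$ is even and at most $2[(k-1)/2]$, so the factor $u^{(2j-l)}(L_i)$ vanishes because $u\in H_k$; for $l$ odd, $l\le 2j-1\le 2[(k-1)/2]-1$, so $c^{(l)}(L_i)$ vanishes by $(C_{1D})_{[(k-1)/2]}$. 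The necessity argument with $u=(x-L_1)\phi$ is also sound, but contains one slip you should fix: this $u$ is \emph{not} in $C_c^\infty(\Omega)$, since $\phi\equiv 1$ near $L_1$ makes the support of $u$ accumulate at the endpoint $L_1$, so the inclusion $u\in C_c^\infty(\Omega)\subset H_k$ you invoke does not apply. The slip is harmless, because the facts you verify immediately afterwards are precisely what membership in $H_k$ requires: $u\in C^\infty(\overline{\Omega})\subset H^k(\Omega)$, $u$ vanishes identically near $L_2$, and $u^{(2j)}(L_1)=0$ for every $j\ge 0$; simply delete the $C_c^\infty$ claim and rest the membership on that direct verification. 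Two cosmetic remarks: with this choice of $u$ the minimality of $p_0$ is not actually needed, since every odd-$l$ term with $l\neq 2p_0-1$ already vanishes because $u^{(2p_0-l)}(L_1)=0$ whenever $2p_0-l$ is odd and different from $1$; and since you argue by contraposition rather than by contradiction, the closing words ``a contradiction'' should be dropped.
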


\begin{Remark}
\rm
Hence in the one-dimensional case,  for all given non-empty open subset $O$ of $\Omega$, we can build smooth nonnegative functions  $c \in W^{k,\infty}(\Omega)$ satisfying the compatibility condition \eqref{comp-1D}, supported in $\overline{O}$ and such that
$c>0$ in a subset of $O$. In particular if $\omega \subset \Omega$ (resp. $\Gamma_1
\subset \Gamma$ with $\Gamma_1 \neq \emptyset$) is a non-empty open set standing for the control region, it then satisfies $(GCC)$ and it can be chosen such that
$\overline{\omega} \cap \overline{O}=\emptyset$ (resp. $\Gamma_1 \cap \overline{O}=\emptyset$), so that the control region does not meet
the coupling region. It can be then generalized to any number of controls and coupling regions.
\end{Remark}
We shall now consider the case $d \ge 2$. We can remark that if $\Gamma_0 \subset \Gamma$ is such that its normal is constant
over this part of the boundary then the compatibility conditions reduce to \eqref{comp-1D} where the derivatives should be taken along
this normal.

\begin{Definition}\label{compD}
Assume $d \ge 2$. Let $k \ge 3$ be a given integer and $c \in W^{k, \infty}(\Omega)$. 
For $k=3$ or $k=4$, we say that $c$ satisfies the compatibility condition $(C)_1$ if
\begin{equation}\label{comp34}
(C)_{1}   \quad \quad
\displaystyle{\frac{\partial c}{\partial \nu}=0  \mbox{ on } \Gamma \,. } 
\end{equation}
 For $k \ge 5$, we say that $c$ satisfies the compatibility condition $(C)_{[(k-1)/2]}$ if
\begin{equation}\label{comp}
(C)_{[(k-1)/2]}
\begin{cases}
(C)_1 \mbox{ holds } \,,\\
\displaystyle{\mbox{ For } \alpha \in \mathbb{N}^d \mbox{ such that } 2 \le |\alpha| \le  2 [(k-1)/2] -1}\,,\\
\displaystyle{\partial^{\alpha} c=0  \mbox{ on } \Gamma}\,.
\end{cases}
\end{equation}
\end{Definition}
\begin{Proposition}\label{compatibilityC}
Assume $d \ge 2$. Let $k \ge 3$ be an integer and  $c \in W^{k, \infty}(\Omega)$. We assume that $c$ satisfies the compatibility condition $(C)_{[(k-1)/2]}$. Then
for all $u \in H_k$, $c\,u \in H_k$.
\end{Proposition}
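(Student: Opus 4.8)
The plan is to read off membership in $H_k=D(A^{k/2})$ from the description \eqref{iterateDA} and to separate an easy regularity part from the boundary-condition part. Writing $m=[(k-1)/2]$, the statement $cu\in H_k$ is equivalent to (a) $cu\in H^k(\Omega)$ together with (b) the boundary conditions $\Delta^j(cu)=0$ on $\Gamma$ for $j=0,1,\dots,m$. Part (a) is routine: the Leibniz rule $\partial^\gamma(cu)=\sum_{\beta\le\gamma}\binom{\gamma}{\beta}(\partial^\beta c)(\partial^{\gamma-\beta}u)$ expresses every derivative of order $\le k$ as a finite sum of products of a bounded factor $\partial^\beta c\in L^\infty(\Omega)$ (since $c\in W^{k,\infty}(\Omega)$) with $\partial^{\gamma-\beta}u\in L^2(\Omega)$, so $\|cu\|_{H^k}\le C\|c\|_{W^{k,\infty}}\|u\|_{H^k}$. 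Everything therefore reduces to the boundary identities in (b).

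I would establish (b) by induction on $k$ in steps of two, matching the parity structure of $m$. The base cases are $k=3,4$, where $m=1$ and only $(C)_1$ is assumed: from $\Delta(cu)=c\,\Delta u+2\,\nabla c\cdot\nabla u+(\Delta c)\,u$ restricted to $\Gamma$, the last term vanishes because $u|_\Gamma=0$, the first because $\Delta u|_\Gamma=0$, and, decomposing $\nabla u=(\partial_\nu u)\nu+\nabla_\tau u$ with $\nabla_\tau u=0$ on $\Gamma$, the middle term equals $2(\partial_\nu c)(\partial_\nu u)$ on $\Gamma$, which is killed by $(C)_1$. Together with $cu|_\Gamma=0$ this gives the base case. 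For the inductive step I would use the characterization $H_k=\{v\in D(A):\Delta v\in H_{k-2}\}$, equivalent to \eqref{iterateDA}. Since $cu\in H^2$ and $cu|_\Gamma=0$ we have $cu\in D(A)$, so it remains to prove $\Delta(cu)\in H_{k-2}$, i.e. that $c\,\Delta u+2\,\nabla c\cdot\nabla u+(\Delta c)u\in H_{k-2}$.

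Here two of the three summands are immediate from the inductive hypothesis. Indeed $c$ satisfies $(C)_m$, hence $(C)_{m-1}=(C)_{[(k-3)/2]}$, so multiplication by $c$ preserves $H_{k-2}$; since $\Delta u=-Au\in H_{k-2}$, the term $c\,\Delta u$ lies in $H_{k-2}$. Likewise, differentiating the compatibility condition shows that each coordinate derivative $\partial_l c$ and $\Delta c$ again satisfy $(C)_{m-1}$ (every $\partial^{\gamma}(\partial_l c)=\partial^{\gamma+e_l}c$ and $\partial^{\gamma}(\Delta c)$ of the required orders lands in the killed range $2\le|\alpha|\le 2m-1$, and $\partial_\nu c=0$); as $u\in H_k\subset H_{k-2}$, the inductive hypothesis gives $(\Delta c)u\in H_{k-2}$. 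The genuinely delicate summand is $2\,\nabla c\cdot\nabla u$, which I would attack by localizing near $\Gamma$ (in the interior there is nothing to check) and passing to boundary-normal (Fermi) coordinates, splitting $\nabla c\cdot\nabla u=(\partial_\nu c)(\partial_\nu u)+\nabla_\tau c\cdot\nabla_\tau u$.

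The main obstacle is precisely this last split, because of the curvature of $\Gamma$. The conditions $\Delta^i u|_\Gamma=0$ for $i\le m$ do \emph{not} annihilate the individual normal derivatives $\partial_\nu^l u$ but only triangular combinations of them, the mismatch being governed by the mean-curvature terms in $\Delta=\partial_\nu^2+b\,\partial_\nu+\Delta_\tau$; moreover the tangential operators $\nabla_\tau$ do not commute with $\Delta$, so neither $\partial_\nu u$ nor $\nabla_\tau u$ lies in $H_{k-2}$ and the inductive hypothesis cannot be applied termwise. The resolution I would pursue is to show that the high-order vanishing of $\partial_\nu c$ on $\Gamma$ (forced by $(C)_m$, which makes all derivatives of $\partial_\nu c$ up to order $2m-2$ vanish) absorbs the normal contribution, while the vanishing of the tangential derivatives of $u$ on $\Gamma$, combined with the curvature structure, absorbs the tangential contribution. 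Concretely, I would track throughout the induction the maximal order of derivative falling on $c$ versus on $u$ in each monomial of $\Delta^j(cu)$ and verify that any monomial carrying an uncontrolled normal derivative of $u$ is multiplied by a derivative $\partial^\alpha c$ with $2\le|\alpha|\le 2m-1$ (or by $\partial_\nu c$), hence killed, whereas the order-$2m$ derivatives of $c$ only ever multiply $u$ itself, which vanishes on $\Gamma$. Checking that these index ranges match exactly is the heart of the argument and is what pins down the precise form of $(C)_{[(k-1)/2]}$.
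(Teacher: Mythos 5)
The paper states Proposition~\ref{compatibilityC} without proof, so there is no argument of the author's to compare yours against; judged on its own merits, your proposal is correct in its reduction (Leibniz gives $cu\in H^k(\Omega)$, so only the boundary identities $\Delta^j(cu)=0$ on $\Gamma$ for $0\le j\le m:=[(k-1)/2]$ are at stake), in its base case $k=3,4$, and in the first two terms of the inductive step ($c\,\Delta u$ and $(\Delta c)\,u$, via the observation that $c$, $\partial_l c$ and $\Delta c$ all satisfy $(C)_{m-1}$). The gap sits exactly at the point you yourself call the heart of the argument: the claim that every monomial of $\Delta^j(cu)$ carrying an uncontrolled derivative of $u$ is multiplied by $\partial_\nu c$ or by some $\partial^\alpha c$ with $2\le|\alpha|\le 2m-1$ is, as stated, false monomial-by-monomial. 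The monomials with exactly one derivative on $c$ are of the form $(\partial_l c)(\partial_l\Delta^{j-1}u)$, and neither factor vanishes on $\Gamma$ individually (only the \emph{normal} component of $\nabla c$ vanishes). The missing idea — and the fact that makes everything work — is the resummation identity: in the Leibniz expansion of $\Delta^j(cu)$, the terms with exactly one derivative on $c$ add up to $2j\,\nabla c\cdot\nabla\bigl(\Delta^{j-1}u\bigr)$.

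With that identity the proposition has a short direct proof which makes your induction, localization, Fermi coordinates and curvature discussion unnecessary. Grouping the expansion by the number of derivatives falling on $c$,
\begin{equation*}
\Delta^j(cu)=c\,\Delta^j u+2j\,\nabla c\cdot\nabla\bigl(\Delta^{j-1}u\bigr)+\sum_{2\le|\alpha|\le 2j-1}C_{\alpha\beta}\,(\partial^{\alpha}c)(\partial^{\beta}u)+(\Delta^{j}c)\,u\,,
\end{equation*}
and for $1\le j\le m$ every group vanishes on $\Gamma$ (the case $j=0$ is trivial, and all traces make sense since $2m\le k-1$): the first because $\Delta^j u=0$ on $\Gamma$; the second because $\Delta^{j-1}u=0$ on $\Gamma$ forces $\nabla(\Delta^{j-1}u)=(\partial_\nu\Delta^{j-1}u)\,\nu$ there, after which $(C)_1$ kills it — this is precisely your base-case trick applied to $v=\Delta^{j-1}u$ instead of $v=u$, and it is all that is ever needed: no curvature or mean-curvature terms enter, because one never has to isolate individual normal derivatives $\partial_\nu^{l}u$, only to annihilate their coefficients; the third group by $(C)_{[(k-1)/2]}$, since $2\le|\alpha|\le 2j-1\le 2m-1$ is exactly the killed range (this is what pins the condition down); the fourth because $u=0$ on $\Gamma$. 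If you prefer to keep your inductive framework, the same identity applied to $\Delta^i(\nabla c\cdot\nabla u)$ for $i\le m-1$ shows $\nabla c\cdot\nabla u\in H_{k-2}$ and closes the step, but the induction is then redundant scaffolding around the direct computation.
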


\begin{Remark}
\rm
If $\Omega$ is a ball of radius $R$ then an open subset which is a neighborhood of any radius will satisfy $(GCC)$. Also, any locally distributed (or boundary) control region $\omega \subset \Omega$ (resp. $\Gamma_1 \subset \Gamma$) such that $\overline{\omega} \supset \{x \in \Gamma\,,
(x-x^0)\cdot \nu(x) >0\}$ (resp. $\Gamma_1 \supset \{x \in \Gamma\,,
(x-x^0)\cdot \nu(x) >0\}$) where $x^0 \in \mathbb{R}^d$ is any given point, satisfies $(GCC)$. Hence if we want to construct a coupling function $c$ such that its support does not meet the control region, it means that $c$ should vanish in some suitable neighborhood of the boundary. On the other hand, this coefficient should satisfy the above compatibility condition.
For $k=3$ or $k=4$, the compatibility conditions reduces to $(C)_1$. Hence one should construct a smooth function $c$ which is nonnegative, strictly positive  in $\overline{O}$, vanishes  outside a neighborhood of $O$  and such that the trace of its normal derivative vanishes on $\Gamma$. We show in the next proposition that this construction is possible. For $(x_1,x_2) \in \mathbb{R}^2$, we denote by $(r,\theta)=(r(x_1,x_2), \theta(x_1,x_2))$ the usual polar coordinates.
\end{Remark}
\begin{Proposition}\label{comp-ball}
Assume that $\Omega \subset \mathbb{R}^2$ is a ball of radius $R>0$. Let $\theta_0$ be a fixed
angle in $[0, 2 \pi)$ and define $\mathcal{R}=\{(r\cos(\theta_0), r\sin(\theta_0))\,, r \in [0,R]\}$. Then for any neighborhood $U$ of $\mathcal{R}$ in
$\Omega$, there exist smooth functions $c$ such that $c \ge 0$ in $\Omega$, $c>0$ in $\overline{U}$, $c$ vanishes outside a neighborhood of $U$ and $c$ satisfies $(C)_1$.
\end{Proposition}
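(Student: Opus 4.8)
The plan is to reduce to $\theta_0=0$ by rotational invariance, to build a generic nonnegative bump adapted to $U$, and then to correct it near $\Gamma$ by a radial reparametrization that forces the normal derivative to vanish. First I would reduce to $\theta_0=0$: a rotation $\mathcal{O}$ of $\mathbb{R}^2$ maps $\Omega$ onto itself, maps $\mathcal{R}$ for the angle $\theta_0$ onto $\mathcal{R}$ for the angle $0$, and maps a neighborhood $U$ onto a neighborhood $\mathcal{O}(U)$; moreover $(C)_1$ is invariant, since the outward unit normal transforms as $\nu\mapsto\mathcal{O}\nu$ and $\partial_\nu(c\circ\mathcal{O})=(\partial_{\mathcal{O}\nu}c)\circ\mathcal{O}$. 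Hence it suffices to treat $\mathcal{R}=\{(r,0):0\le r\le R\}$, and since on $\Gamma=\{r=R\}$ the outward normal is radial, the condition $(C)_1$ reads simply $\partial_r c=0$ on $\{r=R\}$. I also note that the only place where positivity and the boundary condition must be reconciled is the arc $\overline U\cap\Gamma$ around the point $p_0$ where $\mathcal{R}$ meets $\Gamma$, since elsewhere on $\Gamma$ the coefficient will be identically zero.

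Next I would pick an auxiliary neighborhood $U'$ with $\overline U\subset U'$ and with $\overline{U'}$ contained in a prescribed neighborhood of $U$, and use a standard mollified Urysohn construction to produce a smooth $c_1\ge 0$ on $\overline\Omega$ (a restriction of a smooth function on $\mathbb{R}^2$) such that $c_1>0$ on $\overline{U'}$ and $\mathrm{supp}\,c_1$ is contained in a neighborhood of $U$. This preliminary coefficient need not satisfy $(C)_1$, and the remaining task is to flatten it radially near $\Gamma$.

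For the correction I would fix a small $\eps>0$ and a smooth nondecreasing $\rho:[0,R]\to[0,R]$ with $\rho(r)=r$ for $r\le R-\eps$, $\rho(R)=R$, $\rho'(R)=0$ and $|\rho(r)-r|\le\eps$, and set
\[
\Phi(x)=\rho(|x|)\,\frac{x}{|x|}\ \ (x\neq 0),\qquad \Phi(0)=0 .
\]
Since $\rho$ is the identity near $0$, $\Phi$ coincides with the identity near the origin and is therefore smooth on all of $\overline\Omega$; it maps $\overline\Omega$ into itself and fixes $\Gamma$. I then define $c=c_1\circ\Phi$. Writing $\tilde c(r,\theta)=\tilde c_1(\rho(r),\theta)$ in polar coordinates, the chain rule gives
\[
\partial_r\tilde c(r,\theta)=\rho'(r)\,\partial_r\tilde c_1(\rho(r),\theta),\qquad \text{so}\qquad \partial_r\tilde c(R,\theta)=\rho'(R)\,\partial_r\tilde c_1(R,\theta)=0 ,
\]
which is exactly $(C)_1$. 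Clearly $c\ge 0$ because $c_1\ge 0$. Since $\Phi$ fixes $\Gamma$ and displaces every point radially by at most $\eps$, for $\eps$ small one has $\Phi(\overline U)\subset\overline{U'}$, whence $c=c_1\circ\Phi>0$ on $\overline U$; and $\mathrm{supp}\,c\subset\Phi^{-1}(\mathrm{supp}\,c_1)$ remains inside a neighborhood of $U$. Undoing the rotation $\mathcal{O}$ then yields the coefficient $c$ for arbitrary $\theta_0$.

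The delicate point, and the one I expect to require the most care, is reconciling the three constraints at once near the boundary arc $\overline U\cap\Gamma$: $c$ must be strictly positive there yet have vanishing normal derivative, $c$ must be smooth across the origin through which $\mathcal{R}$ passes, and the support must be adapted to an \emph{arbitrary} neighborhood $U$. A naive separated construction such as $c=g(|x|^2)\,\psi(\theta)$ fails on the last two counts: it is singular at the origin, and a fixed angular sector cannot simultaneously engulf a tube that is angularly wide near the centre and stay thin near $\Gamma$. The radial reparametrization $\Phi$ is designed to dispose of all three difficulties at once, for it flattens the profile at $r=R$ (giving $(C)_1$), is the identity near the origin (curing the coordinate singularity), and being $C^0$-close to the identity it preserves both the strict positivity on $\overline U$ and the localization of the support; the bulk of the write-up will be the routine verification that these approximate inclusions hold for $\eps$ small enough.
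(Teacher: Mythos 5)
Your proof is correct, but it takes a genuinely different route from the paper's. The paper argues explicitly: it assumes, without loss of generality, that $U$ is contained in the sector-plus-disc set $\{(r,\theta):\ r\in[\varepsilon,R],\ \theta\in[\theta_0-\delta,\theta_0+\delta]\}\cup B(0,\varepsilon)$ with $0<\varepsilon<R/2$, and then defines $c$ in separated polar form, $c=\phi(r)\,\psi(\theta)+\eta(x_1,x_2)$, where $\phi\ge 0$ is positive on $[\varepsilon,R]$, vanishes for $r\le \varepsilon/2$ and satisfies $\phi'(R)=0$; $\psi\ge 0$ is an angular cutoff positive on $[\theta_0-\delta,\theta_0+\delta]$ and supported in $[\theta_0-2\delta,\theta_0+2\delta]$; and $\eta$ is a bump equal to $1$ on $B(0,\varepsilon)$ and supported in $B(0,2\varepsilon)$. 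Then $(C)_1$ holds because $\phi'(R)=0$ and $\eta$ vanishes near $\Gamma$, and the polar singularity at the origin is harmless because $\phi\psi$ vanishes identically near the origin, the origin being covered by $\eta$. Note that this is essentially the separated ansatz you dismissed as unworkable: the paper rescues it precisely by letting the radial factor vanish near $r=0$ and adding the independent bump $\eta$ there, and by the sector reduction. Your construction replaces all of this by a soft argument — an arbitrary Urysohn bump $c_1$ adapted to $U$, composed with the radial reparametrization $\Phi(x)=\rho(|x|)\,x/|x|$ with $\rho'(R)=0$, so that the vanishing of the normal derivative comes from the chain rule rather than from an explicit profile. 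What your approach buys: no reduction of $U$ to a sector is needed (you handle truly arbitrary neighborhoods, and in fact arbitrary compact sets in place of $\mathcal{R}$), localization and boundary flatness are decoupled, and the argument extends verbatim to balls in $\mathbb{R}^d$, $d\ge 3$, for which the paper instead invokes generalized polar coordinates (Remark~\ref{rkcomp-ball}). What the paper's approach buys: complete explicitness and brevity. In a final write-up you should spell out the two routine points you defer: the existence of $\rho$ (e.g. take $\rho'\ge 0$ smooth, equal to $1$ on $[0,R-\varepsilon_0]$, bounded by $2$, vanishing at $R$, with $\int_{R-\varepsilon_0}^{R}\rho'(s)\,ds=\varepsilon_0$, so that $|\rho(r)-r|\le\varepsilon_0$), and the compactness argument showing that $\Phi(\overline{U})\subset U'$ and that $\mathrm{supp}\,c$ stays inside a neighborhood of $U$ once the displacement bound $\varepsilon_0$ is smaller than $\mathrm{dist}\bigl(\overline{U},\,\complement U'\bigr)$.
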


\begin{proof}[\bf Proof.]
We can assume without loss of generality that $U \subset \{(r, \theta), r \in [\varepsilon, R]\,, \theta \in [\theta_0-\delta, \theta_0+\delta]\}
\cup B(0, \varepsilon)$ where $0<\varepsilon<R/2$ and $\delta$. Let $\phi$ be any given smooth nonnegative function on $[\varepsilon, R]$ such
that $\phi^{\prime}(R)=0$, $\phi>0$ on $[\varepsilon, R]$,  $\phi \equiv 0$ in $[0, \varepsilon/2]$. We choose $\psi$ as a smooth nonnegative function on $[0, 2\pi]$ such that $\psi >0$ on $[\theta_0-\delta, \theta_0+\delta]$ and $\psi$ vanishes outside  $[\theta_0-2\delta, \theta_0+2\delta]$. We moreover choose a smooth function $\eta$ on $\overline{\Omega}$ such
that $\eta \equiv 1$ in $B(0,\varepsilon)$, $\eta \equiv 0$ outside $B(0, 2\varepsilon)$ and $\eta \in [0,1]$ elsewhere. We set
$$
c(x,y)= \phi(r(x_1,x_2))\psi(\theta(x_1,x_2)) + \eta(x_1,x_2) \,,\quad  (x_1,x_2) \in B(0,R) \,.
$$
 Then $c$ satisfies the desired properties by construction.
\end{proof}

\begin{Remark}\label{rkcomp-ball}
\rm
The above proof can be generalized to $d \ge 3$ by use of the generalized polar coordinates.
\end{Remark}
\begin{Proposition}
Assume that $d \ge 2$ and $k \ge 5$. We set $\Omega=B(0,R)$ with $R>0$ and $\Gamma=\partial \Omega$.
If $c$ is a smooth function satisfying $(C)_{[(k-1)/2]}$, then $c$ has to be constant
on $\Gamma$. Moreover if $c$ is constant on $\Gamma$ and satisfies 
\begin{equation}\label{suffComp}
\displaystyle{
\frac{\partial ^p c}{\partial \nu^p}=0 \quad p=1, \ldots, 2 [(k-1)/2] -1 \mbox{ on } \Gamma\,,}
\end{equation}
 then $c$ satisfies $(C)_{[(k-1)/2]}$.
\end{Proposition}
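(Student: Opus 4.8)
The plan is to prove the two assertions separately, in both cases reducing Cartesian derivatives to the intrinsic geometry of the sphere $\Gamma=\partial B(0,R)$. For the first assertion, set $f=c|_\Gamma$. Since $k\ge 5$ we have $2[(k-1)/2]-1\ge 3$, so $(C)_{[(k-1)/2]}$ forces all second-order Cartesian partials $\partial^\alpha c$ with $|\alpha|=2$ to vanish on $\Gamma$, i.e. the ambient Hessian $\mathrm{Hess}(c)$ vanishes on $\Gamma$, while $(C)_1$ gives $\langle\nabla c,\nu\rangle=0$ on $\Gamma$. The intrinsic Hessian of $f$ satisfies $\nabla^2_\Gamma f(X,Y)=\mathrm{Hess}(c)(X,Y)-\langle\nabla c,\nu\rangle\,\mathrm{II}(X,Y)$ for tangent vectors $X,Y$ (the sign of the second term depends on the convention for the second fundamental form $\mathrm{II}$ and is irrelevant here); I would verify this by a short computation in a graph chart $x_d=\phi(x')$ over the tangent plane at a base point, where $\nabla'\phi=0$ at that point so that the induced Christoffel symbols vanish there. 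Both terms on the right vanish on $\Gamma$, hence $\nabla^2_\Gamma f\equiv 0$ and $\nabla_\Gamma f$ is a parallel vector field on $\Gamma$.

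It then remains to use that the round sphere carries no nonzero parallel vector field. For $d\ge 3$, $\Gamma=S^{d-1}$ has constant sectional curvature $\kappa=1/R^2>0$; a parallel field $V$ is covariantly constant, so $0=\mathrm{Riem}(X,V)V=\kappa\big(|V|^2X-\langle X,V\rangle V\big)$ for every tangent $X$, and taking $X\perp V$ gives $\kappa|V|^2X=0$, whence $V=0$ and $f$ is constant. For $d=2$, $\Gamma$ is a circle and the curvature argument degenerates: there $\nabla^2_\Gamma f=0$ reads $f''=0$ in the arc-length parameter, and periodicity of $f$ around the circle forces $f$ to be constant. In all cases $c$ is constant on $\Gamma$, which is the first claim.

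For the second assertion, pass to spherical coordinates $(\rho,\omega)\in(0,\infty)\times S^{d-1}$, which are smooth away from the origin and for which $\Gamma=\{\rho=R\}$, the outward normal is $\partial_\rho$, and the tangential directions are the angular ones. The hypotheses read that $c(R,\cdot)$ is constant and $\partial_\rho^p c(R,\omega)=0$ for $1\le p\le 2[(k-1)/2]-1$ and all $\omega$. Since each of these identities holds for every $\omega$, differentiating in $\omega$ kills all angular derivatives as well, so every mixed spherical derivative $\partial_\rho^a\partial_\omega^b c(R,\omega)$ with $1\le a+b\le 2[(k-1)/2]-1$ vanishes on $\Gamma$. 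Finally, expanding an arbitrary Cartesian derivative $\partial^\alpha$ of order $m$ with $2\le m\le 2[(k-1)/2]-1$ via the chain rule for the smooth change of variables $x\mapsto(\rho,\omega)$ (valid near $\rho=R>0$) writes $\partial^\alpha c$ as a finite combination, with smooth coefficients, of spherical derivatives $\partial_\rho^a\partial_\omega^b c$ with $1\le a+b\le m$; each of these vanishes on $\Gamma$, so $\partial^\alpha c=0$ on $\Gamma$. Together with $(C)_1$ (the case $p=1$), this is precisely $(C)_{[(k-1)/2]}$.

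The step I expect to be the main obstacle is the geometric one in the first assertion: passing correctly from the vanishing of the ambient Cartesian Hessian to the vanishing of the intrinsic Hessian on the curved surface, and then ruling out nonzero parallel vector fields on the sphere, with the one-dimensional sphere $(d=2)$ handled separately through periodicity. By contrast, the second assertion is essentially bookkeeping once one notices that the radial conditions, holding for all $\omega$, automatically furnish the mixed radial-angular derivatives.
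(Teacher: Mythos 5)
Your proof is correct, but for the first assertion it takes a genuinely different route from the paper's. The paper only sketches the argument in dimension $d=2$ and goes through the functional property that motivates the compatibility condition: it assumes that multiplication by $c$ maps $H_5$ into $H_5$, imposes $\Delta(cu)=\Delta^2(cu)=0$ on $\Gamma$ for every $u$ with $u=\Delta u=\Delta^2u=0$ on $\Gamma$, and extracts $\frac{\partial c}{\partial \theta}\equiv 0$ on $\Gamma$ from the polar-coordinate expression of the Laplacian; strictly speaking it derives boundary constancy from the multiplier property, which $(C)_{[(k-1)/2]}$ implies by Proposition~\ref{compatibilityC}. You instead work directly with the pointwise conditions defining $(C)_{[(k-1)/2]}$: vanishing of the ambient Hessian on $\Gamma$ together with $\partial c/\partial\nu=0$ annihilates the intrinsic Hessian of $c|_\Gamma$, so $\nabla_\Gamma(c|_\Gamma)$ is parallel, and the round sphere carries no nonzero parallel field (curvature identity when $d\ge 3$, periodicity of an affine function on the circle when $d=2$). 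What your approach buys is a complete proof valid in every dimension $d\ge 2$ using only the stated hypotheses, where the paper's computation is two-dimensional and, if completed, yields the slightly stronger fact that the multiplier property alone forces constancy on $\Gamma$ --- which is the real content behind the definition. For the converse, your spherical-coordinate and chain-rule bookkeeping is a detailed version of exactly what the paper asserts in one sentence: constancy on $\Gamma$ plus vanishing pure normal derivatives up to order $2[(k-1)/2]-1$ kill all mixed tangential--normal derivatives of those orders, hence all Cartesian derivatives appearing in $(C)_{[(k-1)/2]}$.
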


\begin{proof}[\bf Proof.]
We will only sketch the proof for the dimension $d=2$.
If $k \ge 5$ then the compatibility condition requires that if $u$ is in $H_5$, then $cu \in H_5$, so that $u=\Delta (cu)= \Delta^2 (cu)=0$
on $\Gamma$ should hold whenever $u \in H^5(\Omega)$ and $u=\Delta u=\Delta ^2u=0$ on $\Gamma$. Using polar coordinates
and the expression of the Laplacian in polar coordinates, we find that this requires
$\frac{\partial c}{\partial \theta} \equiv 0$ on $\Gamma$. Hence $c$
should be constant on $\Gamma$. Conversely if  $c$ is constant on $\Gamma$ and satisfies \eqref{suffComp}, then all the
tangential and normal derivatives of at least order $1$ and up to $2 [(k-1)/2] -1$ are vanishing on $\Gamma$ so that
the compatibility condition is satisfied.
\end{proof}

For $n \ge 4$, we can now give the compatibility conditions on the coefficients $c_{i i-1}$, which implies that when $C_{i i-1}$ stands for the multiplication operator by $c_{i i-1}$ in $H$, then $C_{ii-1}^{\ast} \in \mathcal{L}(H_k)$ for all $k=0, \ldots n-i+1$ for $i=2, \ldots, n$. One can note that no compatibility conditions are required on the coefficients if $n \le 3$. Moreover when $n \ge 4$, no compatibility conditions
are required on the coefficients $c_{n-1 n-2}$ and $c_{n n-1}$. For $n \ge 4$, the sufficient compatibility conditions read as follows.
\begin{Definition}\label{compcii-1}
Let $n \! \ge  \! 4.$  We say that the coefficients $ c_{i i-1}  \! \in  \! W^{n-i+1, \infty}(\Omega) $
 for $ i=2, \ldots, n-2 ,$  
 satisfy the compatibility condition
$(CC)_n$ if  for the dimension $d=1$ $($resp. $d \ge 2 )$ $c_{i i-1}$ satisfies $(C_{1D})_{[(n-i)/2]}$
$($resp. $(C)_{[(n-i)/2]} ) $ for all $i=2, \ldots, n-2$.
\end{Definition}
\subsection{Boundary and localized observability/controllability of $n+p$-coupled cascade wave equations with localized couplings}
\subsubsection{Main observability results for $n+p$-coupled cascade wave systems}
Let $ n \ge 2$ and $p \ge 0$ be two integers. We consider the following $n+p$-coupled mixed bi-diagonal and non bi-diagonal cascade system of wave equations
\begin{equation*}\label{mixedbinondiagonalH}
\begin{cases}
u_{1,tt} -\Delta  u_1 = 0 \quad \mbox{ in } (0,T)\times \Omega\,,\\
u_{i,tt} -\Delta  u_i+ c_{i i-1}(x)u_{i-1}  = 0 \quad \mbox{ in } (0,T)\times \Omega \,, 2 \le i \le n \,, \\
\displaystyle{u_{i,tt} -\Delta u_{i}+ \sum_{k=n-1}^{i-1} c_{i k}(x)u_k  =0 \quad \mbox{ in } (0,T)\times \Omega\,, n+1 \le i \le n+p \,,}\\
u_i=0 \mbox{ for } i=1, \ldots n+p \quad \mbox{ in } (0,T)\times \partial \Omega\,,\\
(u_i,u_{i,t})(0)=(u_i^0,u_i^1) \mbox{ for }
i=1, \ldots n+p \quad \mbox{ in } \Omega\,,
\end{cases}
\end{equation*}
 where the subscript $t$  denotes the partial derivative  with respect to time $t$. This system can be written as an abstract second
order differential system
$$
u^{\prime\prime} + \mathcal{M}_{n+p}^{\ast}u=0\,,
$$
  with the appropriate initial conditions and where $u=(u_1, \ldots, u_{n+p})^t$, the matrix operator $\mathcal{M}_{n+p}$ is given by \eqref{Mn+p} and
where $A$ is the unbounded operator in $H=L^2(\Omega)$ defined by $Au=-\Delta u$
for $u \in D(A)=H^2(\Omega) \cap H^1_0(\Omega)$. 

We set $U^0=
(u_1^0, \ldots, u_{n+p}^0, u_1^1, \ldots, u_{n+p}^1)$ for all the sequel of this section. We make the following assumptions on the coefficients $c_{i,j}$.
$$
(H1)
\begin{cases}
c_{i i-1} \in W^{n-i+1,\infty}(\Omega) \mbox{ for } i=2, \ldots, n\,,\\
c_{i i-1} \ge 0 \mbox{ on } \Omega \mbox{ for } i=2, \ldots, n\,,\\
\{c_{i i-1} >0\} \supset \overline{O_i} \mbox{ for some open subsets } O_i \subset \Omega 
\mbox{ for } i=2, \ldots, n\,,\\
c_{j,k} \in W^{1,\infty}(\Omega) \mbox{ for } j \in \{n+1, \ldots, n+p\}\,, k \in \{n-1, \ldots, j-1\} \,.
\end{cases}
$$

\begin{Remark}\label{localizedregion}
\rm
The couplings terms  $c_{i i-1}$ for $i=2, \ldots, n$ located on the main subdiagonal are
assumed to be strictly positive on $\overline{O_i}$ for $i=2, \ldots, n$, so that the coupling effects due to these terms are effective in a neighborhood of these sets. We will say in all the sequel that the subsets $\overline{O_i}$ for $i=2, \ldots, n$ are the regions on which the couplings are localized.
\end{Remark}
We shall consider $p+1$ observations associated to the equations/compo-\ \\ nents ranked from $n$ up
to $n+p$, the $n-1$ first equations/components being unobserved. These $p+1$ observations can be each either locally distributed or localized on parts of the boundary. We shall denote by
$I_{int}$ (resp. $I_{bd}$) the set of integers $k \in \{0, \ldots, p\}$ such that the corresponding observation is locally distributed (resp. is localized on a part of the boundary). We do not recall below the admissibility property for the coupled system which allows to show that the observations
of the solution are well-defined (hidden regularity result) in a classical way. This property is given in the abstract Theorem~\ref{mixedsystem}.
\begin{Theorem}[Observability estimates]\label{ncoupledwavethm}
We assume that  the hypothesis $(H1)$ holds for some open subsets $O_i
\subset \Omega$ for $i=2, \ldots, n$ that satisfy $(GCC)$ for all $i \in \{2, \ldots, n\}$. 
If $n \ge 4$, we further assume that the coefficients $c_{i i-1}$ for $i=2, \ldots, n-2$ satisfy the compatibility condition
$(CC)_n$.
Let $b_{n+k}$ for $k=0, \ldots, p$ be given functions defined on $\Omega$ (resp.
$\Gamma$) for $k \in I_{int}$ (resp. $k \in I_{bd}$) such that
$$ 
\mbox{ For all } k \in I_{int}:   
b_{n+k} \ge 0 \mbox{ on } \Omega  ,\ \
\{b_{n+k} >0\} \supset \overline{\omega_{n+k}}  ,\  
b_{n+k} \in L^{\infty}(\Omega) , 
$$
$$ 
\mbox{ For all } k \in I_{bd}:   
b_{n+k} \ge 0 \mbox{ on } \Gamma  ,\ \
\{b_{n+k} >0\} \supset \overline{\Gamma_{n+k}}  ,\ 
b_{n+k} \in L^{\infty}(\Gamma) , 
$$
  for some open subsets $\omega_{n+k} \subset \Omega$ and some subsets $\Gamma_{n+k} \subset \Gamma$ such that
$\omega_{n+k}$ satisfy $(GCC)$ for all  $k \in  I_{int}$ and the subsets $\Gamma_{n+k}$ satisfy $(GCC)$ for all $k$ in $I_{bd}$. Then there exists $T^{\ast}>0$ such that for
all $T>T^{\ast}$, there exist constants $c_{i,n}(T)>0$ and $d_{k,n}(T)>0$ such that 
for all $U^0 \in (\Pi_{i=1}^n D(A^{(1+i-n)/2})) \times (H^1_0(\Omega))^p \times
(\Pi_{i=1}^n D(A^{(i-n)/2})) \times (L^2(\Omega))^p$ the following observability inequalities hold
$$
\begin{cases}
\mbox{ Either } 0 \in I_{int} \mbox{ and then  for all } \ i=1, \ldots, n\,:\\
\displaystyle{c_{i,n} (T) ||(u_i^0,u_i^1)||^2_{D(A^{(1+i-n)/2})\times D(A^{(i-n)/2)}} \le 
\int_0^T \int_{\Omega}b_n |u_{n,t}|^2 \,dx\, dt  } \,, \\
\mbox{ Or } 0 \in I_{bd} \mbox{ and then for all } \ i=1, \ldots, n\,,:\\
\displaystyle{c_{i,n} (T) ||(u_i^0,u_i^1)||^2_{D(A^{(1+i-n)/2})\times D(A^{(i-n)/2})} \le 
\int_0^T \int_{\Gamma} b_n\Big|\frac{\partial u_{n}}{\partial \nu}\Big|^2 \,d \sigma\, dt }\,, \\
\mbox{ and }\\ 
\displaystyle{d_{n,k}(T)
 \|(u_{n+k}^0,u_{n+k}^1)\|^2_{H^1_0(\Omega) \times L^2(\Omega)}} 
 \! \le    \! 
\displaystyle{\int_0^T   \!  \int_{\Omega} \sum_{l \in I_{int}, 0 \le l \le k} b_{n+l} |u_{{n+l}, t}|^2 \,dx\, dt}  \\
+ \displaystyle{\int_0^T \int_{\Gamma}\sum_{l \in I_{bd}, 0 \le l \le k} b_{n+l} \Big|\frac{\partial u_{n+l}}{\partial \nu}\Big|^2 \,d \sigma\, dt } 
\quad \forall \ k=1, \ldots, p\,.\\
\end{cases}
$$
\end{Theorem}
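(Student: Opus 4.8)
The plan is to obtain Theorem~\ref{ncoupledwavethm} as a direct specialization of the abstract observability result Theorem~\ref{mixedsystem}, the entire task reducing to the verification that the concrete wave system fits the abstract framework with $H = L^2(\Omega)$ and $A = -\Delta$ under homogeneous Dirichlet conditions, $D(A) = H^2(\Omega) \cap H^1_0(\Omega)$. First I would check $(A1)$: $A$ is self-adjoint, its first Dirichlet eigenvalue furnishes the coercivity constant $\omega > 0$, and the compact embedding $H^1_0(\Omega) \hookrightarrow L^2(\Omega)$ gives the compact resolvent; by \eqref{iterateDA} the scale $H_k = D(A^{k/2})$ coincides with the iterated Dirichlet spaces and $e_1$ is the usual wave energy. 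For the non bi-diagonal block ($n+1 \le i \le n+p$) the hypothesis $(H1)$ ensures $c_{j,k} \in W^{1,\infty}(\Omega)$, so that $C_{j k}$ and $C_{j k}^{\ast}$ lie in $\mathcal{L}(H_k)$ for $k = 0, 1$, exactly as required by Theorem~\ref{mixedsystem}.

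Next I would verify $(A2)_n$ for the self-adjoint multiplication operators $C_{i\,i-1} u = c_{i\,i-1}(x) u$. Boundedness and the inequality $|C_{i\,i-1} w|^2 \le \beta_i \langle C_{i\,i-1} w, w\rangle$ with $\beta_i = \|c_{i\,i-1}\|_{L^\infty}$ follow immediately from $c_{i\,i-1} \ge 0$, since $\int c_{i\,i-1}^2 |w|^2 \le \|c_{i\,i-1}\|_\infty \int c_{i\,i-1} |w|^2$. The delicate part is the regularity requirement $C_{i\,i-1}^{\ast} \in \mathcal{L}(H_k)$ for $k \le n-i+1$, that is $c_{i\,i-1} u \in H_k$ whenever $u \in H_k$. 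For $k \le 2$ this holds with no extra condition, which covers every index when $n \le 3$ and also the coefficients $c_{n-1\,n-2}$, $c_{n\,n-1}$ when $n \ge 4$; for the higher powers it is precisely the content of the compatibility conditions, so I would invoke Proposition~\ref{1dcomp} in dimension one and Proposition~\ref{compatibilityC} for $d \ge 2$, applied to $c_{i\,i-1}$ for $i = 2, \ldots, n-2$ as encoded in $(CC)_n$ of Definition~\ref{compcii-1}.

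For the partial coercivity clause of $(A2)_n$ and for $(A3)_n$ I would exploit $\{c_{i\,i-1} > 0\} \supset \overline{O_i}$ with $O_i$ satisfying $(GCC)$: choose $\Pi_i$ to be multiplication by a smooth $\pi_i \ge 0$ with $\pi_i^2 \le \alpha_i^{-1} c_{i\,i-1}$ (giving $\alpha_i |\Pi_i w|^2 \le \langle C_{i\,i-1} w, w\rangle$) and $\pi_i$ bounded below on $O_i$, so that $\{\pi_i > 0\}$ still satisfies $(GCC)$. Then $(A3)_n$, namely $\int_0^T |\Pi_i w'|^2 \ge \gamma_i(T) e_1(W)(0)$ for the free wave equation, is the Bardos--Lebeau--Rauch theorem~\cite{blr92}, rendered in the $T$-uniform form used here through~\cite{alaleaucont, sicon03}. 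Likewise, the $p+1$ observation operators $\mathbf{B^{\ast}_{n+k}}$ act as multiplication by $\sqrt{b_{n+k}}$ on $u_{n+k,t}$ for $k \in I_{int}$ and on $\partial_\nu u_{n+k}$ restricted to $\Gamma$ for $k \in I_{bd}$; in each case $(A4)_{n+k}$ is the classical admissibility/hidden-regularity estimate for the scalar wave equation with a source term (interior trace, resp. boundary normal-derivative trace), while $(A5)_{n+k}$ is again Bardos--Lebeau--Rauch observability under the assumed $(GCC)$ on $\omega_{n+k}$, resp. $\Gamma_{n+k}$.

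With $(A1)$, $(A2)_n$, $(A3)_n$ and $(A4)_{n+j}$--$(A5)_{n+j}$ established for $j = 0, \ldots, p$, the conclusion is immediate: Theorem~\ref{mixedsystem} yields \eqref{eqobskmixed} and \eqref{eqobskmixed2}, which become the stated inequalities once $e_{1+i-n}(U_i)(0)$ and $e_1(U_{n+k})(0)$ are rewritten as the squared $D(A^{(1+i-n)/2}) \times D(A^{(i-n)/2})$, resp. $H^1_0(\Omega) \times L^2(\Omega)$, norms of the data and the right-hand observation terms are spelled out. I expect the main obstacle to be the reconciliation in this partial-coercivity step: one must \emph{simultaneously} arrange that $\Pi_i$ is dominated by $c_{i\,i-1}$ (for the algebraic inequality) and that its support still obeys $(GCC)$ (for the BLR estimate), while verifying via the compatibility conditions that multiplication by $c_{i\,i-1}$ preserves the high-order Dirichlet spaces $H_k$; the boundary case additionally rests on the hidden regularity of $\partial_\nu u$, which is standard but must be cited with care.
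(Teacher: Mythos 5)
Your proposal is correct and follows essentially the same route as the paper: verify $(A1)$, $(A2)_n$--$(A3)_n$ (multiplication operators, the compatibility conditions $(CC)_n$ for the $\mathcal{L}(H_k)$ regularity, and Bardos--Lebeau--Rauch under $(GCC)$), then $(A4)_{n+j}$--$(A5)_{n+j}$ (bounded multiplication, resp. hidden regularity of $\partial_\nu u$, plus BLR), and finally apply the abstract Theorem~\ref{mixedsystem}. The only cosmetic difference is your choice of $\Pi_i$ as a smooth cutoff dominated by $c_{i\,i-1}$, whereas the paper simply takes $\Pi_i = 1_{O_i}$ with $\alpha_i = \inf_{O_i} c_{i\,i-1}$; both are admissible since $(A3)_n$ only requires $\Pi_i \in \mathcal{L}(H)$.
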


\begin{Remark}
\rm
We do not give a characterization of the minimal observability time, and  $T^{\ast}$ is not in general the minimal observatility time. In the case $n=2$, that is for $2$-coupled cascade systems and for locally distributed observation in a $\mathcal{C}^{\infty}$ manifold without boundary, Dehman, Le Rousseau and L\'eautaud in~\cite{DLRL} (see also~\cite{these-leautaud}), give a characterization of the minimal control time by a contradiction argument based on a micro-local analysis. It would be interesting to check if their approach together with the natural observability inequality proved in our paper for $n+p$-coupled
cascade systems, can lead to a characterization of the minimal control time for $n+p$ coupled systems, by contradiction arguments and a micro-local analysis approach.
\end{Remark}

\begin{Remark}
\rm
As in the Remark~\ref{localizedregion}, the subsets $\omega_{n+k}$ (resp. $\Gamma_{n+k}$) of $\omega$ (resp. $\Gamma$) are the regions (indeed in a neighborhood of them) on which the observations are localized. 
In the one-dimensional case, $(GCC)$ is satisfied for any nonempty open subset of $\Omega$ or for any extremity of $\Omega$. Hence, we can exhibit subsets $O_i$ for $i=2, \ldots, n$ such that $(H1)$,
and $(CC)_n$ hold for arbitrary nonempty subsets $O_i$ of $\Omega$ and arbitrary observation region $\omega_n$ or $\Gamma_n$ with
$\overline{O_i}\cap \overline{\omega_n}=\emptyset$ (resp. $\overline{O_i}\cap \overline{\Gamma_n}=\emptyset$) for all $i=2, \ldots, n$. 

Let us now consider the case of dimensions $d \ge 2$. If $n=3$ and $p=0$, we prove that $3$-coupled bi-diagonal cascade systems
with two coupling terms localized respectively on subregions $O_2 \subset \Omega$ and $O_3 \subset \Omega$ with a single observation either  locally distributed on $\omega_3 \subset \Omega$ or distributed on a part $\Gamma_3 \subset \Gamma$ of the boundary, is observable under the geometric condition that both $O_i$ for $i=2, 3$ and $\omega_3$
(resp. $O_i$ for $i=2,3$ and $\Gamma_3$) satisfy (GCC). Hence this covers many situations for which the intersection
$\overline{O_i} \cap \overline{\omega_3}=\emptyset$ for $i=2,3$ (resp. $\overline{O_i} \cap \overline{\Gamma_3}=\emptyset$). This geometric situation still holds true for $n=4$ and $n=5$, since in this case the compatibility condition reduces to
the property that the coefficients $c_{21}$ and $c_{32}$ should satisfy $(C)_1$, but they can be locally supported in a region which meets
only a small part of the boundary (as small as we want for a ball for instance as proved in Proposition~\ref{comp-ball} and
Remark~\ref{rkcomp-ball}). Hence the coupling regions  for $c_{21}\,, c_{32}$ and $c_{54}$
can be chosen in a such a way that they do not meet any other control/observation regions (which should also satisfy $(GCC)$). This also holds
with a larger number of observations ($p \ge 1$).
If $n=6$, then $c_{21}$ should satisfy $(C)_2$ so that for a ball in $\mathbb{R}^2$ it should be constant on the boundary and
have normal derivatives up to $3$ equal to zero on the boundary, whereas the coefficients $c_{i i-1}$ for $i=3, 4$ should satisfy $(C)_1$ but can
be localized in a region which meets only a small part of the boundary. No conditions are required for $c_{5 4}$ and $c_{65}$. 

More generally, if $d \ge 2$ and $n \ge 6$, then the supports of $c_{i i-1}$ for $i=2, \ldots, n-4$ have to contain a neighborhood of the boundary $\Gamma$ so that the support of these coupling coefficients will necessarily meet the support of the control regions. However we can build
examples for which the intersections between the supports of these coupling coefficients and the control regions are non empty but are neighborhoods (as small as we want) of some  parts (as small as we want in the locally distributed observation case) of the boundary (with respect to the appropriate measure on the boundary). 
Moreover
we can exhibit geometric examples for which the supports of the remaining coefficients $c_{i i-1}$ for $i=n-3, \ldots n$ can be chosen so that they do not meet any of the control regions. 
\end{Remark}

\begin{Remark}
\rm
 It would be interesting to determine whether if this regularity hypothesis is necessary for our abstract results to hold, or if it can be weakened, so that the compatibility conditions $(CC)_n$ for $n \ge 6$ for the application to the wave system can be suppressed or relaxed. 
\end{Remark}

\begin{Remark}
\rm
We can also remark that we make no sign assumptions on the coefficients situated away from the main subdiagonal for the equation ranging from $n+1$ up to $n+p$ for which a direct observation is supposed to hold. The significative coefficients for the transmission of the appropriate information are indeed the ones located on the main subdiagonal for equations which are not directly observed. However, it is important, at least for technical reasons, that the coefficients located away from the main subdiagonal are vanishing for columns which are ranging from $1$ up to $n-2$.
\end{Remark}

\subsubsection{Main controllability results for $n+p$-coupled cascade wave systems}
We now consider the $n+p$-coupled dual control cascade system subjected to $p+1$ controls. For this system, we shall consider three different cases: all the $p+1$ controls are locally distributed,
all the $p+1$ controls are localized on parts of the boundary and finally the case of mixed
locally distributed and boundary controls. These three situations are described as follows.
\begin{itemize}
\item[(i)]
 {\bf Locally distributed controls}.  Assume that
$$
(H2)
\begin{cases}
b_{n+k} \ge 0 \mbox{ on } \Omega \mbox{ for } k=0, \ldots, p\,,\\
\{b_{n+k} >0\} \supset \overline{\omega_{n+k}} \mbox{ for some subsets } \omega_{n+k} \subset \Omega
\mbox{ for } k=0, \ldots, p\,,\\
b_{n+k} \in L^{\infty}(\Omega) \mbox{ for all } j \in \{0, \ldots, p\} \,.
\end{cases}
$$ 
\item[(ii)]
 {\bf Boundary controls}. Assume that
$$
(H3)
\begin{cases}
b_{n+k} \ge 0 \mbox{ on } \Gamma \mbox{ for } k=0, \ldots, p\,,\\
\{b_{n+k} >0\} \supset \overline{\Gamma_{n+k}} \mbox{ for some subsets } \Gamma_{n+k} \subset \Gamma
\mbox{ for } k=0, \ldots, p\,,\\
b_{n+k} \in L^{\infty}(\Gamma) \mbox{ for all } j \in \{0, \ldots, p\} \,.
\end{cases}
$$
\item[(iii)]
 {\bf Mixed boundary and locally distributed observability}. 
Assume that
$$
(H4)
\begin{cases}
b_{n+k} \ge 0 \mbox{ on } \Gamma \mbox{ for } k=0, \ldots, q\,,\\
\{b_{n+k} >0\} \supset \overline{\Gamma_{n+k}} \mbox{ for some subsets } \Gamma_{n+k} \subset \Gamma
\mbox{ for } k=0, \ldots, q\,,\\
b_{n+k} \in L^{\infty}(\Gamma) \mbox{ for all } j \in \{0, \ldots, q\} \,,\\
b_{n+k} \ge 0 \mbox{ on } \Omega \mbox{ for } k=q+1, \ldots, p\,,\\
\{b_{n+k} >0\} \supset \overline{\omega_{n+k}} \mbox{ for some subsets } \omega_{n+k} \subset \Omega 
\,, k=q+1, \ldots, p\,,\\
b_{n+k} \in L^{\infty}(\Omega) \mbox{ for all } j \in \{q+1, \ldots, p\} \,,
\end{cases}
$$  
\end{itemize}
where $\omega_{n+k}$ are open subsets. We shall start with the case of $p+1$ locally distributed controls. For the sake of length, we give the control problem under a matrix operator form.
\begin{equation}\label{nmixedwavecont}
\begin{cases}
y^{\prime\prime} + \mathcal{M}_{n+p}y=(0, \ldots, b_n v_n, b_{n+1} v_{n+1}, \ldots, b_{n+p}v_{n+p})^t \,,\\
y(0)=(y_1^0, \ldots, y_{n+p}^0)\,, y^{\prime}(0)=(y_1^1, \ldots, y_{n+p}^1) \,,
\end{cases}
\end{equation}
 where the matrix operator $\mathcal{M}_{n+p}$ is given in \eqref{Mn+p}, $A$ stands for the homogeneous Dirichlet Laplacian,
$y=(y_1,\ldots,y_{n+p})^t$ and $v_n, \ldots, v_{n+p}$ are the $p+1$ locally distributed controls. We also use the convention that the first equation has to disappear  if $n=2$. We set
$Y_0=(y_1^0, \ldots, y_{n+p}^0, y_1^1, \ldots, y_{n+p}^1)$ and denote by $Y=(y_1,\ldots, y_{n+p},y_1^{\prime},\ldots, y_{n+p}^{\prime})$ the solution of \eqref{nmixedwavecont} with initial data $Y_0$. Then we have the following exact controllability result.

\begin{Theorem}\label{contwave}
 We assume that the coefficients $c_{ij}$ satisfy the hypothesis $(H1)$ for some open subsets $O_i
\subset \Omega$ for $i=2, \ldots, n$ that satisfy $(GCC)$ for~all $i \in \{2, \ldots, n\}$. 
If $n \ge 4$, we further assume that the coefficients $c_{i i-1}$ for $i=2, \ldots, n-2$ satisfy the compatibility condition
$(CC)_n$.
We also assume~that the coefficients $b_{n+k}$ and the subsets $\omega_{n+k}$ with $k=0, \ldots,p$  satisfy $(H2)$~where the subsets $\omega_{n+k}$  satisfy $(GCC)$ for all $k \in \{0, \ldots,p\}$. Then for all
$T>T^{\ast}$, and all $Y_0 \in  {(\Pi_{i=1}^{n} D(A^{\frac{n-i+1}2})\times (H^1_0(\Omega))^p) \times (\Pi_{i=1}^{n} D(A^{\frac{n-i}2})} \times  {
 (L^2(\Omega))^p)}$, there exist control functions $v_{n+k} \!  \in \!  L^2((0,T);L^2(\Omega))$
for $k \! = \! 0,$ $\ldots, p$ such that the solution $Y$ of \eqref{nmixedwavecont} 
with initial data $Y_0$ satisfies $Y(T) \! = \! 0$.
\end{Theorem}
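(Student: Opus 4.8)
The plan is to recognize \eqref{nmixedwavecont} as the concrete realization of the abstract control system \eqref{CTHn+pq} and then to invoke the abstract controllability result Theorem~\ref{control2n+pq}, so that the whole argument reduces to checking that the hypotheses $(A1)$, $(A2)_{n}$, $(A3)_{n}$ and $(A4)_{n+j}$--$(A5)_{n+j}$ hold in the present PDE setting. First I would set $H=G_{n+k}=L^2(\Omega)$, take $A=-\Delta$ with domain $D(A)=H^2(\Omega)\cap H^1_0(\Omega)$, let each coupling operator $C_{i\,i-1}$ (and, for $i>n$, $C_{i\,j}$) be multiplication by the coefficient $c_{i\,i-1}$ (resp. $c_{i\,j}$), and let each control operator $B_{n+k}$ be multiplication by $\sqrt{b_{n+k}}$, which is bounded from $L^2(\Omega)$ into itself under $(H2)$. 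Since all control operators are then bounded, we are exactly in case $(i)$ of Theorem~\ref{control2n+pq}, whose conclusion delivers controls $v_{n+k}\in L^2((0,T);L^2(\Omega))$ for initial data in the space $M^{\ast}_{-(n+p-1)}$ of \eqref{Mstar-npq}; this coincides with the space announced here once one reads $H_{n-i+1}=D(A^{(n-i+1)/2})$, $H_1=H^1_0(\Omega)$ and $H=L^2(\Omega)$.

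Next I would verify the abstract hypotheses. Assumption $(A1)$ is the standard fact that the Dirichlet Laplacian is self-adjoint, coercive by Poincar\'e's inequality, and has compact resolvent. For $(A2)_{n}$ the operators $C_{i\,i-1}$ are self-adjoint (the $c_{i\,i-1}$ being real), and for a nonnegative bounded coefficient the pointwise inequality $|c_{i\,i-1}w|^2\le \|c_{i\,i-1}\|_\infty\, c_{i\,i-1}w^2$ integrates to $|C_{i\,i-1}w|^2\le \beta_i\langle C_{i\,i-1}w,w\rangle$ with $\beta_i=\|c_{i\,i-1}\|_\infty$. The partial coercivity is obtained by choosing $\Pi_i$ to be multiplication by the characteristic function of $O_i$: since $c_{i\,i-1}$ is continuous and strictly positive on the compact set $\overline{O_i}$ under $(H1)$, there is $\alpha_i>0$ with $c_{i\,i-1}\ge\alpha_i$ on $O_i$, whence $\alpha_i|\Pi_i w|^2\le\langle C_{i\,i-1}w,w\rangle$. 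The only delicate point of $(A2)_{n}$ is the mapping property $C^{\ast}_{i\,i-1}\in\mathcal{L}(H_k)$ for $k$ up to $n-i+1$, i.e. that multiplication by $c_{i\,i-1}$ preserves $D(A^{k/2})$; this requires no condition for $k\le 2$ and, for $k\ge 3$, is exactly the content of Proposition~\ref{1dcomp} in dimension one and Proposition~\ref{compatibilityC} in dimension $d\ge 2$, under the compatibility condition $(CC)_n$. This is where the hypothesis $(CC)_n$ (for $n\ge 4$) enters.

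It then remains to check the observability-type hypotheses on the free single wave equation. Taking $\Pi_i$ as above, $(A3)_{n}$ asks for $\int_0^T\int_{O_i}|w_t|^2\,dt\ge\gamma_i(T)e_1(W)(0)$ for solutions of $w''+Aw=0$; since $O_i$ satisfies $(GCC)$, this is the interior velocity observability estimate of Bardos--Lebeau--Rauch~\cite{blr92}. Likewise, for each $k$ the operator $\mathcal{\mathbf{B^{\ast}_{n+k}}}$ is multiplication of $w_t$ by $\sqrt{b_{n+k}}$ in the locally distributed case (and the weighted normal trace $\sqrt{b_{n+k}}\,\partial_\nu w$ in the boundary case): the admissibility $(A4)_{n+k}$ is the classical direct, hidden-regularity estimate for the scalar wave equation with a source term, while $(A5)_{n+k}$ follows from $(GCC)$ for $\omega_{n+k}$ (resp. $\Gamma_{n+k}$) again by~\cite{blr92}, and Lemma~\ref{AL} then furnishes the uniform-in-$T$ form used by the abstract machinery. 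With $(A1)$--$(A5)$ verified, Theorem~\ref{control2n+pq}$(i)$ applies and yields the stated exact controllability for every $T>T^{\ast}$, with $T^{\ast}=T^{\ast}_{n+p}$ furnished by Theorem~\ref{mixedsystem}.

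The main obstacle is the high-regularity part of $(A2)_{n}$: one must ensure that multiplication by the coupling coefficients stabilizes the iterated domains $D(A^{k/2})$ up to level $k=n-i+1$, which for $k\ge 3$ forces the boundary compatibility conditions encoded in $(CC)_n$. The observability ingredients, by contrast, are the classical Bardos--Lebeau--Rauch estimates under $(GCC)$, and the entire passage from observability to controllability is already contained in the abstract HUM argument of Theorem~\ref{control2n+pq}, so no new variational work is needed here.
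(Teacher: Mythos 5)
Your proposal is correct and follows essentially the same route as the paper: the paper's proof likewise verifies $(A1)$, $(A2)_n$--$(A3)_n$ (with $\Pi_i=1_{O_i}$, $\alpha_i=\inf_{O_i}c_{i\,i-1}$, and the compatibility conditions giving $C_{i\,i-1}^{\ast}\in\mathcal{L}(H_k)$) and $(A4)_{n+j}$--$(A5)_{n+j}$ via Bardos--Lebeau--Rauch in the proof of Theorem~\ref{ncoupledwavethm}, i.e.\ the hypotheses of Theorem~\ref{mixedsystem}, and then applies part $(i)$ of the abstract HUM result Theorem~\ref{control2n+pq}. The only cosmetic difference is your choice of $B_{n+k}$ as multiplication by $\sqrt{b_{n+k}}$ rather than by $b_{n+k}$, which affects nothing since both weights are bounded, nonnegative and positive on $\overline{\omega_{n+k}}$.
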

We now consider the following $n+p$-coupled dual control cascade system with $p+1$ boundary controls.
\begin{equation}\label{nmixedwavecontbd}
\begin{cases}
y_{i,tt} -\Delta y_i +c_{i+1 i}y_{i+1}= 0  \mbox{ in } (0,T) \times \Omega\,,  1 \le i \le n-2 ,
\\
\displaystyle{y_{i,tt} -\Delta y_{i} + 
 \!  \!  \! 
  \sum_{k=i+1}^{n+p}c_{k\, i} y_k=0 \mbox{ in } (0,T)
 \times \Omega , n-1 \le i \le n+p-1} ,\\
y_{n+p,tt} -\Delta  y_{n+p}=0 \mbox{ in } (0,T) \times \Omega ,\\
y_i=0 \mbox{ in } (0,T) \times \Gamma  \mbox{ for }
i=1, \ldots, n-1  , \\
y_{n+k}=b_{n+k} v_{n+k}  \mbox{ in } (0,T) \times \Gamma  \mbox{ for }
k=0, \ldots, p \,, \\
(y_i,y_{i,t})(0)=(y_i^0,y_i^1) \mbox{ for }
i=1, \ldots, n+p \,,
\end{cases}
\end{equation}
 where we use the convention that the first equation has to disappear  if $n=2$.  Then we have the following exact controllability result.
\begin{Theorem}\label{contwavebd}
We assume that the coefficients $c_{ij}$ satisfy the hypothesis $(H1)$ for some open subsets 
$O_i \!
\subset  \!\Omega$ for $i \!= \!2, \ldots, n$ that satisfy $(GCC)$ for~all
 $i \! \in  \!\{2, \ldots, n\}$. 
If $n  \!\ge \! 4$, we further assume that the coefficients $c_{i i-1}$ for $i \!= \!2, $ $\ldots, n-2$ satisfy the compatibility condition
$(CC)_n$.
We also assume~that the coefficients $b_{n+k}$ and the subsets $\Gamma_{n+k}$ with $k=0, \ldots,p$  satisfy $(H3)$ where the subsets $\Gamma_{n+k}$  satisfy $(GCC)$ for all $k \in \{0, \ldots,p\}$. Then for all
$T>T^{\ast}$, and all $Y_0 \in  {(\Pi_{i=1}^{n} D(A^{\frac{n-i}2})\times (L^2(\Omega))^p) \times (\Pi_{i=1}^{n} D(A^{\frac{n-i-1}2})}   {\times (H^{-1}(\Omega))^p)}$, \\ 
 there exist control functions $v_{n+k} \in L^2((0,T);L^2(\Gamma))$
for $k=0, \ldots, p$ such that the solution $Y$ of \eqref{nmixedwavecontbd} with initial data $Y_0$ satisfies $Y(T)=0$.
\end{Theorem}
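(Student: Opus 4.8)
The plan is to deduce this boundary controllability result from the abstract unbounded-control theorem, namely Theorem~\ref{control2n+pq}~$(ii)$, exactly as the interior result Theorem~\ref{contwave} is deduced from Theorem~\ref{control2n+pq}~$(i)$. Boundary control of the wave equation is the prototypical case of an \emph{unbounded} control operator $B_{n+k}\in\mathcal{L}(G_{n+k},H_2^{\prime})$ whose adjoint is the normal-derivative trace; accordingly I would set $H=L^2(\Omega)$, take $A=-\Delta$ with $D(A)=H^2(\Omega)\cap H^1_0(\Omega)$, let $C_{i\,i-1}$ be the multiplication operator by $c_{i\,i-1}$ and the remaining $C_{i\,j}$ the multiplication operators by $c_{i\,j}$, and identify $\mathcal{\mathbf{B^{\ast}_{n+k}}}(w,w^{\prime})=B_{n+k}^{\ast}w=b_{n+k}\tfrac{\partial w}{\partial\nu}\big|_{\Gamma}$ with $G_{n+k}=L^2(\Gamma)$. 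Once this dictionary is fixed, the whole content of the theorem is the verification that the concrete data satisfy the abstract hypotheses $(A1)$, $(A2)_n$, $(A3)_n$ and $(A4)_{n+k}$--$(A5)_{n+k}$, after which Theorem~\ref{control2n+pq}~$(ii)$ applies verbatim and the abstract space $M_{(n+p-1)}^{\ast}$ is translated into Sobolev scales.

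First I would dispatch the structural assumptions. $(A1)$ is classical for the Dirichlet Laplacian (self-adjointness, coercivity by Poincar\'e, compact resolvent). For $(A2)_n$, the pointwise bound $|c_{i\,i-1}w|^2\le\|c_{i\,i-1}\|_{\infty}\,c_{i\,i-1}|w|^2$ gives $|C_{i\,i-1}w|^2\le\beta_i\langle C_{i\,i-1}w,w\rangle$ with $\beta_i=\|c_{i\,i-1}\|_{\infty}$, while the partial coercivity $\alpha_i|\Pi_i w|^2\le\langle C_{i\,i-1}w,w\rangle$ with $\Pi_i=C_{i\,i-1}$ follows from the same pointwise inequality (with $\alpha_i=\beta_i^{-1}$) since $c_{i\,i-1}\ge0$. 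The delicate point here is the mapping property $C_{i\,i-1}^{\ast}\in\mathcal{L}(H_k)$ for $k\in\{0,\ldots,n-i+1\}$: using the description \eqref{iterateDA} of $D(A^{k/2})$, this is automatic for $k\le2$ but for $k\ge3$ it forces the compatibility conditions supplied by Proposition~\ref{1dcomp} ($d=1$) and Proposition~\ref{compatibilityC} ($d\ge2$), which is precisely why $(CC)_n$ is imposed on $c_{i\,i-1}$ for $i=2,\ldots,n-2$ when $n\ge4$. Next, $(A3)_n$ with $\Pi_i=C_{i\,i-1}$ is the interior observability estimate $\int_0^T|c_{i\,i-1}w^{\prime}|^2\,dt\ge\gamma_i(T)e_1(W)(0)$ for the free wave equation; since $\{c_{i\,i-1}>0\}\supset\overline{O_i}$ and $O_i$ satisfies $(GCC)$, this is the Bardos--Lebeau--Rauch theorem.

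It then remains to verify the observation hypotheses for the $p+1$ boundary operators. The admissibility $(A4)_{n+k}$ is the hidden (trace) regularity of the wave equation, bounding $\int_0^T\int_{\Gamma_{n+k}}|\partial_\nu w|^2\,d\sigma\,dt$ by the energy plus the source term; the observability $(A5)_{n+k}$ is the boundary observability inequality, again a consequence of $(GCC)$ for $\Gamma_{n+k}$ (meeting the bicharacteristics at non-diffractive points). With $(A1)$--$(A5)$ in hand, Theorem~\ref{mixedsystem} yields the observability inequalities \eqref{eqobskmixed}--\eqref{eqobskmixed2}, which are exactly the estimates stated in Theorem~\ref{ncoupledwavethm}, and Theorem~\ref{control2n+pq}~$(ii)$ produces controls $v_{n+k}\in L^2((0,T);L^2(\Gamma))$ driving $Y$ to rest for every $T>T_{n+p}^{\ast}=T^{\ast}$. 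Finally I would read off the functional setting: with $H=L^2(\Omega)$ one has $H_{n-i}=D(A^{(n-i)/2})$, $H_{n-i-1}=D(A^{(n-i-1)/2})$ and $H_{-1}=H^{-1}(\Omega)$, so $M_{(n+p-1)}^{\ast}$ is exactly the space displayed in the statement.

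The main obstacle is twofold, and both parts concern the passage from the interior to the boundary setting. On the PDE side, the unbounded (trace) nature of the boundary observation forces one to work with the \emph{weak} well-posedness of the adjoint cascade system in the negative scales $M_{(n+p-1)}$ and to prove the matching hidden-regularity/admissibility estimate $(A4)_{n+k}$, so that the HUM bilinear form built from $\int_0^T\int_{\Gamma}b_{n+k}^2|\partial_\nu w_{n+k}|^2$ is simultaneously continuous and coercive on $M_{(n+p-1)}$; this is the part not already contained in the interior proof of Theorem~\ref{contwave}. On the coupling side, the recurrent difficulty remains the high-order mapping property $C_{i\,i-1}^{\ast}\in\mathcal{L}(H_k)$, whose verification through \eqref{iterateDA} is exactly what dictates the compatibility conditions $(CC)_n$; everything else is a transcription of the abstract machinery already established for \eqref{NSHnmixed} and \eqref{CTHn+pq}.
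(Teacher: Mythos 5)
Your proposal is correct and takes essentially the same route as the paper: the paper verifies the abstract hypotheses in the proof of Theorem~\ref{ncoupledwavethm} (with the boundary admissibility $(A4)_{n+k}$ given by Lions' hidden regularity result and $(A5)_{n+k}$ by Bardos--Lebeau--Rauch under $(GCC)$, and the mapping properties $C_{i\,i-1}^{\ast}\in\mathcal{L}(H_k)$ secured by $(CC)_n$), and then simply invokes part $(ii)$ of Theorem~\ref{control2n+pq}, reading off $M_{(n+p-1)}^{\ast}$ in the Sobolev scale exactly as you do. The only cosmetic difference is the choice of partial-coercivity operator: the paper takes $\Pi_i=1_{O_i}$ with $\alpha_i=\inf_{O_i}c_{i\,i-1}>0$, while you take $\Pi_i=C_{i\,i-1}$ with $\alpha_i=\beta_i^{-1}$; both satisfy $(A2)_n$--$(A3)_n$ under $(H1)$ and $(GCC)$.
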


We now consider the following $n+p$-coupled dual control cascade system with $p+1$ mixed boundary and locally distributed controls.
$$
\begin{cases}
 \label{nmixedwavecontbdmix} 
&
y_{i,tt} -\Delta y_i +c_{i+1 i}\,y_{i+1}= 0  \mbox{ in } (0,T) \times \Omega\,,  1 \le i \le n-2\,,\\
&
\displaystyle{y_{i,tt} -\Delta y_{i} + \sum_{k=i+1}^{n+p}c_{k\, i}\, y_k=0 \mbox{ in } (0,T) \times \Omega\,, n-1 \le i \le n+q}
\,,\\
&
\displaystyle{y_{i,tt} -\Delta y_{i} +\sum_{k=i+1}^{n+p}c_{k\, i} y_k=}  
\displaystyle{ b_iv_i \mbox{ in } (0,T) \times \Omega , n+q+1,\le i \le n+p-1 ,}\\
&
y_{n+p,tt} -\Delta  y_{n+p}=b_{n+p}v_{n+p} \mbox{ in } (0,T) \times \Omega\,,\\
&
y_i=0 \mbox{ in } (0,T) \times \Gamma  \mbox{ for }
i\in \{1, \ldots, n-1\} \cup \{n+q+1, \ldots, n+p\} \,, \\
&
y_{n+k}=b_{n+k} v_{n+k}  \mbox{ in } (0,T) \times \Gamma  \mbox{ for }
k=0, \ldots, q \,, \\
&
(y_i,y_{i,t})(0)=(y_i^0,y_i^1) \mbox{ for }
i=1, \ldots, n+p \,,
\end{cases}
$$
 where we use the convention that the first equation has to disappear  if $n=2$.  Then we have the following exact controllability result.

\begin{Theorem}\label{contwavebdmixed}
We assume that the coefficients $c_{ij}$ satisfy the hypothesis $(H1)$ for some open subsets $O_i
\subset \Omega$ for $i=2, \ldots, n$ that satisfy $(GCC)$ for all $i \in \{2, \ldots, n\}$. 
If $n \ge 4$, we further assume that the coefficients $c_{i i-1}$ for $i=2, \ldots, n-2$ satisfy the compatibility condition
$(CC)_n$.
We also assume that the coefficients $b_{n+k}$, the subsets $\Gamma_{n+k}$ for $k=0, \ldots,q$  and
the subsets $\omega_{n+k}$ for $k=q+1, \ldots, p$ satisfy $(H4)$ where the subsets $\Gamma_{n+k}$ for $k=0, \ldots,q$ and $\omega_{n+k}$   for $k=q+1, \ldots, p$ satisfy $(GCC)$. Then for all
$T>T^{\ast}$, and all $Y_0 \in \displaystyle{(\Pi_{i=1}^{n} D(A^{(n-i)/2})\times (L^2(\Omega))^q \times (H^1_0(\Omega))^{p-q}) \times (\Pi_{i=1}^{n} D(A^{(n-i-1)/2}) 
\times }\ \\$
$\displaystyle{(H^{-1}(\Omega))^q) \times (L^2(\Omega))^{p-q})}$,
 there exist control functions $v_{n+k} \in L^2((0,T);\ \\$$L^2(\Gamma))$
for $k=0, \ldots, q$ and  $v_{n+k} \in L^2((0,T);\ \\$
$L^2(\Omega))$
for $k=q+1, \ldots, p$ such that the solution $Y$ of \eqref{nmixedwavecontbd} with initial data $Y_0$ satisfies $Y(T)=0$.
\end{Theorem}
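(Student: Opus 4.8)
The plan is to deduce Theorem~\ref{contwavebdmixed} from the abstract mixed control result Theorem~\ref{control2n+pqmixed} (whose observability counterpart is Theorem~\ref{mixedsystem}), exactly as Theorem~\ref{ncoupledwavethm} plays the role of the abstract observability estimates in the purely distributed or purely boundary cases. First I would fix the abstract framework by taking $H=L^2(\Omega)$ and $A=-\Delta$ with domain $D(A)=H^2(\Omega)\cap H^1_0(\Omega)$, so that $(A1)$ holds and $H_k=D(A^{k/2})$ is the scale \eqref{iterateDA}. The coupling operators $C_{ii-1}$ are the multiplication operators $u\mapsto c_{ii-1}u$, and the system is governed by the transpose of $\mathcal{M}_{n+p}$ given in \eqref{Mn+p}. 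The $p+1$ observation operators attached to the adjoint system are $\mathcal{B}^{\ast}_{n+k}(w,w')=b_{n+k}\,\partial_\nu w$ (mapping $H_2\times H$ into $G_{n+k}=L^2(\Gamma_{n+k})$) for the boundary indices $k\in\{0,\dots,q\}$, and $\mathcal{B}^{\ast}_{n+k}(w,w')=b_{n+k}\,w'$ (into $G_{n+k}=L^2(\omega_{n+k})$) for the distributed indices $k\in\{q+1,\dots,p\}$. This is precisely the bounded/unbounded splitting required by Theorem~\ref{control2n+pqmixed}: the boundary operators are of the unbounded type $B_{n+k}\in\mathcal{L}(G_{n+k},H_2')$ acting on $w$, the distributed ones of the bounded type $B_{n+k}^{\ast}w'$ with $B_{n+k}\in\mathcal{L}(G_{n+k},H)$.

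Next I would verify the abstract hypotheses. Hypothesis $(A2)_n$ splits into three checks. The bound $|C_{ii-1}w|^2\le\beta_i\langle C_{ii-1}w,w\rangle$ with $\beta_i=\|c_{ii-1}\|_\infty$ is immediate from $c_{ii-1}\ge0$, and taking $\Pi_i$ to be multiplication by $c_{ii-1}$ (as in Theorem~\ref{nNEC}) gives the partial coercivity $\alpha_i|\Pi_i w|^2\le\langle C_{ii-1}w,w\rangle$ on the region $\{c_{ii-1}>0\}\supset\overline{O_i}$. The delicate point is the regularity requirement $C_{ii-1}^{\ast}\in\mathcal{L}(H_k)$ for $k\in\{0,\dots,n-i+1\}$: for $k\le2$ it is automatic, while for $k\ge3$ it holds precisely because $c_{ii-1}$ satisfies the compatibility condition $(CC)_n$, via Proposition~\ref{1dcomp} in dimension $1$ and Proposition~\ref{compatibilityC} in dimension $d\ge2$. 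Hypothesis $(A3)_n$ with $\Pi_i=C_{ii-1}$ is the interior observability of the single free wave equation from the region $\{c_{ii-1}>0\}\supset\overline{O_i}$, which holds for $T$ large by the Bardos--Lebeau--Rauch theorem \cite{blr92} since $O_i$ satisfies $(GCC)$. Finally, for each $k$, the admissibility $(A4)_{n+k}$ is the hidden-regularity estimate for the scalar nonhomogeneous wave equation (interior velocity observation for $k>q$, boundary normal-derivative observation for $k\le q$), and the observability $(A5)_{n+k}$ is the corresponding BLR estimate, valid because $\omega_{n+k}$ (resp. $\Gamma_{n+k}$) satisfies $(GCC)$.

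With all abstract hypotheses in force, Theorem~\ref{mixedsystem} yields the observability inequalities \eqref{eqobskmixed}--\eqref{eqobskmixed2} for the adjoint cascade system, and Theorem~\ref{control2n+pqmixed} then produces, via HUM and the Lax--Milgram lemma on the weighted energy space $N_{(n+p-1)}$, controls $v_{n+k}=\mathcal{B}^{\ast}_{n+k}\hat w_{n+k}\in L^2((0,T);G_{n+k})$ steering the state to rest for every $T>T^{\ast}$ and every $Y_0$ in the dual space $N_{(n+p-1)}^{\ast}$. It remains only to identify $N_{(n+p-1)}^{\ast}$ with the product space in the statement, which is done by reading off the scale $H_k=D(A^{k/2})$ together with $H_{-1}=H^{-1}(\Omega)$, $H_1=H^1_0(\Omega)$, $H=L^2(\Omega)$, matching the $q$ boundary slots to the $(L^2,H^{-1})$ regularity and the $p-q$ distributed slots to the $(H^1_0,L^2)$ regularity.

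I expect two points to carry the real difficulty. The first, and principal, obstacle is the verification of $C_{ii-1}^{\ast}\in\mathcal{L}(H_k)$ for the higher powers $k$: this is exactly where the compatibility conditions $(CC)_n$ enter, and it is the reason the clean geometric examples (coupling regions disjoint from the control regions, touching $\Gamma$ only in an arbitrarily small set) are available only up to moderate $n$. The second is the treatment of the boundary (unbounded) observation operators: one must ensure both the hidden regularity making $\partial_\nu w$ square-integrable on $\Gamma$ and, crucially, the uniform-in-$T$ form of the observability inequality provided by Lemma~\ref{AL}, since the multi-level induction of Theorem~\ref{inductionobsNSHn} underlying Theorem~\ref{mixedsystem} requires observability constants with the explicit $T$-dependence recorded in $(\mathcal{P}_n)$.
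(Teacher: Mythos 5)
Your proposal is correct and follows essentially the same route as the paper: the paper verifies the abstract hypotheses $(A1)$, $(A2)_n$--$(A5)_{n+k}$ for $A=-\Delta$, multiplication couplings, and the mixed boundary/interior observation operators in the proof of Theorem~\ref{ncoupledwavethm} (using the compatibility conditions for $C_{ii-1}^{\ast}\in\mathcal{L}(H_k)$, BLR for $(A3)_n$ and $(A5)_{n+k}$, and hidden regularity for $(A4)_{n+k}$), and then simply invokes Theorem~\ref{control2n+pqmixed}, exactly as you do. The only cosmetic difference is your choice $\Pi_i=C_{ii-1}$ instead of the paper's $\Pi_i=1_{O_i}$ with $\alpha_i=\inf_{O_i}c_{ii-1}$, both of which satisfy $(A2)_n$--$(A3)_n$ under hypothesis $(H1)$.
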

\subsection{Boundary and localized controllability of $n+p$-coupled cascade heat and Schr\"odinger equations with localized couplings}
We now consider the following $n+p$-coupled locally control cascade heat-type system. We shall first consider the case of $p+1$ locally distributed controls.
\begin{equation}\label{nmixediffcont}
\begin{cases}
e^{i \theta}y^{\prime} + \mathcal{M}_{n+p}y=(0, \ldots, b_n v_n, b_{n+1} v_{n+1}, \ldots, b_{n+p}v_{n+p})^t \,,\\
y(0)=(y_1^0, \ldots, y_{n+p}^0) \,,
\end{cases}
\end{equation}
 where we use the convention that the first equation has to disappear  if $n=2$. We set
$Y_0=(y_1^0, \ldots, y_{n+p}^0)$. We recover $n+p$-coupled heat (resp. Schr\"odinger) cascade systems when $\theta=0$ (resp. $\theta=\pm \pi/2$) and diffusive coupled cascade systems when
$\theta \in (-\pi/2, \pi/2)$. 
Then we have the following exact controllability result.

\begin{Corollary}\label{contheat}
We assume that the coefficients $c_{ij}$ satisfy the hypothesis $(H1)$ for some open subsets $O_i
\subset \Omega$ for $i=2, \ldots, n$ that satisfy $(GCC)$ for all $i \in \{2, \ldots, n\}$. 
If $n \ge 4$, we further assume that the coefficients $c_{i i-1}$ for $i=2, \ldots, n-2$ satisfy the compatibility condition
$(CC)_n$.
We also assume that the coefficients $b_{n+k}$ and the subsets $\omega_{n+k}$ with $k=0, \ldots,p$  satisfy $(H2)$ where the subsets $\omega_{n+k}$  satisfy $(GCC)$ for all $k \in \{0, \ldots,p\}$. Then, 
the following properties hold
\begin{itemize}
\item[(i)]
 The case $\theta \in (-\pi/2,\pi/2)$ (Heat type systems). We have  for all
$T>0$, and all $Y_0 \in ( L^2(\Omega))^{n+p}$, there exist control functions $v_{n+k} \in L^2((0,T);L^2(\Omega))$
for $k=0, \ldots, p$ such that the solution $Y=(y_1,$ $\ldots, y_{n+p})$ of \eqref{nmixediffcont} with initial data $Y_0$ satisfies $Y(T)=0$.
\item[(ii)] The case $\theta=\pm \pi/2$ (Schr\"odinger systems). We have for all $T>0$ and all
$Y_0 \in \displaystyle{\Pi_{i=1}^n D(A^{(n-i)/2}) \times (L^2(\Omega))^p}$, there exist control functions $v_{n+k} \in L^2((0,T);L^2(\Omega))$
for $k=0, \ldots, p$ such that the solution $Y=(y_1,\ldots, y_{n+p})$ of \eqref{nmixediffcont} with initial data $Y_0$ satisfies $Y(T)=0$.
\end{itemize}
\end{Corollary}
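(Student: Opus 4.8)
The plan is to deduce the parabolic and Schrödinger statements directly from the wave controllability of Theorem~\ref{contwave} by the \emph{control transmutation method} of~\cite{seidman, miller, phung, EZ}, exactly as was carried out for $2$-coupled systems in~\cite{alaleaucont, alaleau11}. The decisive observation is that the wave system \eqref{nmixedwavecont}, the diffusive/heat system \eqref{nmixediffcont} with $\theta\in(-\pi/2,\pi/2)$, and the Schrödinger system \eqref{nmixediffcont} with $\theta=\pm\pi/2$ all share the \emph{same} spatial matrix operator $\mathcal{M}_{n+p}$ and the \emph{same} control operators $\mathcal{B}_{n+p}$; only the time evolution differs. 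Since the transmutation kernel acts solely on the time variable and $\mathcal{M}_{n+p}$ commutes with integration in time, the entire cascade coupling structure is transported untouched from the wave setting to the diffusive and Schrödinger settings. Consequently no new coupling analysis is needed: all the hard work is already contained in the wave observability/controllability of Theorems~\ref{mixedsystem}, \ref{control2n+pq} and \ref{contwave}.

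Concretely, first I would fix $Y_0$ in the relevant data space, let $v=(v_n,\dots,v_{n+p})$ be a wave control furnished by Theorem~\ref{contwave} that drives suitable wave Cauchy data (with position part $Y_0$) to rest in a time $L>T^{\ast}$, and denote by $w$ the corresponding controlled wave trajectory, so that $\ddot w+\mathcal{M}_{n+p}w=\mathcal{B}_{n+p}v$ with $w$ at rest outside $(0,L)$. Next I would introduce the transmutation kernel $k(s,t)$, supported in $(s,t)\in(0,T)\times(0,L)$, solving $e^{i\theta}\partial_s k=\partial_t^2 k$ together with the endpoint conditions that make the construction work (a complex Gaussian-type kernel for $\theta\in(-\pi/2,\pi/2)$, and an oscillatory kernel built from the free one-dimensional Schrödinger propagator for $\theta=\pm\pi/2$). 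Setting $u(s)=\int_0^L k(s,t)\,v(t)\,dt$ and $y(s)=\int_0^L k(s,t)\,w(t)\,dt$, two integrations by parts in $t$—using the wave equation and the kernel identity—yield $e^{i\theta}y'+\mathcal{M}_{n+p}y=\mathcal{B}_{n+p}u$, the boundary terms in $t$ being annihilated by the support/endpoint conditions on $k$ in $t$, while the endpoint conditions in $s$ force $y(0)=Y_0$ and $y(T)=0$. This produces the sought controls $v_{n+k}\in L^2((0,T);L^2(\Omega))$.

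It then remains to identify the data spaces by matching through the transmutation. For $\theta\in(-\pi/2,\pi/2)$, the operator $\mathcal{M}_{n+p}$ is a bounded triangular perturbation of the self-adjoint coercive diagonal operator $A$, hence generates an analytic semigroup; its instantaneous smoothing upgrades the wave energy-space controllability to null-controllability from arbitrary $(L^2(\Omega))^{n+p}$ data (uniformly across all $n+p$ components, including the unobserved ones) and, crucially, in \emph{any} time $T>0$, which is case $(i)$. For $\theta=\pm\pi/2$ the Schrödinger dynamics is conservative and time-reversible, so the transmutation preserves the position data and, after accounting for the second-order-to-first-order passage in the spatial operator, yields precisely the space $\Pi_{i=1}^n D(A^{(n-i)/2})\times(L^2(\Omega))^p$ of case $(ii)$, again for any $T>0$.

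The hard part will be the rigorous bookkeeping of the weakened energy spaces of the unobserved components $u_1,\dots,u_{n-1}$ through the kernel transform, and—in the Schrödinger case—verifying that the wave controllability is available in exactly the form required to build the oscillatory kernel transform, including the admissibility/hidden-regularity estimates that make $\int_0^L k(s,t)\,v(t)\,dt$ a \emph{bona fide} $L^2((0,T);G_{n+k})$ control. Because the kernel is universal (independent of $\mathcal{M}_{n+p}$ and of the coupling), and the required wave estimates are already encoded in Theorems~\ref{mixedsystem}, \ref{control2n+pq} and \ref{contwave}, I expect these verifications to be technical rather than conceptual.
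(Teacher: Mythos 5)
Your proposal is correct and follows essentially the same route as the paper: both deduce the heat/Schr\"odinger results from the wave controllability of Theorem~\ref{contwave} via Miller's control transmutation method, using parabolic smoothing to pass from $(L^2(\Omega))^{n+p}$ data to the smooth space required by the wave theorem in case $(i)$, and working directly on $(0,T)$ in the Schr\"odinger case $(ii)$. The only difference is presentational: the paper cites Miller's transmutation theorems (\cite{miller04} for the heat case, \cite{miller05} for the Schr\"odinger case) rather than rebuilding the kernel, and it implements the smoothing step concretely by applying no control on $(0,T/2)$ so that $Y_{|t=T/2}$ lands in $\Pi_{i=1}^{n} D(A^{(n-i+1)/2})\times (H^1_0(\Omega))^p$, then controlling on $(T/2,T)$ --- the precise form of the ``upgrade'' you describe.
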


We now consider the following $n+p$-coupled control cascade heat-type system with $p+1$ boundary controls.
\begin{equation}\label{nmixediffcontbd}
\begin{cases}
e^{i \theta}y_{i,t} -\Delta y_i +c_{i+1 i}y_{i+1}= 0  \mbox{ in } (0,T) \times \Omega\,,  1 \le i \le n-2 ,
\\
\displaystyle{e^{i \theta}y_{i,t}  \! - \! \Delta y_{i} 
+ \!   \!\!  \sum_{k=i+1}^{n+p}  \!  \!  c_{k  i} y_k=0 \mbox{ in } (0,T) \times \Omega , n-1 \le i \le n+p-1}
 ,\\
e^{i \theta}y_{n+p,t} -\Delta  y_{n+p}=0 \mbox{ in } (0,T) \times \Omega\,,\\
y_i=0 \mbox{ in } (0,T) \times \Gamma  \mbox{ for }
i=1, \ldots, n-1 \,, \\
y_{n+k}=b_{n+k} v_{n+k}  \mbox{ in } (0,T) \times \Gamma  \mbox{ for }
k=0, \ldots, p \,, \\
y_i(0)=y_i^0 \mbox{ for } i=1, \ldots, n+p \,,
\end{cases}
\end{equation}
 where we use the convention that the first equation has to disappear  if $n=2$.  Then we have the following exact controllability result.

\begin{Corollary}\label{contdiffbd}
We assume that the coefficients $c_{ij}$ satisfy the hypothesis $(H1)$ for some open subsets $O_i
\subset \Omega$ for $i=2, \ldots, n$ that satisfy $(GCC)$ for all $i \in \{2, \ldots, n\}$. 
If $n \ge 4$, we further assume that the coefficients $c_{i i-1}$ for $i=2, \ldots, n-2$ satisfy the compatibility condition
$(CC)_n$.
We also assume that the coefficients $b_{n+k}$ and the subsets $\Gamma_{n+k}$ with $k=0, \ldots,p$  satisfy $(H3)$ where the subsets $\Gamma_{n+k}$  satisfy $(GCC)$ for all $k \in \{0, \ldots,p\}$. Then 
we have
\begin{itemize}
\item[(i)] The case $\theta \in (-\pi/2,\pi/2)$ (Heat type systems). We have  for all
$T>0$, and all $Y_0 \in ( H^{-1}(\Omega))^{n+p}$, there exist control functions $v_{n+k} \in L^2((0,T);L^2(\Gamma))$
for $k=0, \ldots, p$ such that the solution $Y=(y_1,\ldots, y_{n+p})$ of \eqref{nmixediffcontbd} with initial data $Y_0$ satisfies $Y(T)=0$.
\item[(ii)] The case $\theta=\pm \pi/2$ (Schr\"odinger systems). We have for all $T>0$ and all
$Y_0 \in \displaystyle{\Pi_{i=1}^n D(A^{(n-i-1)/2}) \times (H^{-1}(\Omega))^p}$, there exist control functions $v_{n+k} \in L^2((0,T);L^2(\Omega))$
for $k=0, \ldots, p$ such that the solution $Y=(y_1,\ldots, y_{n+p})$ of \eqref{nmixediffcontbd} with initial data $Y_0$ satisfies $Y(T)=0$.
\end{itemize}
\end{Corollary}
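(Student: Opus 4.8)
The plan is to deduce both null-controllability assertions from the exact boundary controllability of the associated cascade wave system, namely Theorem~\ref{contwavebd}, by means of the transmutation method~\cite{seidman, miller, phung, EZ}. First I would note that the heat-type system \eqref{nmixediffcontbd} and the wave system \eqref{nmixedwavecontbd} are governed by the \emph{same} matrix operator $\mathcal{M}_{n+p}$ and the \emph{same} boundary control data $b_{n+k}$; only the time derivative differs, the free evolution of \eqref{nmixediffcontbd} being generated by $-e^{-i\theta}\mathcal{M}_{n+p}$, which is an analytic semigroup for $|\theta|<\pi/2$ and a group for $\theta=\pm\pi/2$. Under the hypotheses $(H1)$, $(CC)_n$ and the $(GCC)$ assumption on the sets $\Gamma_{n+k}$, Theorem~\ref{contwavebd} provides exact controllability of \eqref{nmixedwavecontbd} in any time $L>T^{\ast}$, with boundary controls in $L^2((0,L);L^2(\Gamma))$ and initial data in the wave energy space $(\Pi_{i=1}^{n} D(A^{(n-i)/2})\times (L^2(\Omega))^p)\times(\Pi_{i=1}^{n} D(A^{(n-i-1)/2})\times (H^{-1}(\Omega))^p)$.

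Next I would apply the control transmutation method. One represents the heat (resp. Schr\"odinger) propagator generated by $\mathcal{M}_{n+p}$ as a convolution in time of the wave cosine and sine families with a fundamental kernel $k(t,s)$ that solves a one-dimensional heat (resp. Schr\"odinger) equation on a half-line with suitable boundary data; passing a wave control that drives the state to rest in time $L$ through the dual of this kernel yields a control for \eqref{nmixediffcontbd} that drives the corresponding state to rest. Since the transformation absorbs the fixed wave control time $L$, the construction succeeds for \emph{every} $T>0$, which explains why no minimal time appears in the statement. In the parabolic range $|\theta|<\pi/2$ one uses a Gaussian Weierstrass kernel and, exploiting the strong smoothing of the adjoint parabolic flow, relaxes the admissible data down to $(H^{-1}(\Omega))^{n+p}$, whereas for $\theta=\pm\pi/2$ one uses the associated oscillatory kernel and the data space is the velocity component $\Pi_{i=1}^{n} D(A^{(n-i-1)/2})\times (H^{-1}(\Omega))^p$ of the wave energy space, exactly as in (i) and (ii). Equivalently, one may transmute the boundary observability estimate behind Theorem~\ref{mixedsystem} and then conclude by HUM, as was done for the $2$-coupled system in~\cite{alabau2cascade}.

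The main obstacle is that $\mathcal{M}_{n+p}$ is \emph{not} self-adjoint, so the scalar spectral form of the transmutation identity, which rests on the functional calculus of a self-adjoint generator, is not directly available. I would instead invoke the abstract operator-theoretic version of the method~\cite{miller, EZ}, which only requires that the second-order Cauchy problem be well posed and exactly controllable; both hold here because $\mathcal{M}_{n+p}$ is a bounded cascade perturbation of $\mathrm{diag}(A,\dots,A)$ and hence generates a well-behaved cosine family. The remaining points are routine bookkeeping: one checks, using the admissibility property recorded in Theorem~\ref{mixedsystem}, that the transmuted controls stay in $L^2((0,T);L^2(\Gamma))$ despite the unboundedness of the boundary control operators, and one tracks the successive energy levels of the cascade through the kernel transform so that the initial-data spaces coincide with those announced in (i) and (ii). Once these verifications are made, both null-controllability results follow.
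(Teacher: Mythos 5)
Your proposal is correct and follows essentially the same route as the paper: both deduce the result from the exact boundary controllability of the wave cascade system (Theorem~\ref{contwavebd}) combined with Miller's control transmutation method (\cite{miller}, Theorem 3.4, for the heat range and \cite{miller05}, Theorem 3.1, for Schr\"odinger), which is exactly why no minimal time appears. The only cosmetic difference is in case (i): the paper obtains the $(H^{-1}(\Omega))^{n+p}$ data space by applying no control on $(0,T/2)$ and letting the heat semigroup smooth the data into $\Pi_{i=1}^{n} D(A^{(n-i)/2})\times (L^2(\Omega))^p$ before transmuting on $(T/2,T)$, whereas you attribute the same relaxation to parabolic smoothing via the adjoint flow.
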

We now consider the following $n+p$-coupled control cascade heat-type system with $p+1$ mixed boundary and locally distributed controls.
\begin{equation}\label{nmixediffcontbdmix}
\begin{cases}
e^{i \theta}y_{i,t} -\Delta y_i +c_{i+1 i}\,y_{i+1}= 0  \mbox{ in } (0,T) \times \Omega\,,  1 \le i \le n-2\,,
\\
\displaystyle{e^{i \theta}y_{i,t} \!- \!\Delta y_{i} \! + \!
 \! \! \! \!
  \sum_{k=i+1}^{n+p} \!  \!\! c_{k\, i}  y_k=0 \mbox{ in } (0,T)  \! \times \!  \Omega , n-1 
   \! \le \! i  \! \le n+q} ,\\
\displaystyle{e^{i \theta}y_{i,t} -\Delta y_{i} + \sum_{k=i+1}^{n+p}c_{k\, i} y_k=
}\\
\displaystyle{b_iv_i \mbox{ in } (0,T) \times \Omega\,, n+q+1 \le i \le n+p-1} ,
\\
e^{i \theta}y_{n+p,t} -\Delta  y_{n+p}=b_{n+p}v_{n+p} \mbox{ in } (0,T) \times \Omega ,
\\
y_i \! = \! 0 \mbox{ in } (0,T)  \! \times \!  \Gamma  \mbox{ for }
i \! \in  \! \{1, \ldots, n \! - \! 1\} \!  \cup \!  \{n+q+1, \ldots, n+p\}  , \\
y_{n+k}=b_{n+k} v_{n+k}  \mbox{ in } (0,T) \times \Gamma  \mbox{ for }
k=0, \ldots, q  , \\
(y_i,y_{i,t})(0)=(y_i^0,y_i^1) \mbox{ for }
i=1, \ldots, n+p \,,
\end{cases}
\end{equation}
 where we use the convention that the first equation has to disappear  if $n=2$.  Then we have the following exact controllability result.

\begin{Corollary}\label{contdiffbdmixed}
We assume that the coefficients $c_{ij}$ satisfy the hypothesis $(H1)$ for some open subsets $O_i
\subset \Omega$ for $i=2, \ldots, n$ that satisfy $(GCC)$ for all $i \in \{2, \ldots, n\}$. 
If $n \ge 4$, we further assume that the coefficients $c_{i i-1}$ for $i=2, \ldots, n-2$ satisfy the compatibility condition
$(CC)_n$.
We also assume that the coefficients $b_{n+k}$, the subsets $\Gamma_{n+k}$ for $k=0, \ldots,q$  and
the subsets $\omega_{n+k}$ for $k=q+1, \ldots, p$ satisfy $(H4)$ where the subsets $\Gamma_{n+k}$ for $k=0, \ldots,q$ and $\omega_{n+k}$   for $k=q+1, \ldots, p$ satisfy $(GCC)$. Then 
we have
\begin{itemize}
\item[(i)] The case $\theta \in (-\pi/2,\pi/2)$ (Heat type systems). We have  for all
$T>0$, and all $Y_0 \in (H^{-1}(\Omega))^{n+q}\times ( L^2(\Omega))^{p-q}$, there exist control functions $v_{n+k} \in L^2((0,T);L^2(\Gamma))$
for $k=0, \ldots, q$ and  $v_{n+k} \in L^2((0,T);L^2(\Omega))$
for $k=q+1, \ldots, p$ such that the solution $Y=(y_1,\ldots, y_{n+p})$ of \eqref{nmixediffcontbdmix} with initial data $Y_0$ satisfies $Y(T)=0$.
\item[(ii)] The case $\theta=\pm \pi/2$ (Schr\"odinger systems). We have for all $T>0$ and all
$Y_0 \in \displaystyle{\Pi_{i=1}^n D(A^{(n-i-1)/2}) \times (H^{-1}(\Omega))^q \times (L^2(\Omega))^{p-q}}$, there exist control functions $v_{n+k} \in L^2((0,T);L^2(\Gamma))$
for $k=0, \ldots, q$ and  $v_{n+k} \in L^2((0,T);L^2(\Omega))$
for $k=q+1, \ldots, p$ such that the solution $Y=(y_1,\ldots, y_{n+p})$ of \eqref{nmixediffcontbdmix} with initial data $Y_0$ satisfies $Y(T)=0$.
\end{itemize}
\end{Corollary}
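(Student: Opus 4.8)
The plan is to deduce the null-controllability of the heat- and Schr\"odinger-type cascade system \eqref{nmixediffcontbdmix} from the exact controllability of the corresponding $n+p$-coupled cascade \emph{wave} system established in Theorem~\ref{contwavebdmixed}, by means of the transmutation method~\cite{seidman, miller, phung, EZ}. Both dynamics are governed by the same matrix operator $\mathcal{M}_{n+p}$ built on the Dirichlet Laplacian $A$, and the coupling/control geometry, the regularity hypothesis $(H1)$, the compatibility condition $(CC)_n$ and the $(GCC)$ and $(H4)$ assumptions are literally the same as in Theorem~\ref{contwavebdmixed}. By HUM duality that wave controllability result is equivalent to the observability inequality of Theorem~\ref{ncoupledwavethm} for the adjoint homogeneous wave cascade system, with the $p+1$ mixed boundary/locally distributed observations $\mathbf{B}^{\ast}_{n+k}$ acting on the components ranked from $n$ to $n+p$. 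My first step would be to record this wave observability inequality; the whole task is then to transport it to the parabolic (resp. dispersive) setting and to conclude by a second application of HUM, exactly as in Theorem~\ref{control2n+pqmixed}.

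I would carry out the transmutation as follows. One relates the conservative wave propagator to the heat (resp. Schr\"odinger) propagator associated with the same spatial operator through an explicit kernel $k(t,s)$: the parabolic (resp. dispersive) solution is written as a time-convolution of the wave solution against $k$, whose defining property (see~\cite{miller, EZ}) transfers the wave observability estimate into the corresponding observability estimate for the dual system of \eqref{nmixediffcontbdmix}. Since $\mathcal{M}_{n+p}$ is a bounded lower-triangular perturbation of $\mathrm{diag}(A,\ldots,A)$, and since the wave estimate of Theorem~\ref{ncoupledwavethm} is formulated entirely in terms of energies of the \emph{scalar} components $u_i$ -- each solving a single wave equation with a source issued from the coupling -- the kernel $k(t,s)$ acts on these scalar building blocks, for which the spatial operator is the self-adjoint $A$ and the classical transmutation formula applies. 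This is where the gain of time occurs: the resulting parabolic (resp. dispersive) observability holds for \emph{any} $T>0$, in contrast with the constraint $T>T^{\ast}$ of the wave case, which accounts for the arbitrary control time in both items of the statement.

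I would then treat the two cases separately. For the heat-type case $\theta\in(-\pi/2,\pi/2)$ I would use the parabolic transmutation kernel; because the heat semigroup is infinitely smoothing, the cascade energy hierarchy of the wave problem is flattened, and the Lax--Milgram/HUM argument of Theorem~\ref{control2n+pqmixed} yields controls $v_{n+k}$ with the announced regularity for every initial datum $Y_0\in(H^{-1}(\Omega))^{n+q}\times(L^2(\Omega))^{p-q}$, which is item (i). For the Schr\"odinger case $\theta=\pm\pi/2$ I would use the Schr\"odinger transmutation kernel; since the Schr\"odinger group is itself conservative, no smoothing occurs and the graded structure of the wave problem is preserved, producing the space $\Pi_{i=1}^{n}D(A^{(n-i-1)/2})\times(H^{-1}(\Omega))^{q}\times(L^2(\Omega))^{p-q}$ of item (ii). In both items the mixed bounded/unbounded structure -- unbounded (boundary) controls on the first $q+1$ equations, bounded (locally distributed) controls on the remaining ones -- is inherited from Theorem~\ref{contwavebdmixed} and is encoded by the space $N_{(n+p-1)}^{\ast}$ introduced for Theorem~\ref{control2n+pqmixed}, which governs the placement of the $H^{-1}$ and $L^2$ factors.

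The main obstacle will be to make the transmutation rigorous for a \emph{non-self-adjoint} cascade, since the naive spectral transmutation of $e^{-t\mathcal{M}_{n+p}}$ would require $\sqrt{\mathcal{M}_{n+p}}$, which is ill-defined here. I expect to circumvent this by transmuting at the level of the observability inequalities, acting on the scalar components, rather than at the level of the coupled propagator; the coupling is then reintroduced through the same cascade estimates that prove Theorem~\ref{ncoupledwavethm}. A secondary, more bookkeeping, difficulty will be to check that the admissibility (hidden-regularity) bounds survive the convolution against $k(t,s)$ and to track precisely how each functional space transforms; this I would handle exactly as in the $2$-coupled case of~\cite{alaleaucont, alaleau11}, to which the present argument reduces block by block.
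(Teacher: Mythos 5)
Your overall strategy (wave result $\to$ transmutation $\to$ parabolic/dispersive result) belongs to the right family of ideas, but you transmute the wrong object, and the step you rely on is precisely the one that is not available. The paper's proof does \emph{not} transmute observability inequalities: it takes the wave controllability result of Theorem~\ref{contwavebdmixed} as a black box and transmutes the \emph{controlled trajectories and controls}, invoking Miller's control transmutation theorems (\cite{miller04}, Theorem 3.4 of \cite{miller}, Theorem 3.1 of \cite{miller05}); for the heat case (i) it first applies \emph{no control on $(0,T/2)$}, so that parabolic smoothing drives the rough data $(H^{-1}(\Omega))^{n+q}\times(L^2(\Omega))^{p-q}$ into the smooth space required by Theorem~\ref{contwavebdmixed}, and then controls on $(T/2,T)$. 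Control transmutation is benign for the cascade system: if $Y(t)=\int k(t,s)W(s)\,ds$ with $W$ a controlled wave trajectory, then $Y$ solves the parabolic cascade system with control $\int k(t,s)v(s)\,ds$, simply because $\mathcal{M}_{n+p}$ is time-independent and hence commutes with convolution in time; no self-adjointness and no square root of $\mathcal{M}_{n+p}$ ever enter, and only \emph{upper} bounds (the $L^2$ membership of the transmuted control) are needed.

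By contrast, your central claim --- that writing the parabolic adjoint solution as a time convolution $k\ast U$ of the wave adjoint solution ``transfers the wave observability estimate into the corresponding observability estimate for the dual system'' --- is not a formal consequence of the convolution formula: observability requires a \emph{lower} bound of $\int_0^T\|\mathcal{\mathbf{B}}^{\ast}(k\ast U)\|^2\,dt$ in terms of the wave observation $\int\|\mathcal{\mathbf{B}}^{\ast}U\|^2\,ds$, and a convolution sitting inside a norm cannot be bounded below in this way. Observability transmutation in the style of \cite{EZ} is a genuinely hard theorem, proved there for the scalar self-adjoint heat equation; extending it to the non-self-adjoint matrix operator $\mathcal{M}_{n+p}$, to boundary observations, and to the Schr\"odinger case is exactly what you would have to prove, and you do not. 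Your fallback --- transmute scalar components and ``reintroduce the coupling through the same cascade estimates that prove Theorem~\ref{ncoupledwavethm}'' --- would amount to re-running the multi-level energy method on the parabolic side; but that method depends on conservation of the energy of the first component, on time reversibility, and on the time-translation invariance underlying Lemma~\ref{AL}, all of which fail for the heat system. This is the very reason the paper confines all cascade analysis to the wave side and transfers only the final controllability statement. Finally, your handling of the rough initial data in item (i) --- smoothing ``flattening the hierarchy'' inside the Lax--Milgram/HUM argument --- is not a proof; the actual mechanism is the explicit wait-then-control argument recalled above.
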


\begin{Remark}
\rm
The condition $(GCC)$ is not natural for the scalar heat equation, so that the results presented above are probably not optimal for $n$-coupled cascade systems. It is also known that $(GCC)$ is not necessary for scalar Schr\"odinger equation in a rectangle (see~\cite{jaffard, tenentucs}). However, the above results are the first ones valid in a multidimensional setting, for localized as well as boundary control, and for control regions which do not meet the coupling regions for $n=3$.
\end{Remark}

\begin{Remark}
\rm
As in~\cite{alaleau11} (see Remark~1.6), the above results apply for $n=3$ to other boundary conditions such as Neumann or Fourier conditions. They also apply more generally for a diffusion operator of the form $-div (c \nabla )$, where $c$ is a sufficiently smooth positive symmetric matrix (see Remark~1.7 in~\cite{alaleau11}). 
\end{Remark}
\section{Proofs of the main applicative results}
\subsection{Proofs of the results for coupled cascade wave systems}

We begin with the proof of Theorem~\ref{ncoupledwavethm}.

\smallskip

\noindent
\textbf{Proof of Theorem~\ref{ncoupledwavethm}}.
This is an application of the abstract Theorem~\ref{mixedsystem}. Here $H=L^2(\Omega)$, and
$A$ is given by $A=-\Delta$ and $D(A)=H^2(\Omega) \cap H^1_0(\Omega)$. The sets $H_k=
D(A^{k/2})$ are given by \eqref{iterateDA}. The operators $C_{ij}$ are the multiplication operators 
in $L^2(\Omega)$ by the corresponding functions $c_{ij}$ and are therefore bounded and self-adjoint. Thanks to the smoothness 
and the compatibility assumptions on the  coefficients $c_{i i-1}$, $C_{i i-1}^{\ast} \in \mathcal{L}(H_k)$ for all
$k=0, \ldots, n-i+1$. Thanks to $(H1)$, the assumption $(A2)_n$ holds with $\Pi_i=
1_{O_i}$ and $\alpha_i=\inf_{O_i}(c_{i i-1})>0$. Morever since the sets $O_i$ satisfies
$(GCC)$,  $(A3)_n$ also holds. We shall check the assumptions on the observability operators.
 First case: $j \in I_{int}$ where $j \in \{0, \ldots, p\}$. Then we have $G_{n+j}=L^2(\Omega)$,
and $B_{n+j}^{\ast}$ is the multiplication in $L^2(\Omega)$ by the bounded function $b_{n+j}$. Therefore $B_{n+j}^{\ast}$ is a bounded symmetric operator in $H$, so that
$(A4)_{n+j}$ holds. Thanks to the assumptions
on $b_{n+j}$ when $j\in I_{int}$ and since $\omega_{n+j}$ satisfies $(GCC)$, we deduce
that $(A5)_{n+j}$ also holds using \cite{blr92}.

Second case: $j \in I_{bd}$ where $j \in \{0, \ldots, p\}$. Then we have $G_{n+j}=L^2(\Gamma)$,
and $B_{n+j}^{\ast} \in \mathcal{L}(H^2(\Omega) \cap H^1_0(\Omega);L^2(\Gamma))$ is defined as
$ 
B_{n+j}^{\ast}u=b_{n+j} \frac{ \partial u}{\partial \nu} $
$ u \in H^2(\Omega) \cap H^1_0(\Omega) .
$ 
 Thanks to the well-known hidden regularity result of~\cite{lions}, $B_{n+j}^{\ast}$ satisfies $(A4)_{n+j}$. Thanks to the assumptions
on $b_{n+j}$ when $j\in I_{bd}$ and since $\Gamma_{n+j}$ satisfies $(GCC)$, we deduce
that $(A5)_{n+j}$ also holds using \cite{blr92}. Hence we can apply Theorem~\ref{mixedsystem}, which gives the desired result.

We now shall prove the controllability results for coupled cascade wave systems.

\smallskip

\noindent
\textbf{Proof of Theorem~\ref{contwave}}.
Thanks to our hypotheses and thanks to the above proof the assumption of
Theorem~\ref{mixedsystem} are satisfied. Hence we apply the part $(i)$ of Theorem~\ref{control2n+pq}. This gives the desired result.

\smallskip

\noindent
\textbf{Proof of Theorem~\ref{contwavebd}}
Thanks to our hypotheses and thanks to the above proof the assumption of
Theorem~\ref{mixedsystem} are satisfied. Hence we apply the part $(ii)$ of Theorem~\ref{control2n+pq}. This gives the desired result.

\smallskip

\noindent
\textbf{Proof of Theorem~\ref{contwavebdmixed}}.
Thanks to our hypotheses and thanks to the above proof the assumption of
Theorem~\ref{mixedsystem} are satisfied. Hence we apply Theorem~\ref{control2n+pqmixed}. This gives the desired result.

\subsection{Proofs of the results for coupled cascade heat and Schr\"odinger equations}
We start with the proof of Corollary~\ref{contheat} using the transmutation method.

\smallskip

\noindent
\textbf{Proof of Corollary~\ref{contheat}}.
We proceed as in~\cite{alaleau11}. 
Proof of $(i)$ We first apply no control on the time interval $(0,T/2)$, so that by
the smoothing effect of the heat equation, the initial data $Y^0\in (L^2(\Omega))^{n+p}$ is driven to
$Y_{|t=T/2} \in  \displaystyle{\Pi_{i=1}^{n} D(A^{(n-i+1)/2})\times (H^1_0(\Omega))^p}$. We then combine Theorem~\ref{contwave} together with the transmutation result  given by Miller in~\cite{miller04} to prove that there exist control $v_{n+k}$ for $k=0, \ldots, p$ such that $Y(T)=0$.
Proof of $(ii)$. It is similar to the case $(i)$ except that we work directly on the time interval $(0,T)$ since no smoothing effect holds in the case of the Schr\"odinger equation. Combining Theorem~\ref{contwave} together with the transmutation result given by Miller in~\cite{miller05} (see Theorem 3.1), we conclude the proof.

\smallskip

\noindent
\textbf{Proof of Corollary~\ref{contdiffbd}}.
We proceed as in~\cite{alaleau11}. 
Proof of $(i)$ We first apply no control on the time interval $(0,T/2)$, so that by
the smoothing effect of the heat equation, the initial data $Y^0 \in (H^{-1}(\Omega))^{n+p}$ is driven to
$Y_{|t=T/2} \in  \displaystyle{\Pi_{i=1}^{n} D(A^{(n-i)/2})\times (L^2(\Omega))^p}$. We then combine Theorem~\ref{contwavebd} together with the transmutation result given by Miller in~\cite{miller} (see Theorem 3.4) to get the desired result.

Proof of $(ii)$. It is similar to the case $(i)$ except that we work directly on the time interval $(0,T)$ since no smoothing effect holds in the case of the Schr\"odinger equation. Combining Theorem~\ref{contwavebd} together with the transmutation result (Theorem 3.1) given by Miller in~\cite{miller05} (see Theorem 3.1), we conclude the proof.

\smallskip

\noindent
\textbf{Proof of Corollary~\ref{contdiffbdmixed}}.
We combine the above results for the boundary and locally distributed controls and use the appropriate version of the transmutation method as above.

\section{Discussion, generalizations and further questions}

We show in this paper that the two-level energy method introduced in~\cite{alacras01,sicon03} and further extended and simplified in~\cite{alaleaucont, alaleau11} and~\cite{alabau2cascade} can be adapted to handle two subclasses of coupled cascade systems: the bi-diagonal
$n$-coupled cascade and mixed $n+p$-coupled cascade systems.  We prove positive general boundary and locally distributed observability and control results  through a generalization of the two-level energy method into a hierarchic multi-level energy method. It is a constructive method, since it does not rely on contradiction arguments to get the desired observability inequalities. We give several applications of these results to $n+p$-coupled cascade wave, heat and Schr\"odinger systems. 

The main features of these results are that they are valid in a multi-dimensional framework, for locally distributed as well as boundary controls/observations, and for localized couplings. Furthermore in the one-dimensional case and if $n \le 5$ in the multi-dimensional case, they are valid in situations for which the control/observations regions do not meet the localized coupling regions.
In dimensions larger than $2$ and for $n \ge 6$,  the supports of the $n-4$ first coefficients will have to meet the control regions. However the intersections between the supports of these coefficients  and the control regions can be made as small as possible (located on some small neighborhood of some parts of the boundary). 

These two subclasses of coupled cascade system are a toy model prior to a more general study. The proof of the result for the bi-diagonal $n$-coupled cascade system is somehow involved and requires a sharp analysis of the way the information is transferred
from the last observed equation to the $n-1$ other unobserved equations. 
Indeed the study of general $n$-coupled cascade systems or of full $n$-coupled systems under sharp
geometric conditions, optimal conditions on the coupling coefficients is a very involved open question which will require further sharp analysis.
The extension of the multi-level energy methods to other examples and to a larger class of $n$-coupled cascade systems is under study.
In particular, the generalization to a full cascade system by a single control involves other hypotheses on the coupling coefficients situated away fom the sup-diagonal. The present generalization to a a full cascade system of order $3$, that is for a non vanishing coefficient $c_{ 31}$ in the $3$-coupled
cascade system, and thus for a matrix operator of the form 
$$
\mathcal{M}_3=
\left(
\begin{matrix}
A &  c_{21}I & c_{31}I\\
0 & A & c_{32} I  \\
0 & 0 &  A 
\end{matrix} \right)
$$ 
 is a work in progress. It requires an extension of the multi-level energy method to transfer the information through a larger
band away from the diagonal.

In former works~\cite{alacras01, sicon03, alaleaucont, alaleau11} we studied another class of coupled systems, namely the $2$-coupled symmetric systems. We may compare this class with the class of cascade systems studied in this paper as follows:
 
\begin{itemize}
\item If the initial data of the first (unobserved) component in a $2$-coupled cascade system is vanishing, then by uniqueness this component is vanishing at all time, so that the coupled system reduces to a scalar wave equation with a usual observability hypothesis. In this case, the desired observability inequality is trivial. The same property holds for $n$-coupled cascade systems if the initial data of the first $n-1$ components are vanishing. This shows in particular that the notion of {\em partial observability} as introduced in~\cite{lions} is trivial for coupled cascade system. This property no longer holds true for $2$-symmetric coupled systems, which are in some way {\em more coupled} than cascade systems. 

\item  A furthermore important difference is that the total energies (weakened and natural) of the solutions of $2$-coupled symmetric systems are conserved. We strongly used this property in~\cite{alacras01, sicon03, alaleaucont, alaleau11}. This property does not hold true for $2$-coupled cascade systems and more generally for $n$-coupled systems, only the energy of the first component is conserved through time. Nevertheless, we could extend the two-levels energy method to handle this loss of conservative properties for the full state variable. 

\item Finally, the results for $2$-coupled symmetric systems hold under a smallness condition on the coupling coefficient which is not longer requested for coupled cascade systems.
\end{itemize}

The results presented in this paper and existing results in the literature lead to several open questions. We shall give some of these open questions.

Our results on coupled cascade wave systems (see also~\cite{alaleaucont, alaleau11}) rely on smoothness assumptions on the coupling coefficients. It is interesting to note that the results on $2$-coupled cascade wave systems in~\cite{RdT11} --valid for one-dimensional domains in the case of $2$-coupled cascade wave systems (and
to multidimensional domains for $2$-coupled cascade Schr\"odinger with periodic boundary conditions)-- are stated for a coupling term which is the
characteristic function of the observation region. Hence it is not a smooth coupling coefficient. This question is also linked to the necessity or not of the compatibility conditions $(CC)_n$ when $n \ge 4$. It would be interesting to explore this question linked to the {\em hierarchic} approach of our multi-level energy method. Such results, if positive, would allow controllability results in dimensions larger than $2$ and for a number of equations larger than $6$, in geometrical situations where none of the coupling regions meet the controls regions. 

Also, 
the necessary and sufficient abstract condition under which our
results are valid, allows us to get results under sharp geometric
 conditions, issued from micro-local analysis, or spectral and 
 frequency approaches (or also multipliers methods) for wave coupled
  systems. These conditions are sharp for wave systems at least for $n \le 3$ ,
 i.e., they are sufficient conditions and are almost necessary
 in general. However, these geometric conditions are probably not
  optimal in the parabolic case and it would be an important 
  step to understand which geometric conditions on the control
   and coupling regions are optimal for coupled cascade
    parabolic systems. 

Generalizations to other classes of coupled 
PDE's --{\em to be identified}--, to nonconservative systems,
 to systems with distinct operators on the main diagonal
 (see~\cite{sicon03} for some results in this direction) \ldots
are challenging questions. 

Many applications involve the control of nonlinear coupled systems by a reduced number of controls
(see e.g. \cite{CGR10}). Thus,  it is a challenging question to understand also nonlinear phenomenon, and in particular to identify classes of nonlinear coupled systems for which  linearization arguments or the Coron's return method~\cite{coronbk07} (when controllability for
the linearized system fails) can be used to obtain positive controllability results for these nonlinear models by a reduced number of controls (see e.g.~\cite{CGR10}). 

Various other questions linked to the cost of controls in the spirit of~\cite{miller, EZ2}, frequency analysis \ldots of such coupled systems are of interest.  Several different mathematical approaches can lead to complementary results, as in the present paper which combines a performant and robust multi-level energy method and
results from micro-local analysis on geometrical aspects. Two key properties for making possible this combination are the invariance by translation in time of the systems and the quadratic dependence of the observation with respect to state variable. We have already used in~\cite{alaamm11} --in a different way and for different purposes-- these properties for nonlinear stabilization, combining an optimal-convexity method with results from micro-local analysis. Already, one can remark that the first property is lost if the coupling coefficients are also depending on time since time invariance not longer holds. Thus dealing with time and space dependent coupling operators is also an important open and challenging question.

\section{Appendix: proof of the controllability results for mixed
 bi-diagonal and non bi-diagonal cascade systems}
 
\subsubsection{Proofs of the main results for observability of $n+p$-coupled cascade systems}
  
\textbf{Proof of Theorem~\ref{mixedsystem}}.
We will denote by $D(T)$ generic positive constants depending on $T$. We prove \eqref{admissineqimixed} as follows. First we shall prove by induction on $j$ that for
any $j \in \{1,\ldots,p\}$, we have
\begin{align}
\label{DIRJ}
\int_0^T |u_{n+j}|^2 \,dt & \le C \int_0^T  e_1(U_{n+j})(t)\,dt \\
&
\notag
\le D(T) \Big(\sum_{i=1}^n e_{1+i-n}(U_i)(0) + 
\sum_{l=1}^j e_1(U_{n+l})(0)
\Big)\,.
\end{align}
 We prove this inequality for $j=1$ as follows. Using classical energy estimates for the equation in $u_{n+1}$, we obtain
$$
\int_0^T  e_1(U_{n+1})(t)\,dt \le D(T) \Big(e_1(U_{n+1})(0) +\int_0^T (|u_{n-1}|^2 + |u_n|^2)\,dt
\Big)\,.
$$
 Using \eqref{admiss*} and \eqref{admissx*} in this last inequality, we obtain
\begin{align*}
\int_0^T   |u_{n+1}|^2  dt  &   \le C \int_0^T e_1(U_{n+1})(t)\,dt \\
&
 \le D(T) \Big(
e_1(U_{n+1})(0) + 
\sum_{i=1}^n e_{1+i-n}(U_i)(0)
\Big) .
\end{align*}
 Hence the claimed property holds for $j=1$. Assume that it holds for $k$, for all
$k \in \{ 1,\ldots, j-1\}$. 
Considering the equation in $u_{n+j}$ and thanks to classical energy estimates, we have
\begin{align*}
&
\int_0^T  e_1(U_{n+j})(t)\,dt \\
 &
\le D(T) \Big(e_1(U_{n+j})(0) +  
\int_0^T (|u_{n-1}|^2 + |u_n|^2
+\sum_{k=1}^{j-1} |u_{n+k}|^2)\,dt
\Big)\,.
\end{align*}
  Using the property \eqref{DIRJ} for $k=1$ up to $j-1$ and  \eqref{admiss*} and \eqref{admissx*} , we obtain
\begin{align*}
\int_0^T |u_{n+j}|^2 \,dt &
\le C \int_0^T  e_1(U_{n+j})(t)\,dt \\
&
\le D(T) \Big(\sum_{i=1}^n e_{1+i-n}(U_i)(0) + 
\sum_{l=1}^j e_1(U_{n+l})(0)
\Big)\,.
\end{align*}
  Hence we proved \eqref{DIRJ} for any $j \in \{1, \ldots, p\}$.
Let $j$ be any integer in $\{1, \ldots, p\}$. Thanks to the assumption $(A4)_{n+j}$ and
using classical energy estimates for the equation in $u_{n+j}$ we have
\begin{align*}
&
\int_0^T ||\mathcal{\mathbf{B^{\ast}_{n+j}}}U_{n+j}||_{G_{n+j}}^2 \,dt
\\
&
 \le D(T)\Big(
e_1(U_{n+j})(0) + 
\int_0^T (|u_{n-1}|^2 + |u_n|^2 + \sum_{k=1}^{j-1} |u_{n+k}|^2)\,dt
\Big)\,.
\end{align*}
 Using \eqref{admiss*}, \eqref{admissx*} and the property \eqref{DIRJ} for $j-1$, we obtain
$$
\int_0^T ||\mathcal{\mathbf{B^{\ast}_{n+j}}}U_{n+j}||_{G_{n+j}}^2 \,dt \le D(T)\Big(
\sum_{i=1}^n e_{1+i-n}(U_i)(0) +
\sum_{l=1}^j e_1(U_{n+l})(0)
\Big)\,.
$$
 We now turn to the proof of the observability inequalities.
The estimate \eqref{eqobskmixed} has already been proved in Theorem~\ref{obsNSHn}
for $T>T_n^{\ast}$. We prove \eqref{eqobskmixed2} as follows.  We set
$T_{n+p}^{\ast}=\max (T_n^{\ast}, \max_{1 \le j \le p}(T_{0,n+j}))$.
We assume from now on that $T>T_{n+p}^{\ast}$.
Thanks to  \eqref{eqobskmixed} and to \eqref{admiss*} and \eqref{admissx*}, we have
\begin{equation}\label{8x}
\int_0^T \big(|u_{n-1}|^2 + |u_n|^2\big)\,dt \le d_n(T) \int_0^T \| \mathcal{\mathbf{B^{\ast}_{n}}}(U_{n}) \|_{G_{n}}^2 dt \,,
\end{equation}
 where $d_n(T)$ depends on $d_{i,n}(T)$ for $i \in\{1, \ldots,n\}$ and the constant $C(T)$
in \eqref{admiss*}-\eqref{admissx*}.

We shall prove by induction on $j \in \{1,\ldots,p\}$, the following property
\begin{equation}\label{x}
\begin{cases}
 \displaystyle{e_{1}(U_{n+k})(0) \le \rho_{n,k}(T) \sum_{l=0}^k\int_0^T \| \mathcal{\mathbf{B^{\ast}_{n+l}}}(U_{n+l}) \|_{G_{n+l}}^2 dt \,,
\forall \ k=1, \ldots, j \,,}\\
 \displaystyle{\int_0^T |u_{n+k}|^2 \,dt \le s_{n,k}(T) \sum_{l=0}^k \int_0^T ||\mathcal{\mathbf{B^{\ast}_{n+l}}}(U_{n+l}) ||_{G_{n+l}}^2 dt \,, \forall \ k=1, \ldots, j \,,}
\end{cases}
\end{equation}
 where $s_{n,k}(T)>0$ are explicit constants.
Let us prove that \eqref{x} holds for $j=1$. Thanks to the equation for $u_{n+1}$ in
\eqref{NSHnmixed}, we obtain as for  the case of $2$ bi-diagonal
cascade systems
$$
(1+T)\int_0^Te_1(U_{n+1})\,dt \ge Te_1(U_{n+1})(0) - CT\int_0^T(|u_{n-1}|^2 +|u_n|^2)\,dt\,.
$$
 Using \eqref{8x} in this estimate, we obtain
\begin{equation}\label{5x}
(1+T)\int_0^Te_1(U_{n+1})\,dt \ge Te_1(U_{n+1})(0) - CTd_n(T)\int_0^T\| \mathcal{\mathbf{B^{\ast}_{n}}}(U_{n}) \|_{G_{n}}^2
\,dt\,.
\end{equation}
 On the other hand, applying the uniform observability estimate given
in Lemma~\ref{AL}  with a second
member for the equation for $u_{n+1}$ in
\eqref{NSHnmixed}, we deduce that there exist $\eta_{n+1}>0$ and $\delta_{n+1}>0$, such that
\begin{align*}
&
\eta_{n+1}\int_0^T \| \mathcal{\mathbf{B^{\ast}_{n+1}}}(U_{n+1}) \|_{G_{n+1}}^2 dt
\\
&
 \ge
\int_0^T e_1(U_{n+1})\,dt -  
\delta_{n+1} \int_0^T|C_{n+1\,n-1}u_{n-1} + 
C_{n+1\, n}u_n|^2\,dt \,.
\end{align*}
 Using \eqref{8x} in this last estimate together with \eqref{5x}, we obtain 
for $j=1$
\begin{equation}\label{9x}
\displaystyle{e_{1}(U_{n+1})(0) \le \rho_{n,1}(T) \sum_{l=0}^1\int_0^T \| \mathcal{\mathbf{B^{\ast}_{n+l}}}(U_{n+l}) \|_{G_{n+l}}^2 dt \,.}
\end{equation}
 On the other hand, thanks to the equation verified by $u_{n+1}$ and as before, we have
$$
\int_0^T |u_{n+1}|^2 \,dt \le C (e_1(U_{n+1})(0) + \int_0^T(|u_{n-1}|^2 +|u_n|^2)\,dt )\,.
$$
 Using \eqref{8x} and \eqref{9x} in this estimate we obtain
$$
\displaystyle{\int_0^T |u_{n+1}|^2 \,dt \le s_{n,1}(T) \sum_{l=0}^1 \int_0^T ||\mathcal{\mathbf{B^{\ast}_{n+l}}}(U_{n+l}) ||_{G_{n+l}}^2 dt \,.}
$$
 Thus, we proved \eqref{x} for $j=1$. Let us now assume that \eqref{x} holds
for $j-1$, we shall prove that it holds for $j$ proceeding as for the case $j=1$.
First the usual estimates for the equation in $u_{n+j}$ lead to
\begin{align*}
&
 (1+T)\int_0^Te_1(U_{n+j})\,dt \\
 &
 \ge Te_1(U_{n+j})(0) -  
\displaystyle{CT\int_0^T(|u_{n-1}|^2 +|u_n|^2
+\sum_{k=1}^{j-1} |u_{n+k}|^2)\,dt\,.}
\end{align*}
 Using \eqref{8x} together with our induction hypothesis for $j-1$, we deduce that
\begin{align}
\label{10x}
&
 (1+T)\int_0^Te_1(U_{n+j})\,dt \\
 &
 \notag \ge Te_1(U_{n+j})(0) -  
\displaystyle{D_{n,j}(T)
\sum_{l=0}^{j-1}\int_0^T \| \mathcal{\mathbf{B^{\ast}_{n+l}}}(U_{n+l}) \|_{G_{n+l}}^2\,dt\,.}
\end{align}
 On the other hand, applying the uniform observability estimate given
in Lemma~\ref{AL}  with a second
member for the equation for $u_{n+j}$ in
\eqref{NSHnmixed}, 
we deduce that there exist $\eta_{n+j}>0$ and $\delta_{n+j}>0$, such that
\begin{align*}
&
\eta_{n+j}\int_0^T \| \mathcal{\mathbf{B^{\ast}_{n+j}}}(U_{n+j}) \|_{G_{n+j}}^2 dt 
\\
&
\ge
\int_0^T e_1(U_{n+j})\,dt -  
C\delta_{n+j} \int_0^T (|u_{n-1}|^2 +|u_n|^2
+\sum_{k=1}^{j-1} |u_{n+k}|^2)\,dt \,.
\end{align*}
 Using \eqref{8x} in this last estimate together with \eqref{10x} and \eqref{x} for $j-1$, we obtain 
\begin{equation*}\label{11x}
 \displaystyle{e_{1}(U_{n+j})(0) \le \rho_{n,j}(T) \sum_{l=0}^j\int_0^T \| \mathcal{\mathbf{B^{\ast}_{n+l}}}(U_{n+l}) \|_{G_{n+l}}^2 dt \,.}
 \end{equation*}
  On the other hand, thanks to the equation verified by $u_{n+j}$ and as before, we have
$$
\displaystyle{\int_0^T |u_{n+j}|^2 \,dt \le C (e_1(U_{n+j})(0) + \int_0^T(|u_{n-1}|^2 +|u_n|^2+
\sum_{k=1}^{j-1} |u_{n+k}|^2)\,dt \,.}
$$
 Using the above estimates, we deduce that
$$
\displaystyle{\int_0^T |u_{n+j}|^2 \,dt \le s_{n,j}(T) \sum_{l=0}^j \int_0^T ||\mathcal{\mathbf{B^{\ast}_{n+l}}}(U_{n+l}) ||_{G_{n+l}}^2 dt \,.}
$$
 Hence the property \eqref{x} is proved for $j$, which concludes the proof.

To handle the control problem, we shall
need to prove the admissibility and observability properties under a slightly different form
(mainly for the case $B_{n+l} \in \mathcal{L}(G_{n+l},H)$ for
$l=0, \ldots, p$). We have the following results.

\begin{Lemma}\label{tech1nmixed}
Let $n \ge 2$ be an integer.  We assume 
that for all $i=2, \ldots,n$,
the operators $C_{i i-1}$ satisfy the assumption $(A2)_n$. We assume that the operators $C_{i\, j}$ 
and $C_{i\, j}^{\ast}$ are in $\mathcal{L}(H_k)$ for 
$i \in \{n+1, \ldots, n+p\}$ and $j \in \{n-1, \ldots, i-1\}$ and $k=0,1$.
 Then there exist $ C_1>0\,, C_2>0$ such that
 for all $U_0 \in \mathcal{H}_{n+p}$,  the following properties hold for 
 $U$ the solution of \eqref{NSHnmixed} with initial data $U_0$ and for $W=\mathcal{A}_{n+p}^{-1}U$.  
 \begin{align}
 \label{equivxNmixed}
 &
 C_1 \Big(\displaystyle{\sum_{i=1}^n e_{i-n}(U_i)} + \sum_{j=1}^p e_0(U_{n+j})
 \Big)  \\
 &
 \notag
 \le \Big(\displaystyle{\sum_{i=1}^n e_{1+i-n}(W_i)} + \sum_{j=1}^p e_1(W_{n+j})
 \Big)\le 
 C_2 \Big(\displaystyle{\sum_{i=1}^n e_{i-n}(U_i)} + \sum_{j=1}^p e_0(U_{n+j})
 \Big) 
 \end{align}
 \end{Lemma}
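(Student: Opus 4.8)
The plan is to follow the proof of Lemma~\ref{tech1n}, disposing of the bi-diagonal block $i=1,\dots,n$ by a direct appeal to that lemma and then absorbing the $p$ extra equations one at a time by an induction on $j$. Write $W=(w_1,\dots,w_{n+p},w_1',\dots,w_{n+p}')=\mathcal{A}_{n+p}^{-1}U$. The identity $\mathcal{A}_{n+p}W=U$, combined with the fact that $W$ is again a solution of \eqref{NSHnmixed}, gives at every time the relations $w_i'=u_i$ for all $i$ and, for the velocity components, $u_1'=-Aw_1$, $u_i'=-Aw_i-C_{i\,i-1}w_{i-1}$ for $2\le i\le n$, and $u_{n+j}'=-Aw_{n+j}-\sum_{k=n-1}^{n+j-1}C_{n+j\,k}w_k$ for $1\le j\le p$. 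Since the first $n$ of these relations involve only $(u_1,\dots,u_n)$ and $(w_1,\dots,w_n)$, the vector $(w_1,\dots,w_n)$ coincides with $\mathcal{A}_n^{-1}$ applied to the first-$n$ sub-solution, so Lemma~\ref{tech1n} applies verbatim and yields $C_1\sum_{i=1}^n e_{i-n}(U_i)\le\sum_{i=1}^n e_{1+i-n}(W_i)\le C_2\sum_{i=1}^n e_{i-n}(U_i)$.

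Next I would establish, for each $j$, a two-sided pointwise estimate linking $e_1(W_{n+j})$ and $e_0(U_{n+j})$ to the lower levels. Introduce the shorthand $\tilde e(W_k)=e_{1+k-n}(W_k)$ for $k\le n$ and $\tilde e(W_k)=e_1(W_k)$ for $k>n$. Since $w_{n+j}'=u_{n+j}$ we have $|u_{n+j}|^2=|w_{n+j}'|^2$, so the position term of one energy is the velocity term of the other; the remaining terms $|A^{1/2}w_{n+j}|^2$ and $|A^{-1/2}u_{n+j}'|^2$ are controlled by one another, up to $\sum_{k=n-1}^{n+j-1}|A^{-1/2}C_{n+j\,k}w_k|^2$, by applying $A^{-1/2}$ to the velocity relation. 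Here the hypothesis $C_{n+j\,k}^{\ast}\in\mathcal{L}(H_1)$ gives $C_{n+j\,k}\in\mathcal{L}(H_{-1})$ by duality, whence $|A^{-1/2}C_{n+j\,k}w_k|\le C|A^{-1/2}w_k|$, and coercivity of $A$ bounds $|A^{-1/2}w_k|^2$ by $C\,\tilde e(W_k)$ (using $e_0(W_{n-1})$ at $k=n-1$ and the $|A^{1/2}w_k|^2$ term for $k\ge n$). This produces the two bounds
\[
e_1(W_{n+j})\le C\Big(e_0(U_{n+j})+\sum_{k=n-1}^{n+j-1}\tilde e(W_k)\Big),\qquad
e_0(U_{n+j})\le C\Big(e_1(W_{n+j})+\sum_{k=n-1}^{n+j-1}\tilde e(W_k)\Big).
\]

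Finally I would assemble the two inequalities of \eqref{equivxNmixed}. For the upper bound $\sum e_{1+i-n}(W_i)+\sum e_1(W_{n+j})\le C_2(\cdots)$, I would run an induction on $j$: in the first displayed bound the terms with $k\le n$ are dominated by $\sum_{i=1}^n e_{i-n}(U_i)$ via the bi-diagonal step, while the terms $e_1(W_k)$ with $n<k<n+j$ are controlled by the induction hypothesis, so $e_1(W_{n+j})\le C\big(\sum_{i=1}^n e_{i-n}(U_i)+\sum_{l=1}^j e_0(U_{n+l})\big)$. For the lower bound, no induction is needed: the second displayed bound expresses each $e_0(U_{n+j})$ through quantities ($e_1(W_{n+j})$ and the $\tilde e(W_k)$) that already occur in $\sum_{i=1}^n e_{1+i-n}(W_i)+\sum_{l=1}^p e_1(W_{n+l})$, and adding the bi-diagonal estimate for $\sum_i e_{i-n}(U_i)$ closes the chain. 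The main point requiring care is the third step's treatment of the off-subdiagonal couplings in the weakened $H_{-1}$ topology: it is precisely the assumption $C_{n+j\,k},C_{n+j\,k}^{\ast}\in\mathcal{L}(H_0)\cap\mathcal{L}(H_1)$ that makes $|A^{-1/2}C_{n+j\,k}w_k|$ controllable, and one must keep track of which energy level ($e_0$ at $k=n-1$, $e_1$ from $k=n$ onwards) bounds $|A^{-1/2}w_k|$ for each $k$ in the coupling range.
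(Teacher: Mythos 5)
Your proposal is correct and takes essentially the same approach as the paper's proof: apply Lemma~\ref{tech1n} to the self-contained bi-diagonal block $i=1,\dots,n$, then use the relation $u_{n+j}^{\prime}+Aw_{n+j}+\sum_{k}C_{n+j\,k}w_k=0$ to obtain two-sided pointwise bounds between $e_1(W_{n+j})$ and $e_0(U_{n+j})$ modulo lower-indexed terms, closing the right inequality of \eqref{equivxNmixed} by induction on $j$ and the left one by direct summation. The only cosmetic deviation is that you estimate the coupling terms $|A^{-1/2}C_{n+j\,k}w_k|$ via the $\mathcal{L}(H_{-1})$ bound obtained by duality from $C_{n+j\,k}^{\ast}\in\mathcal{L}(H_1)$, whereas the paper only needs $\mathcal{L}(H)$-boundedness there; both yield the same conclusion.
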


\begin{proof}[\bf Proof.]
 We first consider the system formed by the $n$ first equations, it is an independent $n$ bi-diagonal cascade system. Hence we can apply Lemma~\ref{tech1n} and in particular the estimates
 \eqref{equivxN} where $W$ is replaced by $U$ and $Z$ by $W$. This gives
 \begin{equation}\label{equivxU}
 C_1  \sum_{i=1}^n e_{i-n}(U_i) \le \sum_{i=1}^n e_{1+i-n}(W_i) \le
 C_2  \sum_{i=1}^n e_{i-n}(U_i)
 \end{equation}
Let $j \in \{1, \ldots, p\}$ be given. By definition of $W$, the equation for $w_{n+j}$ can be
 written as
 $$
 u_{n+j}^{\prime} + A w_{n+j} + \sum_{k=-1}^{j-1} C_{n+j n+k} w_{n+k}=0 \,.
 $$
 Hence we have
\begin{equation}\label{AUX1}
e_1(W_{n+j}) \le C\Big(e_0(U_{n+j}) + e_0(W_{n-1}) + e_0(W_n) +\sum_{k=1}^{j-1} |w_{n+k}|^2
\Big)\,,
\end{equation}
 with the convention that the sum from $k=1$ to $j-1$ vanishes when $j=1$. 
 We shall prove by induction on $j \in \{1,\ldots, p\}$, the property
 \begin{equation}\label{PJ}
 e_1(W_{n+j}) \le \\
 C\Big(
\displaystyle{\sum_{i=1}^n e_{i-n}(U_i)} + \sum_{k=1}^j e_0(U_{n+k})
 \Big) \,.
 \end{equation}
  Let us prove this property for $j=1$. Thanks to \eqref{AUX1} for $j=1$
 and to \eqref{equivxU} we easily deduce \eqref{PJ} for $j=1$. We assume that this
 property holds up to $j-1$. Using \eqref{equivxU} together with \eqref{AUX1} for $j$ and
 the property \eqref{PJ} for $k=1$ up to $k=j-1$, we obtain \eqref{PJ} for $j$. Summing the inequalities \eqref{PJ} for $j=1$ up to $j=p$ and using  the right inequality in \eqref{equivxU}, we
 obtain the right inequality in \eqref{equivxNmixed}. We prove the left inequality in \eqref{equivxU}
 as follows. Thanks to the equation for $w_{n+j}$ as above, we prove
\begin{align*} \label{AUX2}
e_0(U_{n+j})  & \le C\Big(e_1(W_{n+j}) + e_0(W_{n-1}) + e_0(W_n) +\sum_{k=1}^{j-1} |w_{n+k}|^2
\Big) \\
&
 \le 
C \Big(\sum_{k=1}^j e_1(W_{n+k}) + e_0(W_{n-1}) + e_0(W_n)\Big) \,.
\end{align*} 
 Summing these inequalities from $j=1$ up to $j=p$ and using the
left inequality  in \eqref{equivxU}, we obtain the left inequality in \eqref{equivxNmixed}.
\end{proof}

\begin{Lemma}\label{obsdirnmixed}
Let $n \ge 2$ be an integer.  We assume 
that for all $i=2, \ldots,n$,
the operators $C_{i i-1}$ satisfy the assumption $(A2)_n$ where the operators
$\Pi_i$ satisfy $(A3)_n$. We assume that the operators $C_{i\, j}$ 
and $C_{i\, j}^{\ast}$ are in $\mathcal{L}(H_k)$ for 
$i \in \{n+1, \ldots, n+p\}$ and $j \in \{n-1, \ldots, i-1\}$ and $k=0,1$.  Moreover let $\mathcal{\mathbf{B^{\ast}_{n+j}}}$ for $j=0$ to $j=p$ be
any given operators satisfying $(A4)_{n+j}-(A5)_{n+j}$ for all $j$ in $\{0,\ldots,p\}$. 
Let $T>0$ be given.
For $W^T=(w_1^T, \ldots, w_{n+p}^T, q_1^T, \ldots, q_{n+p}^T) \in M_{-(n+p-1)}$, we denote by $W=(w_1, \ldots, w_{n+p}, w_1^{\prime}, \ldots, w_{n+p}^{\prime})$ the unique solution
in $\mathcal{C}^0([0,T]; ((H_{1-n})^n\times (H_{-1})^p) \times ( (H_{-n})^n \times (H_{-2})^p))$ of 
\begin{equation}\label{Wn+p}
\begin{cases}
w_1^{\prime\prime} + A w_1 = 0 \,,\\
w_i^{\prime\prime} + A w_i+ C_{i i-1}w_{i-1}  = 0 \,, 2 \le i \le n \,, \\
\displaystyle{w_{i}^{\prime\prime} + A w_{i}+ \sum_{k=n-1}^{i-1} C_{i k}\,w_k =0 \,,  n+1 \le i \le n+p \,,}\\
W_{|t=T}=W^T \,,
\end{cases}
\end{equation}
 Then $W$ satisfies the following properties

\begin{itemize}
\item[(i)] $W \in \mathcal{C}^0([0,T]; M_{-(n+p-1)})$,

\item[(ii)] There exists $C_1=C_1(T)>0$, such that
\begin{equation*}\label{directweak1nmixed}
C_1 \int_0^T \sum_{l=0}^p || \mathcal{\mathbf{B}}_{n+l}^{\ast}Z_{n+l}||_{G_{n+l}}^2 \,dt \le 
\displaystyle{\sum_{i=1}^n e_{i-n}(W_i)(0)} + \sum_{j=1}^p e_0(W_{n+j}) \,,
\end{equation*}
 where $Z=\mathcal{A}_{n+p}^{-1}W$.

\item[(iii)]
 For all $T>T_{n+p}^{\ast}$, where 
 $T_{n+p}^{\ast}$ is given in Theorem~\ref{mixedsystem}, 
 there exists $C_2=C_2(T)>0$ such that
\begin{equation*}\label{directweak2nmixed}
\displaystyle{\sum_{i=1}^n e_{i-n}(W_i)(0)} + \sum_{j=1}^p e_0(W_{n+j})
\le C_2  \int_0^T \sum_{l=0}^p|| \mathcal{\mathbf{B}}_{n+l}^{\ast}Z_{n+l}||_{G_{n+l}}^2 \,dt \,,
\end{equation*}
\item[(iv)]
 Assume furthermore that $\mathcal{\mathbf{B}}_{n+l}^{\ast}(w,w^{\prime})=B_{n+l}^{\ast}w^{\prime}$ for all $l \in \{0, \ldots, p\}$. Then
properties $(ii)-(iii)$ become
\begin{equation*}\label{directweak1bisnmixed}
C_1 \int_0^T \sum_{l=0}^p ||B_{n+l}^{\ast}w_{n+l}||_{G_{n+l}}^2 \,dt \le\, \displaystyle{\sum_{i=1}^n e_{i-n}(W_i)(0)} + \sum_{j=1}^p e_0(W_{n+j}) \,.
\end{equation*}
For all $T>T_{n+p}^{\ast}$, we have
\begin{equation*}\label{directweak2bisnmixed}
\displaystyle{\sum_{i=1}^n e_{i-n}(W_i)(0)} + \sum_{j=1}^p e_0(W_{n+j})
\le C_2   \int_0^T \sum_{l=0}^p ||B_{n+l}^{\ast}w_{n+l}||_{G_{n+l}}^2 \,dt \,,
\end{equation*}
 with the same constants $C_1$ and $C_2$ than in $(ii)-(iii)$.

\end{itemize}
\end{Lemma}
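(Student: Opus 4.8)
The plan is to transcribe the proof of Lemma~\ref{obsdirn} to the mixed system, replacing the bi-diagonal ingredients \eqref{equivxN}, \eqref{admissineqi} and \eqref{eqobsk} by their mixed analogues \eqref{equivxNmixed}, \eqref{admissineqimixed} and \eqref{eqobskmixed}--\eqref{eqobskmixed2}. The only genuinely new point is the regularity statement $(i)$, which I would obtain by climbing the cascade.

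For $(i)$, I would first observe that the $n$ first equations of \eqref{Wn+p} form a closed bi-diagonal subsystem whose final data is the restriction of $W^T$ to its first $n$ blocks, which lies in $X_{-(n-1)}$. Hence Lemma~\ref{obsdirn}~$(i)$ applies to this subsystem and yields $(w_i,w_i^{\prime}) \in \mathcal{C}([0,T]; H_{i-n}\times H_{i-n-1})$ for $i=1,\ldots,n$. For the remaining equations $n+1\le i \le n+p$ I would argue by a second induction on $i$. The $i$-th equation reads $w_i^{\prime\prime}+Aw_i=-\sum_{k=n-1}^{i-1} C_{i k} w_k$, and its source is controlled as follows: the term $w_{n-1}$ lies in $H_{-1}$ while each $w_k$ with $n\le k \le i-1$ lies in $H_0$ (the component $w_n$ by the first step, the components $w_{n+l}$ by the induction hypothesis). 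Since the hypothesis gives $C_{i k},C_{i k}^{\ast} \in \mathcal{L}(H_0)\cap\mathcal{L}(H_1)$, duality with pivot $H$ yields $C_{i k} \in \mathcal{L}(H_{-1})$, so that $C_{i\,n-1} w_{n-1} \in H_{-1}$ and $C_{i k} w_k \in H_0\subset H_{-1}$ for $k\ge n$; the source therefore belongs to $\mathcal{C}([0,T];H_{-1})$. By continuous dependence and uniqueness for the scalar wave equation, $(w_i,w_i^{\prime}) \in \mathcal{C}([0,T]; H_0\times H_{-1})$, which closes the induction and gives $W \in \mathcal{C}^0([0,T]; M_{-(n+p-1)})$.

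For $(ii)$ and $(iii)$ I set $Z=\mathcal{A}_{n+p}^{-1}W$; it is again a solution of \eqref{NSHnmixed} and satisfies $z_i^{\prime}=w_i$. Applying the admissibility inequality \eqref{admissineqimixed} to $Z$ bounds $\sum_{l=0}^p\int_0^T \|\mathcal{\mathbf{B}}_{n+l}^{\ast}Z_{n+l}\|_{G_{n+l}}^2\,dt$ by $C(T)\big(\sum_{i=1}^n e_{1+i-n}(Z_i)(0)+\sum_{j=1}^p e_1(Z_{n+j})(0)\big)$, and the right-hand inequality in \eqref{equivxNmixed} (read with $U$ replaced by $W$ and $W$ by $Z$) bounds this by a multiple of $\sum_{i=1}^n e_{i-n}(W_i)(0)+\sum_{j=1}^p e_0(W_{n+j})(0)$, which is $(ii)$. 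For $(iii)$, summing \eqref{eqobskmixed} and \eqref{eqobskmixed2} applied to $Z$ gives $\sum_{i=1}^n e_{1+i-n}(Z_i)(0)+\sum_{k=1}^p e_1(Z_{n+k})(0)\le C(T)\sum_{l=0}^p\int_0^T\|\mathcal{\mathbf{B}}_{n+l}^{\ast}Z_{n+l}\|_{G_{n+l}}^2\,dt$ for $T>T_{n+p}^{\ast}$, and the left-hand inequality in \eqref{equivxNmixed} then produces the observability inequality for $W$. Finally $(iv)$ is immediate: when $\mathcal{\mathbf{B}}_{n+l}^{\ast}(w,w^{\prime})=B_{n+l}^{\ast}w^{\prime}$ the relation $z_{n+l}^{\prime}=w_{n+l}$ gives $\mathcal{\mathbf{B}}_{n+l}^{\ast}Z_{n+l}=B_{n+l}^{\ast}w_{n+l}$, so $(ii)$--$(iii)$ rewrite verbatim with the same constants $C_1,C_2$.

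The main obstacle is step $(i)$: one must keep precise track of the functional space to which each component belongs while moving up the cascade, so that the coupling terms built from the lower-index components always land in $H_{-1}$ and the scalar wave propagator returns the claimed continuity. Once this bookkeeping is done, parts $(ii)$--$(iv)$ are a direct transcription of the bi-diagonal case.
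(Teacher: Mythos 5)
Your proposal is correct and follows essentially the same route as the paper's own proof: part $(i)$ by applying Lemma~\ref{obsdirn} to the closed bi-diagonal subsystem of the first $n$ equations and then climbing recursively through the equations $n+1,\ldots,n+p$, and parts $(ii)$--$(iv)$ by reading \eqref{admissineqimixed}, \eqref{eqobskmixed}--\eqref{eqobskmixed2} and \eqref{equivxNmixed} with $U$ replaced by $W$ and $W$ by $Z=\mathcal{A}_{n+p}^{-1}W$. In fact your write-up is slightly more detailed than the paper's (which leaves the duality argument $C_{ik}^{\ast}\in\mathcal{L}(H_1)\Rightarrow C_{ik}\in\mathcal{L}(H_{-1})$ and the space bookkeeping implicit), so no gap remains.
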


\begin{proof}[\bf Proof.]
Considering the $n$ first equations of the system in $W$, and using the results of Theorem~\ref{obsdirn}, we deduce that $(w_1, \ldots, w_n, w_1^{\prime}, \ldots, w_n^{\prime}) \in
\mathcal{C}([0,T]; X_{-(n-1)})$.  Considering the equation in $w_{n+1}$ and our assumption on
$C_{n+1n-1}$ and $C_{n+1 n}$ we deduce that $(w_{n+1},w_{n+1}^{\prime}) \in
\mathcal{C}([0,T]; H \times H_{-1})$. In a similar way we prove recursively  that $(w_{n+j},w_{n+j}^{\prime}) \in
\mathcal{C}([0,T]; H \times H_{-1})$ for $j=1$ up to $j=p$. This proves that $W \in \mathcal{C}^0([0,T]; M_{-(n+p-1)})$. The properties $(ii)-(iv)$ follow easily from 
\eqref{admissineqimixed} and \eqref{eqobskmixed}-\eqref{eqobskmixed2} together with\eqref{equivxNmixed} where $U$ is replaced by $W$ and $W$ by $Z$.
\end{proof}

\begin{Lemma}\label{obsdirnmixedide}
Let $n \ge 2$ be an integer.  We assume 
that for all $i=2, \ldots,n$,
the operators $C_{i i-1}$ satisfy the assumption $(A2)_n$ where the operators
$\Pi_i$ satisfy $(A3)_n$. We assume that the operators $C_{i\, j}$ 
and $C_{i\, j}^{\ast}$ are in $\mathcal{L}(H_k)$ for 
$i \in \{n+1, \ldots, n+p\}$ and $j \in \{n-1, \ldots, i-1\}$ and $k=0,1$.  Moreover let $\mathcal{\mathbf{B^{\ast}_{n+j}}}$ for $j=0$ to $j=p$ be
any given operators satisfying $(A4)_{n+j}-(A5)_{n+j}$ for all $j$ in $\{0,\ldots,p\}$. 
Let $T>0$ be given.
For 
$$
W^T=(w_1^T, \ldots, w_{n+p}^T, q_1^T, \ldots, q_{n+p}^T) \in M_{(n+p-1)} ,
$$
 we denote by 
 $$
 W=(w_1, \ldots, w_{n+p}, w_1^{\prime}, \ldots, w_{n+p}^{\prime})
 $$
  the unique solution
in $\mathcal{C}^0([0,T]; ((H_{1-n})^n\times (H_{-1})^p) \times ( (H_{-n})^n \times (H_{-2})^p))$ of 
\eqref{Wn+p}. Then we have $W \in \mathcal{C}^0([0,T]; M_{(n+p-1)})$.
\end{Lemma}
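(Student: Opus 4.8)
The plan is to mimic the proof of Lemma~\ref{obsdirnmixed}, simply replacing the weak space $M_{-(n+p-1)}$ by the more regular space $M_{(n+p-1)}$ and invoking the unbounded-control regularity result Lemma~\ref{obsdirnunbounded} in place of its bounded-control analogue (Theorem~\ref{obsdirn}). The global a priori regularity $W \in \mathcal{C}^0([0,T]; ((H_{1-n})^n\times (H_{-1})^p) \times ((H_{-n})^n \times (H_{-2})^p))$ is already granted by the semigroup framework; the task is only to bootstrap it up to $M_{(n+p-1)}$.

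First I would observe that the $n$ first equations of \eqref{Wn+p} form a closed bi-diagonal cascade system: they involve only $w_1, \ldots, w_n$ and are insensitive to the higher components $w_{n+1}, \ldots, w_{n+p}$. Since $W^T \in M_{(n+p-1)}$, the truncation $(w_1^T, \ldots, w_n^T, q_1^T, \ldots, q_n^T)$ lies in $X_{(n-1)} = (\Pi_{i=1}^n H_{i-n+1}) \times (\Pi_{i=1}^n H_{i-n})$. Applying Lemma~\ref{obsdirnunbounded} to this sub-system then yields $(w_1, \ldots, w_n, w_1^{\prime}, \ldots, w_n^{\prime}) \in \mathcal{C}^0([0,T]; X_{(n-1)})$. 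In particular $w_i \in \mathcal{C}([0,T]; H_{i-n+1})$ for $i=1,\ldots,n$, so that the two components feeding the non bi-diagonal block satisfy $w_{n-1} \in \mathcal{C}([0,T]; H)$ and $w_n \in \mathcal{C}([0,T]; H_1) \subset \mathcal{C}([0,T]; H)$.

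Then I would propagate the regularity through the last $p$ equations by induction on $j \in \{1, \ldots, p\}$, proving $(w_{n+j}, w_{n+j}^{\prime}) \in \mathcal{C}([0,T]; H_1 \times H)$. For the inductive step the equation for $w_{n+j}$ reads $w_{n+j}^{\prime\prime} + A w_{n+j} = -\sum_{k=n-1}^{n+j-1} C_{n+j\, k}\, w_k$, whose right-hand side couples only the components $w_{n-1}, w_n, w_{n+1}, \ldots, w_{n+j-1}$. By Step~1 and the induction hypothesis each of these belongs to $\mathcal{C}([0,T]; H)$, and since $C_{n+j\, k} \in \mathcal{L}(H)$ (the case $k=0$ of the hypothesis), the whole source term lies in $\mathcal{C}([0,T]; H)$. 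The terminal data $(w_{n+j}^T, q_{n+j}^T) \in H_1 \times H$ together with uniqueness for a single nonhomogeneous wave equation in the energy space then force $(w_{n+j}, w_{n+j}^{\prime}) \in \mathcal{C}([0,T]; H_1 \times H)$, which closes the induction (the base case $j=1$ uses only Step~1). Collecting Steps~1 and 2 gives $W \in \mathcal{C}^0([0,T]; M_{(n+p-1)})$.

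The only delicate point, and the reason the peculiar triangular structure of \eqref{NSHnmixed} is imposed, is that the source terms must land exactly in $H$. This is guaranteed precisely because the couplings in the non bi-diagonal block reach no further down than the column $n-1$: the weakest component appearing, $w_{n-1}$, is exactly of regularity $H_0 = H$, while every other coupled component is smoother. Had the sums run below $k = n-1$, one would have to apply $C_{n+j\, k}$ to components $w_k \in H_{k-n+1}$ with strictly negative index, which the boundedness $C_{n+j\, k} \in \mathcal{L}(H)$ cannot absorb, and the bootstrap would fail. No genuinely new estimate is needed beyond this observation, so the argument is routine once the ordering of the steps is fixed.
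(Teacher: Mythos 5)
Your proof is correct and follows essentially the same route the paper intends: the paper dispatches this lemma as ``similar to the proof of $(i)$ in Lemma~\ref{obsdirnmixed}'', which is precisely your two-step argument---apply the bi-diagonal regularity result (here the unbounded-control version, Lemma~\ref{obsdirnunbounded}, giving $\mathcal{C}^0([0,T];X_{(n-1)})$ for the first $n$ components) and then bootstrap recursively through the last $p$ equations, whose source terms land in $\mathcal{C}([0,T];H)$ by the $k=0$ boundedness of the $C_{n+j\,k}$, yielding $(w_{n+j},w_{n+j}^{\prime})\in\mathcal{C}([0,T];H_1\times H)$ by uniqueness. Your bookkeeping of the spaces (in particular $w_{n-1}\in\mathcal{C}([0,T];H)$ as the weakest component feeding the non bi-diagonal block) matches what the paper's referenced proof requires.
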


\noindent
{\bf Proof.}
The proof is similar to that of $(i)$ in Lemma~\ref{obsdirnmixed} and is left to the reader.
\qed

To handle the case of mixed unbounded and bounded control operators, we shall need
a different formulation of admissibility and observability properties. We proceed as follows.

\begin{Lemma}\label{tech1nmixedobs}
Let $n \ge 2$ be an integer.  We assume 
that for all $i=2, \ldots,n$,
the operators $C_{i i-1}$ satisfy the assumption $(A2)_n$. We assume that the operators $C_{i\, j}$ 
and $C_{i\, j}^{\ast}$ are in $\mathcal{L}(H_k)$ for 
$i \in \{n+1, \ldots, n+p\}$ and $j \in \{n-1, \ldots, i-1\}$ and $k=0,1$. Let $q$ be a fixed integer 
such that $q \in [1, p]$.
 Then there exist $ C_1>0\,, C_2>0$ such that
 for all $U_0 \in \mathcal{H}_{n+p}$,  the following properties hold for 
 $U$ the solution of \eqref{NSHnmixed} with initial data $U_0$ and for $W=\mathcal{A}_{n+p}^{-1}U$.  
 \begin{align}
 \label{equivxNmixedobs}
 &
 C_1 \Big(\displaystyle{\sum_{i=1}^n e_{1+i-n}(U_i) + \sum_{k=1}^q e_1(U_{n+k}) +
 \sum_{k=q+1}^p e_0(U_{n+k}) }
 \Big) \\
 &
 \notag
 \le  
 \Big(\displaystyle{\sum_{i=1}^n e_{1+i-n}(U_i)+ \sum_{k=1}^q e_1(U_{n+k})}+
 \displaystyle{\sum_{k=q+1}^p e_1(W_{n+k})}
 \Big)\\
 &
 \notag
 \le  
 C_2 \Big(\displaystyle{\sum_{i=1}^n e_{1+i-n}(U_i) + \sum_{k=1}^q e_1(U_{n+k}) +
 \sum_{k=q+1}^p e_0(U_{n+k}) }
 \Big) \,.
 \end{align}
 \end{Lemma}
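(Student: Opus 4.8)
The plan is to exploit that the first two groups of terms, $\sum_{i=1}^n e_{1+i-n}(U_i)$ and $\sum_{k=1}^q e_1(U_{n+k})$, appear unchanged in all three members of \eqref{equivxNmixedobs}. Writing $\mathcal{E}_c=\sum_{i=1}^n e_{1+i-n}(U_i)+\sum_{k=1}^q e_1(U_{n+k})$ for this common part, the claim reduces to a two-sided comparison, modulo $\mathcal{E}_c$, between $\sum_{k=q+1}^p e_1(W_{n+k})$ and $\sum_{k=q+1}^p e_0(U_{n+k})$. Since $W=\mathcal{A}_{n+p}^{-1}U$ we have $w_i'=u_i$ for all $i$, and each $w_{n+j}$ satisfies the relation
\[
u_{n+j}'+Aw_{n+j}+\sum_{k=-1}^{j-1}C_{n+j\,n+k}\,w_{n+k}=0
\]
already used in the proof of Lemma~\ref{tech1nmixed}. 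I would therefore reuse the one-sided estimates derived there, changing only the status of the first-block terms.

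I would first record that the two boundary energies of the first block are controlled by $\mathcal{E}_c$. Applying Lemma~\ref{tech1n} to the independent bi-diagonal system formed by the $n$ first components gives \eqref{equivxU}; hence $e_0(W_{n-1})$ and $e_0(W_n)\le C\,e_1(W_n)$ are both bounded by $C\sum_{i=1}^n e_{i-n}(U_i)\le C\sum_{i=1}^n e_{1+i-n}(U_i)\le C\mathcal{E}_c$, the last inequalities using coercivity of $A$, by which a lower-level energy is dominated by a higher-level one. For the forward bound I would invoke \eqref{AUX1} together with $|w_{n+k}|^2\le C\,e_1(W_{n+k})$ and the induction \eqref{PJ}; after bounding $\sum_{i=1}^n e_{i-n}(U_i)\le C\sum_{i=1}^n e_{1+i-n}(U_i)$ these give, for every $j\in\{1,\dots,p\}$,
\[
e_1(W_{n+j})\le C\Big(\sum_{i=1}^n e_{1+i-n}(U_i)+\sum_{k=1}^j e_0(U_{n+k})\Big).
\]
Splitting the inner sum at $k=q$ and using $e_0(U_{n+k})\le C\,e_1(U_{n+k})$ for $k\le q$ turns this into $e_1(W_{n+j})\le C\big(\mathcal{E}_c+\sum_{k=q+1}^j e_0(U_{n+k})\big)$; summing over $j\in\{q+1,\dots,p\}$ bounds $\sum_{k=q+1}^p e_1(W_{n+k})$ by a constant times the outer member of \eqref{equivxNmixedobs}.

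For the reverse bound I would use the backward relation $e_0(U_{n+j})\le C\big(\sum_{k=1}^j e_1(W_{n+k})+e_0(W_{n-1})+e_0(W_n)\big)$, obtained by solving the displayed equation for $u_{n+j}'=-Aw_{n+j}-\sum_k C_{n+j\,n+k}w_{n+k}$ and estimating with Cauchy--Schwarz and the mapping property $C_{n+j\,n+k}\in\mathcal{L}(H_{-1})$. The decisive point is that, applying the forward bound with index $k\le q$ (for which the tail sum $\sum_{l=q+1}^k$ is empty), one gets $e_1(W_{n+k})\le C\mathcal{E}_c$ for all $k\le q$; thus $\sum_{k=1}^q e_1(W_{n+k})\le C\mathcal{E}_c$, while $\sum_{k=q+1}^j e_1(W_{n+k})\le\sum_{k=q+1}^p e_1(W_{n+k})$ already sits inside the middle member. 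Together with $e_0(W_{n-1})+e_0(W_n)\le C\mathcal{E}_c$ this yields $e_0(U_{n+j})\le C\,(\text{middle member})$ for $j\ge q+1$, and summing gives the lower inequality; the constants $C_1,C_2$ are then reconstructed from the two one-sided bounds exactly as in the reduction.

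The only genuine subtlety, and the step I would verify most carefully, is the bookkeeping of energy levels: one must check that replacing the level-$(i-n)$ first-block energies of Lemma~\ref{tech1nmixed} by the higher level-$(1+i-n)$ energies here is harmless---which it is, since a higher-level energy dominates a lower one---and, above all, that the blocks with $k\le q$ never call for a true $e_0(U_{n+k})$ contribution but only for $\mathcal{E}_c$. This last fact is exactly what makes the mixed formulation (keeping $U$-energies for the first $n+q$ components and $W$-energies for the last $p-q$) self-consistent; all remaining manipulations are the routine coercivity and Cauchy--Schwarz estimates already present in the proof of Lemma~\ref{tech1nmixed}.
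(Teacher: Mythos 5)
Your proof is correct and follows essentially the same route as the paper's: the right inequality comes from \eqref{PJ} with the sum split at $k=q$, and the left inequality comes from the equation for $w_{n+j}$ together with bounds on the first $n+q$ components by $\mathcal{E}_c$. The only cosmetic difference is that where the paper invokes \eqref{equivxNmixed} applied to the sub-system of the first $n+q$ components, you re-derive the same bound directly from \eqref{equivxU} and \eqref{PJ} for indices $k\le q$ — the ingredients are identical.
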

 
\noindent
{\bf Proof.}
 If $q=p$ \eqref{equivxNmixedobs} trivially holds. Let us assume that $q \le p-1$. Let $j$ be an integer in $[q+1, p]$. The equation for $w_{n+j}$ as before implies that
 \begin{equation}\label{AVX1}
 e_0(U_{n+j}) \le C \Big(
 e_1(W_{n+j}) + |w_{n-1}|^2 +|w_n|^2 + \sum_{k=1}^q |w_{n+k}|^2 +
 \sum_{k=q+1}^{j-1} |w_{n+k}|^2
 \Big) \,.
 \end{equation}
  Thanks to \eqref{equivxNmixed} where $q$ replaces $p$, we have 
 \begin{align}\label{AVX2}
|w_{n-1}|^2 +|w_n|^2 + \sum_{k=1}^q |w_{n+k}|^2  & \le
C\Big(
\sum_{i=1}^n e_{i-n}(U_i) + \sum_{k=1}^q e_0(U_{n+k})
\Big)  
\\
&
\notag
\le
C\Big(
\sum_{i=1}^n e_{1+i-n}(U_i) + \sum_{k=1}^q e_1(U_{n+k})
\Big)\,.
\end{align}
 Using this inequality in \eqref{AVX1}, we obtain the left inequality of
\eqref{equivxNmixedobs}. Let $j$ be an integer in $[q+1, p]$. Thanks to \eqref{PJ}
we have
\begin{align*}
e_1(W_{n+j})  & \le C \Big(\displaystyle{\sum_{i=1}^n e_{i-n}(U_i) + \sum_{k=1}^j e_0(U_{n+k})}
\\
&
\le  
 C_2 \Big(\displaystyle{\sum_{i=1}^n e_{1+i-n}(U_i) + \sum_{k=1}^q e_1(U_{n+k})+
\sum_{k=q+1}^j e_0(U_{n+k})}
 \Big)\,.
\end{align*}
  Thus, we obtain the right inequality of \eqref{equivxNmixedobs}.
\qed

\begin{Theorem}\label{obsdirnmixedobs}
Let $n \ge 2$ be an integer.  We assume 
that for all $i=2, \ldots,n$,
the operators $C_{i i-1}$ satisfy the assumption $(A2)_n$ where the operators
$\Pi_i$ satisfy $(A3)_n$. We assume that the operators $C_{i\, j}$ 
and $C_{i\, j}^{\ast}$ are in $\mathcal{L}(H_k)$ for 
$i \in \{n+1, \ldots, n+p\}$ and $j \in \{n-1, \ldots, i-1\}$ and $k=0,1$.  Moreover let $\mathcal{\mathbf{B^{\ast}_{n+j}}}$ for $j=0$ to $j=p$ be
any given operators satisfying $(A4)_{n+j}-(A5)_{n+j}$ for all $j$ in $\{0,\ldots,p\}$. Let $T>0$ be given.
For $W^T=(w_1^T, \ldots, w_{n+p}^T, q_1^T, \ldots, q_{n+p}^T) \in N_{(n+p-1)}$, we denote by $W=(w_1, \ldots, w_{n+p}, w_1^{\prime}, \ldots, w_{n+p}^{\prime})$ the unique solution
in $\mathcal{C}^0([0,T]; ((H_{1-n})^n\times (H_{-1})^p) \times ( (H_{-n})^n \times (H_{-2})^p))$ of 
\eqref{Wn+p}. Then $W$
satisfies the following properties

\begin{itemize}
\item[(i)] $W \in \mathcal{C}^0([0,T]; N_{(n+p-1)})$,

\item[(ii)] There exists $C_1=C_1(T)>0$, such that
\begin{align}\label{directweak1nmixedobs}
C_1\Big( \int_0^T \sum_{l=0}^q || \mathcal{\mathbf{B}}_{n+l}^{\ast}U_{n+l}||_{G_{n+l}}^2 
+ \sum_{l=q+1}^p || \mathcal{\mathbf{B}}_{n+l}^{\ast}W_{n+l}||_{G_{n+l}}^2\,dt\Big)
\le \\
\displaystyle{\sum_{i=1}^n e_{1+i-n}(U_i)(0)} + \sum_{j=1}^q e_1(U_{n+j})(0)+
 \sum_{j=q+1}^p e_0(U_{n+j})(0)\,,
\end{align}
 where $W=\mathcal{A}_{n+p}^{-1}U$.

\item[(iii)] For all $T>T_{n+p}^{\ast}$ where $T_{n+p}^{\ast}$ is defined in
Theorem~\ref{mixedsystem}, there exists $C_2=C_2(T)>0$ such that
\begin{align}
\label{directweak2nmixedobs}
&
\displaystyle{\sum_{i=1}^n e_{1+i-n}(U_i)(0)} + \sum_{j=1}^q e_1(U_{n+j})(0)+
 \sum_{j=q+1}^p e_0(U_{n+j})(0) \\
 &
 \notag
 \le  
C_2 \Big( \int_0^T \sum_{l=0}^q || \mathcal{\mathbf{B}}_{n+l}^{\ast}U_{n+l}||_{G_{n+l}}^2 
+ \sum_{l=q+1}^p || \mathcal{\mathbf{B}}_{n+l}^{\ast}W_{n+l}||_{G_{n+l}}^2\,dt\Big)
 \,.
\end{align}
\end{itemize}
\end{Theorem}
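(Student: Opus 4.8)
The plan is to follow the proof of Lemma~\ref{obsdirnmixed} almost verbatim, the single new ingredient being the mixed energy equivalence \eqref{equivxNmixedobs} of Lemma~\ref{tech1nmixedobs}, which takes the place of \eqref{equivxNmixed}. I keep the notation of parts $(ii)$--$(iii)$: $U$ denotes the solution of \eqref{Wn+p} with terminal datum in $N_{(n+p-1)}$, and $W=\mathcal{A}_{n+p}^{-1}U$ is its companion one level smoother, so that $u_i=w_i'$ for every $i$ by Proposition~\ref{invertAn}. I also use that the first $n$ factors of $N_{(n+p-1)}$ coincide with $X_{(n-1)}$.

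For $(i)$ I would propagate regularity block by block. The first $n$ equations form an autonomous bi-diagonal $n$-cascade system whose terminal data lie in $X_{(n-1)}$; Lemma~\ref{obsdirnunbounded} then yields $(u_1,\dots,u_n,u_1',\dots,u_n')\in\mathcal{C}([0,T];X_{(n-1)})$, so in particular $u_{n-1}\in\mathcal{C}([0,T];H)$ and $u_n\in\mathcal{C}([0,T];H_1)$. I would then induct on $k$ in the remaining equations $u_{n+k}''+Au_{n+k}+\sum_{j=n-1}^{n+k-1}C_{n+k\,j}u_j=0$. For $1\le k\le q$ the source lies in $\mathcal{C}([0,T];H)$ because every already-treated $u_j$ lies in $H$ and each $C_{n+k\,j}\in\mathcal{L}(H)$; together with a terminal datum in $H_1\times H$ this gives $(u_{n+k},u_{n+k}')\in\mathcal{C}([0,T];H_1\times H)$. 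For $q+1\le k\le p$ the same source still lies in $H\subset H_{-1}$, and with a terminal datum in $H\times H_{-1}$ one obtains $(u_{n+k},u_{n+k}')\in\mathcal{C}([0,T];H\times H_{-1})$. Collecting the three blocks proves that the solution stays in $N_{(n+p-1)}$, and that $W=\mathcal{A}_{n+p}^{-1}U$ is correspondingly one level smoother, so that every bounded observation $\mathcal{\mathbf{B}}_{n+l}^{\ast}W_{n+l}=B_{n+l}^{\ast}w_{n+l}'=B_{n+l}^{\ast}u_{n+l}$ is well defined.

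For $(ii)$ and $(iii)$ I would feed the admissibility inequality \eqref{admissineqimixed} and the observability inequalities \eqref{eqobskmixed}--\eqref{eqobskmixed2} of Theorem~\ref{mixedsystem} into the equivalence \eqref{equivxNmixedobs} (Theorem~\ref{mixedsystem}, stated for \eqref{NSHnmixed}, applies here by time reversibility). Theorem~\ref{mixedsystem} bounds, from above for every $T$ and from below for $T>T_{n+p}^{\ast}$, the natural mixed energy by the observations; the unbounded observations $\mathcal{\mathbf{B}}_{n+l}^{\ast}U_{n+l}$ $(0\le l\le q)$ are governed by the natural energies $e_1(U_{n+l})$, while, using $u_{n+l}=w_{n+l}'$, the bounded observations $\mathcal{\mathbf{B}}_{n+l}^{\ast}W_{n+l}$ $(q+1\le l\le p)$ are governed by the weakened energies $e_0(U_{n+l})\sim e_1(W_{n+l})$. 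The equivalence \eqref{equivxNmixedobs} is exactly what converts these two families of energies into the single $N_{(n+p-1)}$-norm $\sum_{i=1}^n e_{1+i-n}(U_i)+\sum_{k=1}^q e_1(U_{n+k})+\sum_{k=q+1}^p e_0(U_{n+k})$ appearing on the two sides of \eqref{directweak1nmixedobs}--\eqref{directweak2nmixedobs}; the constants $C_1,C_2$ and the threshold $T_{n+p}^{\ast}$ are then inherited directly from Theorem~\ref{mixedsystem}.

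The main obstacle is the level bookkeeping. One must check that the hypotheses $C_{n+k\,j},C_{n+k\,j}^{\ast}\in\mathcal{L}(H_k)$ for $k=0,1$ are precisely strong enough to keep each source term in $H$ — no more is available, since the off-diagonal couplings are bounded only on $H_0$ and $H_1$ — and that in \eqref{equivxNmixedobs} each observation term is matched with its correct energy slot, so that the estimates of Theorem~\ref{mixedsystem}, established at the single natural level with all $p+1$ observations present, transfer without loss to the mixed-level, mixed-observation setting of the statement.
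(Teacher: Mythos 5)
Your parts (i) and (ii) follow the paper's own route: block-by-block propagation of regularity for (i), and, for (ii), admissibility estimates taken at the appropriate level for each group of equations (the $U$-subsystem of the first $n+q$ equations for $l\le q$, the $W$-equations for $l>q$), converted into the $N_{(n+p-1)}$-norm by the energy equivalences \eqref{equivxNmixed} and \eqref{equivxNmixedobs}; since admissibility is an upper bound valid at every level, no matching problem arises there.

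Part (iii), however, has a genuine gap. You propose to ``feed \eqref{eqobskmixed}--\eqref{eqobskmixed2} into the equivalence \eqref{equivxNmixedobs}'', but Theorem~\ref{mixedsystem} cannot be invoked for the full $(n+p)$-system in the mixed setting. Its observability inequality \eqref{eqobskmixed2} for the component of index $n+k$ requires the observations at \emph{all} indices $l=0,\dots,k$ taken on \emph{one and the same} solution: applied to $U$ it needs the $U$-observations $\int_0^T\|\mathcal{\mathbf{B}}_{n+l}^{\ast}U_{n+l}\|^2_{G_{n+l}}dt$ at indices $l>q$, which are not among the data of \eqref{directweak2nmixedobs} (only the $W$-observations are available there, and admissibility bounds go in the wrong direction); applied to $W$ it needs $W$-observations at indices $l\le q$, which are likewise unavailable. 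The equivalence \eqref{equivxNmixedobs} compares \emph{energies}, not observations, so it cannot exchange one family of observations for the other. This is exactly why the paper's proof does extra work: it applies Theorem~\ref{mixedsystem} only to the closed subsystem formed by the first $n+q$ equations (closed thanks to lower-triangularity), obtaining \eqref{ATX2} and then the $L^2$-bounds \eqref{NIRV0} on $w_{n+j}$ for $j\le q$; it then runs a new induction on $k\in\{q+1,\dots,p\}$ (property \eqref{NIRV}), in which Lemma~\ref{AL} with the observation operator $\mathcal{\mathbf{B}}_{n+k}^{\ast}$ is applied to the single nonhomogeneous equation satisfied by $w_{n+k}$, whose source term $\sum_{j}C_{n+k\,j}\,w_j$ is controlled in $L^2((0,T);H)$ by \eqref{NIRV0} and the induction hypothesis, and this is combined with a reverse energy inequality to recover $e_1(W_{n+k})(0)$. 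This component-by-component cascade at the $W$-level is the mechanism missing from your proposal; the sentence asserting that the estimates of Theorem~\ref{mixedsystem} ``transfer without loss to the mixed-level, mixed-observation setting'' is precisely the statement that needs proof.
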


\begin{proof}[\bf Proof.]
We prove $(i)$ as before. Considering the $n+q$ first equations we can apply
Theorem~\ref{mixedsystem}. Thus we have for all $T>0$ there exists $C_2(T)>0$ such that
\begin{align}
\label{ATX1}
&
 \int_0^T \sum_{l=0}^q || \mathcal{\mathbf{B}}_{n+l}^{\ast}U_{n+l}||_{G_{n+l}}^2 
\, dt \\
&
\notag
\le  
C_2(T)\Big(\displaystyle{\sum_{i=1}^n e_{1+i-n}(U_i)(0)} + \sum_{j=1}^q e_1(U_{n+j})(0)
\Big)\,,
\end{align}
 and \eqref{DIRJ} holds for $j=1, \ldots, q$. Let $j$ be an integer  in $[q+1,p]$. Using the property $(A4)_{n+j}$ 
for the equation in $w_{n+j}$ together with the classical energy estimates and \eqref{equivxNmixed}, we obtain
\begin{align}
\label{ATX3}
&
\int_0^T  || \mathcal{\mathbf{B}}_{n+j}^{\ast}W_{n+j}||_{G_{n+j}}^2 
  dt \\
  &
  \notag
\le
C(T) \Big(e_1(W_{n+j})(0) +  
\int_0^T |w_{n-1}|^2 +|w_n|^2 + \sum_{k=1}^{j-1} |w_{n+k}|^2  dt
\Big) .
\end{align}
 Using \eqref{DIRJ} together with \eqref{admiss*} and \eqref{admissx*}, we obtain
\begin{align}\label{ATX4} 
\int_0^T  || \mathcal{\mathbf{B}}_{n+j}^{\ast}W_{n+j}||_{G_{n+j}}^2 
\, dt  
\le
C(T)
\Big(\displaystyle{\sum_{i=1}^n e_{1+i-n}(W_i)(0)} +  
\sum_{j=1}^q e_1(W_{n+j})(0)
\Big)\,.
\end{align} 
 Using \eqref{equivxNmixed}, we obtain for all $j \in [q+1, \ldots, p]$
\begin{align} \label{ATX5}
&
\int_0^T  || \mathcal{\mathbf{B}}_{n+j}^{\ast}W_{n+j}||_{G_{n+j}}^2 
  dt 
\le
C(T)
\Big(\displaystyle{\sum_{i=1}^n e_{i-n}(U_i)(0)} + \sum_{j=1}^p e_0(U_{n+j})(0)
\Big)  
  \notag  \\ 
  &
  \le
C(T) \Big(\displaystyle{\sum_{i=1}^n e_{1+i-n}(U_i)(0)} + \sum_{j=1}^q e_1(U_{n+j})(0)
+ \sum_{l=q+1}^p e_0(U_{n+j})(0)
\Big)
\,.
\end{align} 
 Hence thanks to \eqref{ATX1} and \eqref{ATX5}, we proved \eqref{directweak1nmixedobs}.
We now consider the proof of the observability inequality \eqref{directweak2nmixedobs}.
Thanks to Theorem~\ref{mixedsystem} applied for the system formed by the first $n+q$
equations, we have for all $T>T_{n+p}^{\ast}$, there exists $C_1(T)>0$ such that
\begin{align}\label{ATX2}
C_1(T)\Big(\displaystyle{\sum_{i=1}^n e_{1+i-n}(U_i)(0)} + \sum_{j=1}^q e_1(U_{n+j})(0)
\Big) \le  
 \int_0^T \sum_{l=0}^q || \mathcal{\mathbf{B}}_{n+l}^{\ast}U_{n+l}||_{G_{n+l}}^2 
\, dt \,.
\end{align}
 Thanks to \eqref{DIRJ} applied to the system of the first $n+q$ equations satisfied
by $W$, together with \eqref{equivxNmixed}, we have 
\begin{align*}
\int_0^T|w_{n+j}|^2\,dt & \le D(T)  \Big(\sum_{i=1}^n e_{1+i-n}(W_i)(0) +
\sum_{k=1}^j e_{1}(W_{n+k})(0)
\Big)   \\
&
\le 
D(T)  \Big(\sum_{i=1}^n e_{i-n}(U_i)(0) +
\sum_{k=1}^q e_0(U_{n+k})(0)
\Big)  \\
\le  
D(T)  \Big(\sum_{i=1}^n e_{1+i-n}(U_i)(0)  & +
\sum_{k=1}^q e_{1}(U_{n+k})(0)
\Big)  \,,\quad \forall
 \ j \in \{1, \ldots, q\} \,.
\end{align*}
 Using \eqref{ATX2} in this last inequality, we obtain
\begin{equation}\label{NIRV0}
\int_0^T|w_{n+j}|^2\,dt \le D(T) \int_0^T \sum_{l=0}^q || \mathcal{\mathbf{B}}_{n+l}^{\ast}U_{n+l}||_{G_{n+l}}^2 \,,\quad \forall
 \ j \in \{1, \ldots, q\} \,.
\end{equation} 
 We consider the equation satisfied by
$w_{n+q+1}$. We shall prove by induction on $k \in \{q+1,\ldots, p\}$ that
the following property holds
\begin{equation} \label{NIRV}
\begin{cases}
\displaystyle{e_1(W_{n+k})(0) \le C(T)\Big( \int_0^T \sum_{l=0}^{q} || \mathcal{\mathbf{B}}_{n+l}^{\ast}U_{n+l}||_{G_{n+l}}^2 + }\\
\displaystyle{\sum_{l=q+1}^{k} || \mathcal{\mathbf{B}}_{n+l}^{\ast}W_{n+l}||_{G_{n+l}}^2
\, dt
\Big)}\,,\\
\displaystyle{\int_0^T |w_{n+k}|^2 \,dt \le C(T)\Big( \int_0^T \sum_{l=0}^{q} || \mathcal{\mathbf{B}}_{n+l}^{\ast}U_{n+l}||_{G_{n+l}}^2 + }\\
\displaystyle{\sum_{l=q+1}^{k} || \mathcal{\mathbf{B}}_{n+l}^{\ast}W_{n+l}||_{G_{n+l}}^2
\, dt
\Big)} \,.
\end{cases}
\end{equation}
 We first prove it for $k=q+1$. Thanks to the assumption $(A5)_{n+q+1}$ and to the uniform observability
estimate with an inhomogeneity given in Lemma~\ref{AL}, there exist
$\eta_{n+q+1}>0$ and $\delta_{n+q+1}>0$ such that for
all $T>T_{n+q}^{\ast}$ we have
\begin{align*}
&
\eta_{n+q+1} \int_0^T  || \mathcal{\mathbf{B}}_{n+q+1}^{\ast}W_{n+q+1}||_{G_{n+q+1}}^2 
\\
&
 \ge
\int_0^T e_1(W_{n+q+1})\,dt - 
\delta_{n+q+1}\int_0^T ( |w_{n-1}|^2 + |w_n|^2 +
\sum_{k=1}^q |w_{n+k}|^2)\,dt\,.
\end{align*}
 Using \eqref{NIRV0} in this last inequality, together with \eqref{admiss*}, \eqref{admissx*}
and \eqref{equivxU} and \eqref{ATX2}, we deduce that there exists $C(T)>0$ such that
\begin{align} \label{ATX3q}
&
C(T) \int_0^T \big(||\mathcal{\mathbf{B}}_{n+q+1}^{\ast}W_{n+q+1}||_{G_{n+q+1}}^2  
+\sum_{l=0}^{q} || \mathcal{\mathbf{B}}_{n+l}^{\ast}U_{n+l}||_{G_{n+l}}^2 \big)
\, dt   \\
&
\ge  
\int_0^T e_1(W_{n+q+1})\,dt\,.
\notag
\end{align}
 Considering once again the equation for $w_{n+q+1}$ and proceeding as
in the proof of Theorem~\ref{mixedsystem}, we have
\begin{align*}
&
 (1+T)\int_0^Te_1(W_{n+q+1})\,dt \\
 &
 \ge Te_1(W_{n+q+1})(0) -   
\displaystyle{CT\int_0^T(|w_{n-1}|^2 +|w_n|^2
+\sum_{k=1}^{q} |w_{n+k}|^2)\,dt\,.}
\end{align*}
 Using once again \eqref{NIRV0} in this last inequality 
 together with \eqref{admiss*}, \eqref{admissx*} and 
 \eqref{equivxU} and \eqref{ATX2}, we have
\begin{align*}
 (1+T) \! 
 \int_0^T \!  \! 
 e_1(W_{n+q+1}) dt  \! \ge  \! Te_1(W_{n+q+1})(0) -  
 C(T) \! 
\int_0^T  \! \sum_{l=0}^{q} \!  \| \mathcal{\mathbf{B}}_{n+l}^{\ast}U_{n+l}\|_{G_{n+l}}^2
 . 
\end{align*}
 Combining \eqref{ATX3q} and this last inequality, we deduce that
\begin{align*}
&
e_1(W_{n+q+1})(0) \\
&
\le C(T) \int_0^T\big( \sum_{l=0}^{q} || \mathcal{\mathbf{B}}_{n+l}^{\ast}U_{n+l}||_{G_{n+l}}^2 + 
 ||\mathcal{\mathbf{B}}_{n+q+1}^{\ast}W_{n+q+1}||_{G_{n+q+1}}^2
\big)  dt  .
\end{align*}
 On the other hand, using once again the equation for $w_{n+q+1}$ the above
observability inequality and
the energy estimates we already used in the proof of Theorem~\ref{mixedsystem}, we have
\begin{align*}
&
\int_0^T |w_{n+q+1}|^2\,dt \\
&
\le C(T) \int_0^T \big( \sum_{l=0}^{q} || \mathcal{\mathbf{B}}_{n+l}^{\ast}U_{n+l}||_{G_{n+l}}^2 + 
||\mathcal{\mathbf{B}}_{n+q+1}^{\ast}W_{n+q+1}||_{G_{n+q+1}}^2
\big)\, dt \,.
\end{align*}
 Assuming that \eqref{NIRV} holds up to $k-1$ for $k \in \{q+1,\ldots, p\}$ and
proceeding in the same way as above, we easily prove that
\eqref{NIRV} holds for $k$. Finally combining \eqref{ATX2} together with \eqref{NIRV} for
all $k \in \{q+1,\ldots, p\}$ and using the left inequality of \eqref{equivxNmixedobs}, we
obtain \eqref{directweak2nmixedobs}.
\end{proof}

The proofs of Theorem ~\ref{control2n+pq} and 
Theorem~\ref{control2n+pqmixed} are similar to 
that of Theorem~\ref{control2n} and are left to the reader.

\smallskip

\noindent 
{\bf Acknowledgments.}
  I would like to thank Piermarco Cannarsa for fruitful 
  discussions and suggestions on this paper. I am also 
  very grateful to the referee for his/her careful reading,
   and valuable comments and suggestions.

\end{document}